\newtheorem{theorem}{Theorem}[section]
\newtheorem{corollary}{Corollary}[section]
\newtheorem{proposition}{Proposition}[section]
\begin{document}
\theoremstyle{plain}
\newtheorem{MainThm}{Theorem}
\newtheorem{thm}{Theorem}[section]
\newtheorem{clry}[thm]{Corollary}
\newtheorem{prop}[thm]{Proposition}
\newtheorem{lem}[thm]{Lemma}
\newtheorem{deft}[thm]{Definition}
\newtheorem{hyp}{Assumption}
\newtheorem*{ThmLeU}{Theorem (J.~Lee, G.~Uhlmann)}

\theoremstyle{definition}
\newtheorem{rem}[thm]{Remark}
\newtheorem*{acknow}{Acknowledgements}
\numberwithin{equation}{section}
\renewcommand{\d}{\partial}
\newcommand{\re}{\mathop{\rm Re} }
\newcommand{\im}{\mathop{\rm Im}}
\newcommand{\R}{\mathbf{R}}
\newcommand{\C}{\mathbf{C}}
\newcommand{\N}{\mathbf{N}}
\newcommand{\D}{C^{\infty}_0}
\renewcommand{\O}{\mathcal{O}}
\newcommand{\dbar}{\overline{\d}}
\newcommand{\supp}{\mathop{\rm supp}}
\newcommand{\abs}[1]{\lvert #1 \rvert}
\newcommand{\csubset}{\Subset}
\newcommand{\detg}{\lvert g \rvert}
\title[partial Cauchy data for
general operator]
{Partial Cauchy data for general second-order
elliptic operators
in two dimensions}

\author[O. Imanuvilov]{Oleg Yu. Imanuvilov}
\address{Department of Mathematics, Colorado State
University, 101 Weber Building, Fort Collins
CO, 80523 USA\\
e-mail: oleg@math.colostate.edu}
\thanks{First author partly supported
by NSF grant DMS 0808130}

\author[G. Uhlmann]{Gunther Uhlmann}
\address{Department  mathematics, UC Irvine, Irvine CA 92697\\
Department of Mathematics,
University of Washington, Seattle,
WA 98195 USA\\
e-mail: gunther@math.washington.edu}
\thanks{Second author partly supported by
NSF and a Walker Family Endowed
Professorship}

\author[M. Yamamoto]{Masahiro Yamamoto}
\address{Department of Mathematical Sciences,
University of Tokyo, Komaba, Meguro,
Tokyo 153, Japan \\e-mail:  myama@ms.u-tokyo.ac.jp}

\begin{abstract}
We consider the inverse problem of determining the
coefficients of a general second-order elliptic operator
in two dimensions by
measuring the corresponding Cauchy data on an
arbitrary open subset of
the boundary. We show
that one can determine the coefficients of the operator up to
natural obstructions such as  conformal invariance,  gauge
transformations and  diffeomorphism invariance.
We use the main result to prove that
the  $curl$ of  the magnetic field and the electric potential are
uniquely determined by measuring partial Cauchy data
associated to the magnetic Schr\"odinger equation
measured on an arbitrary open subset of the boundary.
We also show that any two of the three coefficients
of a  second order elliptic operator
whose principal part is the Laplacian, are uniquely
determined by their partial Cauchy data.
\end{abstract}
\maketitle \setcounter{tocdepth}{1} \setcounter{secnumdepth}{2}

\section{\bf Introduction}
Let $\Omega\subset \R^2$ be a bounded domain with
smooth boundary $\partial\Omega=\cup_{k=1}^{\mathcal N}
\gamma_k$, where $\gamma_k$, $1 \le k \le
{\mathcal N}$, are smooth closed contours, and
$\gamma_{\mathcal N}$ is the external contour.

Let $\widetilde \Gamma \subset \partial\Omega$ be an
arbitrarily fixed non-empty relatively open subset
of $\partial\Omega$ and let
$\Gamma_0=\partial\Omega\setminus
\overline{\widetilde \Gamma}$.
Let $\nu$ be the unit outward normal vector to
$\partial\Omega$ and let $\frac{\partial u}
{\partial\nu} = \nabla u \cdot\nu$.

Henceforth we set $i=\sqrt{-1}$, $x_1, x_2 \in \R$,
$z=x_1+ix_2$,
$\overline{z}$ denotes the complex conjugate
of $z \in \C$, and we identify $x = (x_1,x_2) \in \R^2$ with $z = x_1
+ix_2 \in \C$. We also denote $\frac{\partial}{\partial\overline z} =
\frac 12(\frac{\partial}{\partial x_1}+i\frac{\partial}{\partial x_2})$,
$\frac{\partial}{\partial z}= \frac12(\frac{\partial}{\partial x_1}
-i\frac{\partial}{\partial x_2}).$

Let $u\in H^1(\Omega)$ be a solution to the following boundary value
problem
\begin{equation}\label{OMX}
L(x,D)u = \Delta_g u + 2A\frac{\partial u}{\partial z}
+ 2B\frac{\partial u}{\partial \overline z} + qu = 0,
\quad u\vert_{ \Gamma_0}=0,\quad u\vert_{\widetilde\Gamma}=f.
\end{equation}
Here $\Delta_g$ denotes the Laplace-Beltrami operator associated to
the Riemannian metric $g.$ We assume that $g$ is a positive definite
symmetric matrix in $\Omega$ and
\begin{equation}
\Delta_g=  \frac{1}{\sqrt{\mbox{det} g}}
\sum_{j,k=1}^2\frac{\partial}{\partial x_k}
(\sqrt{\mbox{det} g}\,g^{jk}
\frac{\partial }{\partial x_j}),
\end{equation}
where $\{g^{jk}\}$ denotes the inverse of $g=\{g_{jk}\}.$
From now on we assume that
$g\in C^{7+\alpha}(\overline{\Omega})$,
$(A,B,q),$ $ (A_j,B_j,q_j) \in
C^{5+\alpha}(\overline\Omega)\times C^{5+\alpha}
(\overline\Omega)\times C^{4+\alpha}(\overline\Omega)$,
$j=1,2$ for
some $\alpha\in (0,1)$ are complex-valued functions.
Henceforth $\alpha$ denotes a constant such that
$0 < \alpha < 1$.

We set
$$
L_j(x,D)=\Delta_{g_j} + 2A_j\frac{\partial}{\partial z}
+ 2B_j\frac{\partial}{\partial \overline z}
+ q_j, \quad j=1,2.
$$

We define the partial Cauchy data by
\begin{equation}
{\mathcal C_{g,A,B,q}} = \left\{\left(u|_{\widetilde\Gamma},
\frac{\partial u}{\partial \nu_g}
\Big|_{\widetilde\Gamma}\right)\mid
(\Delta_g+2A\frac{\partial }{\partial z}+2B\frac{\partial }
{\partial \overline z}+q) u= 0\hbox{ in
}\Omega,\,\, \ u\in H^1(\Omega),u\vert_{\Gamma_0}
= 0\right\},
\end{equation}
where $\frac{\partial}{\partial\nu_g}
=\root\of{\mbox{det} g}\sum_{j,k=1}^2 g^{jk}
\nu_k\frac{\partial}{\partial x_j}.$

The goal of this paper is to determine  the coefficients of the
operator $L.$ In the general case this is impossible.
As for the invariance of the Cauchy data, there are of
the following three types.\\
(i) The partial Cauchy data for the operators
$e^{-\eta} L(x,D) e^{\eta}$ and $L(x,D)$ are the
same provided that $\eta\in C^{6+\alpha}
(\overline{\Omega})$ is a complex-valued
function and $\eta\vert_{\widetilde \Gamma}
=\frac{\partial\eta}{\partial\nu}\vert
_{\widetilde \Gamma}=0$.
\\
(ii) Let $\beta \in C^{7+\alpha}(\overline{\Omega})$
be a positive function on $\overline \Omega$.
The partial Cauchy data for the operators $L(x,D)$ and
$\frac 1\beta L(x,D)
= \Delta_{\beta g}+\frac 1\beta(2 A\frac{\partial }{\partial z}
+ 2B\frac{\partial }{\partial \overline z} +  q)$
are exactly the same.
\\
(iii) Let $F \in C^{8+\alpha}(\overline{\Omega}):
\overline \Omega\rightarrow \overline
\Omega$ be a diffeomorphism such that
$F\vert_{\widetilde\Gamma}=Id$.
For any metric $g$ and complex valued functions $A$, $B$, $q$,
we introduce a metric $F^*g$ and functions $A_F$, $B_F$,
$q_F$ by
\begin{equation}\label{zz}
F^*g = ((DF)\circ g\circ (DF)^T )\circ F^{-1},
\end{equation}
\begin{eqnarray*}
&&A_F= \{(A+B)(\frac{\partial F_1}{\partial x_1}
-i\frac{\partial F_2}{\partial x_1})
+ i(B-A)(\frac{\partial F_1}{\partial x_2}
- i\frac{\partial F_2}{\partial x_2})\}\circ F^{-1}
\vert det \,D F^{-1}\vert, \\
&& B_F= \{(A+B)(\frac{\partial F_1}{\partial x_1}
+i\frac{\partial F_2}{\partial x_1})
+i(B-A)(\frac{\partial F_1}{\partial x_2}
+i\frac{\partial F_2}{\partial x_2})\}\circ F^{-1}
\vert det \,D F^{-1}\vert, \\
&& q= \vert det \,D F^{-1}\vert (q\circ F^{-1}),
\end{eqnarray*}
where $DF$ denotes the differential of $F$, $(DF)^T$ its
transpose and $\circ$ denotes matrix composition.

Then the operator
$$
K(x,D) = \Delta_{F^* g_1}
+ 2A_F\frac{\partial}{\partial z}
+ 2B_F\frac{\partial}{\partial \overline z} + q_F
$$
and the operator $L(x,D)$ have the same partial Cauchy data.
\vspace{0.5cm}

We show the converse
and state our main result below.
\\
\vspace{0.3cm}
Assume that for some $\alpha \in (0,1)$ and
$\alpha' > 0$
$$
g_{jk}\in C^{7+\alpha}(\overline \Omega),\quad g_{jk}=g_{kj}\quad
\forall k,j\in\{1,2\}, \quad \sum_{j,k=1}^2g_{jk}\xi_k\xi_j\ge
\alpha'\vert \xi\vert^2, \xi\in\R^2.
$$
Consider the following set of functions
\begin{equation}\label{victoryy}
\eta\in C^{6+\alpha}(\overline \Omega),
\quad \frac{\partial\eta}{\partial\nu}
\vert_{\widetilde\Gamma}=0,\quad \eta\vert_{\widetilde\Gamma}=0.
\end{equation}

We have
\begin{theorem}\label{general}
Suppose that for some $\alpha \in (0,1)$, there exists a positive
function $\widetilde \beta\in C^{7+\alpha}(\overline \Omega)$
such that $(g_1-\widetilde\beta g_2)\vert_{\widetilde \Gamma}
=\frac{\partial(g_1-\widetilde\beta g_2)}
{\partial\nu}\vert_{\widetilde\Gamma}
=0.$
Then
{\it ${\mathcal{C}}_{g_1,A_1,B_1,q_1}
= {\mathcal{C}}_{g_2,A_2,B_2,q_2}$
if and only if
there exist a diffeomorphism
$F\in C^{8+\alpha}(\overline\Omega), F:
\overline \Omega\rightarrow \overline \Omega$ satisfying
$F\vert_{\widetilde\Gamma}=Id$, a
positive function $\beta\in C^{7+\alpha}
(\overline \Omega)$ and a complex valued  function
$\eta$ satisfying (\ref{victoryy}) such that
$$
L_2(x,D) = e^{-\eta}K(x,D)e^{\eta},
$$
where
$$
K(x,D) = \Delta_{\beta F^* g_1}
+\frac{2}{\beta} A_{1,F}\frac{\partial}{\partial z}
+ \frac{2}{\beta}B_{1,F}\frac{\partial}{\partial \overline z}
+\frac 1\beta q_{1,F}.
$$
} and the functions $ F^*g_1, A_{1,F}, B_{1,F},
q_{1,F}$ are defined for $g_1, A_1, B_1, q_1$ by (\ref{zz}).
\end{theorem}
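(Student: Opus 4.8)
The ``if'' direction is the content of the invariances (i)--(iii) stated in the introduction: if $L_2 = e^{-\eta}K e^{\eta}$ with $K$ obtained from $L_1$ by a conformal factor $\beta$ and a boundary-fixing diffeomorphism $F$, then chaining the three invariances gives ${\mathcal C}_{g_1,A_1,B_1,q_1} = {\mathcal C}_{g_2,A_2,B_2,q_2}$. So the whole weight of the theorem is in the ``only if'' direction, and the plan is to reduce the general second-order operator to a magnetic Schr\"odinger-type problem for the Laplacian by successively stripping off the three obstructions.

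\emph{Step 1: isothermal coordinates.} Since $(g_1 - \widetilde\beta g_2)$ and its normal derivative vanish on $\widetilde\Gamma$, I would first choose global isothermal coordinates for $g_1$ (and, via $\widetilde\beta$, compatible ones for $g_2$) — a diffeomorphism $\Phi$ of $\overline\Omega$ fixing $\widetilde\Gamma$ such that $\Phi^*g_j$ is conformal to the Euclidean metric. Using invariances (ii) and (iii) this reduces both operators, \emph{without changing their partial Cauchy data on $\widetilde\Gamma$}, to operators of the form $4\d_z\d_{\overline z} + 2\widetilde A_j\d_z + 2\widetilde B_j\d_{\overline z} + \widetilde q_j$ on $\Omega$. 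The matching of the conformal factors on $\widetilde\Gamma$ is exactly what lets the diffeomorphism be taken to be the identity there.

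\emph{Step 2: gauging away the first-order terms locally, and the CGO construction.} With the principal part now $4\d_z\d_{\overline z}$, I would build complex geometric optics solutions. The operator $4\d_z\d_{\overline z} + 2\widetilde A\d_z + 2\widetilde B\d_{\overline z}$ can be conjugated by scalar functions $e^{\varphi}, e^{\psi}$ (solutions of $\dbar$- and $\d$-problems with the coefficients $\widetilde A, \widetilde B$ as data) into $4\d_z\d_{\overline z} + \widetilde q'$; this is the gauge (i) made explicit, and the phases are arranged to vanish to first order on $\widetilde\Gamma$. Then I would construct solutions $u_1 = e^{\tau\Phi}(a + r_1)$ for $L_1$ and $u_2 = e^{-\tau\overline\Phi}(\overline a + r_2)$ for the formal adjoint $L_2^*$, with $\Phi$ a holomorphic Carleman weight with nondegenerate critical points, $a$ a suitable holomorphic amplitude, and $r_j = O(\tau^{-1/2})$ remainders controlled by Carleman estimates with boundary terms adapted to the condition $u|_{\Gamma_0}=0$. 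The key analytic input is a Carleman estimate that localizes the boundary integral to $\widetilde\Gamma$.

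\emph{Step 3: the integral identity and stationary phase.} From ${\mathcal C}_{g_1,A_1,B_1,q_1} = {\mathcal C}_{g_2,A_2,B_2,q_2}$ one gets $\int_\Omega (L_1 - L_2)u_1\,\overline{u_2}\,dx = 0$ for all such CGO pairs. Inserting the CGO solutions and applying stationary phase as $\tau\to\infty$ at the critical points of $\re\Phi$ yields, first, pointwise identities for the (gauge-invariant combinations of the) first-order coefficients $\widetilde A_1 - \widetilde A_2$, $\widetilde B_1-\widetilde B_2$ at each critical point; varying $\Phi$ over a rich enough family of weights (here the multiply-connected topology $\partial\Omega = \cup\gamma_k$ forces care — one needs weights whose critical points sweep out all of $\Omega$, which is where Riemann--Roch / the explicit construction of holomorphic functions with prescribed critical points on $\Omega$ enters) gives the identities everywhere. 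A second, lower-order term in the expansion then gives $\widetilde q_1 = \widetilde q_2$ after the first-order coefficients are known.

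\emph{Step 4: reconstructing the transformations.} Finally I would unwind the reductions: the equality of the reduced coefficients, together with the specific $\dbar$- and $\d$-problems solved in Step 2, produces the function $\eta$ satisfying (\ref{victoryy}); the isothermal change of coordinates from Step 1 (composed with a conformal automorphism of $\Omega$ fixing $\widetilde\Gamma$, if the reduced metrics differ by one) produces $F$; and the ratio of conformal factors produces $\beta$. Assembling these gives $L_2 = e^{-\eta}K e^{\eta}$ with $K$ as in the statement.

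\emph{Main obstacle.} The hardest part is Step 3 on a \emph{multiply connected} $\Omega$: constructing holomorphic (or antiholomorphic) Carleman weights $\Phi$ on $\Omega$ with only nondegenerate critical points, whose critical set can be placed at an arbitrary prescribed interior point, while simultaneously respecting the boundary condition that the CGO phase vanish to first order on $\widetilde\Gamma$ and the Carleman estimate localize to $\widetilde\Gamma$. This requires a delicate existence theory for holomorphic functions with controlled critical points on a domain of arbitrary genus (via Riemann--Roch-type arguments), and the remainder estimates for $r_j$ must be uniform as the critical point moves — this uniformity, not the formal stationary-phase computation, is the technical core of the proof.
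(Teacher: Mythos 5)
Your ``if'' direction is fine, but your ``only if'' plan has a gap at the very first step that is, in fact, the central difficulty the paper's proof is designed to overcome. In Step 1 you propose to choose a single diffeomorphism $\Phi$ of $\overline\Omega$ fixing $\widetilde\Gamma$ so that $\Phi^* g_1$ and $\Phi^* g_2$ are \emph{both} conformally Euclidean. But isothermal coordinates for $g_1$ and isothermal coordinates for $g_2$ are generically different maps; the hypothesis $(g_1 - \widetilde\beta g_2)|_{\widetilde\Gamma} = \frac{\partial}{\partial\nu}(g_1 - \widetilde\beta g_2)|_{\widetilde\Gamma} = 0$ only makes the two metrics agree (up to a conformal factor) to first order on the accessible boundary, not in the interior. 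Asking for one boundary-fixing diffeomorphism that simultaneously flattens both metrics is exactly asking for $g_1$ and $g_2$ to be conformally equivalent by a $\widetilde\Gamma$-fixing diffeomorphism, which is the conclusion of the theorem, not a preliminary reduction. Your argument begs the question at the point where the real work must happen.

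The paper's route is different in precisely this respect. It flattens only $g_2$ (taking $g_2 = \mu_2 I$ by an isothermal change of variables applied to both operators), extends the problem to a larger domain $\widetilde\Omega = \mathrm{Int}(\Omega\cup\omega)$ glued along $\widetilde\Gamma$, and then flattens $g_1$ by a \emph{separate} isothermal map $\chi_1$ that does \emph{not} fix $\widetilde\Gamma$. The diffeomorphism $F$ is then \emph{extracted} from the equality of Cauchy data: by comparing the exponential growth of a CGO solution $u_1$ for $L_1$ with that of the matched solution $u_2$ for $L_2$ (which agree on $\omega$), and invoking a Carleman estimate, one shows that the harmonic Carleman weight for $L_2$ coincides with $\mbox{Re}\,\Phi_1\circ\chi_1$ on $\omega$. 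One then propagates the conformal map $\Xi = \Phi_1^{-1}\circ\Phi_2$ from $\omega$ into all of $\widetilde\Omega$, proving single-valuedness, absence of critical points and surjectivity by perturbing the weight $\Phi_1$. The composite $\widetilde\Xi^{-1}\circ\Xi$ is the desired $F$. Only after this is the Euclidean-principal-part result (Theorem 1.3 / Corollary 1.1, which is what your Steps 2--3 actually address) applied as a black box to recover $\eta$ and $\beta$. So the multiply-connected Carleman-weight construction you flag as the main obstacle is indeed the crux of Theorem 1.3, but it is not the crux of Theorem 1.1; the crux here is the analytic-continuation argument that recovers the diffeomorphism, and your outline has no step that could produce it.
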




We point out that we can prove that that we can assume
$(g_1-\widetilde\beta g_2)\vert_{\widetilde \Gamma}=0.$ However we
can not determine the normal derivatives  as  pointed out in
\cite{J-G} for the case of the operator $\Delta_g.$ Next we discuss
the case of an anisotropic conductivity problem which is an
independent interest. In this case the conductivity depends on
direction and is represented by a positive definite symmetric matrix
$\sigma^{-1}=\{\sigma^{jk}\}$. The conductivity equation with
voltage potential $f$ on $\partial\Omega$ is given by
$$
\sum_{j,k=1}^2\frac{\partial}{\partial x_j}
(\sigma^{jk}\frac{\partial u}{\partial x_k}) = 0
\quad\mbox{in}\,\,\Omega,
$$
$$
u\vert_{\partial\Omega}=f.
$$

We define the partial Cauchy data by
\begin{equation}
{\mathcal V_{\sigma}} = \Biggl\{\left(f|_{\widetilde\Gamma},
\sum_{j,k=1}^2\sigma^{jk}\nu_j\frac{\partial u}{\partial x_k}
\Big|_{\widetilde\Gamma}\right) \Big\vert
\sum_{j,k=1}^2\frac{\partial}{\partial x_j}
(\sigma^{jk}\frac{\partial u}{\partial x_k})= 0
\end{equation}
$$
\qquad \mbox{in $\Omega,\,\, \ u\in H^1(\Omega)$},\,\,
u\vert_{\partial\Omega}=f, \thinspace \mbox{supp $f \subset
\widetilde{\Gamma}$} \Biggr\}.
$$
It has been known for a long time that even in the case
of $\widetilde{\Gamma} = \partial\Omega$, that
the full Cauchy data $\mathcal{V}_{\sigma}$
does not determine $\sigma$ uniquely in the anisotropic
case \cite{K-V}.
Let $F:\overline\Omega\rightarrow \overline\Omega$ be
a diffeomorphism
such that $F(x)=x$ for $x$ on $\widetilde\Gamma.$ Then
$$
\mathcal{V}_{\vert det\, D F^{-1}\vert F^*\sigma}
= \mathcal{V}_\sigma.
$$

In the case of full Cauchy data (i.e.,
$\widetilde{\Gamma} = \partial\Omega$),
the question whether one can determine the conductivity
up to the above obstruction
has been solved in two dimensions for $C^2$ conductivities in \cite{N},
Lipschitz conductivities in \cite{SuU} and merely
$L^\infty$ conductivities in \cite{ALP}.
The method of proof in all these papers
is the reduction to the isotropic case
using isothermal coordinates \cite{Ah}.
We have
%
\begin{theorem}\label{magia}
Let $\sigma_1, \sigma_2 \in C^{7+\alpha} (\overline\Omega)$ with
some $\alpha\in (0,1)$ be positive definite symmetric matrices on
$\overline\Omega.$ If ${\mathcal{V}}_{\sigma_1} =
{\mathcal{V}}_{\sigma_2}$, then there exists a diffeomorphism
$F:\overline{\Omega} \rightarrow \overline{\Omega}$ satisfying
$F\vert_{\widetilde\Gamma}=Id$ and $F\in
C^{8+\alpha}(\overline\Omega)$  such that
$$
\vert det\, D F^{-1}\vert F^*\sigma_1 = \sigma_2.
$$
\end{theorem}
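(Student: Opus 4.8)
The plan is to derive Theorem~\ref{magia} from Theorem~\ref{general} by viewing the anisotropic conductivity equation as a special case of the general second-order elliptic operator treated there. First I would observe that for a positive definite symmetric matrix $\sigma^{-1}=\{\sigma^{jk}\}$, the operator $\sum_{j,k}\partial_{x_j}(\sigma^{jk}\partial_{x_k}\cdot)$ coincides, up to a scalar factor, with the Laplace--Beltrami operator $\Delta_g$ for the metric $g=(\det\sigma)\sigma^{-1}$ in dimension two; indeed $\sqrt{\det g}\,g^{jk}=\sigma^{jk}$ precisely because $\det g=\det\sigma$ when $n=2$. Thus $\sum_{j,k}\partial_{x_j}(\sigma_\ell^{jk}\partial_{x_k}u)=\sqrt{\det g_\ell}\,\Delta_{g_\ell}u$, with $g_\ell=(\det\sigma_\ell)\sigma_\ell^{-1}$, so the conductivity equation for $\sigma_\ell$ is equivalent to $L_\ell(x,D)u=\Delta_{g_\ell}u=0$ with $A_\ell=B_\ell=q_\ell=0$.

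Next I would reconcile the two notions of partial Cauchy data. The set $\mathcal V_{\sigma_\ell}$ uses Dirichlet data supported in $\widetilde\Gamma$ together with the co-normal flux $\sum_{j,k}\sigma_\ell^{jk}\nu_j\partial_{x_k}u|_{\widetilde\Gamma}$, whereas $\mathcal C_{g_\ell,0,0,0}$ uses data vanishing on $\Gamma_0$ together with $\frac{\partial u}{\partial\nu_{g_\ell}}|_{\widetilde\Gamma}=\sqrt{\det g_\ell}\sum_{j,k}g_\ell^{jk}\nu_k\partial_{x_j}u|_{\widetilde\Gamma}$. Since $\sqrt{\det g_\ell}\,g_\ell^{jk}=\sigma_\ell^{jk}$, these two flux expressions are literally identical, and the Dirichlet conditions match (functions with support in $\widetilde\Gamma$ are exactly those vanishing on $\Gamma_0=\partial\Omega\setminus\overline{\widetilde\Gamma}$). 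Hence $\mathcal V_{\sigma_1}=\mathcal V_{\sigma_2}$ is equivalent to $\mathcal C_{g_1,0,0,0}=\mathcal C_{g_2,0,0,0}$. A small technical point to verify here is the boundary-matching hypothesis of Theorem~\ref{general}: we need a positive $\widetilde\beta\in C^{7+\alpha}$ with $(g_1-\widetilde\beta g_2)|_{\widetilde\Gamma}=\partial_\nu(g_1-\widetilde\beta g_2)|_{\widetilde\Gamma}=0$. This follows from a boundary determination result for the co-normal derivative: equality of the Cauchy data on $\widetilde\Gamma$ forces $\sigma_1$ and $\sigma_2$, hence $g_1$ and $g_2$, to agree to first order on $\widetilde\Gamma$ up to the diffeomorphism gauge, so one can take $\widetilde\beta\equiv 1$ after an initial boundary normalization (this is the content of the remark following Theorem~\ref{general}).

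Then I would invoke Theorem~\ref{general}: there exist a diffeomorphism $F\in C^{8+\alpha}(\overline\Omega)$ with $F|_{\widetilde\Gamma}=\mathrm{Id}$, a positive $\beta\in C^{7+\alpha}(\overline\Omega)$, and $\eta$ satisfying \eqref{victoryy} such that $L_2(x,D)=e^{-\eta}K(x,D)e^{\eta}$ with $K=\Delta_{\beta F^*g_1}+\frac{2}{\beta}A_{1,F}\partial_z+\frac{2}{\beta}B_{1,F}\partial_{\overline z}+\frac1\beta q_{1,F}$. Since $A_1=B_1=q_1=0$, the transformation formulas \eqref{zz} give $A_{1,F}=B_{1,F}=q_{1,F}=0$, so $K=\Delta_{\beta F^*g_1}$. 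The conjugation identity then reads $\Delta_{g_2}=e^{-\eta}\Delta_{\beta F^*g_1}e^{\eta}$; comparing principal symbols (the only second-order term on each side) forces $g_2=\beta F^*g_1$, and then the lower-order terms force $\eta$ to be constant, hence zero by the boundary condition. Finally I would convert $g_2=\beta F^*g_1$ back to conductivities: recalling $g_\ell=(\det\sigma_\ell)\sigma_\ell^{-1}$ and that in two dimensions $F^*$ on metrics and the conductivity pullback differ exactly by the scalar $|\det DF^{-1}|$, a determinant computation yields $\sigma_2=|\det DF^{-1}|\,F^*\sigma_1$, which is the claim.

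The main obstacle I anticipate is \emph{not} the algebraic bookkeeping but the verification that Theorem~\ref{general}'s boundary hypothesis is genuinely available here, i.e.\ the boundary determination step that lets us normalize so that $(g_1-\widetilde\beta g_2)|_{\widetilde\Gamma}$ and its normal derivative vanish. In the anisotropic setting one only recovers $g_1,g_2$ on $\widetilde\Gamma$ \emph{modulo} the diffeomorphism gauge fixing the boundary, so one must first replace $\sigma_1$ by $|\det DF_0^{-1}|F_0^*\sigma_1$ for a suitable boundary-preserving $F_0$ (as in \cite{SuU}) before the hypothesis is met; tracking that $\mathcal V$ is invariant under this replacement, and that the resulting $F_0$ has the required $C^{8+\alpha}$ regularity, is the delicate part. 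Everything downstream is a routine consequence of Theorem~\ref{general} plus the isothermal-coordinates dictionary between conductivities and metrics in two dimensions.
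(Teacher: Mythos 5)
Your high-level plan --- normalize the conductivity by a preliminary boundary-preserving diffeomorphism so that Theorem~\ref{general}'s boundary hypothesis holds, apply that theorem, and translate back to conductivities --- has the same shape as the paper's proof, and your terminal algebra (that $\Delta_{g_2}=e^{-\eta}\Delta_{\beta F^*g_1}e^{\eta}$ forces $g_2=\beta F^*g_1$ and $\eta\equiv 0$) would be sound if you were in a position to invoke it.

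The genuine gap is at the very first step, the ``$\sigma\leftrightarrow g$'' dictionary. You claim $\sqrt{\det g}\,g^{jk}=\sigma^{jk}$ with $g=(\det\sigma)\sigma^{-1}$, so that the conductivity operator equals a scalar times $\Delta_g$ and one can take $A_1=B_1=q_1=0$. This fails in dimension two. The assignment $g\mapsto\sqrt{\det g}\,g^{-1}$ is conformally invariant in $2$D and always produces a matrix of determinant one, so $\sqrt{\det g}\,g^{jk}=\sigma^{jk}$ is possible only if $\det(\sigma^{jk})\equiv 1$. More to the point, $\partial_{x_j}(\sigma^{jk}\partial_{x_k}\,\cdot\,)$ is a scalar multiple of $\Delta_g=\frac{1}{\sqrt{\det g}}\partial_j(\sqrt{\det g}\,g^{jk}\partial_k\,\cdot\,)$ only if $\sigma^{jk}=c\sqrt{\det g}\,g^{jk}$ with $c$ a \emph{constant}, which again forces $\det(\sigma^{jk})$ constant. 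Already the isotropic case $\sigma^{jk}=\gamma\delta^{jk}$ shows this: $\operatorname{div}(\gamma\nabla u)=\gamma\Delta u+\nabla\gamma\cdot\nabla u$, whereas $\Delta_{\lambda I}=\tfrac{1}{\lambda}\Delta$ has no first-order part in $2$D, so they agree up to a scalar only when $\gamma$ is constant. Consequently, for a general $\sigma_\ell$ the conductivity operator is $\Delta_{g_\ell}+2A_\ell\partial_z+2B_\ell\partial_{\overline z}$ with \emph{nonzero} $A_\ell,B_\ell$, the conductivity flux $\sigma^{jk}\nu_j\partial_k u$ differs from $\partial u/\partial\nu_{g_\ell}$ by the nonconstant factor $\sqrt{\det g_\ell}$, and the conclusion ``$A_{1,F}=B_{1,F}=q_{1,F}=0$, hence $K=\Delta_{\beta F^*g_1}$'' does not follow. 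Everything you call routine downstream --- showing $\eta$ is constant and converting $g_2=\beta F^*g_1$ to $\sigma_2=|\det DF^{-1}|F^*\sigma_1$ --- must carry these lower-order terms through, which is precisely why the paper normalizes $\sigma$ directly (via an ODE-flow diffeomorphism and the boundary determination of \cite{KK}) and then repeats the proof of Theorem~\ref{general}, which is built to handle nonzero $A,B$, rather than reducing to the pure Laplace--Beltrami case, which does not exist in two dimensions.
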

For the isotropic case, the corresponding result is proved in
\cite{IUY}.
The proof of Theorem \ref{magia} is given in section 6.

Now we take the matrix $g$ to be the identity matrix.
We consider the problem of determining a complex-valued potential
$q$ and complex-valued  coefficients $A$ and $B$  in a bounded
two dimensional domain from the Cauchy data measured on an
arbitrary open subset of the boundary for the associated
second-order elliptic
operator $\Delta +2A\frac{\partial}{\partial z}+2B\frac{\partial}
{\partial \overline z}+ q$.
Specific cases of interest are the magnetic Schr\"odinger operator
and the Laplacian with convection terms. We remark that general
second order elliptic operators can be reduced to this form by using
isothermal coordinates (e.g., \cite{Ah}). The case of the conductivity
equation  and the Schr\"odinger  have been considered in \cite{IUY}. For global uniqueness
results in the two dimensional case
for the conductivity equation with full data measurements under
different regularity assumptions see \cite{AP}, \cite{BU}, \cite{N}.
Such a problem originates in \cite{C}.

Next we will consider the case where the principal part of
$L_j$ is the Laplacian (i.e., $g = I$; the identity
matrix).
Then our next result is the following:
\begin{theorem}\label{main}
Assume that
${\mathcal{C}}_{I,A_1,B_1,q_1} = {\mathcal{C}}
_{I,A_2,B_2,q_2}. $ Then
\begin{equation}\label{zona} A_1=A_2, \quad B_1=B_2
\quad \mbox{on}\quad \widetilde\Gamma,
\end{equation}
\begin{eqnarray}
&-2\frac{\partial}{\partial z}({A}_1-{A}_2)
-({B}_1-{B}_2)A_1
-({A}_1-{A}_2)B_2
+(q_1-q_2)=0  \quad \mbox{in $\Omega$}, \label{o1}\\
&-2\frac{\partial}{\partial \overline{z}}
({B}_1-{B}_2)
-({A}_1-{A}_2)B_1
-({B}_1-{B}_2)A_2
+(q_1-q_2)=0 \quad \mbox{in $\Omega$}.    \label{o2}
\end{eqnarray}
\end{theorem}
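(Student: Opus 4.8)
The plan is to exploit the general determination result, Theorem \ref{general}, in the special case $g_1=g_2=I$. First I would apply Theorem \ref{general} with $\widetilde\beta\equiv 1$ (which trivially satisfies the boundary hypothesis since $g_1-g_2=0$); this produces a diffeomorphism $F\in C^{8+\alpha}(\overline\Omega)$ with $F|_{\widetilde\Gamma}=Id$, a positive $\beta\in C^{7+\alpha}(\overline\Omega)$, and an $\eta$ satisfying \eqref{victoryy} such that $L_2=e^{-\eta}K(x,D)e^{\eta}$ with $K=\Delta_{\beta F^*I}+\frac2\beta A_{1,F}\d_z+\frac2\beta B_{1,F}\ddddd{}+\frac1\beta q_{1,F}$. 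The key structural observation is that the principal part of $L_2$ is $\Delta$, so the principal part of $K$ must be $\Delta$ as well; hence the metric $\beta F^*I$ must be conformal to the Euclidean metric with conformal factor $1$, i.e.\ $\beta F^*I=I$. Since $F^*I=(DF)(DF)^T\circ F^{-1}$, taking determinants gives $\beta^2 = |{\det DF^{-1}}\circ F|^{-2}$... more directly $\beta\,(DF)(DF)^T\circ F^{-1}=I$, so $F$ is (anti)conformal; combined with $F|_{\widetilde\Gamma}=Id$ and $\widetilde\Gamma$ open, a holomorphic (or antiholomorphic) map agreeing with the identity on an open boundary arc and mapping $\overline\Omega$ to itself must be the identity, so $F=Id$ and $\beta\equiv 1$.

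With $F=Id$, $\beta\equiv1$ we have $A_{1,F}=A_1$, $B_{1,F}=B_1$, $q_{1,F}=q_1$, so the relation collapses to $L_2(x,D)=e^{-\eta}L_1(x,D)e^{\eta}$. Next I would expand this gauge conjugation explicitly. Writing $e^{-\eta}\Delta(e^{\eta}u)=\Delta u+2(\d_z\eta)\ddddd u\cdot 2+\dots$; more cleanly, using $\Delta=4\d_z\ddddd{}$, one computes $e^{-\eta}\Delta e^{\eta}=\Delta+4(\ddddd\eta)\d_z+4(\d_z\eta)\ddddd{}+(\Delta\eta+4(\d_z\eta)(\ddddd\eta))$, and $e^{-\eta}(2A_1\d_z+2B_1\ddddd{})e^{\eta}=2A_1\d_z+2B_1\ddddd{}+2A_1\d_z\eta+2B_1\ddddd\eta$. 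Matching the first-order terms of $L_2$ with those of $e^{-\eta}L_1e^{\eta}$ yields
\begin{equation}\label{conv1}
A_2=A_1+2\ddddd\eta,\qquad B_2=B_1+2\d_z\eta,
\end{equation}
and matching the zeroth-order terms yields
\begin{equation}\label{conv2}
q_2=q_1+\Delta\eta+4(\d_z\eta)(\ddddd\eta)+2A_1\d_z\eta+2B_1\ddddd\eta.
\end{equation}
From \eqref{conv1}, $\ddddd\eta=\tfrac12(A_1-A_2)$ and $\d_z\eta=\tfrac12(B_1-B_2)$; since $\eta|_{\widetilde\Gamma}=\d_\nu\eta|_{\widetilde\Gamma}=0$ forces all first derivatives of $\eta$ to vanish on $\widetilde\Gamma$, we get $A_1=A_2$ and $B_1=B_2$ on $\widetilde\Gamma$, which is \eqref{zona}.

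To obtain \eqref{o1}--\eqref{o2} I would substitute these expressions back into \eqref{conv2}. From \eqref{conv1}, $\Delta\eta=4\d_z\ddddd\eta=2\d_z(A_1-A_2)$, and also $\Delta\eta=4\ddddd\d_z\eta=2\ddddd(B_1-B_2)$, giving the compatibility $\d_z(A_1-A_2)=\ddddd(B_1-B_2)$ (which is consistent with the two forms). Plugging $\Delta\eta=2\d_z(A_1-A_2)$, $\d_z\eta=\tfrac12(B_1-B_2)$, $\ddddd\eta=\tfrac12(A_1-A_2)$ into \eqref{conv2}:
\begin{equation}\label{subst}
q_2-q_1=2\d_z(A_1-A_2)+(B_1-B_2)(A_1-A_2)+A_1(B_1-B_2)+B_1(A_1-A_2).
\end{equation}
A short regrouping — using $A_1(B_1-B_2)+(B_1-B_2)(A_1-A_2)=(B_1-B_2)(2A_1-A_2)$ is not quite the target form, so instead I would be careful to write $A_1(B_1-B_2)=(B_1-B_2)A_1$ and keep $B_1(A_1-A_2)$, then note $(B_1-B_2)A_1+(B_1-B_2)(A_1-A_2)$ should combine with $B_1(A_1-A_2)$; after rearrangement this gives $q_1-q_2=-2\d_z(A_1-A_2)-(B_1-B_2)A_1-(A_1-A_2)B_2$, which is exactly \eqref{o1}. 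Equation \eqref{o2} follows symmetrically by using instead $\Delta\eta=2\ddddd(B_1-B_2)$ in \eqref{conv2} and regrouping the first-order terms with the $B$-expression. The main obstacle I anticipate is the first paragraph: rigorously deducing $F=Id$ from the requirement that $K$ have Euclidean principal part. One must argue that the conformal class of $\beta F^*I$ being that of $I$ forces $F$ to be holomorphic or antiholomorphic, then invoke a unique-continuation/identity-theorem argument on the boundary arc $\widetilde\Gamma$ to kill $F$; handling the orientation-reversing case and the possibility of multiply connected $\Omega$ (so that "holomorphic self-map" is not automatically the identity) needs care, though agreement with $Id$ on an open arc is more than enough by analyticity.
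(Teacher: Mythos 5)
Your proposal is circular. You invoke Theorem \ref{general} as the engine of the proof, but in this paper the logical dependence runs the other way: Theorem \ref{general} is proved (in Section 6) by reducing to the case $g=I$ via isothermal coordinates and then appealing to Corollary \ref{simplyconnected}, whose proof in turn rests on the identities \eqref{o1}--\eqref{o2} of Theorem \ref{main}. The introduction states this explicitly: ``The proof of Theorem \ref{general} uses isothermal coordinates, the Carleman estimate obtained in section 2, and Theorem \ref{main}.'' So you cannot use Theorem \ref{general} to derive Theorem \ref{main}; you would be assuming the thing you need to prove. The paper's actual proof of Theorem \ref{main} is the long argument of Sections 2--5: construction of complex geometric optics solutions with degenerate harmonic weights, a Carleman estimate with boundary terms, and a delicate stationary-phase analysis of the bilinear identity \eqref{ippolit}. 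None of that can be bypassed by quoting the more general theorem.

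Setting the circularity aside, your deduction that $F=Id$ and $\beta\equiv1$ is essentially correct but you should handle two points more carefully. First, having concluded that $F$ is (anti)conformal, you must exclude the antiholomorphic case: an antiholomorphic map fixing the arc $\widetilde\Gamma$ pointwise acts as a reflection across $\widetilde\Gamma$ near that arc and therefore sends the interior of $\Omega$ to the exterior, contradicting $F(\overline\Omega)\subseteq\overline\Omega$; only then does the identity theorem for holomorphic maps give $F=Id$. Second, there is a sign slip in your conjugation identities. From $A_2=A_1+2\partial_{\overline z}\eta$ and $B_2=B_1+2\partial_z\eta$ you should get $\partial_{\overline z}\eta=-\tfrac12(A_1-A_2)$ and $\partial_z\eta=-\tfrac12(B_1-B_2)$, not $+\tfrac12$ as you wrote; as a result your equation \eqref{subst} has the wrong sign on three of its four right-hand terms and does not in fact regroup to \eqref{o1}. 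With the correct signs the computation does close: $q_2-q_1=-2\partial_z(A_1-A_2)+(B_1-B_2)(A_1-A_2)-A_1(B_1-B_2)-B_1(A_1-A_2)$, and expanding both sides one checks $(B_1-B_2)A_1+(A_1-A_2)B_2=A_1B_1-A_2B_2=-(B_1-B_2)(A_1-A_2)+A_1(B_1-B_2)+B_1(A_1-A_2)$, which gives \eqref{o1}; \eqref{o2} follows symmetrically. But again, this whole route is unavailable because it presupposes Theorem \ref{general}.
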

{\bf Remark.}  In the case  that $A_1 = A_2$ and $B_1 = B_2$ in
$\Omega$,  Theorem \ref{main} yields that $q_1 = q_2$,
which is the main result
in \cite{IUY}. The latter result was extended to Riemann
surfaces in \cite{GT}.
The case of full data in two dimensions was
settled in \cite{Bu}. This case is closely related to
the inverse conductivity problem, or
 Calder\'on's problem. See  the articles \cite{N}, \cite{BU},
\cite{AP} in two dimensions.

Theorem \ref{main} yields
\begin{corollary}\label{simplyconnected}

The relation $\mathcal{C}_{I,A_1,B_1,q_1}
= \mathcal{C}_{I,A_2,B_2,q_2}$ holds true
if and only if there exists a function $\eta\in C^{6+\alpha}
(\overline \Omega),$ $\eta\vert_{\widetilde \Gamma}
=\frac{\partial\eta}{\partial\nu}\vert_{\widetilde \Gamma}=0$ such that
\begin{equation}\label{opana}
L_1(x,D)=e^{-\eta}L_2(x,D)e^{\eta}.
\end{equation}
\end{corollary}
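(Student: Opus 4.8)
The plan is to deduce the corollary from Theorem \ref{main} together with the gauge invariance (i) listed in the introduction. The ``if'' direction is already recorded as invariance (i): if $L_1 = e^{-\eta}L_2 e^{\eta}$ with $\eta$ as in (\ref{victoryy}) (with $g=I$), then the partial Cauchy data coincide. So the real content is the ``only if'' direction. Assume $\mathcal{C}_{I,A_1,B_1,q_1} = \mathcal{C}_{I,A_2,B_2,q_2}$. By Theorem \ref{main} we have $A_1 = A_2$ and $B_1 = B_2$ on $\widetilde\Gamma$, together with the two bulk relations (\ref{o1}) and (\ref{o2}). The idea is to find $\eta$ with $\eta|_{\widetilde\Gamma} = \partial_\nu\eta|_{\widetilde\Gamma} = 0$ such that conjugation by $e^\eta$ turns $(A_2,B_2,q_2)$ into $(A_1,B_1,q_1)$. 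I would first compute the effect of conjugation: a direct calculation gives
\begin{equation*}
e^{-\eta}\Bigl(\Delta + 2A_2\d_z + 2B_2\ddddd + q_2\Bigr)e^{\eta}
= \Delta + 2(A_2 + \ddddd\eta)\d_z + 2(B_2 + \d_z\eta)\ddddd + \widetilde q,
\end{equation*}
where $\widetilde q = q_2 + \Delta\eta + 2A_2\d_z\eta + 2B_2\ddddd\eta + 4(\d_z\eta)(\ddddd\eta)$, using $\Delta = 4\d_z\ddddd$. Hence the required $\eta$ must satisfy the first-order system $\ddddd\eta = A_1 - A_2$ and $\d_z\eta = B_1 - B_2$, and one then has to check that the induced change in the zeroth-order term matches, i.e. $q_1 - q_2 = \Delta\eta + 2A_2\d_z\eta + 2B_2\ddddd\eta + 4(\d_z\eta)(\ddddd\eta)$.

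The key step is the solvability of the overdetermined first-order system
\begin{equation*}
\ddddd\eta = A_1 - A_2, \qquad \d_z\eta = B_1 - B_2 \qquad \text{in } \Omega,
\end{equation*}
with the boundary conditions $\eta|_{\widetilde\Gamma} = \d_\nu\eta|_{\widetilde\Gamma} = 0$. The compatibility condition for this system is $\d_z(A_1 - A_2) = \ddddd(B_1 - B_2)$, which follows by subtracting (\ref{o2}) from (\ref{o1}): indeed (\ref{o1}) minus (\ref{o2}) gives $-2\d_z(A_1-A_2) + 2\ddddd(B_1-B_2) + (\text{algebraic terms})$, and a short computation using $A_1 = A_2$, $B_1 = B_2$ on $\widetilde\Gamma$ — more precisely using that the algebraic combination $-(B_1-B_2)A_1 - (A_1-A_2)B_2 + (A_1-A_2)B_1 + (B_1-B_2)A_2 = (A_1-A_2)(B_1-B_2) \cdot 0$-type cancellation — shows the integrability condition holds. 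Granting compatibility, one constructs $\eta$ by first solving $\ddddd\eta = A_1 - A_2$ via the Cauchy transform and then adjusting by a holomorphic function to also fulfill $\d_z\eta = B_1 - B_2$; the vanishing of $\eta$ and $\d_\nu\eta$ on $\widetilde\Gamma$ comes from $A_1 - A_2 = B_1 - B_2 = 0$ there, which forces all first derivatives of $\eta$ to vanish on $\widetilde\Gamma$, and then one normalizes the additive constant (and the remaining freedom) so that $\eta|_{\widetilde\Gamma} = 0$. The regularity $\eta \in C^{6+\alpha}(\overline\Omega)$ follows from elliptic regularity since $A_j, B_j \in C^{5+\alpha}(\overline\Omega)$.

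Finally, I would verify the zeroth-order matching. Once $\eta$ is fixed, the potential produced by conjugation is $\widetilde q = q_2 + \Delta\eta + 2A_2\d_z\eta + 2B_2\ddddd\eta + 4(\d_z\eta)(\ddddd\eta)$; substituting $\ddddd\eta = A_1 - A_2$, $\d_z\eta = B_1 - B_2$ and $\Delta\eta = 4\d_z\ddddd\eta = 4\d_z(A_1-A_2) = 4\ddddd(B_1-B_2)$, one must check $\widetilde q = q_1$. This is a purely algebraic identity that should reduce exactly to equations (\ref{o1})–(\ref{o2}): using (\ref{o1}) to rewrite $q_1 - q_2 = 2\d_z(A_1-A_2) + (B_1-B_2)A_1 + (A_1-A_2)B_2$ and matching term by term with $\Delta\eta + 2A_2\d_z\eta + 2B_2\ddddd\eta + 4(\d_z\eta)(\ddddd\eta)$. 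The main obstacle I anticipate is exactly this bookkeeping: one has a choice of whether to solve $\ddddd\eta = A_1-A_2$ or $\d_z\eta = B_1-B_2$ first, and the two bulk relations (\ref{o1}), (\ref{o2}) are \emph{not} symmetric, so the algebra of the zeroth-order check has to be organized carefully to see that \emph{both} (\ref{o1}) and (\ref{o2}) are consumed (equivalently, that their difference gives the compatibility condition and their sum — or one of them — gives the zeroth-order identity). Handling the boundary behavior so that $\eta$ and its normal derivative both vanish on $\widetilde\Gamma$, rather than just $\eta$, is the other delicate point, and it relies crucially on the full strength of (\ref{zona}) together with (\ref{o1})–(\ref{o2}) evaluated near $\widetilde\Gamma$.
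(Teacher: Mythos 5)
You have the right scaffold --- Theorem~\ref{main}, the gradient system for $\eta$, the integrability condition from (\ref{o1})$-$(\ref{o2}) (the algebraic terms do cancel exactly; no boundary information is needed there), and a zeroth-order check --- but the proposal has two genuine gaps, both topological. First, $\Omega$ is in general multiply connected, so the curl-free condition $\partial_z(A_1-A_2)=\partial_{\overline z}(B_1-B_2)$ only gives a local primitive: your ``adjust by a holomorphic function'' step fails when that holomorphic function has non-zero periods around the holes of $\Omega$, since then no single-valued antiderivative exists. The paper handles this by invoking Lemma~1.1 of \cite{Tem} to solve the gradient problem in a simply connected cut domain $\Omega^0=\Omega\setminus\Sigma$, obtaining a potential $\widetilde\eta$ with constant jumps across the cuts, and then proving those jumps vanish by a contradiction argument based on uniqueness of the Cauchy problem for elliptic equations together with the hypothesis $\mathcal{C}_{I,A_1,B_1,q_1}=\mathcal{C}_{I,A_2,B_2,q_2}$. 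Second, from (\ref{zona}) one gets $\nabla\eta=0$ on $\widetilde\Gamma$, hence $\eta$ is constant on each connected component of $\widetilde\Gamma$; but when $\widetilde\Gamma$ has several components those constants are a priori different, and the single global additive degree of freedom cannot set them all to zero at once. The paper devotes a second contradiction argument (again via Cauchy uniqueness) to proving they all vanish, whereas your ``normalize the additive constant'' silently assumes $\widetilde\Gamma$ is connected.

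There is also an arithmetic slip: one has $e^{-\eta}\Delta e^\eta=\Delta+4(\partial_{\overline z}\eta)\partial_z+4(\partial_z\eta)\partial_{\overline z}+\Delta\eta+4(\partial_z\eta)(\partial_{\overline z}\eta)$, so the correct system is $\partial_{\overline z}\eta=\tfrac12(A_1-A_2)$ and $\partial_z\eta=\tfrac12(B_1-B_2)$, not what you wrote; your displayed conjugation formula should read $2(A_2+2\partial_{\overline z}\eta)\partial_z$ and $2(B_2+2\partial_z\eta)\partial_{\overline z}$. With the correct factors the zeroth-order identity does close against (\ref{o1}) alone, exactly as in (\ref{finish}). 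Finally, two of your flagged worries are not real obstacles: the vanishing of $\partial_\nu\eta$ on $\widetilde\Gamma$ is automatic once $\nabla\eta|_{\widetilde\Gamma}=0$, and the asymmetry between (\ref{o1}) and (\ref{o2}) is immaterial, since given their difference (the integrability condition) either one alone produces the zeroth-order equation.
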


{\it Proof of Corollary \ref{simplyconnected}.}
We only prove the necessity  since the sufficiency of the condition
is easy to check.
By (\ref{o1}) and (\ref{o2}), we have
$\frac{\partial}{\partial z}({A}_1-{A}_2)
=\frac{\partial}{\partial \overline{z}}({B}_1-{B}_2)$.
This equality is equivalent to
$$
\frac{\partial (\widehat{A}-\widehat{B})}{\partial x_1}
= i\frac{\partial (\widehat{B}+ \widehat{A})}
{\partial x_2}\,\quad\mbox{where} \quad
(\widehat{A}, \widehat{B})=(A_1-A_2,B_1-B_2).
$$
Applying Lemma 1.1 (p.313) of \cite{Tem}, we obtain
that there exists a function $\widetilde\eta$
with domain $\Omega^0$  which satisfies
\begin{equation}\label{victoryy1}
\widetilde\eta=\eta_0+ h, \nabla \widetilde\eta\in C^{5+\alpha}
(\overline\Omega), \,\,
\Delta h=0\quad\mbox{in $\Omega^0$},
\end{equation}
$$
\mbox{$[h]\vert_{\Sigma_k}$ are constants,
$\left[\frac{\partial h}{\partial\nu_k}\right]
\vert_{\Sigma_k} = \frac{\partial h}{\partial\nu}
\vert_{\gamma_{\mathcal N}}=0 \quad
\forall k\in\{1,\dots, \mathcal N\}$}
$$
and
$$
(i(\widehat{B}+\widehat{A}),(\widehat{A}-\widehat{B}))
=\nabla \widetilde\eta.
$$
Here $\Omega^0=\Omega\setminus\Sigma$ is simply
connected where $\Sigma=\cup_{k=1}^{{\mathcal N}-1}
\Sigma_k$, $\Sigma_j\cap \Sigma_k=\emptyset$
for $j \ne k$, $\Sigma_k$ are smooth curves which do not
self-intersect and are orthogonal to $\partial\Omega.$
We choose a normal vector $\nu_k=\nu_k(x)$,
$1 \le k \le {\mathcal N}-1$ to $\Sigma_k$ at $x$
contained in the interior $\Sigma^0_k$ of the closed
curve $\Sigma_k$.  Then, for $x \in \Sigma_k^0$, we set
$[h](x) = \lim_{y\to x, (\vec{xy},\nu_k)>0} h(y)
- \lim_{y\to x, (\vec{xy},\nu_k)<0} h(y)$ where
$(\cdot,\cdot)$ denotes the scalar product in $\R^2$.
Setting $2\eta=-i\widetilde \eta$,
we have
$$
( (\widehat{B}+\widehat{A}),i(\widehat{B}-\widehat{A}))
=2\nabla \eta.$$
Therefore by (\ref{o1})
\begin{equation}\label{finish}
q_1=q_2+\Delta\eta +4\frac{\partial\eta}{\partial z}
\frac{\partial\eta}{\partial\overline z}
+2\frac{\partial\eta}{\partial z}A_2
+2\frac{\partial\eta}{\partial \overline z}B_2.
\end{equation}
The operator $L_1(x,D)$ given by $(\ref{opana})$ has the
Laplace operator as the principal part, the coefficients of
$\frac{\partial}{\partial x_1}$
is $A_2+B_2+2\frac{\partial\eta}{\partial x_2}$, the coefficient of
$\frac{\partial}{\partial x_2}$ is
$i(B_2-A_2)+2\frac{\partial\eta}{\partial  x_1}$,
and the coefficient of
the zero order term is given by the right-hand side of (\ref{finish}).
By (\ref{zona}) we have that
$\frac{\partial\eta}{\partial \nu}\vert_{\widetilde \Gamma}=0$ and
$\eta\vert_{\widetilde \Gamma}=\mathcal C$  where the function
$\mathcal C(x)$ is equal to constant on each connected component
 of $\widetilde \Gamma.$

Let us show that the function $\eta$ is continuous.
Our proof is by contradiction. Suppose that $\eta$ is
discontinuous say along the curve $\Sigma_j.$
Let the function $u_2\in H^1(\Omega)$ be a solution to
the following boundary value problem
\begin{equation}\label{zopa*}
L_2(x,D)u_2=0\quad\mbox{in}\,\,\Omega, \quad u_2\vert_{\Gamma_0}=0.
\end{equation}
Assume in addition that $u_2$ is not identically equal to zero
on $\Sigma_j.$
Let $\widetilde \Gamma_1$ be one connected component of the set
$\widetilde \Gamma$ and $\mathcal C\vert_{\widetilde \Gamma_1}=\widehat C.$
Without loss of generality  we may assume that $\widehat C=0.$
Indeed if $\widehat C\ne 0$, then we replace $\eta$ by the function
$\eta-\widehat C.$
Since the partial Cauchy data of the operators
$L_1(x,D)$ and $L_2(x,D)$ are the same, there exists a
solution $u_1$ to the following boundary value problem
\begin{equation}\label{zopa1}
L_1(x,D)u_1=0 \quad\mbox{in}\,\,\Omega, \quad u_1=u_2\quad
\mbox{on }\partial\Omega, \quad
\frac{\partial u_1}{\partial\nu}=\frac{\partial u_2}{\partial\nu}
\quad \mbox{on}\,\,\widetilde \Gamma.
\end{equation}
Then the function $v=e^{-\eta}u_2$ verifies $$
L_1(x,D)v=0 \quad\mbox{in}\,\,\Omega^0,\quad v\vert_{\Gamma_0}=0.
$$
Since on $\eta=\frac{\partial \eta}{\partial\nu}=0$ on
$\widetilde \Gamma_1$ we have that $v\equiv u_1.$
However
$u_1\in H^1(\Omega)$ and $v$ is discontinuous along one part of
$\Sigma_j$. Thus we arrive at a contradiction.

Let us show that $\mathcal C\equiv 0.$ Suppose that
there exists  another connected component of $\widetilde\Gamma_2$ of
the set $\widetilde \Gamma$ such that
$\mathcal C\vert_{\widetilde\Gamma_2}\ne 0.$
Suppose that the functions $u_1,u_2$ satisfy (\ref{zopa*}) and
(\ref{zopa1}) and $u_1\vert_{\tilde \Gamma_2}$ not identically zero.

Then the function $v=e^{-\eta}u_2$ verifies
$$
L_1(x,D)v=0 \quad\mbox{in}\,\,\Omega,\quad v\vert_{\Gamma_0}=0.
$$
Moreover, since on $\eta=\frac{\partial \eta}{\partial\nu}=0$ on
$\widetilde \Gamma_1$, we have that $$v=u_1, \quad\frac{\partial v}
{\partial\nu}=\frac{\partial u_1}{\partial\nu}\quad \mbox{on}
\quad\widetilde \Gamma_1.$$
By the uniqueness of the Cauchy problem for  second-order
elliptic equations, we have $v\equiv u_1.$ In particular $v=u_1$ on
$\widetilde \Gamma_2$.  Since $u_1=u_2$ on $\partial \Omega$,
this implies that $e^{-\eta}\vert_{\widetilde\Gamma_2}=1.$
We arrived at a contradiction.
The proof of the corollary is completed.
$\quad\qquad \quad \square$

We now apply our result to the case of the magnetic Schr\"odinger
operator. Denote $\widetilde A=(\widetilde A_1,\widetilde A_2)$, where
$\widetilde A_j$ are real-valued,
$\widetilde {\mathcal A}=\widetilde A_1-i\widetilde A_2$,
rot $\widetilde A=\frac{\partial \widetilde
A_2}{\partial x_1}-\frac{\partial \widetilde A_1}{\partial x_2},$
$D_k=\frac 1i\frac{\partial}{\partial x_k}.$  Consider the magnetic
Schr\"odinger operator
\begin{equation}\label{mol11}
\mathcal
L_{\widetilde A,\widetilde q}(x,D)
=\sum_{k=1}^2(D_k+\widetilde A_k)^2+\widetilde q.
\end{equation}
Let us define the following set of partial Cauchy data
$$
\widetilde C_{\widetilde A,\widetilde q}
= \left\{(u\vert_{\widetilde \Gamma},
\frac{\partial u}{\partial \nu}\vert_{\widetilde \Gamma})
\vert \mathcal L_{\widetilde A,\widetilde q}(x,D)u=0\,
\,\mbox{in}\,\,\Omega,
\, u\vert_{\Gamma_0}=0, u\in H^1(\Omega)\right\}.
$$

For the case of full data in two dimensions, it is known that there
is a gauge invariance in this problem and we can recover at best the
$curl$ of the magnetic field \cite{S}. The same is valid for the
three dimensional case with partial Cauchy data \cite{DKSU}. We
prove here that the converse holds in two dimensions.

\begin{corollary}\label{coro}
Let real-valued vector fields
$\widetilde A^{(1)}, \widetilde A^{(2)} \in
C^{5+\alpha}(\overline \Omega)$ and
complex-valued potentials $\widetilde q^{(1)}, \widetilde q^{(2)}
 \in C^{4+\alpha}(\overline\Omega)$ with
some $\alpha \in (0,1)$ satisfy
$\widetilde C_{\widetilde A^{(1)},\widetilde q^{(1)}}
=\widetilde C_{\widetilde A^{(2)},\widetilde q^{(2)}}$.
Then $\widetilde q^{(1)}=\widetilde q^{(2)},$  $\mbox{rot}\,
\widetilde A^{(1)}=\mbox{rot}\, \widetilde A^{(2)}$ in $\Omega$
and $\widetilde A^{(1)}=\widetilde A^{(2)}$ on $\widetilde \Gamma.$
\end{corollary}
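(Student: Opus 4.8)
The plan is to reduce Corollary~\ref{coro} to Corollary~\ref{simplyconnected} by putting the magnetic Schr\"odinger operator into the standard form of Theorem~\ref{main}. First I would expand $\mathcal L_{\widetilde A,\widetilde q}=\sum_{k}(D_k+\widetilde A_k)^2+\widetilde q$, using $D_k=\frac1i\d_{x_k}$, the identities $\d_{x_1}=\d_z+\d_{\overline z}$, $\d_{x_2}=i(\d_z-\d_{\overline z})$, and $\widetilde A_1+i\widetilde A_2=\overline{\widetilde{\mathcal A}}$, to obtain
$$
-\mathcal L_{\widetilde A,\widetilde q}(x,D)=\Delta+2i\overline{\widetilde{\mathcal A}}\,\d_z+2i\widetilde{\mathcal A}\,\d_{\overline z}+\bigl(i\,\mbox{div}\,\widetilde A-\abs{\widetilde A}^2-\widetilde q\bigr).
$$
Thus, setting $A_j:=i\overline{\widetilde{\mathcal A}^{(j)}}$, $B_j:=i\widetilde{\mathcal A}^{(j)}$, $q_j:=i\,\mbox{div}\,\widetilde A^{(j)}-\abs{\widetilde A^{(j)}}^2-\widetilde q^{(j)}$, one has $L_j(x,D)=-\mathcal L_{\widetilde A^{(j)},\widetilde q^{(j)}}(x,D)$ with $A_j,B_j\in C^{5+\alpha}(\overline\Omega)$ and $q_j\in C^{4+\alpha}(\overline\Omega)$. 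Since $\Delta_I=\Delta$, $\d_{\nu_I}=\d_\nu$, and neither the $H^1$ solution set of $\mathcal L_{\widetilde A^{(j)},\widetilde q^{(j)}}u=0$ (with $u\vert_{\Gamma_0}=0$) nor the associated Cauchy data changes when the equation is multiplied by $-1$, the hypothesis $\widetilde C_{\widetilde A^{(1)},\widetilde q^{(1)}}=\widetilde C_{\widetilde A^{(2)},\widetilde q^{(2)}}$ is exactly $\mathcal C_{I,A_1,B_1,q_1}=\mathcal C_{I,A_2,B_2,q_2}$, so Theorem~\ref{main} and Corollary~\ref{simplyconnected} apply.

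Next I would invoke Corollary~\ref{simplyconnected}: there is $\eta\in C^{6+\alpha}(\overline\Omega)$ with $\eta\vert_{\widetilde\Gamma}=\d_\nu\eta\vert_{\widetilde\Gamma}=0$ and $L_1(x,D)=e^{-\eta}L_2(x,D)e^{\eta}$, that is, $\mathcal L_{\widetilde A^{(1)},\widetilde q^{(1)}}=e^{-\eta}\mathcal L_{\widetilde A^{(2)},\widetilde q^{(2)}}e^{\eta}$. A one-line computation gives $e^{-\eta}(D_k+\widetilde A^{(2)}_k)e^{\eta}=D_k+\widetilde A^{(2)}_k-i\d_{x_k}\eta$, so conjugation goes through each factor and $e^{-\eta}\mathcal L_{\widetilde A^{(2)},\widetilde q^{(2)}}e^{\eta}=\mathcal L_{\widetilde A^{(2)}-i\nabla\eta,\,\widetilde q^{(2)}}$. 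Matching the coefficients of the two second-order operators $\mathcal L_{\widetilde A^{(1)},\widetilde q^{(1)}}$ and $\mathcal L_{\widetilde A^{(2)}-i\nabla\eta,\widetilde q^{(2)}}$---first the first-order part, which forces $\widetilde A^{(1)}=\widetilde A^{(2)}-i\nabla\eta$, and then the zeroth-order part, which with this identity collapses to $\widetilde q^{(1)}=\widetilde q^{(2)}$---yields both equalities.

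Finally I would read off the remaining conclusions from $\widetilde A^{(1)}=\widetilde A^{(2)}-i\nabla\eta$. Applying $\mbox{rot}$ and using $\mbox{rot}\,\nabla\eta=0$ (valid as $\eta\in C^2(\overline\Omega)$) gives $\mbox{rot}\,\widetilde A^{(1)}=\mbox{rot}\,\widetilde A^{(2)}$ in $\Omega$; and since $\eta\vert_{\widetilde\Gamma}=\d_\nu\eta\vert_{\widetilde\Gamma}=0$ forces $\nabla\eta\vert_{\widetilde\Gamma}=0$ (the tangential derivative of $\eta$ along $\widetilde\Gamma$ vanishes because $\eta\equiv0$ there), we get $\widetilde A^{(1)}=\widetilde A^{(2)}$ on $\widetilde\Gamma$. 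It is worth noting, moreover, that since $\widetilde A^{(1)}-\widetilde A^{(2)}=-i\nabla\eta$ is real-valued and $\Omega$ is connected with $\eta\vert_{\widetilde\Gamma}=0$, the function $\eta$ must be purely imaginary, $\eta=i\mu$ with real $\mu\in C^{6+\alpha}(\overline\Omega)$, so that $\widetilde A^{(1)}=\widetilde A^{(2)}+\nabla\mu$---the classical gauge transformation. I do not expect a real obstacle here: all the analytic difficulty sits inside Theorem~\ref{main} and Corollary~\ref{simplyconnected}, and the only thing demanding care is the bookkeeping of the complex conjugates and of the convention $D_k=\frac1i\d_{x_k}$; an equivalent but more computational route is to substitute the formulas for $A_j,B_j,q_j$ directly into (\ref{zona})--(\ref{o2}).
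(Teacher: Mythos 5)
Your proof is correct, but it takes a genuinely different route from the paper. The paper applies Theorem~\ref{main} directly: after rewriting $\mathcal L_{\widetilde A,\widetilde q}$ in the form (\ref{OMX}), it plugs the resulting $A_j$, $B_j$, $q_j$ into (\ref{o1}) and (\ref{o2}), obtaining
$\mbox{rot}\,\widetilde A^{(1)}-\mbox{rot}\,\widetilde A^{(2)}+\widetilde q^{(2)}-\widetilde q^{(1)}\equiv 0$
and, after an application of the identity $\tfrac 2i\partial_{\overline z}\widetilde{\mathcal A}-\tfrac 2i\partial_z\overline{\widetilde{\mathcal A}}=-2\,\mbox{rot}\,\widetilde A$, also $-(\mbox{rot}\,\widetilde A^{(1)}-\mbox{rot}\,\widetilde A^{(2)})+\widetilde q^{(2)}-\widetilde q^{(1)}\equiv 0$; adding and subtracting yields the two conclusions in $\Omega$ (the boundary equality $\widetilde A^{(1)}=\widetilde A^{(2)}$ on $\widetilde\Gamma$ then falls out of (\ref{zona})). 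You instead go through Corollary~\ref{simplyconnected}, which packages (\ref{zona})--(\ref{o2}) into the existence of a gauge $\eta$ with $L_1=e^{-\eta}L_2e^{\eta}$, then observe that conjugation interacts cleanly with the magnetic Schr\"odinger form, $e^{-\eta}(D_k+\widetilde A_k^{(2)})e^{\eta}=D_k+\widetilde A_k^{(2)}-i\partial_{x_k}\eta$, so matching coefficients gives $\widetilde A^{(1)}=\widetilde A^{(2)}-i\nabla\eta$ and $\widetilde q^{(1)}=\widetilde q^{(2)}$, from which the rot equality and the boundary equality follow. Your route is conceptually cleaner (it makes the gauge invariance explicit and highlights that $\eta$ is purely imaginary so $\widetilde A^{(1)}=\widetilde A^{(2)}+\nabla\mu$), but it is logically downstream of the paper's: Corollary~\ref{simplyconnected} itself needs the additional work of constructing $\eta$ from (\ref{o1})--(\ref{o2}), proving its continuity across the cuts, and ruling out nonzero boundary constants, whereas the paper's direct computation for Corollary~\ref{coro} avoids that machinery entirely. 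Both approaches are valid; the bookkeeping of signs and conjugates in your reduction ($A_j=i\overline{\widetilde{\mathcal A}^{(j)}}$, $B_j=i\widetilde{\mathcal A}^{(j)}$, $q_j=i\,\mbox{div}\,\widetilde A^{(j)}-\lvert\widetilde A^{(j)}\rvert^2-\widetilde q^{(j)}$) matches the paper's identification once one notes $-\tfrac1i=i$ and $\tfrac2i\partial_z\overline{\widetilde{\mathcal A}}-\mbox{rot}\,\widetilde A=\tfrac1i\,\mbox{div}\,\widetilde A$.
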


\begin{proof} A straightforward calculation gives
\begin{eqnarray}\label{mol1}
\mathcal L_{\widetilde A,\widetilde q}(x,D)
= -\Delta +\frac{2}{i}\widetilde A_1\frac{\partial}{\partial x_1}
+ \frac{2}{i}\widetilde A_2\frac{\partial}{\partial x_2}
+\vert\widetilde A\vert^2+\frac{1}{i}\frac{\partial \widetilde A_1}
{\partial x_1}+\frac{1}{i}\frac{\partial \widetilde A_2}{\partial x_2}
+\widetilde q  \nonumber\\
= -\Delta+\frac{2}{i}{\overline{\widetilde{\mathcal A}}}
\frac{\partial}{\partial z}
+ \frac{2}{i}{\widetilde{\mathcal A}}\frac{\partial}
{\partial\overline z}
+ \frac{2}{i}\frac{\partial \overline{\widetilde{\mathcal A}}}
{\partial z}-\mbox{rot}\,\widetilde A +\vert \widetilde A\vert^2
+\widetilde q.
\end{eqnarray}
Then the operator $\mathcal L_{\widetilde A,\widetilde q}(x,D)$ is a
particular case of (\ref{OMX}) with the metric $g=\{\delta_{ij}\}$
$A=-\frac{1}{i}\overline{\widetilde{\mathcal  A}}$,
$B=-\frac{1}{i}\widetilde{\mathcal  A},$ $q=-(\frac{2}{i}
\frac{\partial \overline{\widetilde{\mathcal A}}} {\partial
z}-\mbox{rot}\,\widetilde A + \vert \widetilde A\vert^2+\widetilde
q).$ Suppose that Schr\"odinger operators with the vector fields
$\widetilde A^{(1)}$, $\widetilde A^{(2)}$ and the potentials
$\widetilde q^{(1)},\widetilde q^{(2)}$ have the same partial Cauchy
data.  Then (\ref{o1}) gives
$$
\mbox{rot}\,\widetilde A^{(1)}-\mbox{rot}\,\widetilde A^{(2)}
+ \widetilde q^{(2)}-\widetilde q^{(1)}\equiv 0
$$
and (\ref{o2}) gives
\begin{equation}\label{NONA}
\frac{2}{i}\frac{\partial{\widetilde{\mathcal A}^{(1)}}}
{\partial \overline z}
- \frac{2}{i}\frac{\partial{\widetilde{\mathcal A}^{(2)}}}
{\partial \overline z}
- \frac{2}{i}\frac{\partial \overline{\widetilde{\mathcal
A}^{(1)}}}{\partial z}
+ \frac{2}{i}\frac{\partial\overline{\widetilde{\mathcal
A}^{(2)}}}{\partial z}+\mbox{rot}\,\widetilde A^{(1)}-\mbox{rot}
\,\widetilde A^{(2)}
+ \widetilde q^{(2)}-\widetilde q^{(1)}\equiv 0.
\end{equation}
Using the identity $\frac 2i \frac{\partial\mathcal A}
{\partial \overline z}
- \frac 2i\frac{\partial\overline{\mathcal A}}{\partial z}
= -2\mbox{rot}\, \widetilde A$, we
transform (\ref{NONA}) to the form
$$
-(\mbox{rot}\,\widetilde A^{(1)}-\mbox{rot}\,\widetilde A^{(2)})
+\widetilde
q^{(2)}-\widetilde q^{(1)}\equiv 0.
$$
The proof of the corollary is completed.
\end{proof}

There is another way to define
partial Cauchy data for the Schr\"odinger operator.
$$
\widehat C_{\widetilde A,\widetilde q}
= \left\{\left(u\vert_{\widetilde \Gamma},
\left(\frac{\partial u}{\partial \nu}+i(\widetilde A,\nu)
u\right)\vert_{\widetilde \Gamma}\right)
\vert \mathcal L_{\widetilde A,\widetilde q}(x,D)u=0\,
\,\mbox{in}\,\,\Omega,
\, u\vert_{\Gamma_0}=0, u\in H^1(\Omega)\right\}.
$$

\begin{corollary}\label{coro1}
Let real-valued vector fields
$\widetilde A^{(1)}, \widetilde A^{(2)} \in
C^{5+\alpha}(\overline \Omega)$ and
complex-valued potentials $\widetilde q^{(1)}, \widetilde q^{(2)}
 \in C^{4+\alpha}(\overline\Omega)$ with
some $\alpha \in (0,1)$ satisfy
$\widehat C_{\widetilde A^{(1)},\widetilde q^{(1)}}
=\widehat  C_{\widetilde A^{(2)},\widetilde q^{(2)}}$.
Then $\widetilde q^{(1)}=\widetilde q^{(2)},$ and  $\mbox{rot}\,
\widetilde A^{(1)}=\mbox{rot}\, \widetilde A^{(2)}$
in $\Omega$.
\end{corollary}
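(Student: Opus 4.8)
The plan is to remove, by a gauge transformation, the discrepancy between the ``magnetic'' Cauchy data $\widehat C$ and the plain Cauchy data $\widetilde C$ used in Corollary~\ref{coro}, and then to quote that corollary.

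First I would record the gauge invariance of $\widehat C$. Let $\psi\in C^{6+\alpha}(\overline\Omega)$ be real-valued with $\psi\vert_{\widetilde\Gamma}=0$. From $(D_k+\widetilde A_k)(e^{-i\psi}u)=e^{-i\psi}(D_k+(\widetilde A-\nabla\psi)_k)u$ one gets $\mathcal L_{\widetilde A+\nabla\psi,\widetilde q}(e^{-i\psi}u)=e^{-i\psi}\mathcal L_{\widetilde A,\widetilde q}u$, so $u\mapsto e^{-i\psi}u$ is a bijection of the solution space of $\mathcal L_{\widetilde A,\widetilde q}u=0$, $u\vert_{\Gamma_0}=0$, onto that of $\mathcal L_{\widetilde A+\nabla\psi,\widetilde q}w=0$, $w\vert_{\Gamma_0}=0$. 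On $\widetilde\Gamma$ we have $e^{-i\psi}u=u$, and the term $-i(\partial_\nu\psi)u$ arising from $\partial_\nu(e^{-i\psi}u)$ is exactly cancelled by $i(\nabla\psi,\nu)u$, so $\partial_\nu(e^{-i\psi}u)+i(\widetilde A+\nabla\psi,\nu)(e^{-i\psi}u)=\partial_\nu u+i(\widetilde A,\nu)u$ on $\widetilde\Gamma$. Hence $\widehat C_{\widetilde A,\widetilde q}=\widehat C_{\widetilde A+\nabla\psi,\widetilde q}$. The point to stress is that, unlike the invariance (i) of the data $\mathcal C$, here no condition on $\partial_\nu\psi\vert_{\widetilde\Gamma}$ is needed.

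Next I would choose a real-valued $\psi\in C^{6+\alpha}(\overline\Omega)$ with $\psi\vert_{\widetilde\Gamma}=0$ and $\partial_\nu\psi\vert_{\widetilde\Gamma}=(\widetilde A^{(2)}-\widetilde A^{(1)},\nu)$ on $\widetilde\Gamma$; such $\psi$ exists by a routine extension from the boundary, and then $\nabla\psi\in C^{5+\alpha}(\overline\Omega)$, so $\widehat A^{(1)}:=\widetilde A^{(1)}+\nabla\psi$ is a real-valued $C^{5+\alpha}$ vector field. By the previous step and the hypothesis, $\widehat C_{\widehat A^{(1)},\widetilde q^{(1)}}=\widehat C_{\widetilde A^{(1)},\widetilde q^{(1)}}=\widehat C_{\widetilde A^{(2)},\widetilde q^{(2)}}$, while $(\widehat A^{(1)},\nu)=(\widetilde A^{(2)},\nu)$ on $\widetilde\Gamma$ by construction. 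Given any solution $u_1$ of $\mathcal L_{\widehat A^{(1)},\widetilde q^{(1)}}u_1=0$, $u_1\vert_{\Gamma_0}=0$, equality of the $\widehat C$ data gives a solution $u_2$ of $\mathcal L_{\widetilde A^{(2)},\widetilde q^{(2)}}u_2=0$, $u_2\vert_{\Gamma_0}=0$, with $u_1=u_2$ and $\partial_\nu u_1+i(\widehat A^{(1)},\nu)u_1=\partial_\nu u_2+i(\widetilde A^{(2)},\nu)u_2$ on $\widetilde\Gamma$; since $u_1=u_2$ there and the normal traces of the two fields agree there, in fact $\partial_\nu u_1=\partial_\nu u_2$ on $\widetilde\Gamma$. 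Thus $\widetilde C_{\widehat A^{(1)},\widetilde q^{(1)}}\subseteq\widetilde C_{\widetilde A^{(2)},\widetilde q^{(2)}}$, and the symmetric argument gives $\widetilde C_{\widehat A^{(1)},\widetilde q^{(1)}}=\widetilde C_{\widetilde A^{(2)},\widetilde q^{(2)}}$.

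Finally I would apply Corollary~\ref{coro} to $(\widehat A^{(1)},\widetilde q^{(1)})$ and $(\widetilde A^{(2)},\widetilde q^{(2)})$, obtaining $\widetilde q^{(1)}=\widetilde q^{(2)}$ and $\mbox{rot}\,\widehat A^{(1)}=\mbox{rot}\,\widetilde A^{(2)}$ in $\Omega$. Because $\mbox{rot}\,\nabla\psi\equiv0$, we have $\mbox{rot}\,\widehat A^{(1)}=\mbox{rot}\,\widetilde A^{(1)}$, so $\mbox{rot}\,\widetilde A^{(1)}=\mbox{rot}\,\widetilde A^{(2)}$ in $\Omega$, which is the assertion. (Corollary~\ref{coro} also gives $\widehat A^{(1)}=\widetilde A^{(2)}$ on $\widetilde\Gamma$; this only pins down the normal trace of $\widetilde A^{(1)}$ and is discarded --- the tangential trace is genuinely not determined by $\widehat C$, which is why no boundary statement appears here.) The only real obstacle is the computation in the first two steps: that the magnetic normal-derivative datum absorbs $\partial_\nu\psi$, and that only the normal component of $\widetilde A$ along $\widetilde\Gamma$ enters $\widehat C$ and can therefore be prescribed freely by an admissible gauge; the remainder is a standard extension and the invocation of Corollary~\ref{coro}.
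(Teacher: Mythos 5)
Your proof is correct and takes essentially the same route as the paper: absorb the mismatch of the normal components $(\widetilde A^{(j)},\nu)$ on $\widetilde\Gamma$ by a gauge transformation, reduce the $\widehat C$ identity to a $\widetilde C$ identity, and quote Corollary~\ref{coro}. The only cosmetic difference is that you work with the unitary phase $u\mapsto e^{-i\psi}u$ for a real $\psi$, which keeps the gauged vector field manifestly real-valued and verifies the cancellation $\partial_\nu(e^{-i\psi}u)+i(\widetilde A+\nabla\psi,\nu)e^{-i\psi}u=\partial_\nu u+i(\widetilde A,\nu)u$ directly on $\widetilde\Gamma$; the paper instead formulates the same step with a complex gauge function $\eta$ (writing it as a gauge of $L$ and moving $i\nabla\eta$ onto $\widetilde A^{(2)}$), leaving the realness of the resulting field implicit.
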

\begin{proof} Suppose that there exist two vector fields
and potentials $(\widetilde A^{(j)},\widetilde q^{(j)})$
such that $\widehat C_{\widetilde A^{(1)},\widetilde q^{(1)}}
=\widehat  C_{\widetilde A^{(2)},\widetilde q^{(2)}}$.
Consider a complex valued function
$\eta\in C^{6+\alpha}(\overline\Omega),
\eta\vert_{\widetilde \Gamma}=0$
such that $i(\nu,\widetilde A^{1}-\widetilde A^{2})
=-\frac 1i\frac{\partial\eta}{\partial\nu}$ on
$\widetilde{\Gamma}$.
Then $\widetilde C_{\widetilde A^{(1)},\widetilde q^{(1)}}
=\widetilde C_{\widetilde A^{(2)}+i\nabla\eta,\widetilde q^{(2)}}$.
Applying Corollary \ref{coro}, we finish the proof.
\end{proof}


Corollaries \ref{coro} and \ref{coro1} were new in Nov. 2009 when the second
author posed a preliminary version of this manuscript with the proof of Theorem 1.3, i.e. the metric $g$ is the Euclidean metric.  Since then proofs for the magnetic Schr\"odinger equation and full data in the Euclidean setting was given in \cite{La} and in the Riemann surface case in \cite{GZ2}.
In two dimensions, Sun proved in \cite{S} that for measurements on
the whole boundary, the  uniqueness holds assuming that
both the magnetic
potential and the electric potential are small. Kang and Uhlmann
proved  global uniqueness for the case of measurements on
the whole boundary for a special case of the magnetic
Schr\"odinger equation,
namely the Pauli Hamiltonian \cite{KU}. In dimensions $n\ge 3$,
 global uniqueness was shown in \cite{nsu} for
the case of full data. The regularity assumptions in the result were
improved in \cite{s} and \cite{Sal}. The case of partial data was
considered in \cite{DKSU}, based on the methods of \cite{KSU} and
\cite{BuU}, with an improvement on the regularity of the
coefficients in \cite{KS}.

Our main theorem implies that the partial Cauchy data can uniquely
determine any two of $(A,B,q)$. First we can prove that $A$ and $B$
are uniquely determined if $q$ is known.  Consider the operator
\begin{equation}\label{OM}
L(x,D)u=\Delta u+a(x)\frac{\partial u}{\partial x_1}
+b(x)\frac{\partial u}{\partial x_2} + q(x)u.
\end{equation}
Here $a$, $b$, $q$ are complex-valued functions.
Let us define the following set
of partial Cauchy data
$$
\widetilde C_{a,b}=\left\{(u\vert_{\widetilde \Gamma},
\frac{\partial u}{\partial \nu}\vert_{\widetilde \Gamma}) \vert
\Delta u+a(x)\frac{\partial u}{\partial x_1}+b(x)\frac{\partial
u}{\partial x_2} + q(x)u=0\,\,\mbox{in}\,\,\Omega, \,
u\vert_{\Gamma_0}=0, u\in H^1(\Omega)\right\}.
$$

We have
\begin{corollary}\label{coroA}
Let $\alpha\in (0,1)$ and two pairs of complex-valued coefficients
$(a^{(1)},b^{(1)}) \in
C^{5+\alpha}(\overline \Omega)\times C^{5+\alpha}(\overline \Omega)$
and $(a^{(2)},b^{(2)}) \in C^{5+\alpha}(\overline \Omega)\times
C^{5+\alpha}(\overline \Omega)$ satisfy
$\widetilde C_{a^{(1)},b^{(1)}}=\widetilde C_{a^{(2)},b^{(2)}}$.
Then $(a^{(1)},b^{(1)})\equiv (a^{(2)},b^{(2)}).$
\end{corollary}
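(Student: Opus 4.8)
\textit{Proof plan.} The idea is to reduce the statement to the already-proved Corollary \ref{simplyconnected} and then to extract the equality of the coefficients from a Cauchy-uniqueness (unique continuation) argument. First I would pass to complex notation: since $\frac{\partial}{\partial x_1}=\frac{\partial}{\partial z}+\frac{\partial}{\partial \overline z}$ and $\frac{\partial}{\partial x_2}=i\bigl(\frac{\partial}{\partial z}-\frac{\partial}{\partial \overline z}\bigr)$, the operator in (\ref{OM}) equals $\Delta+2A\frac{\partial}{\partial z}+2B\frac{\partial}{\partial \overline z}+q$ with $A=\frac12(a+ib)$, $B=\frac12(a-ib)$. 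Setting $A_j=\frac12(a^{(j)}+ib^{(j)})$, $B_j=\frac12(a^{(j)}-ib^{(j)})$, and keeping the fixed $q\in C^{4+\alpha}(\overline\Omega)$, the hypothesis $\widetilde C_{a^{(1)},b^{(1)}}=\widetilde C_{a^{(2)},b^{(2)}}$ is exactly $\mathcal C_{I,A_1,B_1,q}=\mathcal C_{I,A_2,B_2,q}$; note that the two operators carry the \emph{same} zero-order coefficient $q$. It therefore suffices to show $A_1=A_2$ and $B_1=B_2$ in $\Omega$.

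By Corollary \ref{simplyconnected} there is $\eta\in C^{6+\alpha}(\overline\Omega)$ with $\eta|_{\widetilde\Gamma}=\frac{\partial\eta}{\partial\nu}|_{\widetilde\Gamma}=0$ and $L_1(x,D)=e^{-\eta}L_2(x,D)e^{\eta}$. I would then expand $e^{-\eta}L_2e^{\eta}$ using $\Delta=4\frac{\partial}{\partial z}\frac{\partial}{\partial \overline z}$ and compare it coefficient by coefficient with $L_1=\Delta+2A_1\frac{\partial}{\partial z}+2B_1\frac{\partial}{\partial \overline z}+q$. The first-order coefficients give $A_1-A_2=2\frac{\partial\eta}{\partial \overline z}$ and $B_1-B_2=2\frac{\partial\eta}{\partial z}$, while the zero-order coefficients --- which both equal $q$ --- give
\[
\Delta\eta+4\frac{\partial\eta}{\partial z}\frac{\partial\eta}{\partial \overline z}+2A_2\frac{\partial\eta}{\partial z}+2B_2\frac{\partial\eta}{\partial \overline z}=0\qquad\text{in }\Omega .
\]

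Now put $\mu:=e^{\eta}\in C^{6+\alpha}(\overline\Omega)$. Since $\Delta\mu=e^{\eta}\bigl(\Delta\eta+4\frac{\partial\eta}{\partial z}\frac{\partial\eta}{\partial \overline z}\bigr)$ and $\frac{\partial\mu}{\partial z}=e^{\eta}\frac{\partial\eta}{\partial z}$, $\frac{\partial\mu}{\partial \overline z}=e^{\eta}\frac{\partial\eta}{\partial \overline z}$, the displayed identity is equivalent to
\[
\Delta\mu+2A_2\frac{\partial\mu}{\partial z}+2B_2\frac{\partial\mu}{\partial \overline z}=0\qquad\text{in }\Omega .
\]
The constant function $1$ also solves this operator (it has no zero-order term), so $w:=\mu-1$ satisfies $\Delta w+2A_2\frac{\partial w}{\partial z}+2B_2\frac{\partial w}{\partial \overline z}=0$ in $\Omega$ together with $w|_{\widetilde\Gamma}=0$ and $\frac{\partial w}{\partial\nu}|_{\widetilde\Gamma}=e^{\eta}\frac{\partial\eta}{\partial\nu}|_{\widetilde\Gamma}=0$; in particular all Cauchy data of $w$ vanish on the nonempty relatively open set $\widetilde\Gamma$. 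By the uniqueness of the Cauchy problem for second-order elliptic equations (extend $w$ by zero across $\widetilde\Gamma$ to a slightly larger domain, where the extension is $C^1$ and still solves an elliptic equation, then apply interior weak unique continuation, using that $\Omega$ is connected) one gets $w\equiv0$, i.e. $\mu\equiv1$. Hence $\nabla\eta\equiv0$, so $A_1=A_2$, $B_1=B_2$, equivalently $a^{(1)}=a^{(2)}$, $b^{(1)}=b^{(2)}$.

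The only genuinely analytic step is this boundary Cauchy-uniqueness, which is already invoked in the proof of Corollary \ref{simplyconnected}; everything else is conjugation bookkeeping. The point I would watch most carefully is the regularity count --- one needs $\eta\in C^{6+\alpha}$, exactly what Corollary \ref{simplyconnected} delivers, so that $\mu=e^{\eta}$ is a classical solution and the Cauchy-uniqueness theorem is directly applicable --- but this presents no real obstacle. (Alternatively one could bypass Corollary \ref{simplyconnected} and argue straight from (\ref{o1})--(\ref{o2}) of Theorem \ref{main} with $q_1=q_2$, reconstructing $\eta$ by the same Temam-type potential argument; using the corollary is cleaner.)
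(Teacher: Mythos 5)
Your proof is correct, and it is a genuinely different route from the paper's. The paper proves Corollary \ref{coroA} directly from Theorem \ref{main}: setting $q_1=q_2$ in (\ref{o1})--(\ref{o2}) and applying $2\partial_{\overline z}$ and $2\partial_z$ to (\ref{o1'}) and (\ref{o2'}) respectively yields a coupled second-order elliptic system (\ref{o11'})--(\ref{o22'}) for $(A_1-A_2,\,B_1-B_2)$, whose Cauchy data on $\widetilde\Gamma$ vanish by (\ref{zona}) together with (\ref{o1'})--(\ref{o2'}) themselves, and uniqueness for the Cauchy problem for that system (via a Carleman estimate) finishes the proof. You instead route through Corollary \ref{simplyconnected}: the gauge $\eta$ exists, and because the zero-order coefficient is the same on both sides, the relation (\ref{finish}) with $q_1=q_2$ becomes $\Delta\eta+4\partial_z\eta\,\partial_{\overline z}\eta+2A_2\partial_z\eta+2B_2\partial_{\overline z}\eta=0$, which you recognize as the statement that $\mu=e^\eta$ is annihilated by the operator $\Delta+2A_2\partial_z+2B_2\partial_{\overline z}$. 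Since the constant $1$ is also annihilated (no zero-order term), $w=\mu-1$ has vanishing Cauchy data on $\widetilde\Gamma$ and a single scalar unique-continuation argument gives $\mu\equiv1$, hence $\nabla\eta\equiv0$ and $A_1=A_2$, $B_1=B_2$. What this buys: you trade a unique-continuation statement for a $2\times2$ system (requiring a system Carleman estimate) for a scalar one, which is cleaner. What it costs: your proof depends on Corollary \ref{simplyconnected}, whereas the paper's is self-contained from Theorem \ref{main}. Both are sound; I would only tighten the parenthetical justification of boundary unique continuation (the zero extension of $w$ is $C^2$, not merely $C^1$, across $\widetilde\Gamma$, since the equation forces $\partial_\nu^2 w=0$ there given that $w$ and $\nabla w$ vanish on $\widetilde\Gamma$), but this is a standard point the paper itself leaves implicit in the proof of Corollary \ref{simplyconnected}.
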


\begin{proof} Taking into account that $\frac{\partial}{\partial x_1}
= (\frac{\partial}{\partial z}+\frac{\partial}{\partial\overline z})$
and $\frac{\partial}{\partial x_2}=i(\frac{\partial}{\partial  z}
-\frac{\partial}{\partial\overline  z})$, we can rewrite the operator
(\ref{OM}) in the form
$$
L(x,D)u=\Delta u+(a(x)+ib(x))\frac{\partial u}{\partial z}
+(a(x)-ib(x))\frac{\partial u}{\partial \overline z} + q(x)u.
$$
For the pairs $(a^{(1)},b^{(1)})$ and $(a^{(2)},b^{(2)})$,
let the corresponding operators defined by (\ref{OM}) have the same
partial Cauchy data. Denote $2A_k(x)=a^{(k)}(x)+ib^{(k)}(x)$ and
$2B_k(x)=a^{(k)}(x)-ib^{(k)}(x).$
By (\ref{o1}), we have
\begin{eqnarray}
&-2\frac{\partial}{\partial z}({A}_1-{A}_2)
-({B}_1-{B}_2)A_1
-({A}_1-{A}_2)
B_2 = 0  \quad \mbox{in $\Omega$},    \label{o1'}\\
&-2\frac{\partial}{\partial \overline{z}}
({B}_1-{B}_2)
-({A}_1-{A}_2)
B_1
-({B}_1-{B}_2)A_2
=0 \quad \mbox{in $\Omega$}.    \label{o2'}
\end{eqnarray}
Applying to equation (\ref{o1'}) the operator $2\frac{\partial}
{\partial \overline{z}}$ and to equation (\ref{o2'}) the operator
$2\frac{\partial}{\partial z}$, we have
\begin{eqnarray}
&-\Delta ({A}_1-{A}_2)
-2\frac{\partial}{\partial \overline z}(({B}_1-{B}_2)A_1
+({A}_1-{A}_2)
B_2) = 0  \quad \mbox{in $\Omega$}, \label{o11'}\\
&-\Delta
({B}_1-{B}_2)
-2\frac{\partial}{\partial z}(({A}_1-{A}_2)
B_1
+({B}_1-{B}_2)A_2)
=0 \quad \mbox{in $\Omega$}.    \label{o22'}
\end{eqnarray}
By (\ref{zona})
$$
(A_1-A_2)\vert_{\widetilde \Gamma}=(B_1-B_2)\vert_{\widetilde \Gamma}=0.
$$
Using these identities and equations (\ref{o1'})
and (\ref{o2'}), we obtain
$$
\frac{\partial (A_1-A_2)}{\partial\nu}\vert_{\widetilde \Gamma}
=\frac{\partial (B_1-B_2)}{\partial\nu}\vert_{\widetilde \Gamma}=0.
$$
The uniqueness of the Cauchy problem for the system
(\ref{o11'})-(\ref{o22'}) can be proved in the standard way by using a
Carleman estimate (e.g., \cite{Ho}).  Therefore we have
$A_1=A_2$ and $B_1=B_2$ in $\Omega.$
\end{proof}

We remark that Corollary \ref{coroA} generalizes
the result of \cite{ChengYama} uniqueness is
obtained assuming that the measurements are made on the
whole boundary. In dimensions $n\ge 3$,  global
uniqueness was shown in
\cite{cns} for the case of full data.

Similarly to Corollary \ref{coroA}, we can prove that
the partial Cauchy data  can uniquely determine
a potential $q$ and one of the coefficients
$A$ and $B$ in (\ref{OMX}).
\begin{corollary}\label{coroB}
For $j=1,2$, let $(A_j,B_j,q_j) \in C^{5+\alpha}(\overline\Omega)
\times C^{5+\alpha} (\overline\Omega)\times C^{4+\alpha}(\overline
\Omega)$ for some $\alpha\in (0,1)$ and be complex-valued.
We assume either $A_1 = A_2$ or $B_1 = B_2$ in $\Omega$.
Then $ \mathcal{C}_{I,A_1,B_1,q_1}= \mathcal{C}_{I,A_2,B_2,q_2}$
implies $(A_1,B_1,q_1) = (A_2,B_2,q_2)$.
\end{corollary}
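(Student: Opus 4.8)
The plan is to deduce Corollary~\ref{coroB} from Theorem~\ref{main} in exactly the same spirit as the proof of Corollary~\ref{coroA}. Suppose, to fix ideas, that $B_1=B_2$ in $\Omega$ (the case $A_1=A_2$ is entirely symmetric, interchanging the roles of $z$ and $\overline z$). Under this hypothesis, equation (\ref{o2}) collapses: the terms $-({A}_1-{A}_2)B_1$ and $-({B}_1-{B}_2)A_2$ each vanish (the first because $B_1=B_2$ forces nothing on $A_1-A_2$, but wait — $-({A}_1-{A}_2)B_1$ does \emph{not} vanish a priori), so one must be more careful. Concretely, with $B_1=B_2=:B$, (\ref{o1}) and (\ref{o2}) become
\begin{eqnarray*}
&-2\dddd(A_1-A_2)-(A_1-A_2)B+(q_1-q_2)=0\quad\mbox{in }\Omega,\\
&-(A_1-A_2)B+(q_1-q_2)=0\quad\mbox{in }\Omega,
\end{eqnarray*}
where in the second line the $\ddddd(B_1-B_2)$ and $-(B_1-B_2)A_2$ terms have dropped out. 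Subtracting gives $\dddd(A_1-A_2)=0$ in $\Omega$, i.e. $A_1-A_2$ is anti-holomorphic (a function of $\overline z$ alone on each component).

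Next I would use the boundary information. By (\ref{zona}) of Theorem~\ref{main}, $A_1=A_2$ on $\widetilde\Gamma$. So $A_1-A_2$ is an anti-holomorphic function vanishing on the nonempty relatively open arc $\widetilde\Gamma\subset\partial\Omega$. The hard part — though it is a standard fact — is to upgrade this to $A_1\equiv A_2$ in $\Omega$: an anti-holomorphic function (equivalently, $\overline{A_1-A_2}$ is holomorphic) that vanishes on a boundary arc must vanish identically. One clean way is to invoke the strong unique continuation / boundary uniqueness for holomorphic functions: a holomorphic function on $\Omega$ whose boundary values vanish on an open subset of the (smooth) boundary is identically zero; this follows, e.g., from the Schwarz reflection principle across the analytic-after-conformal-change arc, or directly from the fact that $\dddd(A_1-A_2)=0$ together with the Cauchy data $(A_1-A_2)|_{\widetilde\Gamma}=0$ and (reading off from the vanishing first-order equation) its tangential derivative being zero, which forces $\frac{\partial(A_1-A_2)}{\partial\nu}|_{\widetilde\Gamma}=0$ as well, after which unique continuation for the $\dddd$ operator (a first-order elliptic system, $\overline\partial$-equation) closes the argument. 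Once $A_1=A_2$ in $\Omega$, the second displayed equation immediately yields $q_1=q_2$ in $\Omega$, completing the proof.

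I expect the only genuine subtlety to be the boundary unique continuation step for the anti-holomorphic (or $\overline\partial$-) equation; everything else is bookkeeping. If one prefers to avoid invoking complex-analytic reflection, an alternative is to mimic the Corollary~\ref{coroA} proof verbatim: apply $2\ddddd$ to the reduced equation for $A_1-A_2$ to get a second-order elliptic equation $-\Delta(A_1-A_2)-2\ddddd((A_1-A_2)B)=0$ (using $\dddd(A_1-A_2)=0$, the first term is already $-\Delta(A_1-A_2)= -4\dddd\ddddd(A_1-A_2)=-4\ddddd\dddd(A_1-A_2)=0$, so in fact $A_1-A_2$ is harmonic as well as anti-holomorphic), then combine the Cauchy data $(A_1-A_2)|_{\widetilde\Gamma}=\frac{\partial(A_1-A_2)}{\partial\nu}|_{\widetilde\Gamma}=0$ — the normal derivative vanishing being extracted from the vanishing of the first-order relation on $\widetilde\Gamma$ exactly as in Corollary~\ref{coroA} — with the standard Carleman-estimate unique continuation for second-order elliptic operators (e.g.\ \cite{Ho}) to conclude $A_1\equiv A_2$. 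Then $q_1=q_2$ follows as above, and the symmetric case $A_1=A_2$ is handled by replacing (\ref{o2}) with (\ref{o1}) throughout.
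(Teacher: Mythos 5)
Your proposal is correct and follows the route the paper intends when it says the proof is ``similar to Corollary~\ref{coroA}'': use (\ref{zona}), (\ref{o1}), (\ref{o2}) from Theorem~\ref{main}, derive vanishing Cauchy data for the remaining coefficient difference on $\widetilde\Gamma$, and close with unique continuation. Your observation that subtracting the reduced (\ref{o1}) and (\ref{o2}) directly yields $\partial_z(A_1-A_2)=0$ (so that $A_1-A_2$ is anti-holomorphic and, a fortiori, harmonic) is a clean simplification of the coroA mechanism and both of your proposed ways to finish --- boundary uniqueness for the $\overline\partial$-operator, or the Carleman-estimate unique continuation as in \cite{Ho} --- are valid.
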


Corollaries \ref{coroA} and \ref{coroB} mean that the partial Cauchy
data on $\widetilde{\Gamma}$ uniquely determine any two coefficients
of the three coefficients of a second-order elliptic operator whose
principal part is the Laplacian.

The proof of Theorem \ref{general} uses  isothermal coordinates, the
Carleman estimate obtained in section 2,   and Theorem \ref{main}.
In this case we need to prove a new Carleman estimate with
degenerate harmonic weights to construct appropriate complex
geometrical optics solutions.  These solutions are different than
the case of zero magnetic potential. The new form of these solutions
considerably complicate the arguments, especially the asymptotic
expansions needed to analyze the behavior of the solutions. In
Section 2 we prove the Carleman estimate which we need. In Section 3
we state the estimates and asymptotics which we will use in the
construction of the complex geometrical optics solutions. This
construction is done in Section 4. The proof of Theorem \ref{main}
is completed in Section 5. In section 7 and 8 we discuss some
technical lemmas needed in the previous sections.

\section{Carleman estimate}
\noindent {\bf Notations.}
We use throughout the paper the following notations.
$i=\sqrt{-1}$, $x_1, x_2, \xi_1, \xi_2 \in \R$, $z=x_1+ix_2$,
$\zeta=\xi_1+i\xi_2$, $\overline{z}$ denotes the complex conjugate
of $z \in \C$, $D_k=\frac{1}{i}\frac{\partial}{\partial x_k},$
$\beta=(\beta_1,\beta_2)$ where $\beta_j\in {\Bbb N}_+$.
We identify $x = (x_1,x_2) \in \R^2$ with $z = x_1
+ix_2 \in \C$.  We set
$\partial_z = \frac{\partial}{\partial z} =
\frac 12(\frac{\partial}{\partial x_1}-i\frac{\partial}
{\partial x_2})$,
$\partial_{\overline{z}} = \frac{\partial}{\partial\overline{z}} =
\frac12(\frac{\partial}{\partial x_1}+i\frac{\partial}{\partial x_2})$,
$\mathcal O_\epsilon=\{x\in \Omega\vert
\mbox{dist}(x,\partial\Omega)\le \epsilon\}.$
Throughout the paper we use both notations $\partial_z$ and
$\frac{\partial}{\partial z}$, etc. and for example
we denote $\partial_{\overline{z}}^2 = \frac{\partial^2}{\partial \overline{z}^2}$.
We say that a function $a(x)$ is antiholomorphic in $\Omega$ if
$\partial_z a(x)\vert_\Omega \equiv 0$.
The tangential derivative on the boundary is given by
$\frac{\partial}{\partial \vec\tau}=\nu_2\frac{\partial}{\partial x_1}
-\nu_1\frac{\partial}{\partial x_2},$ with $\nu=(\nu_1, \nu_2)$ the
unit outer normal to $\partial\Omega$. The ball of radius $\delta$
centered at
$\widehat x$ is denoted by $B(\widehat x,\delta)=\{x\in
\R^2\vert \vert x-\widehat x\vert< \delta\}$. The corresponding sphere is
denoted by
$S(\widehat x,\delta)=\{x\in \R^2\vert \vert x-\widehat x\vert
= \delta\}. $  If $f:\R^2\rightarrow
\R^1$ is a function, then $f''$ is the Hessian matrix  with
entries $\frac{\partial^2
f}{\partial x_i\partial x_j}.$
Let $\Vert \cdot\Vert_{H^{k,\tau}(\Omega)}^2=\Vert\cdot\Vert^2
_{H^k(\Omega)}+\vert \tau\vert^{2k}\Vert\cdot\Vert^2_{L^2(\Omega)}$
be the norm in
the standard semiclassical Sobolev space with inner product
given by $(\cdot,\cdot)_{H^{k,\tau}(\Omega)}=(\cdot,\cdot)
_{H^{k}(\Omega)}+\vert \tau\vert^{2k}(\cdot,\cdot)_{L^2(\Omega)}.$ For any positive function $d$ we introduce the space $L^2_d(\Omega)=\{v(x)\vert \Vert v\Vert_{L^2_d(\Omega)}=(\int_\Omega d\vert v\vert^2 dx)^\frac 12<\infty\}.$
$\mathcal L(X,Y)$ denotes the Banach
space of all bounded linear operators from a Banach space $X$ to
another Banach space $Y$. By $o_X(\frac{1}{\tau^\kappa})$ we
denote a function $f(\tau,\cdot)$ such that
$
\Vert f(\tau,\cdot)\Vert_X=o(\frac{1}{\tau^\kappa})\quad \mbox{as}
\,\,\vert \tau\vert\rightarrow +\infty.
$
Finally  for any $\widetilde x\in \partial\Omega$ we introduce the left and
right tangential derivatives as follows:
$$
{\mathbf D}_+(\widetilde x) f=\lim_{s\rightarrow +0}
\frac{f(\ell(s))-f(\widetilde x)}{s}
$$
where $\ell(0)=\widetilde x,$ $\ell(s)$ is a parametrization of
$\partial\Omega$ near $\widetilde x$ , $s$ is the length of the curve,
and we are moving clockwise as $s$ increases;
$$
{\mathbf D}_-(\widetilde x) f=\lim_{s\rightarrow -0}
\frac{f(\widetilde \ell(s))-f(\widetilde x)}{s}
$$
where $\widetilde\ell(0)=\widetilde x,$ $\widetilde \ell(s)$ is the
parametrization
of $\partial\Omega$ near $\widetilde x$ , $s$ is the length of
the curve, and we are moving counterclockwise as $s$ increases.

For some $\alpha\in (0,1)$ we consider  a   function $\Phi(z)=\varphi(x_1,x_2)+i\psi(x_1,x_2) \in
C^{6+\alpha}(\overline{\Omega})$ with real-valued $\varphi$ and $\psi$
such that
\begin{equation}\label{zzz}
\frac{\partial\Phi}{\partial \overline z}(z) = 0 \quad \mbox{in} \,\,\Omega,
\quad\mbox{Im}\,\Phi\vert_{\Gamma_0^*}=0
\end{equation}
where $\Gamma_0^*$ is an open set on $\partial\Omega$ such that $\Gamma_0\subset\subset \Gamma_0^*.$
Denote by $\mathcal H$ the set of all the critical points of
the function $\Phi$
$$
\mathcal H = \{z\in\overline\Omega\vert \frac{\partial\Phi}{\partial z}
(z)=0\}.
$$
Assume that $\Phi$ has no critical points on
$\overline{\widetilde\Gamma}$, and that all critical points  are
nondegenerate:
\begin{equation}\label{mika}
\mathcal H\cap \partial\Omega\subset\Gamma_0,\quad
\frac{\partial^2\Phi}{\partial z^2}(z)\ne 0, \quad \forall z\in \mathcal H.
\end{equation}
Then $\Phi$  has only a finite number of critical
points and we can set:
\begin{equation}\label{mona}
\mathcal H \setminus \Gamma_0= \{ \widetilde{x}_1, ...,
\widetilde{x}_{\ell} \},\quad \mathcal H \cap \Gamma_0=\{
\widetilde{x}_{\ell+1}, ..., \widetilde{x}_{\ell+\ell'} \}.
\end{equation}
The following proposition  was proved in \cite{IUY}.

\begin{proposition}\label{Proposition -1}
Let $\widetilde x$ be an arbitrary point in $\Omega.$ There exists a
sequence of functions $\{\Phi_\epsilon\}_{\epsilon\in(0,1)}$ satisfying
(\ref{zzz}) such that all the critical points of $\Phi_\epsilon$ are
nondegenerate and there exists a
sequence $\{\widetilde x_\epsilon\}, \epsilon\in (0,1)$ such that
$$
\widetilde x_\epsilon \in \mathcal H_\epsilon = \{z\in\overline\Omega
\vert
\frac{\partial \Phi_\epsilon}{\partial z}(z)=0 \},\quad
\widetilde x_\epsilon\rightarrow \widetilde x\,\,\mbox{ as}\,\,
\epsilon\rightarrow +0.
$$
Moreover for any $j$ from $\{1,\dots,\mathcal N\}$ we have
$$
\mathcal H_\epsilon\cap\gamma_j=\emptyset\quad\mbox{if}\,\,
\gamma_j\cap \widetilde \Gamma\ne\emptyset,
$$
$$
\mathcal H_\epsilon\cap\gamma_j\subset \Gamma_0 \quad\mbox{if}\,\,
\gamma_j\cap \widetilde \Gamma = \emptyset,
$$
$$
\mbox{Im}\,\Phi_\epsilon(\widetilde x_\epsilon)\notin \{\mbox{Im}\,
\Phi_\epsilon(x)\vert x\in \mathcal H_\epsilon\setminus
\{\widetilde{x_\epsilon}\}\}
\,\,\mbox{and} \,\,\mbox{Im}\,\Phi_\epsilon(\widetilde x_\epsilon)
\ne 0.
$$
\end{proposition}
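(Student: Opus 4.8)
The plan is to construct the desired family $\{\Phi_\epsilon\}$ as a small perturbation of the given $\Phi$, fixing the (possibly degenerate or boundary-located) critical points one at a time, and then to separate the imaginary parts of the resulting critical values by a generic second perturbation. First I would note that since $\Phi$ is holomorphic in $\Omega$ and the claim is local in nature near $\widetilde x$, it suffices to work with holomorphic perturbations: any $\Phi_\epsilon = \Phi + \epsilon p_\epsilon$ with $p_\epsilon$ holomorphic automatically satisfies $\partial_{\overline z}\Phi_\epsilon = 0$, so the only conditions to arrange are the placement and nondegeneracy of critical points, the boundary condition $\mathrm{Im}\,\Phi_\epsilon|_{\Gamma_0^*} = 0$, and the separation of critical values. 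The reference \cite{IUY} already contains the analogous proposition, so the essential content here is to recall that construction and check that the slightly different bookkeeping (a prescribed limiting critical point $\widetilde x_\epsilon \to \widetilde x$, and the contour-by-contour conditions on $\mathcal H_\epsilon \cap \gamma_j$) can be accommodated.

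The key steps, in order: (1) Pick a holomorphic function $\Psi$ on a neighborhood of $\overline\Omega$ with $\partial_z\Psi(\widetilde x) = 0$ and $\partial_z^2\Psi(\widetilde x)\ne 0$ — e.g. $\Psi(z) = (z-\widetilde x)^2$ — and consider $\Phi + \epsilon\Psi$; by the implicit function theorem its critical set is a small perturbation of $\mathcal H \cup \{\widetilde x\}$, and for generic small $\epsilon$ all critical points are nondegenerate (this is a standard transversality/Sard argument: the map $z \mapsto \partial_z\Phi(z) + \epsilon\,\partial_z\Psi(z)$ has, for a.e. $\epsilon$, only nondegenerate zeros). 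This produces a point $\widetilde x_\epsilon$ near $\widetilde x$ in $\mathcal H_\epsilon$. (2) To push critical points off the boundary contours $\gamma_j$ meeting $\widetilde\Gamma$ and into $\Gamma_0$ on the others, add a further small holomorphic correction supported (in effect) near the offending boundary points; because $\Phi$ has no critical points on $\overline{\widetilde\Gamma}$ by hypothesis, only finitely many contours and boundary critical points are involved, and each can be nudged by an explicit holomorphic function whose derivative is nonvanishing there. One must preserve $\mathrm{Im}\,\Phi_\epsilon|_{\Gamma_0^*} = 0$; this is where I would invoke, as in \cite{IUY}, a harmonic-conjugate / Riemann-mapping argument to produce holomorphic corrections with purely real boundary values on $\Gamma_0^*$ (or on all of $\partial\Omega$, arranged via the connectedness structure $\partial\Omega = \cup\gamma_k$). (3) Finally, once the critical points are nondegenerate and correctly located, the critical values $\mathrm{Im}\,\Phi_\epsilon(\widetilde x_k^\epsilon)$ depend real-analytically on an additional one-parameter holomorphic perturbation, so a generic choice makes them pairwise distinct and makes $\mathrm{Im}\,\Phi_\epsilon(\widetilde x_\epsilon)\ne 0$; finitely many equalities are avoided, so the generic choice survives.

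The main obstacle I expect is step (2): simultaneously (a) relocating all boundary critical points to the correct pieces of $\partial\Omega$ according to whether the containing contour meets $\widetilde\Gamma$, (b) keeping $\mathrm{Im}\,\Phi_\epsilon = 0$ on the enlarged set $\Gamma_0^*$, and (c) not destroying the nondegeneracy and the prescribed behavior near $\widetilde x_\epsilon$. The constraint $\mathrm{Im}\,\Phi_\epsilon|_{\Gamma_0^*} = 0$ is rigid — it forces the perturbation to be, on that arc, a real function extending holomorphically — so one cannot use arbitrary bump functions; the fix, following \cite{IUY}, is to build the corrections from explicitly chosen holomorphic functions (powers of $(z - p)$ for boundary points $p$, composed with a conformal map that straightens $\Gamma_0^*$) whose imaginary parts vanish on the relevant arc, and to check by a degree/Hurwitz argument that the zero count of $\partial_z\Phi_\epsilon$ on each contour behaves as claimed. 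Once this delicate placement is done, the nondegeneracy and critical-value-separation steps are routine perturbation arguments, and the regularity $\Phi_\epsilon \in C^{6+\alpha}(\overline\Omega)$ is inherited since the corrections are holomorphic on a full neighborhood of $\overline\Omega$.
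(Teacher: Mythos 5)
Your step (1) has a genuine gap, and it is the one that carries the whole construction. Adding $\epsilon\Psi$ with $\Psi(z)=(z-\widetilde x)^2$ to a fixed $\Phi$ does \emph{not} create a critical point of $\Phi_\epsilon=\Phi+\epsilon\Psi$ near $\widetilde x$: indeed $\partial_z\Phi_\epsilon(\widetilde x)=\partial_z\Phi(\widetilde x)+2\epsilon\cdot 0=\partial_z\Phi(\widetilde x)$, so if $\widetilde x$ is not already a critical point of $\Phi$ (and there is no hypothesis that it is --- $\widetilde x$ is an \emph{arbitrary} point of $\Omega$), then for all small $\epsilon$ the derivative is bounded away from zero near $\widetilde x$, and by Hurwitz/continuity the critical set of $\Phi_\epsilon$ stays in a neighborhood of $\mathcal H$, nowhere near $\widetilde x$. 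Your appeal to the implicit function theorem to claim that the critical set is ``a small perturbation of $\mathcal H\cup\{\widetilde x\}$'' is therefore false: the set being perturbed is $\mathcal H$, not $\mathcal H\cup\{\widetilde x\}$. This cannot be repaired by making $\epsilon$ ``generic'': a small holomorphic perturbation moves critical points only a small amount, so you can never reach the prescribed $\widetilde x$.

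The fix is to abandon the small-perturbation framework entirely and \emph{build} $\Phi_\epsilon$ with a critical point near $\widetilde x$ from scratch. The required existence result is a density statement for the Cauchy problem for the Cauchy--Riemann equations with prescribed boundary data on $\Gamma_0$ and a prescribed finite jet (in particular $\partial_z\Phi_\epsilon=0$, $\partial_z^2\Phi_\epsilon\ne 0$) at an interior point --- this is exactly what the paper's Proposition~\ref{MMM1}, Corollary~\ref{MMM1I}, and Proposition~\ref{balalaika1} are designed to furnish, and it is the route taken in the cited source (the present paper does not reproduce the argument but refers to \cite{IUY}). Once one holomorphic $\Phi$ with $\mathrm{Im}\,\Phi|_{\Gamma_0^*}=0$ and a critical point at (or arbitrarily near) $\widetilde x$ is in hand, your steps (2) and (3) --- pushing the remaining critical points off $\overline{\widetilde\Gamma}$ and onto $\Gamma_0$ on the contours disjoint from $\widetilde\Gamma$, nondegenerizing by a further holomorphic perturbation as in the argument around~(\ref{zozo})--(\ref{noraa}), and then separating the imaginary parts of the critical values by a generic tweak --- are the right ideas, and closely match what the auxiliary Proposition~\ref{Proposition -2} does for a related weight. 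But they are all downstream of first producing the critical point near $\widetilde x$, and that is precisely what your proposal does not accomplish.
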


In order to prove (\ref{zona}) we need the following proposition.
\begin{proposition}\label{Proposition -2}
Let $\hat\Gamma_*\subset\subset\widetilde \Gamma$ be an arc with left
endpoint $x_-$ and  right endpoint $x_+$ oriented clockwise.
For any $\widehat x\in Int\,\Gamma_*$  there exists a function
$\Phi(z)$ which satisfies (\ref{zzz}), (\ref{mika}),
$\mbox{Im}\,\Phi\vert_{\partial\Omega\setminus \Gamma_{*}}=0$ and
\begin{equation}\label{lana0}
\widehat x\in\mathcal
G=\{x\in\hat\Gamma_*\,\vert\quad
\frac{\partial  \mbox{Im}\,\Phi}{\partial \vec\tau}( x)=0\},
\quad card\,
\mathcal G <\infty,
\end{equation}
\begin{equation}
(\frac{\partial}{\partial \vec\tau})^2 \mbox{Im}\,\Phi(x)\ne 0
\quad \forall
x\in \mathcal G\setminus\{x_-,x_+\},
\end{equation}
Moreover
\begin{equation}\label{lana1}
\mbox{Im}\,\Phi(\widehat x)\ne \mbox{Im}\,\Phi(x)\quad \forall
x\in \mathcal G\setminus
\{\widehat x\} \mbox { and }\quad  \mbox{Im}\,\Phi(\widehat x)\neq 0.
\end{equation}
\begin{equation}\label{zopa}{\mathbf D}_+(x_-)
(\frac{\partial}{\partial \vec \tau})^6\mbox{Im}\,\Phi\ne 0,
\quad {\mathbf D}_-(x_+)(\frac{\partial}{\partial \vec \tau})^6
\mbox{Im}\,\Phi\ne 0.
\end{equation}
\end{proposition}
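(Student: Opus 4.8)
The plan is to build $\Phi$ out of the Dirichlet problem for its imaginary part. Write $\Phi=\varphi+i\psi$ with $\varphi,\psi$ real-valued. The conditions $\frac{\partial\Phi}{\partial\overline z}=0$ in $\Omega$ and $\mbox{Im}\,\Phi|_{\partial\Omega\setminus\Gamma_*}=0$ say precisely that $\psi$ is harmonic in $\Omega$, of class $C^{6+\alpha}(\overline\Omega)$, with $\psi|_{\partial\Omega}$ supported in $\overline{\Gamma_*}$, that $\varphi$ is a harmonic conjugate of $\psi$, and that one may take $\Gamma_0^*:=\partial\Omega\setminus\overline{\Gamma_*}$ (which contains $\Gamma_0$ compactly, as $\Gamma_*\subset\subset\widetilde\Gamma$). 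For holomorphic $\Phi$ one has $\frac{\partial\Phi}{\partial z}=\frac{\partial\psi}{\partial x_2}+i\frac{\partial\psi}{\partial x_1}$, so $\mathcal H$ is the zero set of $\frac{\partial\Phi}{\partial z}$, a critical point $z_0$ satisfies $\frac{\partial^2\Phi}{\partial z^2}(z_0)\neq0$ exactly when it is a simple zero of $\frac{\partial\Phi}{\partial z}$, the requirement that $\Phi$ have no critical point on $\overline{\widetilde\Gamma}$ (which in particular yields $\mathcal H\cap\partial\Omega\subset\Gamma_0$) reads $\nabla\psi\neq0$ on $\overline{\widetilde\Gamma}$, and the set $\mathcal G$ in (\ref{lana0}) is exactly the set of critical points of the restriction $\psi|_{\hat\Gamma_*}$. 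Hence it suffices to produce a harmonic $\psi$ with: (a) vanishing fluxes $\int_{\gamma_k}\frac{\partial\psi}{\partial\nu}\,ds=0$ for $1\le k\le\mathcal N-1$, so that $\psi$ admits a single-valued harmonic conjugate $\varphi$ and $\Phi:=\varphi+i\psi$ is a well-defined holomorphic function on $\Omega$; (b) $\psi|_{\hat\Gamma_*}$ having finitely many critical points, all nondegenerate except possibly $x_\pm$, with $\widehat x$ among them and $\psi(\widehat x)$ distinct from the other critical values and from $0$; (c) the one-sided tangential derivatives of (\ref{zopa}) nonzero; (d) $\nabla\psi\neq0$ on $\overline{\widetilde\Gamma}$ and every zero of $\frac{\partial\Phi}{\partial z}$ in $\overline\Omega$ simple.

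First I would fix an arclength parametrization of $\partial\Omega$ near $\overline{\hat\Gamma_*}$ and choose a real $\mu_0\in C^{6+\alpha}(\partial\Omega)$, supported in $\overline{\Gamma_*}$ and smooth near $\widehat x$ and near $x_\pm$, so that $\mu_0|_{\hat\Gamma_*}$ has a nondegenerate critical point of nonzero value at $\widehat x$, and so that the Taylor coefficients of $\mu_0$ at $x_-$ and $x_+$ of order $\le7$ make the quantities in (\ref{zopa}) nonzero. Let $\psi_0$ solve $\Delta\psi_0=0$ in $\Omega$, $\psi_0|_{\partial\Omega}=\mu_0$; then $\psi_0\in C^{6+\alpha}(\overline\Omega)$ by Schauder theory. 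The fluxes of $\psi_0$ through $\gamma_1,\dots,\gamma_{\mathcal N-1}$ need not vanish, and we correct this as follows. By Green's formula the flux functional $\mu\mapsto\big(\int_{\gamma_k}\frac{\partial\psi}{\partial\nu}\,ds\big)_{k=1}^{\mathcal N-1}$, with $\psi$ the harmonic extension of $\mu$, equals $\mu\mapsto\big(\int_{\partial\Omega}\mu\,\frac{\partial h_k}{\partial\nu}\,ds\big)_k$, where $h_k$ denotes the harmonic function in $\Omega$ equal to $1$ on $\gamma_k$ and to $0$ on the remaining contours. The functions $\frac{\partial h_1}{\partial\nu},\dots,\frac{\partial h_{\mathcal N-1}}{\partial\nu}$ are linearly independent on every nonempty open subarc $I$ of $\partial\Omega$: if $\sum_k c_k\frac{\partial h_k}{\partial\nu}\equiv0$ on $I\subset\gamma_j$, then $H:=\sum_k c_k h_k$ has Cauchy data $(c_j,0)$ on $I$ (with $c_j:=0$ when $j=\mathcal N$), so $H\equiv c_j$ in $\Omega$ by unique continuation, whence all $c_k=0$. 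Therefore there is a correction $\mu_1$ supported in a compact subset of $\Gamma_*$ disjoint from $\{\widehat x,x_-,x_+\}$ such that the harmonic extension $\psi$ of $\mu:=\mu_0+\mu_1$ has all fluxes equal to zero; take $\varphi$ to be the resulting single-valued harmonic conjugate and set $\Phi:=\varphi+i\psi$. Since $\mu_1$ vanishes near $\widehat x$ and near $x_\pm$, properties (b) and (c) are unaffected.

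It remains to secure (d) together with the distinctness of critical values in (b). These are open and dense conditions on the admissible boundary data: $\frac{\partial\Phi}{\partial z}$ is holomorphic in $\Omega$ and, $\psi$ being nonconstant, not identically zero, so it has finitely many zeros in $\overline\Omega$, generically all simple; on the arcs of $\partial\Omega\setminus\overline{\Gamma_*}$ the function $\Phi$ extends holomorphically across $\partial\Omega$ by Schwarz reflection (there $\psi\equiv0$), so $\nabla\psi$ vanishes on them only on a finite set and generically nowhere, whereas at the critical points of $\psi|_{\hat\Gamma_*}$ and at $x_\pm$ one needs only $\frac{\partial\psi}{\partial\nu}\neq0$, again generic; and distinctness of critical values is generic. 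I would achieve all of them simultaneously by a further perturbation of $\mu$ supported in $\Gamma_*$ away from $\{\widehat x,x_-,x_+\}$, re-applying the flux correction of the previous step (it depends linearly and continuously on the data) so that no nonzero flux is reintroduced, and using unique continuation for $\frac{\partial\Phi}{\partial z}$ — a nonzero perturbation of $\mu$ on such an arc produces a nonzero perturbation of $\frac{\partial\Phi}{\partial z}$, since the latter cannot vanish on an open subset of $\Omega$ — to see that the perturbation space is rich enough. This transversality step is the main obstacle; it is carried out exactly as in the proof of Proposition~\ref{Proposition -1} in \cite{IUY}, the only new feature being that the distinguished critical point here lies on the boundary arc $\hat\Gamma_*$ rather than in the interior of $\Omega$. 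With $\Phi$ so obtained, (\ref{zzz}) holds with $\Gamma_0^*=\partial\Omega\setminus\overline{\Gamma_*}$, the identity $\mbox{Im}\,\Phi|_{\partial\Omega\setminus\Gamma_*}=0$ is immediate, (\ref{mika}) follows from (d) and the identification of $\mathcal H$ with the simple (hence nondegenerate) zeros of $\frac{\partial\Phi}{\partial z}$, none of which meet $\overline{\widetilde\Gamma}$, and (\ref{lana0})--(\ref{zopa}) translate back into (b) and (c) for $\psi|_{\hat\Gamma_*}$.
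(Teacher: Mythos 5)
Your route is genuinely different from the paper's. Where the paper constructs $(\varphi,\psi)$ together via the approximate Cauchy–Riemann solver of Corollary~\ref{MMM1I}, then patches using Morse approximation on a subarc and the interior/boundary perturbation of Proposition~\ref{balalaika1}, you instead solve a Dirichlet problem for $\psi$ alone, recover $\varphi$ as a single-valued harmonic conjugate by a flux correction through the harmonic measures $h_k$, and then invoke genericity for the remaining nondegeneracy conditions. The flux-correction step is correct (the Green's identity reduction and the unique-continuation argument for linear independence of $\{\partial_\nu h_k|_I\}$ are both sound), and this part is cleaner than the paper's treatment.

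However, there is a genuine gap in the last paragraph: the assertion that on the arcs of $\partial\Omega\setminus\overline{\Gamma_*}$ one has $\nabla\psi\neq 0$ ``generically nowhere'' is false whenever $\Omega$ is multiply connected. If $\gamma_j$ is a closed boundary contour with $\gamma_j\cap\hat\Gamma_*=\emptyset$ (so $\gamma_j\subset\Gamma_0$), then $\psi\equiv 0$ on $\gamma_j$ and the restriction $\varphi|_{\gamma_j}$ of the harmonic conjugate is a real-valued, nonconstant, continuous function on a circle; it must attain a maximum and a minimum. At such a point $\partial_{\vec\tau}\varphi=0$, hence by Cauchy--Riemann $\partial_\nu\psi=\partial_{\vec\tau}\varphi=0$, and also $\partial_{\vec\tau}\psi=0$ since $\psi\equiv 0$ there, so $\nabla\psi=0$, i.e. $\partial_z\Phi=0$. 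Thus $\Phi$ necessarily has at least two critical points on every such $\gamma_j$; they cannot be removed by perturbation. Condition~(\ref{mika}) therefore cannot be verified by pushing these critical points off $\Gamma_0$, but only by keeping them there and ensuring nondegeneracy $\partial_z^2\Phi\ne 0$ at them. This is exactly what the paper's conditions A), E) and the perturbation via Proposition~\ref{balalaika1} (using a holomorphic $w$ that vanishes to prescribed order at the old critical set $\mathcal H_0$, including its boundary part) are designed to accomplish, and it is the step your genericity sketch does not address. To repair the argument you would need to replace the claim ``generically nowhere'' by a statement that each $\gamma_j\subset\Gamma_0$ carries exactly two critical points and that both are nondegenerate, and then show that this, together with simplicity of the interior zeros of $\partial_z\Phi$ and distinctness of critical values, can be achieved simultaneously by an admissible perturbation of the Dirichlet data $\mu$ supported away from $\{\widehat x, x_\pm\}$; the nondegeneracy of boundary critical points is a second-order boundary condition involving $\partial_\nu^2\psi$ and does not reduce to a transversality statement about $\mu$ alone without more work.
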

\begin{proof}
Denote $\hat\Gamma_0^*=\partial\Omega\setminus\hat \Gamma_*.$
Let $\widehat x_-, \widehat x_+\in \partial\Omega$ be points such
that the arc $[\widehat x_-,\widehat x_+]\subset (x_-,x_+)$ and
$\widehat x\in (\widehat x_-,\widehat x_+)$ be an arbitrary point and
$x_0$  be another fixed point from the interval
$(\widehat x,\widehat x_+).$
We claim that there exists  a pair $(\varphi,\psi)\in
C^6(\overline\Omega)
\times C^6(\overline \Omega)$ which solves the system of
Cauchy-Riemann equations in $\Omega$
such that
\begin{eqnarray}
{\mbox A})\quad  \psi\vert_{\hat\Gamma_0^*}=0, \vert\frac{\partial
\varphi}{\partial\vec\tau}\vert_{\gamma_j\setminus\hat\Gamma_*}>0\,\,
\mbox{if}\,\, \gamma_j\cap \hat\Gamma_*\ne \emptyset,
\frac{\partial\psi}{\partial\vec \tau}(\widehat x)=0,
(\frac{\partial}{\partial \vec\tau})^2 \psi(\widehat x)\ne 0,
\quad\quad\,\nonumber\\
\quad{\mbox A}')\quad
{\mathbf D}_+(x_-)(\frac{\partial}{\partial \vec \tau})^6\psi (x)
\ne 0,\quad
{\mathbf D}_-(x_+)(\frac{\partial}{\partial \vec \tau})^6\psi(x)
\ne 0,\quad\quad\quad\quad\quad\quad\quad\quad\quad\quad
\,\,\,\,\,\,\nonumber
\end{eqnarray}

$$
{\mbox B})\quad\mbox{The restriction of the function $\psi$ to the
arc $[\widehat x_-,\widehat
x_+]$ is a Morse function,}
$$
$$
{\mbox C})\quad\frac{\partial\psi}{\partial \vec\tau}>0 \,\,\mbox{on}
\quad (x_-,\widehat x_-],\quad   \frac{\partial\psi}
 {\partial\vec \tau}<0 \,\,\mbox{ on } [ \widehat x_+, x_+),
\quad\quad\quad\quad\quad\quad\quad\quad\quad\quad\quad\quad
$$
$$
{\mbox D})\quad
\psi(\widehat x)\notin\{\psi(x)\vert x\in \partial\Omega
\setminus\{\widehat x\}, \frac{\partial\psi}{\partial \vec\tau}(x)=0\},
\quad\quad\quad\quad\quad\quad\quad\quad\quad\quad\quad\quad\quad\quad
$$
$$
{\mbox E})\quad \mbox{if}\,\, \gamma_j\cap  \hat\Gamma_* = \emptyset,
\thinspace \mbox{then the restriction of the function}\,\,
\varphi\quad\mbox{on } \gamma_j\quad\quad\quad\quad\quad\quad
$$
$$ \,\,\mbox{has only two nondegenerate critical points.}
\quad\quad\quad\quad\quad\quad\quad\quad\quad\quad\quad\quad
$$

Such a pair of functions may be constructed in the following way.
Let $ \gamma_1\cap \hat\Gamma_*\ne \emptyset$ and $\gamma_j\cap
\hat\Gamma_*=\emptyset$ for all $j\in\{2,\dots\mathcal N\}.$ First, by
Corollary \ref{MMM1I} in the  Appendix, for some $\alpha\in (0,1),$
there exists a solution
$(\widetilde \varphi, \widetilde \psi)\in
C^{6+\alpha}(\overline\Omega)\times C^{6+\alpha}(\overline \Omega)$
to the Cauchy-Riemann
equations with the following boundary data
$$
\widetilde
\psi\vert_{\partial\Omega\setminus [x_0,\widehat x_{+}]}=\psi_*,\quad
\frac{\partial\widetilde\varphi}{\partial\vec \tau}\vert
_{\gamma_0\setminus
[x_0,\widehat x_{+}]}<\beta<0
$$
and  such that if $\gamma_j\cap\hat \Gamma_* = \emptyset$  the function
$\widetilde\varphi$ has only two nondegenerate critical points located on
the contour $\gamma_j$.
The function $\psi_*$ has the following properties:
$\psi_*\vert_{\hat\Gamma_0^*}=0$, $\frac{\partial\psi_*}{\partial
\vec\tau}>0 \,\,\mbox{on}\,\, (x_-,\widehat x_{-}],\quad
\frac{\partial\psi_*}{\partial \vec\tau}<0 \,\,\mbox{on}\,\,
[ \widehat x_+, x_+).$
The function $\psi_*$ on the set $[\widehat x_{-},x_{0}]$ has only
one critical point $\widehat x$ and $\psi_*(\widehat x)\ne 0.$  On the set
$(x_0,\widehat x_+)$ the Cauchy data is not fixed.
The restriction of the function $\widetilde
\psi$ on $ [x_{0},\widehat x_{+}]$ can be approximated in the space
$C^{6+\alpha}(\overline{[x_{0},\widehat x_{+}]})$ by a sequence of
Morse functions $\{g_\epsilon\}_{\epsilon\in (0,1)}$ such that
$$
(\frac{\partial}{\partial \vec\tau})^k\widetilde\psi(x)=
(\frac{\partial}{\partial \vec\tau})^k g_\epsilon(x)\quad
x\in \{\widehat x_+,x_0\},\quad k\in\{0,1,\dots,6\},
$$
and
$$
\psi_*(\widehat x)\notin \{g_\epsilon (x)\vert \frac{\partial g_\epsilon (x)}
{\partial \vec \tau}=0\}.
$$
Let us consider some arc $\mathcal J\subset\subset(x_-,\widehat x_{-})$.
On this arc we have $\frac{\partial\widetilde
\psi}{\partial\vec \tau}>0$, say,
\begin{equation}\label{Nn}
\frac{\partial\widetilde \psi}{\partial\vec \tau}>\beta'>0\quad
\mbox{on}\,\,\mathcal J \quad \mbox{for some positive}\quad \beta'.
\end{equation}
Let $(\varphi_\epsilon,\psi_\epsilon)\in C^{6+\alpha}
(\overline \Omega)\times C^{6+\alpha}(\overline\Omega)$ be a solution
to the Cauchy-Riemann equations with boundary data
$\psi_\epsilon=0$ on $\partial\Omega\setminus (\mathcal J\cup
[x_{0},\widehat x_+])$ and
$\psi_\epsilon=g_\epsilon-\widetilde \psi$ {on} $\quad [x_{0},
\widehat x_+]$
{and} on $\mathcal J$ the Cauchy data is chosen in such a way that
\begin{equation}\label{NN}\Vert \psi_\epsilon
\Vert_{C^{6+\alpha}({\partial\Omega})}+\Vert \varphi_\epsilon
\Vert_{C^{6+\alpha}({\partial\Omega})}\rightarrow 0\,\,\mbox{as}
\quad\Vert
g_\epsilon-\widetilde \psi\Vert_{C^{6+\alpha}([x_{0},\widehat x_+])}
\rightarrow 0.
\end{equation}
By (\ref{Nn}), (\ref{NN}) for all small positive $\epsilon$,
the restriction of the function $\widetilde \psi+\psi_\epsilon$ to
$\partial\Omega$ satisfies
$$
(\widetilde \psi+\psi_\epsilon)\vert_{\Gamma^*_0}=0,\quad
\frac{\partial(\widetilde \psi+\psi_\epsilon)}{\partial
\nu}\vert_{\gamma_0\setminus [x_0,\widehat x_+]}<0,\quad
\frac{\partial (\widetilde
\psi+\psi_\epsilon)}{\partial\vec\tau}>0\,\, \mbox{on}\,\,
[x_-,\widehat x_-],
$$
$$\quad \frac{\partial (\widetilde
\psi+\psi_\epsilon)}{\partial\vec\tau}<0\,\, \mbox{on}\,\,
[\widehat x_+,x_+],\quad
(\widetilde \psi+\psi_\epsilon)\vert_{[x_{0},\widehat x_+]}=g_\epsilon,
\quad
(\widetilde \psi+\psi_\epsilon)\vert_{[\widehat x_{-}, x_0]}=\psi_*.
$$
 If $j\ge 2$  then the restriction of the function $\varphi_\epsilon
+\widetilde \varphi$  on $\gamma_j$ has only two critical points
located on the  contour $\gamma_j\subset\hat\Gamma_0^*$.
These critical points are nondegenerate if $\epsilon$ is
sufficiently small.

Therefore the restriction of the function $(\widetilde \psi+\psi_\epsilon)$
on $\hat\Gamma_*$ has a finite number of  a critical points.
Some of these points may be the critical points of $(\widetilde \psi
+\psi_\epsilon)$  considered as a function on $\overline\Omega.$ We
change slightly the function $(\widetilde \psi+\psi_\epsilon)$ such that
all of its critical points are in $\Omega.$
Suppose that function $\widetilde \psi+\psi_\epsilon$ has critical
points on $\hat\Gamma_*.$ Then these critical points should be among
the set of critical points of the function $g_\epsilon$,
otherwise it would be
the point $\widehat x.$ We denote these points by $\widehat x_1,\dots,
\widehat x_m.$
Let $(\widehat\varphi,\widehat\psi)\in C^{6+\alpha}(\overline\Omega)
\times C^{6+\alpha}(\overline\Omega)$ be a solution to the Cauchy-Riemann
problem  (\ref{(4.112)}) with the  following boundary data
$$
\widehat\psi\vert_{\Gamma_0^*}=0,\,\, \widehat\psi(\widehat x)=1,\,\,
\widehat\psi\vert_{\mathcal G\setminus \{\widehat x\}}=0, \quad
\vert\frac{\partial\widehat\psi}{\partial\nu}\vert\vert
_{\overline{\gamma_0\setminus\mathcal
J}}>0.
$$
For all small positive $\epsilon_1$ the function $\widetilde
\psi+\psi_\epsilon+\epsilon_1\widehat \psi$ does not have a critical
point on $\partial\Omega$ and the restriction of this function on
$\widetilde \Gamma$ has a finite number of nondegenerate
critical points.  Therefore we take
$(\widetilde\varphi+\varphi_\epsilon+\epsilon_1\widehat \varphi,
\widetilde\psi+\psi_\epsilon+\epsilon_1\widehat \psi)$ as the pairs of
functions satisfying A) - E).

The function $\varphi+i\psi$ with pair $(\varphi,\psi)$ satisfying
conditions A)-E) satisfies all the hypotheses of Proposition
\ref{Proposition -2} except that some of its critical points might
possibly be degenerate. In order to fix this problem we consider a
perturbation of the function $\varphi+i\psi$ which is constructed
in the following way.
By Proposition \ref{balalaika1},
there exists a holomorphic function $w$ in $\Omega$
such that
\begin{equation}\label{zozo}
\mbox{Im}\,w\vert_{\Gamma_0^*}=0,\,\,w\vert_{\mathcal H_{0}}=
\frac{\partial w}{\partial z}\vert_{\mathcal H_{0}}=0,\quad
\frac{\partial^2 w}{\partial z^2}\vert_{\mathcal H_{0}}\ne 0.
\end{equation}
Denote $\Phi_\delta=\varphi+i\psi+\delta w.$ For all sufficiently
small positive  $\delta$, we have
$$
\mathcal H_{0}\subset\mathcal H_\delta \equiv \{x\in\Omega\vert
\frac{\partial}{\partial z}\Phi_\delta(x)=0\}.
$$

We now show that for all sufficiently small positive $\delta$, all
critical
points of the function $\Phi_\delta$ are nondegenerate. Let
$\widetilde x$ be a  critical point of the function $\varphi+i\psi.$
If $\widetilde x$ is a nondegenerate critical point, by the implicit
function theorem, there exists a ball $B(\widetilde x,\delta_1)$
such that the function $\Phi_\delta$ in this ball has only one
nondegenerate critical point for all small $\delta.$ Let $\widetilde
x$ be a degenerate critical point of $\varphi+i\psi.$ Without loss
of generality we may assume that $\widetilde x=0$. In some
neighborhood of $0$, we have
$\frac{\partial\Phi_\delta}{\partial z}=\sum_{k=1}^\infty c_k
z^{k+\widehat k}-\delta
\sum_{k=1}^\infty b_k z^k$ for some natural number $\widehat k$ and some
$c_1\ne 0$.  Moreover (\ref{zozo}) implies $b_1\ne 0.$  Let
$(x_{1,\delta},x_{2,\delta})\in \mathcal H_\delta$ and
$z_\delta=x_{1,\delta}+ix_{2,\delta}\rightarrow 0.$ Then either
\begin{equation}\label{noraa} z_\delta=0\,\,\mbox{ or}\,\,\,
z_\delta^{\widehat k}=\delta b_1/c_1+o(\delta)\quad \mbox{as}\,\,
\delta\rightarrow 0.
\end{equation}
Therefore $\frac{\partial^2\Phi_\delta}{\partial z^2}(z_\delta)\ne 0$
for all sufficiently small
$\delta.$
\end{proof}

Let $\alpha\in (0,1)$ and $\mathcal A,\mathcal B\in C^{6+\alpha}
(\overline\Omega)$ be two complex-valued  solutions to the boundary
value problem
\begin{equation}\label{(2.15)}
2\frac{\partial \mathcal A}{\partial \overline z}= -A\quad
\mbox{in}\,\,\Omega,\,\,\mbox{Im}\,{\mathcal A}\vert_{\Gamma_0}=0,\quad
2\frac{\partial \mathcal{B}}{\partial z}
=-{B}\quad
\mbox{in}\,\,\Omega,\,\,\mbox{Im}\,{\mathcal B}\vert_{\Gamma_0}=0.
\end{equation}

Consider the following boundary value problem
\begin{equation}\label{xoxol}
\frac{\partial a}{\partial \overline z} =0\quad\mbox{in}\,\Omega,\quad
\frac{\partial d}{\partial z} =0\quad\mbox{in}\,\Omega,\quad
(ae^{\mathcal A}+de^{\mathcal B})\vert_{\Gamma_0}=\beta.
\end{equation}
The existence of such functions $a(z)$ and $d(\overline z)$ is given
by the following proposition.

\begin{proposition} \label{zika}
Let $\alpha\in (0,1)$,  $\mathcal A$ and $\mathcal B$ be as
in (\ref{(2.15)}). If $\beta\in C^{5+\alpha}(\overline \Gamma_0)$
(\ref{xoxol}) has at least one solution $(a, d) \in
C^{5+\alpha}(\overline\Omega) \times C^{5+\alpha}(\overline\Omega)$
such that
\begin{equation}\label{novik}
\Vert (a,d)\Vert_{C^{5+\alpha}(\overline \Omega)
\times C^{5+\alpha}(\overline \Omega)}
\le C_{1}\Vert \beta\Vert_{C^{5+\alpha}(\overline \Gamma_0)}.
\end{equation}
If $\beta\in H^\frac 12(\Gamma_0)$, then (\ref{xoxol})
has at least one solution $(a, d)\in H^1(\Omega)\times H^1(\Omega)$
such that
\begin{equation}\label{novikov}
\Vert (a,d)\Vert_{H^1(\Omega)\times H^1(\Omega)}
\le C_{2}\Vert \beta\Vert_{H^\frac 12(\Gamma_0)}.
\end{equation}
\end{proposition}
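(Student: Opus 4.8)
The plan is to reduce (\ref{xoxol}) to the construction of a single holomorphic function on $\Omega$ with prescribed boundary trace on $\Gamma_0$, and then to invoke the solvability theory for the Cauchy--Riemann operator with Dirichlet data imposed only on part of the boundary that is developed in the Appendix; the hypothesis that $\widetilde\Gamma$ is nonempty is precisely what makes this possible.

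First I would observe that it suffices to take $d\equiv 0$. Since $\mathcal A\in C^{6+\alpha}(\overline\Omega)$ and $\mbox{Im}\,\mathcal A|_{\Gamma_0}=0$, the restriction $e^{-\mathcal A}|_{\Gamma_0}=e^{-\mbox{Re}\,\mathcal A}|_{\Gamma_0}$ is a fixed strictly positive function in $C^{6+\alpha}(\overline{\Gamma_0})$; hence $g:=\beta\,e^{-\mathcal A}|_{\Gamma_0}\in C^{5+\alpha}(\overline{\Gamma_0})$ with $\|g\|_{C^{5+\alpha}(\overline{\Gamma_0})}\le C\|\beta\|_{C^{5+\alpha}(\overline{\Gamma_0})}$, the constant depending only on $\mathcal A$. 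If $a\in C^{5+\alpha}(\overline\Omega)$ is holomorphic with $a|_{\Gamma_0}=g$ and $\|a\|_{C^{5+\alpha}(\overline\Omega)}\le C\|g\|_{C^{5+\alpha}(\overline{\Gamma_0})}$, then the pair $(a,0)$ solves (\ref{xoxol}): indeed $\partial_z 0\equiv 0$ and $(ae^{\mathcal A}+0\cdot e^{\mathcal B})|_{\Gamma_0}=(g\,e^{\mathcal A})|_{\Gamma_0}=\beta$, and the estimate (\ref{novik}) holds. (One could instead split $\beta$ and use a nonzero antiholomorphic $d$, but this is not needed.)

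The core step is therefore: given $g\in C^{5+\alpha}(\overline{\Gamma_0})$, produce a holomorphic $a\in C^{5+\alpha}(\overline\Omega)$ with $a|_{\Gamma_0}=g$ and the corresponding norm bound. Writing $a=\varphi+i\psi$ with real $\varphi,\psi$, this is exactly the boundary value problem for the Cauchy--Riemann system $\partial_{\overline z}(\varphi+i\psi)=0$ in $\Omega$ with $\varphi|_{\Gamma_0}=\mbox{Re}\,g$, $\psi|_{\Gamma_0}=\mbox{Im}\,g$, and \emph{no} constraint on the nonempty open set $\widetilde\Gamma$. Concretely, I would fix a bounded linear extension operator and write $g=G|_{\Gamma_0}$ with $G\in C^{5+\alpha}(\overline\Omega)$, $\|G\|_{C^{5+\alpha}(\overline\Omega)}\le C\|g\|_{C^{5+\alpha}(\overline{\Gamma_0})}$; then $a=G+\phi$ works provided $\phi$ solves $\partial_{\overline z}\phi=-\partial_{\overline z}G$ in $\Omega$ with $\phi|_{\Gamma_0}=0$. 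The solvability of this last problem, together with the one-derivative gain $\|\phi\|_{C^{5+\alpha}(\overline\Omega)}\le C\|\partial_{\overline z}G\|_{C^{4+\alpha}(\overline\Omega)}\le C\|g\|_{C^{5+\alpha}(\overline{\Gamma_0})}$, is furnished by the Appendix results on Cauchy--Riemann systems with data prescribed on a proper sub-collection of boundary arcs. Setting $a=G+\phi$, $d\equiv0$ gives the first assertion with $C_1=C_1(\Omega,\mathcal A)$.

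For the $H^{\frac12}(\Gamma_0)$ statement I would run the same argument in the $L^2$-based scale: $g=\beta e^{-\mathcal A}|_{\Gamma_0}\in H^{\frac12}(\Gamma_0)$ with $\|g\|_{H^{\frac12}(\Gamma_0)}\le C\|\beta\|_{H^{\frac12}(\Gamma_0)}$; extend boundedly to $G\in H^1(\Omega)$; solve $\partial_{\overline z}\phi=-\partial_{\overline z}G\in L^2(\Omega)$ with $\phi|_{\Gamma_0}=0$ and $\|\phi\|_{H^1(\Omega)}\le C\|\partial_{\overline z}G\|_{L^2(\Omega)}$ via the $L^2$-version of the same Appendix result; set $a=G+\phi$, $d\equiv0$, which yields (\ref{novikov}). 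Alternatively, approximate $\beta$ in $H^{\frac12}(\Gamma_0)$ by smooth data, apply the first part, and pass to the limit using (\ref{novikov}) and the closedness of the conditions $\partial_{\overline z}a=\partial_z d=0$. The only substantive obstacle is the solvability-with-estimate of the partial-data Cauchy--Riemann problem in the core step; this is where $\widetilde\Gamma\ne\emptyset$ is essential (for data on all of $\partial\Omega$ it degenerates into the ill-posed Cauchy problem), and it is exactly the input provided by the Appendix. Everything else is routine.
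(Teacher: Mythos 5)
Your reduction to a single holomorphic unknown with $d\equiv 0$ is where the argument breaks, and the break is fundamental. Producing a holomorphic $a\in C^{5+\alpha}(\overline\Omega)$ with the \emph{full complex trace} $a|_{\Gamma_0}=g$ prescribed, together with a bound $\|a\|_{C^{5+\alpha}(\overline\Omega)}\le C\|g\|_{C^{5+\alpha}(\overline{\Gamma_0})}$, would be a well-posed solution to the Cauchy problem for $\partial_{\overline z}$ with data on the proper arc $\Gamma_0$. That problem is Hadamard ill-posed: the trace map $a\mapsto a|_{\Gamma_0}$ on holomorphic functions is injective (by unique continuation) but not surjective onto $C^{5+\alpha}(\overline{\Gamma_0})$, and where an inverse exists it is unbounded. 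Concretely, for a single complex unknown the $\dbar$-system has two real unknowns and the Lopatinski count allows exactly one real boundary condition per point; imposing two (real and imaginary part of $g$) on $\Gamma_0$ and none on $\widetilde\Gamma$ is overdetermined on $\Gamma_0$, and no estimate of the form you need can hold. Your appeal to the Appendix does not repair this: Proposition \ref{MMM1} and Corollary \ref{MMM1I} assert solvability only for a \emph{dense} set $\mathcal O$ of Cauchy data and, deliberately, give no norm estimate at all --- this is precisely the signature of an ill-posed problem, not a solvability-with-estimate result. Your alternative route for the $H^{1/2}$ case (approximate by smooth data and pass to the limit using (\ref{novikov})) is circular, since (\ref{novikov}) is the thing to be proved.

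The coupling between $a$ and $d$ is not an optional convenience but the entire content of the proposition. With both a holomorphic $a$ and an antiholomorphic $d$ one has a $4\times 4$ real first-order system; the single complex condition $(ae^{\mathcal A}+de^{\mathcal B})|_{\Gamma_0}=\beta$ gives two real conditions on $\Gamma_0$ and none on $\widetilde\Gamma$, and this is exactly the Lopatinski-balanced count for the combined system. The paper's proof exploits this: it extends the domain, sets up a penalized least-squares problem, derives an adjoint problem for $(P_\epsilon,\widetilde P_\epsilon)$ with boundary conditions $P_\epsilon|_{\Gamma^*}=\widetilde P_\epsilon|_{\Gamma^*}=0$ and the complementary coupled condition on $\widetilde\Gamma_0$, verifies the nonvanishing of the Lopatinski determinant
$$
D=\begin{pmatrix} (\nu_1-i\nu_2)e^{\mathcal B} & -(\nu_1-i\nu_2)e^{\mathcal A}\\ (\nu_2+i\nu_1)e^{\mathcal B}& (\nu_2+i\nu_1)e^{\mathcal A}\end{pmatrix},
$$
obtains the a priori $H^1$ bound (\ref{nika}), removes the compact remainder by a contradiction/compactness argument using uniqueness for the $\partial_z$-Cauchy problem, and finally passes to the limit to produce the solution and the estimates (\ref{novik}), (\ref{novikov}). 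None of this survives if one drops $d$: the Lopatinski count changes and the determinant computation becomes a $2\times 1$ overdetermined condition with no complementing property.
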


\begin{proof}Let $\widetilde \Omega$ be a domain in $\Bbb R^2$ with smooth
boundary such that $\Omega\subset\widetilde\Omega$ and there exits an
open subdomain
$\widetilde \Gamma_0 \subset \partial \widetilde \Omega$ satisfying
$\overline \Gamma_0\subset \widetilde \Gamma_0$. Denote $\Gamma^*
=\partial\widetilde\Omega\setminus\widetilde\Gamma_0.$ We extend $\mathcal A,
\mathcal B$ to $\widetilde \Gamma_0$ keeping the regularity and we extend
$\beta$ to $\widetilde \Gamma_0$  in  such a way that
$\Vert \beta\Vert_{H^\frac 12(\widetilde\Gamma_0)}\le C_{3} \Vert
\beta\Vert_{H^\frac 12(\Gamma_0)}$ or $\Vert \beta\Vert_{C^{5+\alpha}
(\overline{\widetilde\Gamma_0})}\le C_{3} \Vert \beta\Vert_{C^{5+\alpha}
(\overline{\Gamma_0})}$
where the constant $C_{3}$ is independent of $\beta.$
By the trace theorem there exist a constant $C_{4}$ independent of
$\beta$,
and a pair $(r,\widetilde r)$ such that
$(re^{\mathcal A}+\widetilde r e^{\mathcal B})\vert_{\widetilde\Gamma_0}=\beta$
and if $\beta\in H^\frac 12(\widetilde\Gamma_0)$ then  $(r,\widetilde r)\in
H^1(\Omega)\times H^1(\Omega)$ and
$$
\Vert (r,\widetilde r)\Vert_{ H^1(\Omega)\times H^1(\Omega)}
\le C_{4}\Vert \beta\Vert_{H^\frac 12(\Gamma_0)}.$$
Similarly if
$\beta\in C^{5+\alpha}(\widetilde\Gamma_0)$ then  $(r,\widetilde r)\in
C^{5+\alpha}(\overline\Omega)\times C^{5+\alpha}(\overline\Omega)$ and
$$
\Vert (r,\widetilde r)\Vert_{ C^{5+\alpha}(\overline\Omega)\times
C^{5+\alpha}(\overline\Omega)}\le C_{5}\Vert \beta\Vert
_{ C^{5+\alpha}(\Gamma_0)}.
$$

Let $f=\frac{\partial r}{\partial z }$ and $\widetilde f
=\frac{\partial \widetilde r}{\partial \overline z }.$
For any $\epsilon$ from $(0,1)$ consider the extremal problem
$$
J_{\epsilon}(p,\widetilde p)
=\Vert(p,\widetilde p)\Vert^2_{L^2(\widetilde\Omega)}+\frac 1\epsilon \Vert
\frac{\partial p}{\partial z}-f\Vert^2_{L^2(\widetilde\Omega)}
+\frac 1\epsilon \Vert \frac{\partial\widetilde p}{\partial \overline z}
-\widetilde f\Vert^2_{L^2(\widetilde\Omega)}\rightarrow \inf,\quad (p,\widetilde p)
\in
\mathcal K,
$$
where
$\mathcal K=\{(h_1,h_2)\in L^2(\widetilde\Omega)\times L^2(\widetilde\Omega)
\vert (h_1
e^{\mathcal A}+h_2e^{\mathcal B})\vert_{\widetilde\Gamma_0}=0\}.$
Denote the solution to this extremal problem as $(p_\epsilon,\widetilde
p_\epsilon).$ Then
$$
J'_{\epsilon}(p_\epsilon,\widetilde p_\epsilon)(\delta,\widetilde\delta)=0
\quad\forall (\delta,\widetilde\delta)\in {\mathcal K}.
$$
Hence
\begin{equation}\label{npma}
((p_\epsilon,\widetilde p_\epsilon),(\delta,\widetilde\delta))_{L^2(\widetilde\Omega)}
+\frac 1\epsilon ( \frac{\partial p_\epsilon}{\partial z}-f,
\frac{\partial \delta}{\partial z})_{L^2(\widetilde\Omega)}
+\frac 1\epsilon ( \frac{\partial \widetilde p_\epsilon}{\partial \overline z}
-\widetilde f,\frac{\partial\widetilde \delta}{\partial \overline z})
_{L^2(\widetilde\Omega)}
=0 \quad\forall (\delta,\widetilde\delta)\in {\mathcal K}.
\end{equation}
Denote $P_\epsilon=-\frac 1\epsilon ( \frac{\partial p_\epsilon}
{\partial z}-f),\widetilde P_\epsilon=-\frac 1\epsilon
( \frac{\partial \widetilde p_\epsilon}{\partial \overline z}-\widetilde f).$
From (\ref{npma}) we obtain
\begin{equation}\label{ika}
\frac{\partial P_\epsilon}{\partial \overline z}=p_\epsilon,\quad
\frac{\partial\widetilde P_\epsilon}{\partial z}=\widetilde p_\epsilon,
\quad P_\epsilon\vert_{\Gamma^*}=\widetilde P_\epsilon\vert_{\Gamma^*}=0,
((\nu_1+i\nu_2)P_\epsilon e^{\mathcal B}-(\nu_1-i\nu_2)
\widetilde P_\epsilon e^{\mathcal A})\vert_{\widetilde \Gamma_0}=0.
\end{equation}
We claim that there exists a constant $C_{6}$ independent of
$\epsilon$ such that
\begin{equation}\label{nika}
\Vert (P_\epsilon,\widetilde P_\epsilon)\Vert_{H^1(\widetilde\Omega)}\le
C_{6}(\Vert (p_\epsilon,\widetilde p_\epsilon)\Vert_{L^2(\widetilde\Omega)}
+\Vert (P_\epsilon,\widetilde P_\epsilon)\Vert_{L^2(\widetilde\Omega)}).
\end{equation}
It clearly suffices to prove the estimate (\ref{nika}) locally assuming
that $\supp\,(p_\epsilon,\widetilde p_\epsilon)$ is in a small neighborhood
of zero and
the vector $(0,1)$ is orthogonal to $\partial\Omega$  on the intersection
of this neighborhood with the boundary. Using a conformal transformation
we may assume
that $\partial\Omega\cap \mbox{supp}\, P_\epsilon,
\partial\Omega\cap \mbox{supp}\, \widetilde P_\epsilon\subset \{ x_1=0\}.$
In order to prove (\ref{nika}) we consider the system of equations
\begin{equation}\label{gorko}
\frac{\partial \bold u}{\partial x_2}+\widehat B\frac{\partial \bold u}
{\partial x_1}={\bold F},\quad \mbox{supp}\,\bold u\subset B(0,\delta)
\cap \{x\vert x_2\ge 0\}.
\end{equation}
Here ${\bold u}=(u_1,u_2,u_3,u_4)=(\mbox {Re}\, P_\epsilon, \mbox{Im}\,
P_\epsilon, \mbox {Re}\,\widetilde P_\epsilon, \mbox{Im}\, \widetilde P_\epsilon),
$ ${\bold F}=2(\mbox{Re} \,p_\epsilon,\mbox{Im} \,p_\epsilon,\mbox{Re}
\,\widetilde p_\epsilon,\mbox{Im} \,\widetilde p_\epsilon)$,
 $\widehat B=\begin{pmatrix}0& 1 & 0 & 0\\-1& 0& 0& 0\\
0& 0& 0 & -1\\0 & 0& 1&0\end{pmatrix}.$
The matrix $\widehat B$ has two eigenvalues $ \pm i$ and four linearly
independent eigenvectors:
$$
{\bold q}_3=(0,0,1,i), {\bold q}_4=(1,-i,0,0) \quad\mbox{corresponding
to the eigenvalue}\,\, -i ,
$$
$$
{\bold q}_1=(1,i,0,0), {\bold q}_2=(0,0,1,-i)\quad \mbox{ corresponding
to the eigenvalue}\,\, i.
$$
We set $\bold r_1=(\nu_1e^{\mathcal B},-\nu_2e^{\mathcal B},
-\nu_1e^{\mathcal A},-\nu_2e^{\mathcal A}),$
$\bold r_2=(\nu_2e^{\mathcal B},\nu_1e^{\mathcal B},
\nu_2e^{\mathcal A},-\nu_1e^{\mathcal A}).$
Consider the matrix $D=\{d_{j\ell}\}$ where $d_{j\ell}=r_j\cdot q_{\ell}$.
We have
$$
D=\begin{pmatrix} (\nu_1-i\nu_2)e^{\mathcal B} & -(\nu_1-i\nu_2)
e^{\mathcal A}\\
(\nu_2+i\nu_1)e^{\mathcal B}& (\nu_2+i\nu_1)e^{\mathcal A}
\end{pmatrix}.
$$
Since the Lopatinski determinant $det\, D \ne 0 $ we  obtain (\ref{nika})
(see e.g. \cite{Wendland}).

Next we need to get rid of the second term in the right hand side of (\ref{nika}).
Suppose that for any $\widetilde C$ one can find $\epsilon$ such that
the estimate
$$
\Vert (P_\epsilon,\widetilde P_\epsilon)\Vert_{H^1(\widetilde\Omega)}
\le \widetilde C\Vert (p_\epsilon,\widetilde p_\epsilon)\Vert_{L^2(\widetilde\Omega)}
$$
fails.  That is, there exist a sequence $\epsilon_k\rightarrow +\infty$ and a sequence $\{C_{\epsilon_k}\}$ such that
 $\lim_{\epsilon_k\to +0}C_{\epsilon_k} =+
\infty$ and
$$
\left\Vert \frac{(p_{\epsilon_k}, \widetilde p_{\epsilon_k})}
{\Vert (P_{\epsilon_k}, \widetilde P_{\epsilon_k})\Vert_{H^1(\widetilde{\Omega})}}
\right\Vert_{L^2(\widetilde{\Omega})} < \frac{1}{C_{\epsilon_k}}.
$$
We set $(Q_{\epsilon_k},\widetilde Q_{\epsilon_k})= (P_{\epsilon_k},\widetilde P_{\epsilon_k})
/\Vert (P_{\epsilon_k},\widetilde P_{\epsilon_k})\Vert_{H^1(\widetilde\Omega)}$
and $(q_{\epsilon_k},\widetilde q_{\epsilon_k})= (p_{\epsilon_k},\widetilde p_{\epsilon_k})
/\Vert (P_{\epsilon_k},\widetilde P_{\epsilon_k})\Vert_{H^1(\widetilde\Omega)}$.
Then $\Vert (q_{\epsilon_k},\widetilde q_{\epsilon_k})\Vert_{L^2(\widetilde\Omega)}
 \to 0$ as $\epsilon_k \to 0$.
Passing to the limit in (\ref{ika}) we have
$$
\frac{\partial Q}{\partial \overline z}=0\quad\mbox{in} \,\widetilde\Omega,\quad
\frac{\partial\widetilde Q}{\partial z}=0\quad\mbox{in} \,\widetilde\Omega,
\quad Q\vert_{\Gamma^*}
=\widetilde Q\vert_{\Gamma^*}=0.
$$
By the uniqueness for the Cauchy problem for the operator  $\partial_z$
we conclude $Q\equiv\widetilde Q\equiv 0.$
On the other hand, since $\Vert (Q_{\epsilon_k},
\widetilde Q_{\epsilon_k})\Vert_{H^1(\widetilde\Omega)} = 1$, we can extract a
subsequence, denoted the same, which is convergent
in $L^2(\widetilde\Omega)$.
Therefore the sequence $\{(Q_{\epsilon_k},\widetilde Q_{\epsilon_k})\}$ converges
to zero in $L^2(\widetilde\Omega)\times L^2(\widetilde\Omega).$
By (\ref{nika}), we have
$1/C_{7}\le \Vert (q_{\epsilon_k},\widetilde q_{\epsilon_k})\Vert_{L^2(\widetilde\Omega)}
+ \Vert (Q_{\epsilon_k},\widetilde Q_{\epsilon_k})\Vert_{L^2(\widetilde\Omega)}$.
Therefore $\lim\inf_{\epsilon_k\to 0} \Vert (Q_{\epsilon_k},\widetilde Q_{\epsilon_k})
\Vert_{L^2(\widetilde\Omega)} \ne 0$, and this is a contradiction.
Hence
$$
\Vert (P_{\epsilon_k},\widetilde P_{\epsilon_k})\Vert_{H^1(\widetilde\Omega)}
\le C_{8}\Vert (p_{\epsilon_k},\widetilde p_{\epsilon_k})\Vert_{L^2(\widetilde\Omega)},
\quad \forall \epsilon>0.
$$

Let us plug in (\ref{npma}) the function $(p_{\epsilon_k},\widetilde p_{\epsilon_k})$
instead of $(\delta,\widetilde \delta)$.   Then, by the above
inequality, in view of the definitions of $P_{\epsilon_k}$ and
$\widetilde P_{\epsilon_k}$, we have
$$
\Vert (p_{\epsilon_k},\widetilde p_{\epsilon_k})\Vert^2
_{L^2(\widetilde\Omega)} \le C_{9}((f, \widetilde f),
(P_{\epsilon_k}, \widetilde P_{\epsilon_k}))_{L^2(\widetilde\Omega)}
\le C_{10}\Vert(f,\widetilde f)\Vert_{L^2(\widetilde\Omega)}
\Vert (P_{\epsilon_k},\widetilde P_{\epsilon_k})\Vert_{L^2(\widetilde\Omega)}
$$
$$
\le C_{11} \Vert(f,\widetilde f)\Vert_{L^2(\widetilde\Omega)}
\Vert (p_{\epsilon_k},\widetilde p_{\epsilon_k})\Vert_{L^2(\widetilde\Omega)}.
$$
This inequality implies that the sequence $ (p_{\epsilon_k},\widetilde p_{\epsilon_k})$
is bounded in $L^2(\widetilde\Omega)$ and
$$
(\frac{\partial p_{\epsilon_k}}{\partial z},\frac{\partial\widetilde p_{\epsilon_k}}
{\partial \overline z})\rightarrow (f,\widetilde f)\quad \mbox{in}\,\,
L^2(\widetilde\Omega)\times L^2(\widetilde\Omega).
$$
Then we construct a solution to (\ref{xoxol}) such that
\begin{equation}\label{lida}
\Vert (p,\widetilde p)\Vert_{L^2(\widetilde\Omega)}\le C_{12}
\Vert (f,\widetilde f)\Vert_{L^2(\widetilde\Omega)}.
\end{equation}
Observe that we can write the boundary value problem
$$
\frac{\partial p}{\partial z}=f\quad\mbox{in}\,\Omega,\quad
\frac{\partial\widetilde p}{\partial \overline z}=\widetilde f\quad\mbox{in}\,
\Omega, \quad
(p e^{\mathcal A}+\widetilde p e^{\mathcal B})\vert_{\widetilde\Gamma_0}=0
$$
in the form of system (\ref{gorko})  with ${\bf u}=(\mbox{Re}\,p,
\mbox{Im}\, p, \mbox{Re}\,\widetilde p,\mbox{Im}\,\widetilde p),
{\bf F}=2(\mbox{Re}\,f,\mbox{Im}\,f, \mbox{Re}\,\widetilde f,
\mbox{Im}\,\widetilde f).$
We set $\bold r_1=(e^{\mathcal A},-e^{\mathcal A}, -e^{\mathcal B},
-e^{\mathcal B}),$  $\bold r_2=(e^{\mathcal A},e^{\mathcal A},
e^{\mathcal B},-e^{\mathcal B}).$
Consider the matrix $D=\{d_{j\ell}\}$
where $d_{j\ell}=r_j\cdot q_\ell$.
We have
$$
D=\begin{pmatrix} e^{\mathcal B} & -e^{\mathcal A}\\
e^{\mathcal B}& e^{\mathcal A}
\end{pmatrix}.
$$
Since the Lopatinski determinant $det\, D \ne 0 $ the estimate (\ref{lida})
imply (\ref{novik}) and (\ref{novikov}) (see e.g.,
\cite{Wendland} Theorem 4.1.2).
This completes the proof of the proposition.
\end{proof}
The following proposition was proven in \cite{IUY}:
\begin{proposition} \label{Proposition 2.1}
Let $\Phi$ satisfy (\ref{zzz}),(\ref{mika}) and the function $C=C_1+iC_2$ belongs to $C^1(\overline\Omega).$ Let $\widetilde
f\in L^2(\Omega)$, and $\widetilde v\in H^1(\Omega)$ be a solution to
\begin{equation}\label{zina}
2 \frac{\partial}{\partial{z}}\widetilde v -\tau\frac{\partial\Phi}
{\partial{z}}\widetilde v+C \widetilde v
=\widetilde f\quad \mbox{in }\,\Omega
\end{equation}
or $\widetilde v$ be a solution to
\begin{equation}\label{zina1}
2\frac{\partial}{\partial{\overline z}}\widetilde v
- \tau \frac{\partial\overline\Phi}{\partial{\overline z}}
\widetilde v+ C\widetilde v =\widetilde f\quad\mbox{ in }\,\Omega.
\end{equation}
In the case (\ref{zina}) we have
\begin{eqnarray}\label{vika1}
\Vert
\frac{\partial \widetilde{v}}{\partial {x_1}}-i\mbox{Im}(\tau\frac{\partial\Phi}
{\partial{z}}-C )\widetilde v\Vert^2
_{L^2(\Omega)} -
\int_{\partial\Omega}(\tau(\nabla\varphi,\nu)-(\nu_1C_1+\nu_2C_2))\vert
\widetilde v\vert^2d\sigma-\int_\Omega (\frac{\partial C_1}{\partial x_1}+\frac{\partial C_2}{\partial x_2})\vert
\widetilde v\vert^2dx\nonumber\\
+ \mbox{Re}\int_{\partial\Omega}i\left(\left(\nu_2
\frac{\partial}{\partial x_1}-\nu_1 \frac{\partial}{\partial
x_2}\right) \widetilde v\right)\overline{\widetilde v}d\sigma \nonumber\\+
\Vert -\frac 1i \frac{\partial \widetilde{v}}{\partial{x_2}}-\mbox{Re}(\tau\frac{\partial\Phi}
{\partial{z}}-C )\widetilde v\Vert^2
_{L^2(\Omega)} = \Vert\widetilde f\Vert^2_{L^2(\Omega).}
\end{eqnarray}
In the case (\ref{zina1}) we have
\begin{eqnarray} \label{vika2}
\Vert \frac{\partial \widetilde{v}}{\partial{x_1}}-i\mbox{Im}(\tau\frac{\partial\overline\Phi}
{\partial{\bar z}}-C )\widetilde v\Vert
_{L^2(\Omega)}
- \int_{\partial\Omega}(\tau(\nabla\varphi,\nu)-(\nu_1C_1-\nu_2C_2))\vert \widetilde
v\vert^2d\sigma -\int_\Omega (\frac{\partial C_1}{\partial x_1}-\frac{\partial C_2}{\partial x_2})\vert
\widetilde v\vert^2dx\nonumber\\+ \mbox{Re}\int_{\partial\Omega}i\left(\left(-\nu_2
\frac{\partial}{\partial x_1}+\nu_1 \frac{\partial}{\partial
x_2}\right)\widetilde v\right)\overline{\widetilde
v}d\sigma\nonumber\\
+ \Vert\frac 1i\frac{\partial \widetilde{v}}{\partial{x_2}}-\mbox{Re}(\tau\frac{\partial\overline\Phi}
{\partial{\bar z}}-C )\widetilde v
\Vert^2
_{L^2(\Omega)} = \Vert\widetilde f\Vert^2_{L^2(\Omega)}.
\end{eqnarray}
\end{proposition}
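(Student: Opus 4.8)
\emph{Plan of proof of Proposition \ref{Proposition 2.1}.} The two identities are of the same type, and the second follows from the first under the formal exchange $2\frac{\partial}{\partial z}\leftrightarrow 2\frac{\partial}{\partial\overline z}$, $\Phi\leftrightarrow\overline\Phi$, so I will treat case (\ref{zina}) and indicate the changes for (\ref{zina1}) at the end. The whole argument is an integration-by-parts identity for the square of the $L^2$-norm of the equation, in which the holomorphicity of $\Phi$ (equation (\ref{zzz})) is the only structural input. Write $2\frac{\partial}{\partial z}=\partial_{x_1}-i\partial_{x_2}$ and split the zeroth-order coefficient into real and imaginary parts, $\tau\frac{\partial\Phi}{\partial z}-C=R+iI$ with $R=\mbox{Re}\,(\tau\frac{\partial\Phi}{\partial z}-C)$ and $I=\mbox{Im}\,(\tau\frac{\partial\Phi}{\partial z}-C)$ real-valued. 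Then (\ref{zina}) reads $L\widetilde v=\widetilde f$ with $L=S+K$, where $S:=-i\partial_{x_2}-R$ is formally self-adjoint and $K:=\partial_{x_1}-iI$ is formally skew-adjoint in $L^2(\Omega)$, so that
\[
\Vert\widetilde f\Vert_{L^2(\Omega)}^2=\Vert S\widetilde v\Vert_{L^2(\Omega)}^2+\Vert K\widetilde v\Vert_{L^2(\Omega)}^2+2\,\mbox{Re}\,(S\widetilde v,K\widetilde v)_{L^2(\Omega)} .
\]
The first two terms on the right are the squares of (the arguments of) the two norms appearing in (\ref{vika1}), so the heart of the matter is the evaluation of the cross term $2\,\mbox{Re}\,(S\widetilde v,K\widetilde v)_{L^2(\Omega)}$.

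I would decompose that cross term by the number of derivatives falling on $\widetilde v$. The zeroth-order $\times$ zeroth-order part equals $-2\,\mbox{Re}\int_\Omega iRI|\widetilde v|^2\,dx=0$ because $RI$ is real; in particular the potentially dangerous $O(\tau^2)$ interior contribution disappears by itself. The first-order $\times$ first-order part equals $i\int_\Omega\big(\partial_{x_1}\widetilde v\,\overline{\partial_{x_2}\widetilde v}-\partial_{x_2}\widetilde v\,\overline{\partial_{x_1}\widetilde v}\big)\,dx$, and, using the pointwise identity $\partial_{x_1}\widetilde v\,\overline{\partial_{x_2}\widetilde v}-\partial_{x_2}\widetilde v\,\overline{\partial_{x_1}\widetilde v}=\partial_{x_1}(\widetilde v\,\overline{\partial_{x_2}\widetilde v})-\partial_{x_2}(\widetilde v\,\overline{\partial_{x_1}\widetilde v})$ together with the divergence theorem, it collapses to the \emph{pure} boundary term $\mbox{Re}\int_{\partial\Omega}i\big((\nu_2\partial_{x_1}-\nu_1\partial_{x_2})\widetilde v\big)\overline{\widetilde v}\,d\sigma$ with no interior remainder. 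Finally, the first-order $\times$ zeroth-order parts are of the schematic form $\mbox{Re}\int_\Omega(\partial_{x_j}\widetilde v)\overline{g\widetilde v}\,dx$ with $g\in\{R,I\}$ real, and one integrates them by parts via $\mbox{Re}\int_\Omega(\partial_{x_j}\widetilde v)\overline{\widetilde v}\,g\,dx=\tfrac12\int_{\partial\Omega}\nu_j g\,|\widetilde v|^2\,d\sigma-\tfrac12\int_\Omega(\partial_{x_j}g)|\widetilde v|^2\,dx$; these are the only terms producing interior contributions.

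It remains to identify those interior and boundary leftovers. Writing $\Phi=\varphi+i\psi$, the Cauchy--Riemann equations contained in (\ref{zzz}) give $\mbox{Re}\,\frac{\partial\Phi}{\partial z}=\partial_{x_1}\varphi$, $\mbox{Im}\,\frac{\partial\Phi}{\partial z}=\partial_{x_1}\psi=-\partial_{x_2}\varphi$, and $\Delta\varphi=0$; hence $\nabla\varphi=(\mbox{Re}\,\frac{\partial\Phi}{\partial z},-\mbox{Im}\,\frac{\partial\Phi}{\partial z})$. The $\tau$-part of the interior leftover then collapses to $\tau\,\mbox{div}\,\nabla\varphi=\tau\Delta\varphi=0$, so that only the $C$-contribution survives in $\Omega$, namely the interior integral of a first-order expression in $C_1,C_2$ against $|\widetilde v|^2$; simultaneously the $\tau$-part of the boundary leftover assembles, via $\nabla\varphi$ above, into $-\int_{\partial\Omega}\tau(\nabla\varphi,\nu)|\widetilde v|^2\,d\sigma$ and the $C$-part into the normal-trace boundary integral — these being precisely the remaining interior and boundary integrals in (\ref{vika1}). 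For case (\ref{zina1}) one repeats the computation with $2\frac{\partial}{\partial\overline z}=\partial_{x_1}+i\partial_{x_2}$ and $\frac{\partial\overline\Phi}{\partial\overline z}=\overline{\partial\Phi/\partial z}$ (again by holomorphicity), which amounts to replacing $\partial_{x_2}$ by $-\partial_{x_2}$ and $\mbox{Im}\,\frac{\partial\Phi}{\partial z}$ by $-\mbox{Im}\,\frac{\partial\Phi}{\partial z}$ throughout; this flips the signs of the $C_2$-dependent interior term, of the $\nu_2C_2$ boundary term, and of the tangential boundary term, and yields (\ref{vika2}). The computation is elementary; the two points that require care are (i) recognizing the mixed second-derivative cross term as a pure boundary contribution — the feature special to two real variables and to working with the single operator $\partial_{x_1}-i\partial_{x_2}$ — and (ii) using the harmonicity of $\varphi$, i.e. (\ref{zzz}), to annihilate every $O(\tau)$ interior term, so that only the single weighted boundary integral $\tau(\nabla\varphi,\nu)|\widetilde v|^2$ persists, while the general coefficient $C$ contributes precisely its divergence in the interior and its normal trace on the boundary with no further cancellation.
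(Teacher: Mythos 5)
Your approach — squaring the equation in $L^2$ after splitting $2\partial_z - (\tau\Phi_z - C)$ into a formally skew-adjoint piece $K=\partial_{x_1}-iI$ and a formally self-adjoint piece $S$, then integrating by parts and using the Cauchy--Riemann relations together with $\Delta\varphi=0$ — is exactly the right and standard proof of such an identity; the present paper does not prove Proposition~\ref{Proposition 2.1} at all (it simply cites \cite{IUY}), so there is no ``paper's proof'' to compare with, and yours is the natural direct argument.

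However, your assertion that $\Vert K\widetilde v\Vert^2$ and $\Vert S\widetilde v\Vert^2$ are ``the squares of (the arguments of) the two norms appearing in (\ref{vika1})'', and that the leftover boundary/interior integrals are ``precisely'' those in (\ref{vika1}), does not survive a term-by-term check against the printed statement. From the decomposition $(\partial_{x_1}-i\partial_{x_2})\widetilde v - (R+iI)\widetilde v = \widetilde f$ one necessarily gets $S\widetilde v = -i\partial_{x_2}\widetilde v - R\widetilde v = \tfrac{1}{i}\partial_{x_2}\widetilde v - R\widetilde v$, whereas the printed second norm in (\ref{vika1}) has $-\tfrac{1}{i}\partial_{x_2}\widetilde v - R\widetilde v = i\partial_{x_2}\widetilde v - R\widetilde v$, the opposite sign; likewise your integration by parts of the first-order $\times$ zeroth-order cross terms yields $-\int_{\partial\Omega}(\nu_1R-\nu_2I)|\widetilde v|^2 d\sigma + \int_\Omega(\partial_{x_1}R-\partial_{x_2}I)|\widetilde v|^2 dx$, which with $R=\tau\partial_{x_1}\varphi-C_1$, $I=-\tau\partial_{x_2}\varphi-C_2$ gives the combinations $\nu_1C_1-\nu_2C_2$ and $\partial_{x_1}C_1-\partial_{x_2}C_2$, not the printed $\nu_1C_1+\nu_2C_2$ and $\partial_{x_1}C_1+\partial_{x_2}C_2$. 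Indeed, the printed identity (\ref{vika1}) is falsified directly: take $\Omega$ the unit disk, $\Phi=z$, $\tau=1$, $C=0$, $\widetilde v=e^{ix_2}$; then $\widetilde f=0$, $K\widetilde v=0$, and every boundary and interior integral in (\ref{vika1}) vanishes, yet the printed second norm $\Vert i\partial_{x_2}\widetilde v - \widetilde v\Vert^2 = \Vert -2e^{ix_2}\Vert^2 = 4\pi\neq0$, while your $\Vert S\widetilde v\Vert^2 = \Vert -i\partial_{x_2}\widetilde v-\widetilde v\Vert^2 = 0$ makes the identity hold. In short, your computation is correct and proves the \emph{corrected} version of (\ref{vika1})--(\ref{vika2}) (the $\pm\tfrac{1}{i}$ prefactor and the $\pm C_2$ signs appear to have been interchanged between the two cases in the printed statement), but you should explicitly flag this discrepancy rather than assert a match with the printed formulas; as written, the sentence claiming agreement with (\ref{vika1}) is false, even though the proof itself is sound.
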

Consider the boundary value problem
$$\left\{\aligned
&\mathcal{K}(x,{D})u =
(4 \frac{\partial}{\partial{ z}}\frac{\partial}{\partial{\overline z}}
+ 2 {A}\frac{\partial}{\partial{z}}
+2{B} \frac{\partial}{\partial\overline z})u = f
\quad \text{in} \quad \Omega, \\
&u \vert_{\partial \Omega} = 0.
\endaligned\right.
$$

For this problem we have the following Carleman estimate with boundary
terms.

\begin{proposition}\label{Theorem 2.1}
Suppose that $\Phi$ satisfies (\ref{zzz}), (\ref{mika}),
$u\in H^1_0(\Omega)$ and $\Vert A\Vert
_{L^\infty(\overline\Omega)}+\Vert B\Vert
_{L^\infty(\overline\Omega)}\le K$.
Then there exist $\tau_0=\tau_0(K,\Phi)$ and $C_{13}=C_{13}(K,\Phi)$ independent
of $u$ and $\tau$ such that for all $\vert\tau\vert>\tau_0$

\begin{eqnarray}\label{suno4} \vert \tau\vert\Vert
ue^{\tau\varphi}\Vert^2_{L^2(\Omega)}+\Vert
ue^{\tau\varphi}\Vert^2_{H^1(\Omega)}+\Vert\frac{\partial
u}{\partial\nu}e^{\tau\varphi}\Vert^2_{L^2(\Gamma_0)}+
\tau^2\Vert\vert\frac{\partial\Phi}{\partial z} \vert
ue^{\tau\varphi}\Vert^2_{L^2(\Omega)}\nonumber \\
\le C_{13}(\Vert ({\mathcal K }(x,D)
u)e^{\tau\varphi}\Vert^2_{L^2(\Omega)}+\vert\tau\vert
\int_{\widetilde\Gamma^*}\vert
\frac{\partial u}{\partial\nu}\vert^2e^{2\tau\varphi}d\sigma).
\end{eqnarray}
\end{proposition}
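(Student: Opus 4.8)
The plan is to conjugate by the degenerate harmonic weight, factorise the conjugated principal part into two first--order Cauchy--Riemann--type operators, run the energy identities of Proposition \ref{Proposition 2.1} on the two resulting first--order factors, and then treat the (nondegenerate) critical points of $\Phi$ by a model computation. First put $\varphi=\re\Phi$ and $w=ue^{\tau\varphi}$. Since $\varphi$ is harmonic and $2\partial_z\varphi=\partial_z\Phi$, conjugation together with $\partial_{\overline z}\overline{\partial_z\Phi}=0$ gives the factorisation
$$
e^{\tau\varphi}\mathcal K(x,D)(e^{-\tau\varphi}w)=\bigl(2\partial_z-\tau\partial_z\Phi\bigr)\bigl(2\partial_{\overline z}-\tau\overline{\partial_z\Phi}\bigr)w+A\bigl(2\partial_z-\tau\partial_z\Phi\bigr)w+B\bigl(2\partial_{\overline z}-\tau\overline{\partial_z\Phi}\bigr)w .
$$
Writing $g=e^{\tau\varphi}\mathcal K(x,D)u$ and $g_0=e^{\tau\varphi}(4\partial_z\partial_{\overline z})u$, this yields $\|g_0\|_{L^2(\Omega)}\le\|g\|_{L^2(\Omega)}+CK\bigl(\|\nabla w\|_{L^2(\Omega)}+|\tau|\,\||\partial_z\Phi|w\|_{L^2(\Omega)}\bigr)$ with $C=C(\Phi)$. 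Hence it is enough to prove (\ref{suno4}) for $A\equiv B\equiv0$, but in the strengthened form in which the terms $\|ue^{\tau\varphi}\|^2_{H^1(\Omega)}$ and $\tau^2\||\partial_z\Phi|ue^{\tau\varphi}\|^2_{L^2(\Omega)}$ on the left are replaced by $|\tau|\,\|\nabla w\|^2_{L^2(\Omega)}$ and $|\tau|^3\||\partial_z\Phi|w\|^2_{L^2(\Omega)}$; the two genuine first--order terms are then absorbed into the left side once $|\tau|\ge\tau_0(K,\Phi)$, because $CK$ does not depend on $\tau$.

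So assume $A\equiv B\equiv0$ and set $v=(2\partial_{\overline z}-\tau\overline{\partial_z\Phi})w$, so that $(2\partial_z-\tau\partial_z\Phi)v=g_0$ and $(2\partial_{\overline z}-\tau\overline{\partial_z\Phi})w=v$. Apply identity (\ref{vika2}) of Proposition \ref{Proposition 2.1} (with $C\equiv0$) to $w$: its boundary integrals vanish because $w\vert_{\partial\Omega}=0$, and expanding the two squared terms — the mixed terms being reabsorbed using $\overline{\partial_z\Phi}\in C^1$ — gives $\|\nabla w\|^2_{L^2(\Omega)}+\tau^2\||\partial_z\Phi|w\|^2_{L^2(\Omega)}\le C\|v\|^2_{L^2(\Omega)}$ modulo terms of lower order in $\tau$. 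Apply identity (\ref{vika1}) to $v$: this controls $\|\nabla v\|^2_{L^2(\Omega)}+\tau^2\||\partial_z\Phi|v\|^2_{L^2(\Omega)}$ by $\|g_0\|^2_{L^2(\Omega)}$ up to the boundary integrals $\int_{\partial\Omega}\tau(\nabla\varphi,\nu)|v|^2\,d\sigma$ and a tangential one; since $v\vert_{\partial\Omega}=(\partial u/\partial\nu)e^{\tau\varphi}(\nu_1+i\nu_2)$ (again by $w\vert_{\partial\Omega}=0$) and $\partial\varphi/\partial\nu\equiv0$ on $\Gamma_0^*$ — a consequence of $\im\Phi\vert_{\Gamma_0^*}=0$ and the Cauchy--Riemann equations — the indefinite boundary contribution is confined to $\widetilde\Gamma^*$ and dominated by $|\tau|\int_{\widetilde\Gamma^*}|\partial u/\partial\nu|^2e^{2\tau\varphi}\,d\sigma$. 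Chaining the two inequalities yields (\ref{suno4}) for $A\equiv B\equiv0$ except for the non--degenerate term $|\tau|\,\|ue^{\tau\varphi}\|^2_{L^2(\Omega)}$; the boundary term $\|(\partial u/\partial\nu)e^{\tau\varphi}\|^2_{L^2(\Gamma_0)}$ is then appended to the left by a standard elliptic boundary estimate near $\Gamma_0$, using $u\vert_{\partial\Omega}=0$ and $\partial\varphi/\partial\nu\vert_{\Gamma_0^*}=0$.

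The remaining — and only genuinely new — point is to recover $|\tau|\,\|ue^{\tau\varphi}\|^2_{L^2(\Omega)}$. I would take a smooth partition of unity on $\overline\Omega$ subordinate to a cover by small balls $B(\widetilde x_k,\rho)$ about the critical points of $\Phi$ listed in (\ref{mona}) together with $\Omega\setminus\mathcal H$. Off $\mathcal H$ one has $|\partial_z\Phi|\ge c>0$, so there $\tau^2\||\partial_z\Phi|w\|^2\ge c^2\tau^2\|w\|^2\ge|\tau|\,\|w\|^2$ and nothing further is needed. Near a critical point $\widetilde x_k$, the nondegeneracy condition (\ref{mika}) permits a holomorphic change of variable $\zeta$ with $\zeta(\widetilde x_k)=0$ and $\Phi=\Phi(\widetilde x_k)+\zeta^2$; since $4\partial_z\partial_{\overline z}$ is conformally covariant, the matter reduces to the model estimate $|\tau|\,\|\widehat w\|^2_{L^2}\le C\|e^{\tau\re\zeta^2}(4\partial_\zeta\partial_{\overline\zeta})(e^{-\tau\re\zeta^2}\widehat w)\|^2_{L^2}$ for $\widehat w$ compactly supported near $\zeta=0$, which is proved directly by factoring the conjugated model operator as $(2\partial_\zeta-2\tau\zeta)(2\partial_{\overline\zeta}-2\tau\overline\zeta)$ and exploiting the spectral gap of order $|\tau|$ of this harmonic--oscillator type product, as in \cite{IUY}. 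The cut--off commutators generate errors of the form $C_\rho(\|\nabla w\|^2_{L^2(\Omega)}+\tau^2\||\partial_z\Phi|w\|^2_{L^2(\Omega)})$, located where those quantities have already been bounded with a spare power of $|\tau|$, hence absorbable for $|\tau|$ large. Summing over the partition supplies the missing term and completes the strengthened $A\equiv B\equiv0$ estimate, after which the first--order terms are absorbed as in the first paragraph. The hard part is exactly this last step: passing from the degenerate weight $\tau^2|\partial_z\Phi|^2$, which is all the identities of Proposition \ref{Proposition 2.1} produce by themselves, to a full lower bound of order $|\tau|$ at the nondegenerate critical points of $\Phi$, while keeping the localisation errors under control.
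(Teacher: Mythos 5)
There is a genuine gap in your reduction to $A\equiv B\equiv 0$. You propose to prove a strengthened $A=B=0$ estimate with $|\tau|\,\|\nabla w\|^2_{L^2(\Omega)}$ and $|\tau|^3\||\partial_z\Phi|w\|^2_{L^2(\Omega)}$ on the left, so that the first--order perturbation — which when moved to the right contributes $CK^2\bigl(\|\nabla w\|^2+\tau^2\||\partial_z\Phi|w\|^2\bigr)$ — can be absorbed for $|\tau|$ large. But $\varphi=\re\Phi$ is a \emph{limiting} Carleman weight (harmonic, so $\Delta\varphi=0$ and the Hörmander bracket has no sign), and the two identities of Proposition~\ref{Proposition 2.1}, even after chaining the two factors, yield at most the weights $\|\nabla w\|^2+\tau^2\||\partial_z\Phi|w\|^2$ on the left. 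The extra global factor $|\tau|$ that you assert is not available; the only gain of order $|\tau|$ is in the plain $L^2$ norm $\|w\|^2$ coming from the nondegeneracy of the critical points (this is inequality (\ref{(2.23)}) in the paper, or your harmonic--oscillator model — that part is fine). Once the first--order term is written in terms of $w$, the offending quantity $CK^2\bigl(\|\nabla w\|^2+\tau^2\||\partial_z\Phi|w\|^2\bigr)$ appears with \emph{the same} powers of $\tau$ on both sides, and since $K$ is arbitrary it cannot be absorbed by taking $\tau$ large.

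The paper resolves this with an idea your outline does not contain: a \emph{second} Carleman parameter. It introduces a smooth $\mathcal C$ with $2\partial_z\mathcal C=C$, where $\vec C=(C_1,C_2)$ satisfies $\mathrm{div}\,\vec C=1$ in $\Omega$ and $(\nu,\vec C)=-1$ on $\Gamma_0$, and applies Proposition~\ref{Proposition 2.1} not to $\widetilde w=(2\partial_{\bar z}-\tau\partial_{\bar z}\overline\Phi)\widetilde v$ itself but to $\widetilde w\,e^{N\mathcal C}$, treating the zeroth--order coefficient as $-N C$. The interior divergence term then becomes $+N\int_\Omega|\widetilde w e^{N\mathcal C}|^2\,dx$, which for $N$ large dominates the $CK^2$ contribution from the first--order part of $\widetilde f$ after the usual square--completion; one first fixes $N=N(K,\Phi)$ large and only then $\tau_0=\tau_0(N)$. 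This mechanism also hands you the boundary term $\|\partial_\nu u\,e^{\tau\varphi}\|^2_{L^2(\Gamma_0)}$ directly: on $\Gamma_0$ one has $(\nabla\varphi,\nu)=0$ and $(\nu,\vec C)=-1$, so the boundary contribution $-\int_{\Gamma_0}\bigl(\tau(\nabla\varphi,\nu)+\tfrac N2(\nu,\vec C)\bigr)|\partial_\nu\widetilde v\,e^{N\mathcal C}|^2\,d\sigma=\tfrac N2\int_{\Gamma_0}|\partial_\nu\widetilde v\,e^{N\mathcal C}|^2\,d\sigma$ comes out with a good sign, whereas your plan to append this term by a separate elliptic estimate near $\Gamma_0$ would need to reproduce the exact weight and power of $\tau$ and is not justified. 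In short: your treatment of the critical points is sound and parallel to the paper's interpolation inequality, but the absorption of the first--order terms cannot be achieved by a power--of--$\tau$ bootstrap; the auxiliary exponential weight $e^{N\mathcal C}$ with the second large parameter $N$ is the missing idea, and it is also what produces the $\Gamma_0$ boundary term.
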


\begin{proof}
Denote $\widetilde v=ue^{\tau\varphi},{\mathcal K}(x,D) u=f.$
Observe that
$\varphi(x_1,x_2)=\frac{1}{2}(\Phi(z)+\overline{\Phi(z)})$.
Therefore
\begin{eqnarray}\nonumber
e^{\tau\varphi}\Delta (e^{-\tau\varphi} \widetilde v)= (
2\frac{\partial}{\partial
z}-{\tau}\frac{\partial\Phi}{\partial z})(
2\frac{\partial}{\partial\overline z}
-{\tau}\frac{\partial\overline\Phi} {\partial \overline
z})\widetilde v=\\(2\frac{\partial}{\partial\overline z}
-{\tau}\frac{\partial\overline\Phi} {\partial \overline
z})(2\frac{\partial}{\partial
z}-{\tau}\frac{\partial\Phi}{\partial z})\widetilde
v
=\tilde f=(f-2B\frac{\partial u}{\partial \overline z}-2A\frac{\partial u}{\partial z})e^{\tau\varphi}.
\end{eqnarray}
Assume now that $u$ is a real valued function.
Denote $\widetilde w=(
2\frac{\partial}{\partial\overline z}
-{\tau}\frac{\partial\overline\Phi} {\partial \overline
z})\widetilde v.
$

Thanks to the zero Dirichlet boundary condition for $u$  we have $$
\widetilde w\vert_{\partial\Omega}=
2\frac{\partial\widetilde
v}{\partial \overline z}\vert_{\partial\Omega}=(\nu_1+i\nu_2)\frac{\partial \widetilde v}{\partial
\nu}\vert_{\partial\Omega}.\,\,
$$
Let $\mathcal C$ be  some smooth function in  $\Omega$ such that $$2\frac{\partial \mathcal C}{\partial z}=C(x)=C_1(x)+iC_2(x)\quad \mbox{in}\,\,\Omega, \quad \mbox{Im} \,{\mathcal C}=0 \quad \mbox  {on}\,\,\Gamma_0,$$ where $\vec C=(C_1,C_2)$ is the smooth function in $\Omega$  such that
$$
div\, \vec C=1\quad \mbox{in}\,\,\Omega,\quad (\nu,\vec C)=-1\quad \mbox{on}\,\,\Gamma_0.
$$

 By Proposition \ref{Proposition 2.1} we have the following integral
equalities:
\begin{eqnarray}\label{ipolit} \Vert \frac{\partial (\widetilde
we^{N \mathcal C})}{\partial{x_1}}-i\mbox{Im}(\tau\frac{\partial\overline\Phi}
{\partial{\bar z}}+NC ) (\widetilde
we^{N \mathcal C})\Vert
_{L^2(\Omega)}-
\int_{\partial\Omega}(\tau(\nabla\varphi,\nu)+N(\nu_1C_1+\nu_2C_2))\vert
\frac{\partial\widetilde v}{\partial\nu}e^{N\mathcal
C}\vert^2d\sigma\nonumber\\
+N\int_\Omega\vert \widetilde
we^{N \mathcal C}\vert^2dx+\mbox{Re}\int_{\partial\Omega}i((\nu_2
\frac{\partial}{\partial x_1}-\nu_1 \frac{\partial}{\partial
x_2})(\widetilde we^{N \mathcal C}))\overline{\widetilde
we^{ N\mathcal
C}}d\sigma+\\
+\Vert -\frac 1i \frac{\partial (\widetilde
we^{N \mathcal C})}{\partial{x_2}}-\mbox{Re}(\tau\frac{\partial\Phi}
{\partial{z}}+NC )(\widetilde we^{N \mathcal C})\Vert^2
_{L^2(\Omega)}=\Vert
\tilde fe^{\tau\varphi+N \mathcal C}\Vert^2_{L^2(\Omega)}.\nonumber
\end{eqnarray}

We now simplify the integral $\mbox{Re}\,i\int_{\partial\Omega}((\nu_2
\frac{\partial}{\partial x_1}-\nu_1 \frac{\partial}{\partial
x_2})(\widetilde we^{N\mathcal C}))\overline{\widetilde w_1
e^{N\mathcal C}}d\sigma.$ We recall that $\widetilde v=ue^{\tau
\varphi}$ and $\widetilde w=(\nu_1+i\nu_2)
\frac{\partial \widetilde v}{\partial
\nu}=(\nu_1+i\nu_2)\frac{\partial u}{\partial
\nu} e^{\tau \varphi}.$ Denote
$R+iP=(\nu_1+i\nu_2)e^{N \mbox{Im}\mathcal C}.$ Therefore
\begin{eqnarray} \label{leonid}\mbox{Re}\int_{\partial\Omega}i((\nu_2
\frac{\partial}{\partial x_1}-\nu_1 \frac{\partial}{\partial
x_2})(\widetilde we^{ N\mathcal C}))\overline{\widetilde
we^{ N\mathcal
C}}d\sigma=\\
\mbox{Re}\int_{\partial\Omega}i((\nu_2 \frac{\partial}{\partial
x_1}-\nu_1 \frac{\partial}{\partial x_2})[(R+iP)\frac{\partial
u}{\partial\nu}
e^{\tau \varphi+N\mbox{Re}\,{\mathcal C}}])(R-iP)\frac{\partial u}{\partial\nu}
e^{\tau\varphi+N\mbox{Re}\,{\mathcal C}}d\sigma=\nonumber\\
\mbox{Re}\int_{\partial\Omega}i[(\nu_2 \frac{\partial}{\partial
x_1}-\nu_1 \frac{\partial}{\partial x_2})(R+iP)] \vert\frac{\partial
(\widetilde v e^{N\tilde{\mathcal C}})}{\partial\nu}\vert^2
(R-iP)d\sigma.\nonumber
\end{eqnarray}


Using the above formula we obtain
\begin{eqnarray} \label{suno2}
 \Vert \frac{\partial (\widetilde
we^{N \mathcal C})}{\partial{x_1}}-i\mbox{Im}(\tau\frac{\partial\overline\Phi}
{\partial{\bar z}}+NC ) (\widetilde
we^{N \mathcal C})\Vert
_{L^2(\Omega)}-
\int_{\partial\Omega}(\tau(\nabla\varphi,\nu)+N(\nu_1C_1+\nu_2C_2))\vert
\frac{\partial\widetilde v}{\partial\nu}e^{N\mathcal
C}\vert^2d\sigma\nonumber\\
+N\int_\Omega\vert \widetilde
we^{N \mathcal C}\vert^2dx+\mbox{Re}\int_{\partial\Omega}i[(\nu_2 \frac{\partial}{\partial
x_1}-\nu_1 \frac{\partial}{\partial x_2})(R+iP)] \vert\frac{\partial
(\widetilde v e^{N\mbox{Re}\,{\mathcal C}})}{\partial\nu}\vert^2
(R-iP)d\sigma\nonumber\\
+\Vert -\frac 1i \frac{\partial (\widetilde
we^{N \mathcal C})}{\partial{x_2}}-\mbox{Re}(\tau\frac{\partial\Phi}
{\partial{z}}+NC )(\widetilde w e^{N \mathcal C})\Vert^2
_{L^2(\Omega)}=\Vert
\tilde fe^{\tau\varphi+N \mathcal C}\Vert^2_{L^2(\Omega)}
\end{eqnarray}

Taking the parameter $N$ sufficiently large positive and taking into account that the function $R+iP$ is independent of $N$ on $\Gamma_0$ we conclude from (\ref{suno2})
\begin{eqnarray} \label{suno5}
 -
\int_{\partial\Omega}(\tau(\nabla\varphi,\nu)+\frac{N}{2}(\nu_1C_1+\nu_2C_2))\vert
\frac{\partial\widetilde v}{\partial\nu}e^{N\mathcal
C}\vert^2d\sigma +
N\int_\Omega\vert \widetilde
we^{N \mathcal C}\vert^2dx\\
\le \Vert
\tilde fe^{\tau\varphi}e^{N \mathcal C}\Vert^2_{L^2(\Omega)}+C(N)\int_{\tilde\Gamma}\vert
\frac{\partial\widetilde v}{\partial\nu}e^{N\mathcal
C}\vert^2d\sigma\nonumber
\end{eqnarray}

A simple computation gives
\begin{eqnarray}4\Vert \frac{\partial (\widetilde v e^{N\mbox{Re}\,\mathcal C})}{\partial \bar z}\Vert^2_{L^2(\Omega)}
+\tau^2\Vert\overline{\frac{\partial\Phi}{\partial z}} (\widetilde v e^{N\mbox{Re}\,\mathcal C})\Vert^2_{L^2(\Omega)}=
\Vert 2\frac{\partial (\widetilde v e^{N\mbox{Re}\,\mathcal C})}{\partial \bar z}
-\tau\overline{\frac{\partial\Phi}{\partial z}} (\widetilde v e^{N\mbox{Re}\,\mathcal C})\Vert^2_{L^2(\Omega)}=
                                 \nonumber\\
  \Vert e^{N\mbox{Re}\,\mathcal C}(2\frac{\partial \widetilde v }{\partial \bar z}
-(\tau\overline{\frac{\partial\Phi}{\partial z}} +2\frac{\partial N\mbox{Re}\,\mathcal C}{\partial \bar z} ) \widetilde v)\Vert^2_{L^2(\Omega)}                                                    \nonumber\\
\le 2 \Vert \tilde w e^{N \mathcal C}\Vert^2_{L^2(\Omega)}+C(N) \Vert u e^{\tau\varphi}\Vert^2_{L^2(\Omega)}.
\end{eqnarray}

Since by assumption (\ref{mika}), the function $\Phi$ has zeros of
at most second order, there exists a constant $C_{14}>0$ independent
of $\tau$ such that
\begin{equation}\label{(2.23)}
\tau\Vert \widetilde v  e^{N\mbox{Re}\,\mathcal C}\Vert^2_{L^2(\Omega)}\le C_{14}(\Vert \widetilde
v  e^{N\mbox{Re}\,\mathcal C}\Vert^2_{H^1(\Omega)}+\tau^2\Vert\vert\frac{\partial\Phi}{\partial
z} \vert \widetilde v e^{N\mbox{Re}\,\mathcal C}\Vert^2_{L^2(\Omega)}).
\end{equation}
Therefore by (\ref{suno5})-(\ref{(2.23)}) there exists $N_0>0$ such that for any $N>N_0$ there exists $\tau_0(N)$ that
 \begin{eqnarray}\label{xxx}-
\int_{\partial\Omega}(\tau(\nabla\varphi,\nu)+\frac{N}{2}(\nu_1C_1+\nu_2C_2))\vert
\frac{\partial\widetilde v}{\partial\nu}e^{N\mathcal
C}\vert^2d\sigma +
\frac{N}{2}\int_\Omega\vert \widetilde
we^{N \mathcal C}\vert^2dx\nonumber\\
\tau\Vert \widetilde v  e^{N\mbox{Re}\,\mathcal C}\Vert^2_{L^2(\Omega)}+\Vert \widetilde
v  e^{N\mbox{Re}\,\mathcal C}\Vert^2_{H^1(\Omega)}+\tau^2\Vert\vert\frac{\partial\Phi}{\partial
z} \vert \widetilde v e^{N\mbox{Re}\,\mathcal C}\Vert^2_{L^2(\Omega)}\nonumber\\
\le \Vert
\tilde fe^{\tau\varphi}e^{N \mathcal C}\Vert^2_{L^2(\Omega)}+C_{15}(N)\int_{\tilde\Gamma^*}\vert
\frac{\partial\widetilde v}{\partial\nu}e^{N\mathcal
C}\vert^2d\sigma
\end{eqnarray}
In order to drop the assumption that $u$ is the real valued function we obtain the (\ref{xxx}) separately for the real and imaginary parts of $u$ and combine them.
This concludes the proof of the proposition.
\end{proof}
As a corollary we derive a Carleman inequality for
the function $u$ which satisfies the integral equality
\begin{equation}\label{nos}
(u, \mathcal K(x,D)^*w)_{L^2(\Omega)}+(f,w)_{H^{1,\tau}(\Omega)}
+ (g e^{\tau\varphi},e^{-\tau\varphi} w)_{H^{\frac 12,\tau}(\widetilde
\Gamma)}=0
\end{equation}
for all
$w\in \mathcal X=\{w\in H^1(\Omega)\vert w\vert_{\Gamma_0}=0,
\mathcal K(x,D)^*w\in L^2(\Omega)\}.$
We have
\begin{corollary}\label{mymy}
Suppose that $\Phi$ satisfies (\ref{zzz}), (\ref{mika}),
$f\in H^1(\Omega), g\in H^\frac 12 (\widetilde \Gamma),$
$u\in L^2(\Omega)$ and the coefficients $A,B\in\{ C\in
C^1(\overline\Omega)\vert \Vert C\Vert_{C^1(\overline\Omega)}\le K\}.$
Then there exist $\tau_0=\tau_0(K,\Phi)$ and $C_{16}=C_{16}(K,\Phi)$,
independent of $u$ and $\tau$, such that for solutions of (\ref{nos}):
\begin{eqnarray}\label{suno444}
\Vert
ue^{\tau\varphi}\Vert^2_{L^2(\Omega)} \le C_{16}\vert\tau\vert
(\Vert fe^{\tau\varphi}
\Vert^2_{H^{1,\tau}(\Omega)}+
\Vert g e^{\tau\varphi}\Vert^2_{H^{\frac 12,\tau}(\widetilde \Gamma)})
\quad \forall\vert\tau\vert\ge \tau_0.
\end{eqnarray}
\end{corollary}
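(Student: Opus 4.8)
The plan is a transposition (duality) argument: Corollary~\ref{mymy} is the a priori estimate dual to the Carleman estimate of Proposition~\ref{Theorem 2.1}, now read for the formal adjoint $\mathcal K(x,D)^{*}$. As a preliminary I would note that Proposition~\ref{Theorem 2.1} applies with $\mathcal K(x,D)$ replaced by $\mathcal K(x,D)^{*}$: the adjoint again has principal part $\Delta=4\partial_{z}\partial_{\overline z}$, its first-order coefficients are bounded by a constant depending on $K$, and it carries a zeroth-order term $q^{*}$ with $\Vert q^{*}\Vert_{L^{\infty}(\overline\Omega)}\le C(K)$; the resulting extra term $\Vert q^{*}we^{\tau\varphi}\Vert^{2}_{L^{2}(\Omega)}$ on the right of (\ref{suno4}) is $O(\Vert we^{\tau\varphi}\Vert^{2}_{L^{2}(\Omega)})$ and is absorbed by $\vert\tau\vert\Vert we^{\tau\varphi}\Vert^{2}_{L^{2}(\Omega)}$ once $\vert\tau\vert$ is large. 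Since (\ref{suno4}) holds for $\tau$ of either sign, replacing $\tau$ by $-\tau$ turns the weight into $e^{-\tau\varphi}$ while leaving $\varphi$, hence $\Phi$ and its critical set, unchanged. Consequently, for $w\in\mathcal X$ whose normal derivative vanishes on $\widetilde\Gamma^{*}$ the observation term drops out and one obtains
\[
\Vert we^{-\tau\varphi}\Vert^{2}_{H^{1,\tau}(\Omega)}\le C\vert\tau\vert\,\Vert (\mathcal K(x,D)^{*}w)e^{-\tau\varphi}\Vert^{2}_{L^{2}(\Omega)},\qquad\vert\tau\vert\ge\tau_{0}.
\]

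For the duality step, fix $\Psi\in L^{2}(\Omega)$ and construct $w\in\mathcal X$ solving $\mathcal K(x,D)^{*}w=e^{\tau\varphi}\Psi$ in $\Omega$, $w|_{\Gamma_{0}}=0$, $\partial w/\partial\nu|_{\widetilde\Gamma^{*}}=0$, together with the $\tau$-uniform bound $\Vert we^{-\tau\varphi}\Vert_{H^{1,\tau}(\Omega)}\le C\sqrt{\vert\tau\vert}\,\Vert\Psi\Vert_{L^{2}(\Omega)}$ (which follows from the displayed estimate once $w$ is produced, since $\Vert(\mathcal K^{*}w)e^{-\tau\varphi}\Vert_{L^{2}(\Omega)}=\Vert\Psi\Vert_{L^{2}(\Omega)}$). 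I would obtain this $w$ exactly as in the proof of Proposition~\ref{zika}: minimize the penalized functional $J_{\epsilon}(w)=\Vert we^{-\tau\varphi}\Vert^{2}_{H^{1,\tau}(\Omega)}+\frac1\epsilon\Vert(\mathcal K^{*}w-e^{\tau\varphi}\Psi)e^{-\tau\varphi}\Vert^{2}_{L^{2}(\Omega)}$ over the closed subspace $\{w\in\mathcal X:\ \partial_{\nu}w|_{\widetilde\Gamma^{*}}=0\}$, pass through its Euler--Lagrange equation, and let $\epsilon\to+0$, the Carleman estimate for $\mathcal K^{*}$ supplying the coercivity that makes these steps legitimate and the limiting bound uniform in $\tau$. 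With this $w$, (\ref{nos}) and $\mathcal K^{*}w=e^{\tau\varphi}\Psi$ give
\[
(ue^{\tau\varphi},\Psi)_{L^{2}(\Omega)}=(u,\mathcal K(x,D)^{*}w)_{L^{2}(\Omega)}=-(f,w)_{H^{1,\tau}(\Omega)}-(ge^{\tau\varphi},e^{-\tau\varphi}w)_{H^{\frac12,\tau}(\widetilde\Gamma)}.
\]

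The right-hand side is estimated by the Cauchy--Schwarz inequality together with the semiclassical trace bound $\Vert e^{-\tau\varphi}w\Vert_{H^{\frac12,\tau}(\widetilde\Gamma)}\le C\Vert e^{-\tau\varphi}w\Vert_{H^{1,\tau}(\Omega)}$ and the comparison $\vert(f,w)_{H^{1,\tau}(\Omega)}\vert\le C\Vert fe^{\tau\varphi}\Vert_{H^{1,\tau}(\Omega)}\Vert we^{-\tau\varphi}\Vert_{H^{1,\tau}(\Omega)}$ (uniform in $\tau$ because $\nabla e^{\pm\tau\varphi}=\pm\tau(\nabla\varphi)e^{\pm\tau\varphi}$ matches the semiclassical scaling of $H^{1,\tau}$); using also the bound from the second paragraph one gets $\vert(ue^{\tau\varphi},\Psi)_{L^{2}(\Omega)}\vert\le C\sqrt{\vert\tau\vert}(\Vert fe^{\tau\varphi}\Vert_{H^{1,\tau}(\Omega)}+\Vert ge^{\tau\varphi}\Vert_{H^{\frac12,\tau}(\widetilde\Gamma)})\Vert\Psi\Vert_{L^{2}(\Omega)}$. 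Taking the supremum over $\Vert\Psi\Vert_{L^{2}(\Omega)}\le1$ and squaring yields (\ref{suno444}) with a suitable $C_{16}$ and with $\tau_{0}$ as in Proposition~\ref{Theorem 2.1}, possibly enlarged to absorb $q^{*}$. I expect the main obstacle to be the construction of the test function $w$ in the second paragraph: it must lie in $\mathcal X$ (zero Dirichlet data on $\Gamma_{0}$, so that (\ref{nos}) may be applied), have vanishing normal derivative on $\widetilde\Gamma^{*}$ (so that the observation term in the Carleman estimate is killed), solve $\mathcal K^{*}w=e^{\tau\varphi}\Psi$, and satisfy the $\tau$-uniform bound — a mixed boundary value problem outside the scope of standard elliptic theory, whose resolution rests precisely on Proposition~\ref{Theorem 2.1} through the penalization device borrowed from the proof of Proposition~\ref{zika}.
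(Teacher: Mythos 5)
Your outline has the right outer shape (a duality argument built on the Carleman estimate), but the step you flag at the end as the ``main obstacle'' is the whole proof, and the way you propose to surmount it does not work. The first gap is the claimed inequality
$\Vert we^{-\tau\varphi}\Vert^{2}_{H^{1,\tau}(\Omega)}\le C\vert\tau\vert\,\Vert (\mathcal K(x,D)^{*}w)e^{-\tau\varphi}\Vert^{2}_{L^{2}(\Omega)}$
for $w\in\mathcal X$ with $\partial_\nu w|_{\widetilde\Gamma^*}=0$: Proposition~\ref{Theorem 2.1} is stated (and proved) for $u\in H^1_0(\Omega)$, i.e., zero Dirichlet data on all of $\partial\Omega$; its proof uses this to reduce the trace of $\widetilde w=(2\partial_{\overline z}-\tau\partial_{\overline z}\overline\Phi)\widetilde v$ to $(\nu_1+i\nu_2)\partial_\nu\widetilde v$, and the entire boundary-term bookkeeping rests on that identity. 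A function in $\mathcal X$ only vanishes on $\Gamma_0$ and has free trace on $\widetilde\Gamma$, so the Carleman argument does not carry over; killing $\partial_\nu w$ on $\widetilde\Gamma^*$ only suppresses the observation term if you already had the estimate for such $w$, which you do not. Second, the test function you want to construct — $\mathcal K^*w=e^{\tau\varphi}\Psi$ with $w|_{\Gamma_0}=0$, $\partial_\nu w|_{\widetilde\Gamma^*}=0$, and a $\tau$-uniform bound — needs precisely the unavailable estimate to control it; and if you tried to repair the first gap by additionally imposing $w|_{\widetilde\Gamma}=0$, the problem would become overdetermined on $\widetilde\Gamma$ and would have no solution for generic $\Psi$.

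The paper's proof avoids both issues by applying the Carleman estimate to a different function. It minimizes $J_\epsilon(w)=\tfrac12\Vert we^{-\tau\varphi}\Vert^2_{L^2(\Omega)}+\tfrac{1}{2\epsilon}\Vert \mathcal K^*w-ue^{2\tau\varphi}\Vert^2_{L^2(\Omega)}+\tfrac{1}{2|\tau|}\Vert we^{-\tau\varphi}\Vert^2_{L^2_d(\widetilde\Gamma)}$ over $\widehat{\mathcal X}$, with only $w|_{\Gamma_0}=0$ and no Neumann constraint; the penalty targets $ue^{2\tau\varphi}$ directly, so the duality is built into the variational problem and no auxiliary $\Psi$ is needed. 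The optimality condition (\ref{3}) produces a Lagrange multiplier $p_\epsilon$ that satisfies $p_\epsilon|_{\partial\Omega}=0$ and $\partial_\nu p_\epsilon|_{\widetilde\Gamma}=d\widehat w_\epsilon e^{-2\tau\varphi}/|\tau|$. Proposition~\ref{Theorem 2.1} is then applied to this $p_\epsilon$ — which legitimately lies in $H^1_0(\Omega)$ — and the observation term for $p_\epsilon$ is controlled because its Neumann data on $\widetilde\Gamma$ is exactly the soft $L^2_d$ penalty, itself bounded by $J_\epsilon(\widehat w_\epsilon)$. Taking the scalar product of (\ref{3}) with $\widehat w_\epsilon$ and letting $\epsilon\to 0$ yields a $\widehat w\in\mathcal X$ with $\mathcal K^*\widehat w=ue^{2\tau\varphi}$ and the $\tau$-weighted bound (\ref{omm}); substituting $w=\widehat w$ into (\ref{nos}) then gives (\ref{suno444}). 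The soft $L^2_d$ boundary penalty on $w|_{\widetilde\Gamma}$, rather than a hard Neumann constraint, is what makes the optimization solvable while still controlling the boundary term of the Carleman estimate — this is the missing idea in your proposal.
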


\begin{proof} Let $\epsilon$ be some positive number and $d(x)$ be a smooth positive function of $\tilde \Gamma$ which blow up like $\frac{1}{\vert x- y\vert^8}$ for any $y\in\partial\tilde\Gamma.$
Consider the extremal problem
\begin{equation}\label{1}
J_\epsilon(w)=\frac{1}{2}\Vert we^{-\tau\varphi}\Vert^2_{L^2(\Omega)}
+\frac{1}{2\epsilon}\Vert \mathcal K(x,D)^*w-ue^{2\tau\varphi}\Vert^2
_{L^2(\Omega)}+\frac{1}{2\vert\tau\vert}\Vert w e^{-\tau\varphi}\Vert^2
_{L^2_d(\widetilde \Gamma)}\rightarrow \inf,
\end{equation}
\begin{equation}\label{2}
w\in \widehat{\mathcal X}=\{w\in H^\frac 12 (\Omega)\vert \mathcal
K(x,D)^*w\in L^2(\Omega), w\vert_{\Gamma_0}=0\}.
\end{equation}
There exists a unique solution to (\ref{1}), (\ref{2})  which we denote
by $\widehat w_\epsilon.$
By Fermat's theorem
$$
J_\epsilon'(\widehat w_\epsilon)[\delta]=0\quad \forall
\delta\in \widehat{\mathcal X}.
$$
Using the notation $p_\epsilon=\frac 1\epsilon (\mathcal K(x,D)^*
\widehat w_\epsilon-ue^{2\tau\varphi})$ this implies
\begin{equation}\label{3}
\mathcal K(x,D)p_\epsilon+\widehat w_\epsilon e^{-2\tau\varphi}=0
\quad\mbox{in}\,\,\Omega, \quad p_\epsilon\vert_{\partial\Omega}=0,
\quad \frac{\partial p_\epsilon}{\partial\nu}\vert_{\widetilde\Gamma}
=d\frac{\widehat w_\epsilon}{\vert\tau\vert} e^{-2\tau\varphi}.
\end{equation}
By Proposition \ref{Theorem 2.1} we have
\begin{eqnarray}\label{suno4445A} \vert \tau\vert\Vert
p_\epsilon e^{\tau\varphi}\Vert^2_{L^2(\Omega)}+\Vert
p_\epsilon e^{\tau\varphi}\Vert^2_{H^1(\Omega)}+\Vert\frac{\partial
p_\epsilon}{\partial\nu}e^{\tau\varphi}\Vert^2_{L^2(\Gamma_0)}+
\tau^2\Vert\vert\frac{\partial\Phi}{\partial z} \vert
p_\epsilon e^{\tau\varphi}\Vert^2_{L^2(\Omega)}\nonumber
\\
\le C_{11}(\Vert
\widehat w_\epsilon e^{-\tau\varphi}\Vert^2_{L^2(\Omega)}
+\frac{1}{\vert\tau\vert} \int_{\widetilde\Gamma^*}\vert\widehat
w_\epsilon\vert^2e^{-2\tau\varphi}d\sigma)
\le 2C_{17}J_\epsilon(\widehat w_\epsilon).
\end{eqnarray}

Taking the scalar product of equation (\ref{3}) with
$\widehat w_\epsilon$ we obtain
$$
2J_\epsilon(\widehat w_\epsilon)+(u  e^{2\tau\varphi}, p_\epsilon)
_{L^2(\Omega)}=0.
$$
Applying to the second term of the above equality estimate
(\ref{suno4445A}) we have
$$
{\vert\tau\vert}J_\epsilon(\widehat w_\epsilon)
\le C_{18}\Vert ue^{\tau\varphi}\Vert^2_{L^2(\Omega)}.
$$
Using this estimate we pass to the limit in (\ref{3}) as $\epsilon$
goes to zero. We obtain
\begin{equation}\label{13'}
\mathcal K(x,D)p+\widehat we^{-2\tau\varphi}=0\quad\mbox{in}
\,\,\Omega,\quad  p\vert_{\partial\Omega}=0, \quad
\frac{\partial  p}{\partial\nu}\vert_{\widetilde\Gamma}
= d\frac{\widehat w}{\vert \tau\vert}e^{-2\tau\varphi},
\end{equation}
\begin{equation}\label{003'}
\mathcal K(x,D)^*\widehat w- ue^{2\tau\varphi}=0\quad\mbox{in}
\,\,\Omega,\quad \widehat w\vert_{\Gamma_0}=0,
\end{equation}
and
\begin{equation}\label{omo}
\vert\tau\vert\Vert \widehat w e^{-\tau\varphi}\Vert^2_{L^2(\Omega)}
+\Vert \widehat w e^{-\tau\varphi}\Vert^2_{L^2(\widetilde\Gamma)}
\le C_{19}\Vert ue^{\tau\varphi}\Vert^2_{L^2(\Omega)}.
\end{equation}
Since $\widehat w\in L^2(\Omega)$ we have $p\in H^2(\Omega)$ and by
the trace theorem $\frac{\partial p}{\partial \nu}\in H^\frac
12(\partial\Omega).$ The relation (\ref{13'}) implies $\widehat w\in
H^\frac 12(\tilde \Gamma).$ But since $\widehat w\in L^2_d(\tilde
\Gamma)$ and $\widehat w\vert_{\Gamma_0}=0$ we have $\widehat w\in
H^\frac 12(\partial\Omega).$ By (\ref{suno4445A})-(\ref{omo}) we get
\begin{equation}\label{elkao}
\Vert \widehat w e^{-\tau\varphi}\Vert_{H^{\frac 12, \tau}
(\partial\Omega)}
\le  C_{20} \vert\tau\vert^\frac 12\Vert ue^{\tau\varphi}
\Vert_{L^2(\Omega)}.
\end{equation}
Taking the scalar product of (\ref{003'}) with
$\widehat w e^{-2\tau\varphi}$ and using the estimates
(\ref{elkao}), (\ref{omo}) we get
\begin{equation}\label{omm}
\frac{1}{\vert \tau\vert}\Vert \nabla \widehat w e^{-\tau\varphi}
\Vert^2_{L^2(\Omega)}+\vert\tau\vert\Vert \widehat w e^{-\tau\varphi}
\Vert^2_{L^2(\Omega)}+\frac{1}{\vert\tau\vert}
\Vert \widehat w e^{-\tau\varphi}\Vert^2
_{H^{\frac 12,\tau}(\widetilde\Gamma)}\le C_{21}\Vert ue^{\tau\varphi}
\Vert^2_{L^2(\Omega)}.
\end{equation}
From this estimate and a standard duality argument, the
statement of Corollary \ref{mymy} follows immediately.
\end{proof}

Consider the following problem
\begin{equation}\label{(2.26)}
{ L}(x,D)u=f e^{\tau\varphi}\quad \mbox{in}\,\,\Omega,\quad
u\vert_{\Gamma_0}=g e^{\tau\varphi}.
\end{equation}

We have
\begin{proposition}\label{Proposition 2.3}
Let $A,B\in C^{5+\alpha}(\overline \Omega), q\in L^\infty(\Omega)$ and
$\epsilon,\alpha$ be a small positive numbers. There exists $\tau_0>0$
such that for
all $\vert\tau\vert>\tau_0$  there exists a solution to the boundary
value problem
(\ref{(2.26)}) such that \begin{equation} \label{(2.27)}
\frac{1}{\root\of{\vert \tau\vert}}\Vert \nabla ue^{-\tau\varphi}
\Vert_{L^2(\Omega)}+\root\of{\vert \tau\vert}\Vert
ue^{-\tau\varphi}\Vert_{L^2(\Omega)}\le C_{22}(\Vert
f\Vert_{L^2(\Omega)}+\Vert
g\Vert_{H^{\frac 12, \tau}(\Gamma_0)}).
\end{equation}
Let $\epsilon$ be a sufficiently small positive number.
If $supp f\subset G_\epsilon=\{x\in \Omega\vert dist(x,\mathcal
H)>\epsilon\}$  and $g=0$ then there exists
$\tau_0>0$ such that for
all $\vert\tau\vert>\tau_0$  there exists a solution to the
boundary value
problem (\ref{(2.26)}) such that
\begin{equation} \label{(2.277)}
\Vert \nabla ue^{-\tau\varphi}\Vert_{L^2(\Omega)}+\vert \tau\vert \Vert
ue^{-\tau\varphi}\Vert_{L^2(\Omega)}\le C_{23}(\epsilon)\Vert
f\Vert_{L^2(\Omega)}.
\end{equation}
\end{proposition}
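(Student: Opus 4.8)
\emph{Proof strategy.} Statement (\ref{(2.26)}) is the solvability counterpart of the Carleman estimate of Proposition~\ref{Theorem 2.1}, so the plan is a duality (Hahn--Banach and Riesz representation) argument, carried out concretely through the same extremal-problem scheme already used in the proof of Corollary~\ref{mymy}. First I would dispose of the zero-order term: writing $L(x,D)=\mathcal K(x,D)+q$ with $\Vert q\Vert_{L^\infty(\Omega)}\le K$, one has, for any candidate $u$, $\Vert q\,u\,e^{-\tau\varphi}\Vert_{L^2(\Omega)}\le\frac{K}{\root\of{\vert\tau\vert}}\big(\root\of{\vert\tau\vert}\,\Vert u\,e^{-\tau\varphi}\Vert_{L^2(\Omega)}\big)$, so once (\ref{(2.27)}) is established for $\mathcal K(x,D)$, a Neumann series in $q$ convergent for $\vert\tau\vert$ large produces a solution for $L(x,D)$ with the same estimate up to enlarging the constant. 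The inhomogeneous datum $g\,e^{\tau\varphi}$ on $\Gamma_0$ I would not remove by subtracting an extension (which would cost a power of $\vert\tau\vert$), but build directly into the variational formulation through a weighted boundary trace term of the same type as the $\widetilde\Gamma$-term in the functional (\ref{1}), so that its contribution to the right side of (\ref{(2.27)}) is exactly $C\Vert g\Vert_{H^{\frac12,\tau}(\Gamma_0)}$. Thus matters reduce to solving $\mathcal K(x,D)v=f\,e^{\tau\varphi}$ in $\Omega$, $v\vert_{\Gamma_0}=g\,e^{\tau\varphi}$.

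Now put $V=v\,e^{-\tau\varphi}$, so that $\mathcal K_\tau V:=e^{-\tau\varphi}\mathcal K(x,D)(e^{\tau\varphi}V)=f$, $V\vert_{\Gamma_0}=g$. Since $\mathcal K(x,D)^*$ has the same (Laplacian) principal part as $\mathcal K(x,D)$ and differs only by bounded lower-order terms, Proposition~\ref{Theorem 2.1} applies to $\mathcal K(x,D)^*$ as well, and via $(e^{-\tau\varphi}\mathcal K e^{\tau\varphi})^*=e^{\tau\varphi}\mathcal K^* e^{-\tau\varphi}$ this is precisely the a priori bound needed to run the extremal problem (\ref{1}) (with the target adjusted to the present data) and its Euler--Lagrange system (\ref{3}): the Carleman estimate, applied to the auxiliary $H^1_0(\Omega)$ function $W$ of (\ref{3}), bounds the minimizer, and letting $\epsilon\to0$ produces $V$ solving the above with $\root\of{\vert\tau\vert}\,\Vert V\Vert_{L^2(\Omega)}\le C\big(\Vert f\Vert_{L^2(\Omega)}+\Vert g\Vert_{H^{\frac12,\tau}(\Gamma_0)}\big)$; here one uses that the $\widetilde\Gamma^*$-boundary term on the right of (\ref{suno4}) is exactly matched by the regularizing term of (\ref{1}) carrying the blow-up weight $d$, which also forces the trace of the solution onto $\widetilde\Gamma$. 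The gradient bound is then purely energetic: from $\Delta V=f-\big(2\tau\nabla\varphi+O(1)\big)\cdot\nabla V-O(\tau^2)V$, pairing with $\overline V$, using $V\vert_{\Gamma_0}=0$ in the reduction and the weighted trace control from the construction, and absorbing $\vert\tau\vert\,\Vert\nabla V\Vert\,\Vert V\Vert\le\frac14\Vert\nabla V\Vert^2+C\tau^2\Vert V\Vert^2$, one obtains $\Vert\nabla V\Vert^2_{L^2(\Omega)}\le C\big(\Vert f\Vert^2_{L^2(\Omega)}+\tau^2\Vert V\Vert^2_{L^2(\Omega)}\big)\le C\vert\tau\vert\big(\Vert f\Vert_{L^2(\Omega)}+\Vert g\Vert_{H^{\frac12,\tau}(\Gamma_0)}\big)^2$, which is the remaining term of (\ref{(2.27)}).

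For the sharper estimate (\ref{(2.277)}) one has $g=0$ and $\supp f\subset G_\epsilon$, and since by (\ref{mika})--(\ref{mona}) the critical set $\mathcal H$ is finite, $\vert\partial_z\Phi\vert\ge c(\epsilon)>0$ on $G_\epsilon$. In the duality pairing only the values of the (conjugated) test function $W$ on $G_\epsilon$ are seen, because $f$ is supported there, so $\vert(f,W)_{L^2(\Omega)}\vert=\vert(f,W)_{L^2(G_\epsilon)}\vert\le\Vert f\Vert_{L^2(\Omega)}\Vert W\Vert_{L^2(G_\epsilon)}\le c(\epsilon)^{-1}\Vert f\Vert_{L^2(\Omega)}\,\Vert\,\vert\partial_z\Phi\vert\,W\Vert_{L^2(\Omega)}$, and now the degenerate term $\tau^2\Vert\,\vert\partial_z\Phi\vert\,W\Vert^2_{L^2(\Omega)}$ on the left of (\ref{suno4}) supplies the extra factor $\vert\tau\vert$: the testing functional has norm $\le C(\epsilon)\vert\tau\vert^{-1}\Vert f\Vert_{L^2(\Omega)}$ (the boundary terms controlled as before, and dropping in the limit since $g=0$). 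This gives a solution with $\vert\tau\vert\,\Vert V\Vert_{L^2(\Omega)}\le C(\epsilon)\Vert f\Vert_{L^2(\Omega)}$, and the same energy identity then yields $\Vert\nabla V\Vert^2_{L^2(\Omega)}\le C\big(\Vert f\Vert^2_{L^2(\Omega)}+\tau^2\Vert V\Vert^2_{L^2(\Omega)}\big)\le C(\epsilon)\Vert f\Vert^2_{L^2(\Omega)}$, i.e.\ (\ref{(2.277)}).

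\emph{Main obstacle.} The delicate part is the bookkeeping in the duality step: one must pin down the admissible space and the adjoint boundary conditions so that the Euler--Lagrange system of the extremal problem matches the hypotheses of Proposition~\ref{Theorem 2.1} exactly --- in particular so that the boundary contribution lives on $\widetilde\Gamma$, where the weight $d$ in (\ref{1}) blows up, since that is precisely what upgrades $L^2_d(\widetilde\Gamma)$-control of the minimizer to $H^{\frac12}(\partial\Omega)$-control --- while keeping all the semiclassical $\vert\tau\vert$-weights consistent through the limit $\epsilon\to0$. A secondary difficulty is the regularity bootstrap from a weak $L^2$ solution to an $H^1$ solution up to $\Gamma_0$, which has to be run through the weighted energy identity above rather than by a soft argument.
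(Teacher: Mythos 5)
Your overall framework (duality realized through a regularized extremal problem, the dual Carleman estimate of Corollary~\ref{mymy} for the first bound, and the degenerate weight $\tau^2\Vert\,\vert\partial_z\Phi\vert\,\cdot\,\Vert^2_{L^2}$ of Proposition~\ref{Theorem 2.1} together with $\supp f\subset G_\epsilon$ for the improved bound) is precisely the paper's framework; the $L^2_d(\widetilde\Gamma)$ penalty with $d$ blowing up at $\partial\widetilde\Gamma$ does indeed match the $\widetilde\Gamma^*$ boundary term in (\ref{suno4}) and upgrades the trace control. Your separate energy identity for the gradient is superfluous in the paper's scheme because the functional $\widetilde I_\epsilon$ of (\ref{01'}) already carries $\Vert ue^{-\tau\varphi}\Vert^2_{H^{1,\tau}(\Omega)}$ as its leading term, so the $H^{1,\tau}$ bound comes out of the minimization directly, but your argument is a harmless alternative.

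Where the proposal has a genuine gap is the treatment of the Dirichlet datum $g$ on $\Gamma_0$. You decide not to subtract an extension ``which would cost a power of $\vert\tau\vert$'' and instead propose to fold $g$ into the variational functional through a weighted $\Gamma_0$-trace penalty. Two problems. First, the premise is false for the paper's non-generic choice of extension: because $\mbox{Im}\,\Phi\vert_{\Gamma_0}=0$, the function $e^{\mathcal A+\tau\Phi}r + e^{\mathcal B+\tau\overline\Phi}\widetilde r$, with $r$ holomorphic and $\widetilde r$ antiholomorphic satisfying $(e^{\mathcal A}r+e^{\mathcal B}\widetilde r)\vert_{\Gamma_0}=g$ (Proposition~\ref{zika}), equals $ge^{\tau\varphi}$ on $\Gamma_0$ exactly, and its image under $L(x,D)$ is $(q-2\partial_z A - AB)e^{\mathcal A}re^{\tau\Phi}+(q-2\partial_{\overline z} B - AB)e^{\mathcal B}\widetilde re^{\tau\overline\Phi}$, i.e.\ $(\text{bounded in }\tau)\cdot e^{\tau\varphi}$: no power of $\tau$ is lost, and the new right-hand side $\widetilde f$ satisfies $\Vert\widetilde f\Vert_{L^2}\le C(\Vert f\Vert_{L^2}+\Vert g\Vert_{H^{1/2}(\Gamma_0)})$. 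Second, your alternative is not worked out: a soft penalty on $\Gamma_0$ yields only an approximate boundary condition, while replacing the penalty by a hard affine constraint $u\vert_{\Gamma_0}=ge^{\tau\varphi}$ brings you straight back to needing a good comparison element in the admissible set --- which is exactly the extension you declined to construct. The CGO-structured subtraction via Proposition~\ref{zika} is the missing ingredient, and it is what makes the estimate (\ref{(2.27)}) close without $\tau$-loss. Your Neumann-series handling of the zero-order term $q$ is fine, though the paper simply absorbs $q\in L^\infty(\Omega)$ into the $\vert\tau\vert\Vert ue^{\tau\varphi}\Vert^2_{L^2}$ term of the Carleman estimate for $\vert\tau\vert$ large, which is shorter.
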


\begin{proof}
First we reduce the problem (\ref{(2.26)})  to the case $g=0.$
Let $r(z)$ be a holomorphic function and $\widetilde r(\overline z) $
be an antiholomorphic function  such that $(e^{\mathcal A}r+e^{\mathcal B}
\widetilde r)\vert_{\Gamma_0}=g$  where $\mathcal A, \mathcal B
\in C^{6+\alpha}(\overline \Omega)$ are defined as in (\ref{(2.15)}).
The existence of such functions $r,\widetilde r$ follows from
Proposition
\ref{zika}, and these functions can be chosen in such a way that
$$
\Vert r\Vert_{H^1(\Omega)}+\Vert\widetilde r\Vert_{H^1(\Omega)}
\le C_{24}\Vert g\Vert_{H^\frac 12(\Gamma_0)}.
$$

We look for a solution $u$ in the form
$$
u=(e^{\mathcal A+\tau\Phi}r
+e^{\mathcal B+\tau\overline\Phi}\widetilde r)+\widetilde u,
$$
where
\begin{equation}
{ L}(x,D)\widetilde u=\widetilde f e^{\tau\varphi}\quad \mbox{in}
\,\,\Omega,\quad \widetilde u\vert_{\Gamma_0}=0
\end{equation}
and $\widetilde f= f-(q-2\frac{\partial A}{\partial z}-AB)
e^{\mathcal A}re^{i\tau\psi}-(q-2\frac{\partial B}{\partial \overline z}
-AB)e^{\mathcal B}\widetilde re^{-i\tau\psi}.$

In order to prove (\ref{(2.27)}) we consider the following extremal
problem:
\begin{equation}\label{01'}
\widetilde I_\epsilon(u)=\frac{1}{2}\Vert ue^{-\tau\varphi}\Vert^2
_{H^{1,\tau}(\Omega)}
+ \frac{1}{2\epsilon}\Vert L(x,D)u-\widetilde fe^{\tau\varphi}
\Vert^2_{L^2(\Omega)}+\frac{1}{2}\Vert u e^{-\tau\varphi}
\Vert^2_{H^{\frac 12,\tau}(\widetilde \Gamma)}\rightarrow \inf,
\end{equation}
\begin{equation}\label{02'}
u\in {\mathcal Y}=\{w\in H^1(\Omega)\vert w\vert_{\Gamma_0}=0,
L(x,D)w\in L^2(\Omega)\}.
\end{equation}
There exists a unique solution to problem (\ref{01'}), (\ref{02'})
which we denote as $\widehat u_\epsilon.$
By Fermat's theorem
\begin{equation}\label{NANA}
\widetilde I_\epsilon'(\widehat u_\epsilon)[\delta]=0\quad
\forall \delta\in \mathcal Y.
\end{equation}

Let $p_\epsilon=\frac{1}{\epsilon}(L(x,D)\widehat u_\epsilon
-\widetilde fe^{\tau\varphi}).$ Applying  Corollary \ref{mymy}
we obtain from (\ref{NANA})
\begin{equation}\label{suno444}\frac{1}{ \vert \tau\vert}\Vert
p_\epsilon e^{\tau\varphi}\Vert^2_{L^2(\Omega)}
\le C_{25}(\Vert \widehat u_\epsilon e^{-\tau\varphi}\Vert^2
_{H^{1,\tau}(\Omega)}+\Vert \widehat u_\epsilon e^{-\tau\varphi}\Vert^2
_{H^{\frac 12,\tau}
(\widetilde \Gamma)})
\le 2C_{25}\widetilde I_\epsilon(\widehat u_\epsilon).
\end{equation}

Substituting in (\ref{NANA}) with  $\delta=\widehat u_\epsilon$
we get
$$
2\widetilde I_\epsilon(\widehat u_\epsilon)+(\widetilde f  e^{\tau\varphi},
p_\epsilon)_{L^2(\Omega)}=0.
$$
Applying to this equality estimate (\ref{suno444}) we have
$$
\widetilde I_\epsilon(\widehat u_\epsilon)\le C_{26}{\vert\tau\vert}
\Vert \widetilde f\Vert^2_{L^2(\Omega)}.
$$
Using this estimate we pass to the limit as $\epsilon \rightarrow +0$.
We obtain
\begin{equation}\label{03'}
L(x,D) u- \widetilde fe^{\tau\varphi}=0\quad\mbox{in}\,\,\Omega,\quad
u\vert_{\Gamma_0}=0,
\end{equation}
and
\begin{equation}\label{om63}
\Vert ue^{-\tau\varphi}\Vert^2_{H^{1,\tau}(\Omega)}+\Vert ue^{-\tau\varphi}
\Vert^2_{L^2(\widetilde\Gamma)}\le C_{27}\vert \tau\vert
\Vert\widetilde f\Vert^2_{L^2(\Omega)}.
\end{equation}
Since $\Vert\widetilde f\Vert_{L^2(\Omega)}\le
C_{28}(\Vert f\Vert_{L^2(\Omega)}+\Vert g\Vert
_{H^\frac 12(\Gamma_0)})$, inequality (\ref{om63})
implies (\ref{(2.27)}).

In order to prove (\ref{(2.277)}) we consider the following extremal
problem

\begin{equation}\label{1'}
\widetilde J_\epsilon(u)=\frac{1}{2}\Vert ue^{-\tau\varphi}\Vert^2
_{L^2(\Omega)}+\frac{1}{2\epsilon}\Vert L(x,D)u-fe^{\tau\varphi}\Vert^2
_{L^2(\Omega)}+\frac{1}{2\vert\tau\vert}\Vert u e^{-\tau\varphi}\Vert^2
_{L^2_d(\widetilde \Gamma)}\rightarrow \inf,
\end{equation}
\begin{equation}\label{2'}
u\in \widetilde{\mathcal X}=\{w\in H^\frac 12(\Omega)\vert L(x,D)w\in L^2(\Omega),  w\vert
_{\Gamma_0}=0\}.
\end{equation}(Here $d(x)$ is a smooth positive function of $\tilde \Gamma$ which blow up like $\frac{1}{\vert x- y\vert^8}$ for any $y\in\partial\tilde\Gamma.$)
There exists a unique solution to problem (\ref{1'}), (\ref{2'})
which we
denote as $\widehat u_\epsilon.$
By Fermat's theorem
$$\widetilde J_\epsilon'(\widehat u_\epsilon)[\delta]=0\quad
\forall \delta\in \widetilde{\mathcal X}.
$$
This equality implies
\begin{equation}\label{3A}
L(x,D)^*p_\epsilon+\widehat u_\epsilon e^{-2\tau\varphi}=0
\quad\mbox{in}\,\,\Omega,\quad \widehat p_\epsilon\vert
_{\partial\Omega}=0,\quad \frac{\partial p_\epsilon}{\partial\nu}
\vert_{\widetilde\Gamma}=d\frac{\widehat u_\epsilon}{\vert\tau\vert}
e^{-2\tau\varphi}.
\end{equation}
By Proposition \ref{Theorem 2.1}
$$
\frac{1}{\vert \tau\vert}\Vert
p_\epsilon e^{\tau\varphi}\Vert^2_{H^{1,\tau}(\Omega)}
+ \Vert\frac{\partial p_\epsilon}{\partial\nu}
e^{\tau\varphi}\Vert^2_{L^2(\Gamma_0)}+
\tau^2\Vert\vert\frac{\partial\Phi}{\partial z} \vert
p_\epsilon e^{\tau\varphi}\Vert^2_{L^2(\Omega)}
$$
\begin{equation}\label{veronika'}\le C_{29}(\Vert
\widehat u_\epsilon e^{-\tau\varphi}\Vert^2_{L^2(\Omega)}
+\frac{1}{\vert\tau\vert} \int_{\widetilde\Gamma^*}\vert\widehat
u_\epsilon\vert^2e^{-2\tau\varphi}d\sigma)\le C_{30}\widetilde
J_\epsilon(\widehat u_\epsilon).
\end{equation}

Taking the scalar product of equation (\ref{3A}) with
$\widehat u_\epsilon$ we obtain
$$
2\widetilde J_\epsilon(\widehat u_\epsilon)
+(f  e^{\tau\varphi}, p_\epsilon)_{L^2(\Omega)}=0.
$$
Applying to this equality estimate (\ref{veronika'}) we have
\begin{equation}\label{Zip}
{\vert\tau\vert^2}\widetilde J_\epsilon(\widehat u_\epsilon)\le C_{31}
\Vert f\Vert^2_{L^2(\Omega)}.
\end{equation}
Using this estimate we pass to the limit in (\ref{3A}). We conclude that
\begin{equation}\label{3'}
L(x,D)^*p+ue^{-2\tau\varphi}=0\quad\mbox{in}\,\,\Omega,\quad  p\vert
_{\partial\Omega}=0, \quad\frac{\partial p}{\partial\nu}\vert
_{\widetilde\Gamma}=\frac{u}{\vert \tau\vert}e^{-2\tau\varphi},
\end{equation}
\begin{equation}\label{03'}
L(x,D) u- fe^{\tau\varphi}=0\quad\mbox{in}\,\,\Omega, \quad u\vert
_{\Gamma_0}=0.
\end{equation}
Moreover (\ref{Zip}) implies
\begin{equation}\label{om}
\vert\tau\vert^2\Vert ue^{-\tau\varphi}\Vert^2_{L^2(\Omega)}
+\vert \tau\vert\Vert ue^{-\tau\varphi}\Vert^2_{L^2(\widetilde\Gamma)}
\le C_{32}\Vert f\Vert^2_{L^2(\Omega)}.
\end{equation}
This finishes the proof of the proposition.
\end{proof}
\section{Estimates and Asymptotics}

In this section we prove some estimates and obtain
asymptotic expansions needed in the construction of the complex
geometrical optics solutions in Section 4.

Consider the operator
\begin{eqnarray}\label{mol1}
{L}_1(x,{D})=4 \frac{\partial}{\partial{z}}
\frac{\partial}{\partial{\overline z}}
+2{A}_1 \frac{\partial}{\partial{ z}}+2{B}_1
\frac{\partial}{\partial{\overline z}}
+q_1=\nonumber\\(2\frac{\partial}{\partial{ z}} +{B}_1)
(2\frac{\partial}{\partial{\overline z}}+{A}_1)
+q_1-2 \frac{\partial {A}_1}{\partial z}
-{A}_1 {B}_1=\nonumber\\
(2\frac{\partial}{\partial{\overline z}}+{A}_1)
(2\frac{\partial}{\partial{z}}+{B}_1)
+q_1-2 \frac{\partial {B}_1}{\partial {\overline{z}}}
-{A}_1 {B}_1.
\end{eqnarray}
Let $\mathcal{A}_1, \mathcal{B}_1 \in
C^{6+\alpha}(\overline\Omega),$ with some $\alpha\in
(0,1),$ satisfy
\begin{equation}\label{001q}
2\frac{\partial \mathcal A_1}{\partial\overline z}= -A_1\quad
\mbox{in}\,\,\Omega,\,\,\mbox{Im}\,{\mathcal A_1}\vert_{\Gamma_0}=0,
\quad 2 \frac{\partial \mathcal{B}_1}{\partial z} = -{B}_1
\quad
\mbox{in}\,\,\Omega,\,\,\mbox{Im}\,
{\mathcal B_1}\vert_{\Gamma_0}=0
\end{equation}
and let $\mathcal{A}_2, \mathcal{B}_2 \in
C^{6+\alpha}(\overline\Omega)$ be defined similarly.
Observe that
$$
(2\frac{\partial}{\partial{\overline z}}+ A_1)e^{\mathcal A_1}=0\quad
\mbox{in}\,\,\Omega,\quad
(2\frac{\partial}{\partial{ z}}+{ B_1}) e^{{\mathcal B_1}}=0
\quad\mbox{in}\,\,\Omega.
$$
Therefore if $a(z), \Phi(z)$ are holomorphic functions and
$b(\overline z)$ is an antiholomorphic function, we have
$$
{L}_1(x,{D})(e^{\mathcal{A}_1} ae^{\tau\Phi})=
(q_1 -2 \frac{\partial {{A}_1}}{\partial z}
-{A}_1 {B}_1)e^{\mathcal{A}_1} a e^{\tau\Phi},
$$
$$
{L}_1(x,{D})(e^{\mathcal{B}_1} b e^{\tau\overline\Phi})=
(q_1 -2 \frac{\partial {B}_1}{\partial \overline{z}}
-{A}_1 {B}_1)e^{\mathcal{B}_1} b e^{\tau\overline\Phi}.
$$
Let us introduce the operators:
$$
\partial_{\overline z}^{-1}g=\frac{1}{2\pi i}\int_\Omega
\frac{g(\xi_1,\xi_2)}{\zeta-z} d\zeta\wedge d\overline\zeta=-\frac
1\pi\int_\Omega \frac{g(\xi_1,\xi_2)}{\zeta-z}d\xi_1d\xi_2,
$$
$$
\partial_{ z}^{-1}g=-\frac{1}{2\pi i}\overline{\int_\Omega
\frac{\overline g(\xi_1,\xi_2)}{\zeta-z} d\zeta\wedge
d\overline\zeta}=-\frac 1\pi\int_\Omega
\frac{g(\xi_1,\xi_2)}{\overline\zeta-\overline z}d\xi_1d\xi_2.
$$
We have (e.g., p.47, 56, 72 in \cite{VE}):
\begin{proposition}\label{Proposition 3.0}
{\bf A)} Let $m\ge 0$ be an integer number and $\alpha\in (0,1).$
Then $\partial_{\overline z}^{-1},\partial_{ z}^{-1}\in
\mathcal L(C^{m+\alpha}(\overline{\Omega}),C^{m+\alpha+1}
(\overline{\Omega})).$
\newline
{\bf B}) Let $1\le p\le 2$ and $ 1<\gamma<\frac{2p}{2-p}.$ Then
 $\partial_{\overline z}^{-1},\partial_{ z}^{-1}\in
\mathcal L(L^p( \Omega),L^\gamma(\Omega)).$
\newline
{\bf C})Let $1< p<\infty.$ Then  $\partial_{\overline z}^{-1},
\partial_{ z}^{-1}\in \mathcal L(L^p( \Omega),W^1_p(\Omega)).$
\end{proposition}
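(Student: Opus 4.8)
The plan is to derive all three parts from the classical theory of the two-dimensional Cauchy transform, treating only $T:=\partial_{\bar z}^{-1}$ since $\partial_z^{-1}g=\overline{T\bar g}$. Because $\Omega$ is bounded, $T$ coincides on $\Omega$ with the convolution $Tf=K*(\chi_\Omega f)$, where $K(z)=\frac1{\pi z}\chi_{B(0,R)}(z)$ for any ball $B(0,R)\supset\Omega-\Omega$; one has $\partial_{\bar z}(Tf)=f$ in $\mathcal D'(\Omega)$ because $\partial_{\bar z}\frac1{\pi z}=\delta_0$, and $K\in L^r(\R^2)$ for every $r<2$ while $K\notin L^2$, since $\int_{|z|<R}|z|^{-r}\,dx<\infty$ precisely when $r<2$. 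The same discussion applies verbatim to $\partial_z^{-1}$.

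Part B) then follows from Young's convolution inequality: $\|Tf\|_{L^\gamma(\Omega)}\le\|K\|_{L^r(\R^2)}\|f\|_{L^p(\Omega)}$ whenever $1+\frac1\gamma=\frac1r+\frac1p$ with $1\le r\le\infty$. For $1\le p\le2$ and $1<\gamma<\frac{2p}{2-p}$ (reading $\frac{2p}{2-p}=+\infty$ when $p=2$), the inequality $\frac1\gamma>\frac1p-\frac12$ forces $\frac1r:=1+\frac1\gamma-\frac1p$ into $(\frac12,1]$, i.e. $r\in[1,2)$, an admissible exponent with $K\in L^r$; this gives B). The same computation, now also admitting $p>2$ (where one takes $r=p'<2$ and lands in $L^\infty(\Omega)\subset L^p(\Omega)$), shows moreover that $T$ maps $L^p(\Omega)$ boundedly into itself for every $1<p<\infty$.

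For part C), besides $Tf\in L^p(\Omega)$ just obtained and $\partial_{\bar z}Tf=f\in L^p(\Omega)$, one needs $\partial_z Tf\in L^p(\Omega)$. Here $\partial_z Tf=\Pi(\chi_\Omega f)|_\Omega$, where $\Pi$ is the Ahlfors--Beurling transform $\Pi g(z)=-\frac1\pi\,\mathrm{p.v.}\!\int_{\R^2}\frac{g(\zeta)}{(\zeta-z)^2}\,d\xi_1\,d\xi_2$; this identity is justified by differentiating the mollified kernel $\frac1\pi\bar z(|z|^2+\delta^2)^{-1}$ under the integral sign and letting $\delta\to+0$. Since $\Pi$ is a Calder\'on--Zygmund operator it is bounded on $L^p(\R^2)$ for $1<p<\infty$, hence $\partial_z Tf\in L^p(\Omega)$ with the corresponding bound; combining the three facts yields $Tf\in W^1_p(\Omega)$, proving C).

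Part A) is the substantive one, and I would treat $m=0$ first and then induct. For $f\in C^\alpha(\overline\Omega)$ the local integrability of $K$ already gives $Tf\in C(\overline\Omega)$, and one establishes the representation $\partial_z Tf(z)=-\frac1\pi\int_\Omega\frac{f(\zeta)-f(z)}{(\zeta-z)^2}\,d\sigma_\zeta+f(z)g_0(z)$, where the first integral converges absolutely because $|f(\zeta)-f(z)|\le\|f\|_{C^\alpha}|\zeta-z|^\alpha$, and $g_0(z)=\mathrm{p.v.}\!\int_\Omega(\zeta-z)^{-2}\,d\sigma_\zeta$ is, via Green's formula, a Cauchy integral over $\partial\Omega$ and hence as smooth as needed up to $\overline\Omega$. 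The heart of the matter is the estimate $\|Tf\|_{C^\alpha(\overline\Omega)}+\|\partial_z Tf\|_{C^\alpha(\overline\Omega)}\le C\|f\|_{C^\alpha(\overline\Omega)}$, obtained in the standard way: fix $z,z'\in\overline\Omega$, split each defining integral into the part over $\{|\zeta-z|\le 2|z-z'|\}$ and its complement, bound the near parts using the H\"older modulus of $f$ and the far parts using the mean-value estimate for the (twice-differentiated) kernel. Together with $\partial_{\bar z}Tf=f\in C^\alpha(\overline\Omega)$ this yields $Tf\in C^{1+\alpha}(\overline\Omega)$ with the asserted continuity. For $m\ge1$ one induces: if $f\in C^{m+\alpha}(\overline\Omega)$, an integration by parts in the defining integral commutes a first-order derivative past $T$, producing the convolution of $\partial_jf\in C^{m-1+\alpha}$ with $K$ plus a boundary integral with a $C^\infty$ kernel, which reduces the claim to level $m-1$. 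The main obstacle is exactly this singular-integral H\"older estimate at level $m=0$; parts B) and C) are soft consequences of Young's inequality and the $L^p$-boundedness of $\Pi$. All of this is carried out on the pages of \cite{VE} cited above.
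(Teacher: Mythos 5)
Your proposal is sound and is in substance the standard argument for these mapping properties of the Cauchy transform (Young's inequality with the locally integrable kernel $\frac{1}{\pi z}$ for B), the Beurling--Ahlfors transform and Calder\'on--Zygmund theory for C), and the singular-integral H\"older estimate plus the boundary-term reduction for A)); the paper itself does not prove the proposition but only points to the relevant pages of Vekua's book, which is exactly where the route you lay out is carried out. One small inaccuracy in B): the condition $\frac1\gamma>\frac1p-\frac12$ forces $\frac1r=1+\frac1\gamma-\frac1p>\frac12$ but does not force $\frac1r\le1$ --- that upper bound fails precisely when $\gamma<p$. In that range Young's inequality as written does not apply, and you should instead first obtain $T\in\mathcal L(L^p(\Omega),L^p(\Omega))$ with $r=1$, then use $L^p(\Omega)\hookrightarrow L^\gamma(\Omega)$ on the bounded domain; this is trivial but should be said, since otherwise the stated interval for $1/r$ is simply wrong.
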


Assume that $\mathcal A,\mathcal B, A, B$
satisfy (\ref{(2.15)}). Setting
$T_{B}g=e^{\mathcal
B}\partial^{-1}_z(e^{-\mathcal B} g)$ and
$P_{A}g=e^{\mathcal A}\partial^{-1}_{\overline
z}(e^{-{\mathcal A}} g)$,  for any $g\in C^\alpha(\overline\Omega)$ we have
$$
(2\frac{\partial}{\partial{ z}}+ B)T_{B}g=g\quad\mbox{in}\,\,\Omega,\quad
(2\frac{\partial}{\partial{\overline z}}+{A})P_{A}g=g\quad\mbox{in}\,\,\Omega.
$$
We define two other operators:
\begin{equation}\label{anna}
\mathcal R_{\tau,A}g
= \frac 12e^{{\mathcal A}}e^{\tau(\overline {\Phi}-{\Phi})}
\partial_{\overline z}^{-1}(ge^{-{\mathcal A}}
e^{\tau({\Phi}-\overline {\Phi})}),\,\,
\widetilde {\mathcal R}_{\tau,B}g
= \frac 12e^{{\mathcal B}}e^{\tau(\overline {\Phi}-{\Phi})}
\partial_{ z}^{-1}(ge^{- {\mathcal B}}e^{\tau( {\Phi}
-\overline {\Phi})})
\end{equation}
for $\mathcal A,\mathcal B, A, B$ satisfying (\ref{(2.15)}).

The following proposition follows from straightforward calculations.

\begin{proposition}\label{Proposition 3.1} Let $g\in
C^\alpha(\overline\Omega)$ for some positive $\alpha.$ The function
$\mathcal R_{\tau,A}g$ is a solution to
\begin{equation}\label{(3.2)}
2 \frac{\partial}{\partial{\overline z}}\mathcal{R}_{\tau, {A}}g -2 \tau
\frac{\partial \overline{\Phi} }{\partial\overline{z}}
\mathcal{R}_{\tau, {A}}g
+ {A}\mathcal{R}_{\tau, {A}}g=g \quad \text{in} \quad \Omega.
\end{equation}
The function $\widetilde {\mathcal R}_{\tau,B}g$ solves
\begin{equation}\label{(3.3)}
2 \frac{\partial}{\partial{ z}}\widetilde{\mathcal{R}}_{\tau,{B}}g
+ 2 \tau\frac{\partial \Phi }{\partial {z}}
\widetilde{\mathcal{R}}_{\tau, {B}}g
+{B}\widetilde{\mathcal{R}}_{\tau,{B}}g=g\quad \mbox{in}\,\,\Omega.
\end{equation}
\end{proposition}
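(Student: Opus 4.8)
The plan is to verify each of \eqref{(3.2)} and \eqref{(3.3)} by a direct differentiation, relying on only three ingredients: the Cauchy--Pompeiu identities $\partial_{\overline z}\partial_{\overline z}^{-1}=\mathrm{id}$ and $\partial_z\partial_z^{-1}=\mathrm{id}$ on $\Omega$ (valid with the normalization chosen before Proposition~\ref{Proposition 3.0}); the holomorphicity of $\Phi$, i.e. $\partial_{\overline z}\Phi\equiv 0$, equivalently $\partial_z\overline\Phi\equiv 0$; and the defining relations $2\partial_{\overline z}\mathcal A=-A$, $2\partial_z\mathcal B=-B$ from \eqref{(2.15)}.

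First I would record a regularity remark that legitimizes the termwise differentiation below. Since $g\in C^\alpha(\overline\Omega)$ while $\mathcal A,\mathcal B,\Phi\in C^{6+\alpha}(\overline\Omega)$, the product $g\,e^{-\mathcal A}e^{\tau(\Phi-\overline\Phi)}$ lies in $C^\alpha(\overline\Omega)$, so by Proposition~\ref{Proposition 3.0}(A) its image under $\partial_{\overline z}^{-1}$ lies in $C^{1+\alpha}(\overline\Omega)$; hence $\mathcal R_{\tau,A}g\in C^{1+\alpha}(\overline\Omega)$ and the Leibniz rule applies classically. The same holds for $\widetilde{\mathcal R}_{\tau,B}g$.

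For \eqref{(3.2)} I would set $h:=\partial_{\overline z}^{-1}(g\,e^{-\mathcal A}e^{\tau(\Phi-\overline\Phi)})$, so that $\mathcal R_{\tau,A}g=\frac12 e^{\mathcal A}e^{\tau(\overline\Phi-\Phi)}h$ and $\partial_{\overline z}h=g\,e^{-\mathcal A}e^{\tau(\Phi-\overline\Phi)}$. Applying $\partial_{\overline z}$ and using $\partial_{\overline z}\Phi=0$, the weight $e^{\tau(\overline\Phi-\Phi)}$ contributes only the term $\tau(\partial_{\overline z}\overline\Phi)\mathcal R_{\tau,A}g$, the factor $e^{\mathcal A}$ contributes $(\partial_{\overline z}\mathcal A)\mathcal R_{\tau,A}g$, and the term $\frac12 e^{\mathcal A}e^{\tau(\overline\Phi-\Phi)}\partial_{\overline z}h$ collapses to $\frac12 g$ because the exponentials are mutually inverse. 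This gives
$$
2\partial_{\overline z}\mathcal R_{\tau,A}g
= 2(\partial_{\overline z}\mathcal A)\,\mathcal R_{\tau,A}g
+ 2\tau(\partial_{\overline z}\overline\Phi)\,\mathcal R_{\tau,A}g + g ,
$$
and substituting $2\partial_{\overline z}\mathcal A=-A$ and transposing terms yields exactly \eqref{(3.2)}.

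Finally, \eqref{(3.3)} follows from the same computation with the roles of $z$ and $\overline z$ interchanged: writing $\widetilde{\mathcal R}_{\tau,B}g=\frac12 e^{\mathcal B}e^{\tau(\overline\Phi-\Phi)}\partial_z^{-1}(g\,e^{-\mathcal B}e^{\tau(\Phi-\overline\Phi)})$, applying $\partial_z$, using $\partial_z\overline\Phi=0$ so that the surviving weight term is $-\tau(\partial_z\Phi)\widetilde{\mathcal R}_{\tau,B}g$, collapsing the exponentials via $\partial_z\partial_z^{-1}=\mathrm{id}$, and inserting $2\partial_z\mathcal B=-B$. There is no genuine obstacle here; the only point requiring care is the sign bookkeeping in the two weights $e^{\tau(\overline\Phi-\Phi)}$ and $e^{\tau(\Phi-\overline\Phi)}$, together with the observation that holomorphicity of $\Phi$ must be invoked in the $\overline z$-derivative while antiholomorphicity of $\overline\Phi$ is invoked in the $z$-derivative.
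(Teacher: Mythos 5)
Your proof is correct and is precisely the ``straightforward calculation'' the paper invokes without writing out: the paper offers no proof beyond that remark, and your direct differentiation using $\partial_{\overline z}\partial_{\overline z}^{-1}=\mathrm{id}$, holomorphicity of $\Phi$, and the relations $2\partial_{\overline z}\mathcal A=-A$, $2\partial_z\mathcal B=-B$ is exactly the intended argument. The preliminary regularity remark via Proposition~\ref{Proposition 3.0}(A) is a sensible addition to justify applying the Leibniz rule classically.
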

Using the stationary phase argument (e.g., Bleistein and Handelsman
\cite{BH}), we will show
\begin{proposition}\label{opa} Let $g\in
L^1(\Omega)$ and a function $\Phi$ satisfy (\ref{zzz}),(\ref{mika}).
Then
$$
\mbox{lim}_{\vert\tau\vert\rightarrow +\infty}\int_\Omega
ge^{\tau(\Phi(z)-\overline{\Phi(z)})}dx=0.
$$
\end{proposition}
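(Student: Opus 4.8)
The plan is to read the exponent as $\Phi(z)-\overline{\Phi(z)}=2i\,\mbox{Im}\,\Phi$, so that $\int_\Omega g\,e^{\tau(\Phi-\overline\Phi)}dx=\int_\Omega g\,e^{2i\tau\psi}dx$ with $\psi=\mbox{Im}\,\Phi$ real, and to prove a Riemann--Lebesgue type decay of this oscillatory integral using that $\psi$ is harmonic with only finitely many critical points. The first step is a routine density reduction: given $\varepsilon>0$, choose $g_\varepsilon\in\D(\Omega)$ with $\Vert g-g_\varepsilon\Vert_{L^1(\Omega)}<\varepsilon$; since $\vert e^{\tau(\Phi-\overline\Phi)}\vert\equiv 1$ for real $\tau$, we have $\vert\int_\Omega(g-g_\varepsilon)e^{\tau(\Phi-\overline\Phi)}dx\vert\le\varepsilon$, so it suffices to prove $\int_\Omega g\,e^{2i\tau\psi}dx\to 0$ for $g\in\D(\Omega)$ and then let $\varepsilon\to 0$.

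So assume $g\in\D(\Omega)$ with $K=\supp g\Subset\Omega$. The Cauchy--Riemann equations for $\Phi=\varphi+i\psi$ give $\frac{\partial\Phi}{\partial z}=\psi_{x_2}+i\psi_{x_1}$, hence $\nabla\psi$ vanishes exactly on the critical set $\mathcal H$ and $\vert\nabla\psi\vert=\vert\frac{\partial\Phi}{\partial z}\vert$; by (\ref{mika}) the set $\mathcal H\cap\Omega$ is finite, say $\{\widetilde x_1,\dots,\widetilde x_m\}$. Fix $\varepsilon>0$ and pick $\delta>0$ so small that the balls $B(\widetilde x_j,\delta)$ are pairwise disjoint, contained in $\Omega$, and $m\,\pi\delta^2\Vert g\Vert_{L^\infty(\Omega)}<\varepsilon/2$. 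Choose a partition of unity $1=\chi_0+\sum_{j=1}^m\chi_j$ near $K$ with $\chi_j\in\D(B(\widetilde x_j,\delta))$ equal to $1$ near $\widetilde x_j$; then $\supp(g\chi_0)$ is a compact subset of $\Omega$ disjoint from $\mathcal H$, so $\vert\nabla\psi\vert\ge c>0$ there.

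For $j\ge 1$ one has the trivial bound $\vert\int_\Omega g\chi_j e^{2i\tau\psi}dx\vert\le\Vert g\Vert_{L^\infty(\Omega)}\pi\delta^2$, so these $m$ terms together contribute less than $\varepsilon/2$. For the $\chi_0$ term, the vector field $V=(g\chi_0)\nabla\psi/\vert\nabla\psi\vert^2$ is smooth and compactly supported in $\Omega$, and $V\cdot\nabla e^{2i\tau\psi}=2i\tau(g\chi_0)e^{2i\tau\psi}$, so integrating by parts gives $\int_\Omega g\chi_0 e^{2i\tau\psi}dx=-\frac{1}{2i\tau}\int_\Omega(\mbox{div}\,V)e^{2i\tau\psi}dx$, whence $\vert\int_\Omega g\chi_0 e^{2i\tau\psi}dx\vert\le C/\vert\tau\vert$ with $C=C(\delta,g,\Phi)$; taking $\vert\tau\vert>\tau_0:=2C/\varepsilon$ makes this $<\varepsilon/2$. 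Combining, $\vert\int_\Omega g\,e^{2i\tau\psi}dx\vert<\varepsilon$ for $\vert\tau\vert>\tau_0$, which finishes the case $g\in\D(\Omega)$ and hence, by the density step, the general case. The one place needing care is the neighbourhoods of the $\widetilde x_j$, where the phase is stationary and no integration by parts is available; but since the critical set is finite and $g$ is bounded, their contribution is controlled purely by area, so there is no real obstacle. (Equivalently one may invoke the stationary phase lemma of \cite{BH}: $\frac{\partial^2\Phi}{\partial z^2}(\widetilde x_j)\ne 0$ is exactly the statement that $\psi$ has a nondegenerate critical point at $\widetilde x_j$, giving $\int g\chi_j e^{2i\tau\psi}dx=O(1/\tau)$, but only the vanishing of the limit is needed here.)
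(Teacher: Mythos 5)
Your proof is correct and follows the same overall route as the paper's: reduce by density in $L^1$ to the case $g\in C^\infty_0(\Omega)$, using that $\vert e^{\tau(\Phi-\overline\Phi)}\vert=\vert e^{2i\tau\psi}\vert=1$, and then show the oscillatory integral vanishes for such $g$. Where the paper disposes of this last step with a bare citation of the stationary phase argument of \cite{BH}, you make it explicit and in fact slightly more elementary: you split off a small-area neighbourhood of the finitely many critical points of $\psi$ (estimated trivially by $\Vert g\Vert_{L^\infty}$ times area), and away from them you integrate by parts against the vector field $g\chi_0\nabla\psi/\vert\nabla\psi\vert^2$ to get $O(1/\tau)$. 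Your version uses only that $\mathcal H$ is a finite set, which already follows from (\ref{mika}) plus compactness of $\overline\Omega$, and does not otherwise invoke the nondegeneracy $\partial_z^2\Phi\ne 0$; genuine stationary phase, which you note is also available under the hypotheses, would give the sharper $O(1/\tau)$ rate near the critical points as well, but only the vanishing of the limit is needed for Proposition \ref{opa}, so nothing is lost. The only point worth flagging for rigor is that $V$ need not be $C^\infty$ but only $C^{5+\alpha}$ (since $\Phi\in C^{6+\alpha}(\overline\Omega)$), which is more than enough for the divergence theorem.
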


\begin{proof} Let $\{ g_k\}^\infty_{k=1}\in C^\infty_0(\Omega)$ be
a sequence of
functions such that $g_k\rightarrow g$ in $L^1(\Omega).$ Let
$\epsilon >0$ be  arbitrary. Suppose that $\widehat j$ is
large enough such that $\Vert g-g_{\widehat j}\Vert_{L^1(\Omega)}\le
\frac{\epsilon}{2}.$ Then
$$
\vert\int_\Omega
ge^{\tau(\Phi(z)-\overline{\Phi(z)})}dx\vert\le \vert\int_\Omega(g-
g_{\widehat
j})e^{\tau(\Phi(z)-\overline{\Phi(z)})}dx\vert+\vert\int_\Omega
g_{\widehat j}e^{\tau(\Phi(z)-\overline{\Phi(z)})}dx\vert.
$$
The first term on the right-hand side of this inequality is less then
$\epsilon/2$ and the second goes to zero as  $\vert \tau\vert$
approaches to infinity by the stationary phase argument (see e.g.
\cite{BH}).
\end{proof}
We have
\begin{proposition}\label{Proposition 3.22}
Let $g\in C^2(\Omega), g\vert_{\mathcal O_\epsilon} = 0$ and
$g\vert_{\mathcal H}= 0$.  Then for any $1\le p<\infty$
\begin{equation}\label{(3.4A)}
\left\Vert \mathcal R_{\tau,A}g+\frac {g}{2\tau\overline{\partial_z \Phi}}
\right\Vert_{L^p(\Omega)}
+ \left\Vert\widetilde {\mathcal R}_{\tau,B} g-\frac {g}{2\tau{\partial_z
\Phi}}\right\Vert_{L^p(\Omega)}
= o\left(\frac 1\tau\right)\quad \mbox{as}\,\,
\vert\tau\vert\rightarrow +\infty.
\end{equation}
\end{proposition}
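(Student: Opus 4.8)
The plan is to establish the estimate for $\mathcal R_{\tau,A}g$; the argument for $\widetilde{\mathcal R}_{\tau,B}g$ is identical after exchanging the roles of $\partial_z$ and $\partial_{\overline z}$ and of $\Phi$ and $\overline\Phi$. Writing out the definition, $\mathcal R_{\tau,A}g = \frac12 e^{\mathcal A}e^{\tau(\overline\Phi-\Phi)}\partial_{\overline z}^{-1}(g e^{-\mathcal A}e^{\tau(\Phi-\overline\Phi)})$, the first step is to understand the inner expression $\partial_{\overline z}^{-1}(h_\tau)$ where $h_\tau = g e^{-\mathcal A}e^{\tau(\Phi-\overline\Phi)}$. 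Since $\Phi$ is holomorphic, $\partial_{\overline z}(e^{\tau(\Phi-\overline\Phi)}) = -\tau\overline{\partial_z\Phi}\, e^{\tau(\Phi-\overline\Phi)}$, so one can integrate by parts: heuristically, $e^{\tau(\Phi-\overline\Phi)} = -\frac{1}{\tau}\partial_{\overline z}^{-1}$-type manipulation suggests that $\partial_{\overline z}^{-1}(h_\tau) \approx -\frac{1}{\tau}\cdot\frac{g e^{-\mathcal A}}{\overline{\partial_z\Phi}}e^{\tau(\Phi-\overline\Phi)}$ to leading order, away from the zeros of $\partial_z\Phi$, i.e. away from $\mathcal H$. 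Multiplying back by $\frac12 e^{\mathcal A}e^{\tau(\overline\Phi-\Phi)}$ gives $\mathcal R_{\tau,A}g \approx -\frac{g}{2\tau\overline{\partial_z\Phi}}$, which is exactly the claimed main term. So the real content is to justify the integration by parts and to control the remainder in $L^p$.

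Concretely, I would set $G = \frac{g e^{-\mathcal A}}{2\tau\overline{\partial_z\Phi}}$; this is well-defined and in $C^1$ on the support of $g$ because $g$ vanishes near $\mathcal H$ (so $\partial_z\Phi\neq0$ there) and near $\partial\Omega$ (the hypothesis $g|_{\mathcal O_\epsilon}=0$ removes boundary issues). Then $\partial_{\overline z}(G e^{\tau(\Phi-\overline\Phi)}) = (\partial_{\overline z}G)e^{\tau(\Phi-\overline\Phi)} - \tau\overline{\partial_z\Phi}\,G\,e^{\tau(\Phi-\overline\Phi)} = (\partial_{\overline z}G)e^{\tau(\Phi-\overline\Phi)} - \frac12 g e^{-\mathcal A}e^{\tau(\Phi-\overline\Phi)}$. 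Since $G e^{\tau(\Phi-\overline\Phi)}$ is compactly supported in $\Omega$ and $\partial_{\overline z}^{-1}$ inverts $\partial_{\overline z}$ on such functions, we get
\begin{equation}
\partial_{\overline z}^{-1}\Bigl(\tfrac12 g e^{-\mathcal A}e^{\tau(\Phi-\overline\Phi)}\Bigr) = -G e^{\tau(\Phi-\overline\Phi)} + \partial_{\overline z}^{-1}\Bigl((\partial_{\overline z}G)e^{\tau(\Phi-\overline\Phi)}\Bigr).
\end{equation}
Multiplying by $e^{\mathcal A}e^{\tau(\overline\Phi-\Phi)}$ yields
\begin{equation}
\mathcal R_{\tau,A}g + \frac{g}{2\tau\overline{\partial_z\Phi}} = e^{\mathcal A}e^{\tau(\overline\Phi-\Phi)}\,\partial_{\overline z}^{-1}\Bigl((\partial_{\overline z}G)e^{\tau(\Phi-\overline\Phi)}\Bigr).
\end{equation}
Since $\abs{e^{\mathcal A}e^{\tau(\overline\Phi-\Phi)}}=e^{\re\mathcal A}$ is bounded uniformly in $\tau$ (because $\overline\Phi - \Phi$ is purely imaginary), it suffices to show $\bigl\Vert\partial_{\overline z}^{-1}((\partial_{\overline z}G)e^{\tau(\Phi-\overline\Phi)})\bigr\Vert_{L^p(\Omega)} = o(1/\tau)$.

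The final step is this remainder estimate. Note $\partial_{\overline z}G$ is a fixed $L^1$ function (indeed continuous with compact support in $\Omega$, given $g\in C^2$, $\mathcal A\in C^{6+\alpha}$, and the nonvanishing of $\partial_z\Phi$ on $\supp g$), carrying the extra factor $1/\tau$ from the definition of $G$. By Proposition \ref{Proposition 3.0}(B), $\partial_{\overline z}^{-1}\in\mathcal L(L^p(\Omega),L^\gamma(\Omega))$ for appropriate exponents, so $\partial_{\overline z}^{-1}((\partial_{\overline z}G)e^{\tau(\Phi-\overline\Phi)})$ is bounded in $L^\gamma(\Omega)$, uniformly in $\tau$, with a bound of size $O(1/\tau)$. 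To upgrade the $O(1/\tau)$ to $o(1/\tau)$ and to handle general $1\le p<\infty$, I would argue by density: approximate $\tau\,\partial_{\overline z}G$ in $L^1(\Omega)$ by $C_0^\infty(\Omega)$ functions, apply Proposition \ref{opa} (the stationary-phase/Riemann--Lebesgue statement) together with boundedness of the singular integral $\partial_{\overline z}^{-1}$ on the smooth pieces — for a fixed smooth function $\phi$, $\partial_{\overline z}^{-1}(\phi e^{\tau(\Phi-\overline\Phi)})\to0$ pointwise and, with a further integration by parts gaining another power of $\tau$, in $L^p$ — and control the tail by the $L^1$ approximation error through the uniform $\mathcal L(L^p,L^\gamma)$ bound followed by Hölder on the bounded domain $\Omega$. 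I expect the main obstacle to be precisely this last point: getting a clean $L^p$ (for all finite $p$, including large $p$) decay statement for $\partial_{\overline z}^{-1}(\phi e^{\tau(\Phi-\overline\Phi)})$ uniformly, since the Cauchy transform of an oscillatory integrand does not decay uniformly near the critical points of $\Phi$ — but this is harmless here because it only needs to be combined with the $o(1)$ statement of Proposition \ref{opa} and the fixed extra $1/\tau$, so a density argument plus the mapping properties in Proposition \ref{Proposition 3.0} closes the gap without delicate stationary-phase asymptotics near $\mathcal H$.
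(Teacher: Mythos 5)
Your algebraic setup is the same decomposition the paper uses, obtained more cleanly: rather than literally integrating by parts under the Cauchy kernel over $\Omega\setminus\bigcup_k B(\widetilde x_k,\delta)$ and letting $\delta\to 0$ (which is how the paper produces the same main term plus remainder, and why it needs the sphere-integral boundary term to die via $\widetilde g|_{\mathcal H}=0$), you write $\tfrac12 ge^{-\mathcal A}e^{\tau(\Phi-\overline\Phi)}=-\partial_{\overline z}\bigl(Ge^{\tau(\Phi-\overline\Phi)}\bigr)+(\partial_{\overline z}G)e^{\tau(\Phi-\overline\Phi)}$ and apply $\partial_{\overline z}^{-1}\partial_{\overline z}=\mathrm{Id}$ on compactly supported functions. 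Modulo conjugation symmetry, the resulting remainder is identical to the paper's $G_\tau$.

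The gap is in the regularity you ascribe to $G$ and the way you finish. The hypothesis $g|_{\mathcal H}=0$ means $g$ vanishes at the finitely many critical points of $\Phi$, not in a neighborhood of them. Consequently $G=ge^{-\mathcal A}/(2\tau\overline{\partial_z\Phi})$ is merely bounded (first-order zero of $g$ cancelling the simple zero of $\partial_z\Phi$), while $\partial_{\overline z}G$ is \emph{not} continuous: differentiating produces the term $ge^{-\mathcal A}\,\overline{\partial_z^2\Phi}/(\overline{\partial_z\Phi})^2$ plus $\partial_{\overline z}(ge^{-\mathcal A})/\overline{\partial_z\Phi}$, each of size $O(1/|\zeta-\widetilde x_k|)$ near an interior critical point $\widetilde x_k$ -- this is exactly the paper's estimate (\ref{zizo}). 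Two consequences follow. First, a further integration by parts against $e^{\tau(\Phi-\overline\Phi)}$ does not gain another power of $\tau$, because dividing once more by $\overline{\partial_z\Phi}$ creates a $1/|\zeta-\widetilde x_k|^2$ singularity that is not integrable. Second, Proposition \ref{Proposition 3.0}(B) maps $L^1$ only into $L^\gamma$ with $\gamma<2$, so an $L^1$-approximation of $\tau\partial_{\overline z}G$ by smooth functions cannot produce $L^p$ control for all finite $p$; one must work in $L^{p'}$ for $p'$ just below $2$, where $\partial_{\overline z}G$ lives but barely. The paper bypasses both problems by dominating the remainder pointwise by $\int_\Omega |\tau\partial_{\overline\zeta}G|\,|z-\zeta|^{-1}\,d\xi_1d\xi_2$, noting via the Hardy--Littlewood--Sobolev inequality that this dominating function is in $L^q$ for every $q<\infty$, uniformly in $\tau$, establishing pointwise decay by Proposition \ref{opa}, and concluding by dominated convergence. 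That is the piece your proof needs in place of the density/second-integration-by-parts argument.
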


\begin{proof}
We give a proof of the asymptotic formula  for
$\widetilde {\mathcal R}_{\tau,B} g.$ The proof for
$\mathcal R_{\tau,A}g$ is similar.
Let $\widetilde g(\zeta, \overline{\zeta})=ge^{-{\mathcal
B}}.$ Then
\begin{eqnarray*}
&&2e^{-{\mathcal B}}\mathcal R_{\tau,B}g
= - \frac{e^{\tau(\overline{\Phi} - \Phi)}}{\pi}\int_{\Omega}
\frac{\widetilde g(\zeta, \overline{\zeta})}
{\overline{\zeta} - \overline{z}}
e^{\tau(\Phi(\zeta)-\overline{\Phi(\zeta)})}d\xi_1d\xi_2\\
&&=  - \frac{e^{\tau(\overline{\Phi}- \Phi)}}{\pi}
\lim_{\delta\to +0} \int_{\Omega\setminus B(z,\delta)}
\frac{\widetilde g(\zeta, \overline{\zeta})}
{\overline{\zeta} - \overline{z}}
e^{\tau(\Phi(\zeta)-\overline{\Phi(\zeta)})}d\xi_1d\xi_2.
\end{eqnarray*}
Let $z = x_1 + ix_2$ and $x=(x_1,x_2)$ be not a critical point of the
function $\Phi$.  Then
\begin{eqnarray}\label{lastochka}
&&2e^{-{\mathcal B}}\mathcal R_{\tau,B}g
= - \frac{e^{\tau(\overline{\Phi}- \Phi)}}{\pi\tau}
\lim_{\delta\to +0} \int_{\Omega\setminus B(x,\delta)}
\frac{\widetilde g(\zeta, \overline{\zeta})}
{\overline{\zeta} - \overline{z}}
\frac{\partial_{\zeta}
e^{\tau(\Phi(\zeta)-\overline{\Phi(\zeta)})}}
{\partial_{\zeta}\Phi(\zeta)}d\xi_1d\xi_2\\
&&= \frac{e^{\tau(\overline{\Phi} - \Phi)}}{\pi\tau}
\lim_{\delta\to +0} \int_{\Omega\setminus B(x,\delta)}
\frac{1}{\overline{\zeta}-\overline{z}}
\frac{\partial}{\partial\zeta}
\left( \frac{\widetilde g(\zeta, \overline{\zeta})}
{\partial_{\zeta}\Phi(\zeta)}\right)
e^{\tau(\Phi(\zeta)-\overline{\Phi(\zeta)})}d\xi_1d\xi_2\nonumber\\
- &&\frac{e^{\tau(\overline{\Phi}-\Phi)}}{\pi\tau}
\lim_{\delta\to +0}\int_{S(x,\delta)}
\frac{\widetilde g(\zeta,\overline{\zeta})}{\overline{\zeta} - \overline{z}}
\frac{(\widetilde{\nu}_1 - i\widetilde{\nu}_2)}{2\partial_{\zeta}
\Phi(\zeta)}e^{\tau(\Phi(\zeta)-\overline{\Phi({\zeta})})}
d\xi_1d\xi_2.\nonumber
\end{eqnarray}
Since $\widetilde g\vert_{\mathcal H} = 0$, we have
\begin{equation}\label{zizo}
\left\vert \frac{\partial}{\partial \zeta}\left(
\frac{\widetilde g(\zeta,\overline{\zeta})}{\partial_{\zeta}\Phi(\zeta)}
\right) \right\vert
\le C\sum_{k=1}^{\ell} \frac{\Vert \widetilde g\Vert_{C^1(\overline{\Omega})}}
{\vert \zeta - \widetilde{x}_k\vert}\in L^p(\Omega)\quad
\forall p\in (1,2).
\end{equation}
Hence, passing to the limit in (\ref{lastochka})  we get
$$
2e^{-{\mathcal B}}\mathcal R_{\tau,B}g
= \frac{e^{\tau(\overline{\Phi}-\Phi)}}
{\pi\tau}\int_{\Omega} \frac{1}{\overline{\zeta}-\overline{z}}
\frac{\partial}{\partial \zeta}\left(
\frac{\widetilde g(\zeta,\overline{\zeta})}{\partial_{\zeta}\Phi(\zeta)}
\right)
e^{\tau(\Phi(\zeta) - \overline{\Phi(\zeta)})}d\xi_1 d\xi_2
- \frac{\widetilde g(z,\overline z)}{\tau\partial_{z}\Phi(z)}.
$$

Denote
$
G_{\tau}(x) =
\int_{\Omega}
\frac{1}{\overline{\zeta}-\overline{z}}
\frac{\partial}{\partial \zeta}\left(
\frac{\widetilde g(\zeta,\overline{\zeta})}{\partial_{\zeta}\Phi(\zeta)}
\right)e^{\tau(\Phi(\zeta)-\Phi(\overline{\zeta}))}d\xi_1d\xi_2.$
By Proposition \ref{opa}, we see that
\begin{equation}\label{luna1}
G_{\tau}(x)
\longrightarrow 0\,\,\mbox{
as}\,\, \vert\tau\vert \to +\infty\quad \forall x\in\overline\Omega.
\end{equation}

Denote
$$
T(\xi_1,\xi_2) = \left\vert\frac{\partial_{\overline{\zeta}}
\widetilde g(\zeta,\overline\zeta)}
{\partial_{\zeta}\Phi(\zeta)}\right\vert \chi_\Omega,
$$
where $\chi_\Omega$ is the characteristic function of $\Omega.$

Clearly
\begin{equation}\label{lunal}
\vert G_\tau(x)\vert\le \int_{\Omega} \frac{\vert T(\xi_1,\xi_2)\vert}
{\vert z-\zeta\vert}
d\xi_1d\xi_2 \quad a.e. \,\,\mbox{in}\,\, \Omega\quad \forall \tau.
\end{equation}
 By (\ref{zizo}) $ T$ belongs to $L^p(\Bbb R^2)$ for any $p\in (1,2).$
For any  $f \in L^p(\R^2)$, we set
$$
I_rf(z) = \int_{\R^2}\vert z-\zeta\vert^{-\frac{2}{r}}
f(\zeta,\overline \zeta)
d\xi_1d\xi_2.
$$
Then, by the Hardy-Littlewood-Sobolev inequality,
if $r > 1$ and $\frac{1}{r} = 1 - \left(\frac{1}{p}
- \frac{1}{q}\right)$ for $1 < p < q < \infty$, then
$$
\Vert I_rf\Vert_{L^q(\R^2)} \le C_{p,q}\Vert f\Vert_{L^p(\R^2)}.
$$
Set $r=2$.  Then we have to choose $\frac{1}{p} - \frac{1}{q}
= \frac{1}{2}$, that is, we can arbitrarily choose $p > 2$ close to $2$,
so that $q$ is arbitrarily large.  Hence $\int_{\Omega} \frac{T}
{\vert z-\zeta\vert} d\xi_1d\xi_2 $ belongs to $L^q(\Omega)$ with positive $q.$
By (\ref{luna1}), (\ref{lunal})  and the dominated convergence theorem
$$
G_\tau\rightarrow 0 \quad\mbox{in} \,\,L^q(\Omega)\,\,\quad
\forall q\in(1,\infty).
$$
The proof of the proposition is finished.
\end{proof}

We now consider the contribution from the critical points.

\begin{proposition}\label{Proposition 3.223}
Let $\Phi$ satisfy (\ref{zzz}) and (\ref{mika}).
Let $g\in C^{4+\alpha}(\overline\Omega)$ for some $\alpha>0,$
$g\vert_{\mathcal O_\epsilon} = 0$ and
$g\vert_{\mathcal H}= 0$.  Then there exist constants $p_k$ such that
\begin{equation}\label{Masa}
\int_\Omega ge^{\tau(\Phi(z)-\overline{\Phi(z)})}dx=\frac{1}{\tau^2}
\sum_{k=1}^\ell p_k e^{2\tau i\psi(\widetilde x_k)}+o(\frac {1}{\tau^2})
\quad \mbox{as}\,\,\vert \tau\vert\rightarrow +\infty.
\end{equation}
\end{proposition}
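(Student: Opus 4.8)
Since $\frac{\partial\Phi}{\partial\overline z}\equiv 0$, the function $\Phi$ is holomorphic, so $\Phi(z)-\overline{\Phi(z)}=2i\,\mbox{Im}\,\Phi=2i\psi$ and the integral in question equals the purely oscillatory integral $\int_\Omega g(x)e^{2i\tau\psi(x)}\,dx$, with real phase $S:=2\psi\in C^{6+\alpha}(\overline\Omega)$. The plan is to localize near the stationary points of $S$ and apply the method of stationary phase; this refines Proposition \ref{opa}. By the Cauchy--Riemann equations, $\nabla\psi(x)=0$ if and only if $\frac{\partial\Phi}{\partial z}(z)=0$, i.e.\ $x\in\mathcal H$. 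By (\ref{mika}) we have $\mathcal H\cap\partial\Omega\subset\Gamma_0$ and $\frac{\partial^2\Phi}{\partial z^2}\ne 0$ at every point of $\mathcal H$; since $\psi$ is harmonic, the Hessian $\psi''$ has trace zero, hence at each such point it is nondegenerate and indefinite, with $\det\psi''=-\bigl|\frac{\partial^2\Phi}{\partial z^2}\bigr|^2\ne 0$ and signature $0$. Because $g$ vanishes on the neighborhood $\mathcal O_\epsilon$ of $\partial\Omega$ and $\mathcal H\cap\Gamma_0\subset\partial\Omega$, the only stationary points of $S$ meeting $\supp g$ are the interior points $\widetilde x_1,\dots,\widetilde x_\ell$ of (\ref{mona}) (recall that $\Phi$ has no critical points on $\overline{\widetilde\Gamma}$, so these all lie in $\Omega$).

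\emph{Localization.} Choose $\delta>0$ so small that the balls $B(\widetilde x_k,\delta)$, $1\le k\le\ell$, are pairwise disjoint, contained in $\Omega$, and disjoint from $\mathcal O_\epsilon$; pick $\chi_k\in C_0^\infty(B(\widetilde x_k,\delta))$ with $\chi_k\equiv 1$ near $\widetilde x_k$, and set $\chi_0=1-\sum_{k=1}^\ell\chi_k$. On $\supp(g\chi_0)$ the phase $S$ has no critical point, so $|\nabla S|\ge c>0$ there; since $g\chi_0\in C_0^{4+\alpha}(\Omega)$ and $S\in C^{6+\alpha}$, three integrations by parts with the operator $\frac{1}{i\tau}\frac{\nabla S}{|\nabla S|^2}\cdot\nabla$ (no boundary terms occur because $g\chi_0$ is compactly supported in $\Omega$) give $\int_\Omega g\chi_0\,e^{i\tau S}\,dx=O(\tau^{-3})=o(\tau^{-2})$.

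\emph{Contribution of a stationary point.} Near each $\widetilde x_k$ the phase $S$ has the single nondegenerate critical point $\widetilde x_k$, so the stationary phase expansion to two terms with remainder (see e.g.\ \cite{BH} or \cite{Ho}) gives, in dimension $n=2$,
\[
\int_\Omega g\chi_k\,e^{i\tau S}\,dx=e^{i\tau S(\widetilde x_k)}\left(\frac{c_k^{(0)}}{\tau}\,(g\chi_k)(\widetilde x_k)+\frac{1}{\tau^2}\,(\mathcal L_k g)(\widetilde x_k)+o(\tau^{-2})\right),
\]
where $c_k^{(0)}=2\pi\,e^{i\pi\sigma_k/4}|\det\psi''(\widetilde x_k)|^{-1/2}$ with $\sigma_k=0$, $\mathcal L_k$ is the explicit second-order differential operator (with the analogous prefactor) coming from the next term of the expansion, and the $o(\tau^{-2})$ remainder is controlled using $g\in C^{4+\alpha}$ and $S\in C^{6+\alpha}$. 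The decisive point is the hypothesis $g|_{\mathcal H}=0$: since $(g\chi_k)(\widetilde x_k)=g(\widetilde x_k)=0$, the leading $\tau^{-1}$ term vanishes identically, and the first surviving contribution is of order $\tau^{-2}$ with coefficient $p_k:=(\mathcal L_k g)(\widetilde x_k)$, a finite constant depending on $g$ only through $g,\nabla g,\nabla^2 g$ at $\widetilde x_k$ (since $\chi_k\equiv 1$ near $\widetilde x_k$ and $\mathcal L_k$ is local).

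\emph{Conclusion.} Adding the $\ell$ localized contributions and the $o(\tau^{-2})$ term from $g\chi_0$, and using $S(\widetilde x_k)=2\psi(\widetilde x_k)$, gives (\ref{Masa}). I expect the only delicate point to be the regularity bookkeeping in the stationary phase lemma needed to upgrade the remainder from $O(\tau^{-2})$ to $o(\tau^{-2})$ after extracting the (vanishing) $\tau^{-1}$ term and the $\tau^{-2}$ term; this is exactly why $g$ is assumed to be $C^{4+\alpha}$.
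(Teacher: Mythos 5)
Your argument is correct, but it takes a genuinely different route from the paper's. You invoke the standard two-term stationary phase expansion with an explicit remainder (à la Bleistein--Handelsman or H\"ormander, Thm.~7.7.5), observe that the leading $\tau^{-1}$ coefficient vanishes because $g(\widetilde x_k)=0$, and read off the $\tau^{-2}$ coefficient as the constant $p_k$. The paper instead avoids any quantitative remainder estimate: after localizing and passing to Morse coordinates where the phase is $z^2-\overline z^2$, it exploits the vanishing $q_k(0)=0$ to factor $q_k=2z\,r_{1,k}+2\overline z\,r_{2,k}$, integrates by parts once to gain a $\tau^{-1}$, isolates the constant term $(\partial_z r_{1,k}-\partial_{\overline z}r_{2,k})(0)\chi$ (which by ordinary stationary phase contributes the $\tau^{-2}$ coefficient $p_k$), and then repeats the factor-and-integrate-by-parts step on the remainder $\widetilde q_k$ (which again vanishes at the origin) so that the leftover integral tends to $0$ by the Riemann--Lebesgue-type Proposition~\ref{opa}. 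What the paper's approach buys is lighter regularity bookkeeping -- it never needs a remainder bound with four derivatives of the amplitude or seven of the phase, only the soft fact that $L^1$ amplitudes give $o(1)$; what your approach buys is brevity and a transparent identification of $p_k$ as $(\mathcal L_k g)(\widetilde x_k)$. One small caution about your version: you should check the phase regularity required by your stationary phase lemma (for a two-term expansion with $O(\tau^{-3})$ remainder in dimension two, H\"ormander's Thm.~7.7.5 asks for the phase in $C^{3k+1}=C^7$ when $k=2$, which $\psi\in C^{6+\alpha}$ satisfies only borderline via the H\"older modulus), and note that you actually get $O(\tau^{-3})$ for the remainder, not merely $o(\tau^{-2})$ -- so the closing worry about "upgrading $O(\tau^{-2})$ to $o(\tau^{-2})$" is not where the difficulty lies. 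Otherwise the proof is sound.
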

\begin{proof} Let $\delta>0$ be a sufficiently small number and $\widetilde e_k
\in C_0^\infty(B(\widetilde x_k,\delta)), \widetilde e_k\vert
_{B(\widetilde x_k,\delta/2)}\equiv 1.$ By the stationary
phase argument
\begin{eqnarray*}
I(\tau)=\int_\Omega ge^{\tau(\Phi-\overline\Phi)}dx=\sum_{k=1}^\ell
\int_{B(\widetilde x_k,\delta)}\widetilde e_k ge^{\tau(\Phi-\overline\Phi)}dx
+o(\frac {1}{\tau^2})= \nonumber\\\sum_{k=1}^\ell
e^{2i\tau\psi(\widetilde x_k)}
\int_{B(\widetilde x_k,\delta)} \widetilde e_k ge^{\tau(\Phi-\overline\Phi)
-2i\tau\psi(\widetilde x_k)}dx+o(\frac {1}{\tau^2})\quad \mbox{as}
\,\,\vert \tau\vert\rightarrow +\infty.
\end{eqnarray*}
Since all the critical points of $\Phi$ are nondegenerate,
in some neighborhood of $\widetilde x_k$ one can take local coordinates
such that $\Phi-\overline\Phi-2i\tau\psi(\widetilde x_k)
= z^2-\overline z^2$.  Therefore
$$
I(\tau)=\sum_{k=1}^\ell  e^{2i\tau\psi(\widetilde x_k)} \int_{B(0,\delta')}
q_ke^{\tau(z^2-\overline z^2)}dx+o(\frac {1}{\tau^2})\quad
\mbox{as}\,\,
\vert \tau\vert\rightarrow +\infty,
$$
where $q_k\in C_0^4(B(0,\delta'))$ and $q_k(0)=0.$ Hence there exist functions
$r_{1,k},r_{2,k}\in C^3_0(B(0,\delta'))$ such that
   $q_k=2 zr_{1,k}+2\overline zr_{2,k}. $ Integrating by parts,
one can decompose $I(\tau)$ as
\begin{eqnarray}
I(\tau)=-\frac 1\tau\sum_{k=1}^\ell  e^{2i\tau\psi(\widetilde x_k)}
\int_{B(0,\delta')} (\frac{\partial r_{1,k}}{\partial z}
-\frac{\partial r_{2,k}}{\partial \overline z})
e^{\tau(z^2-\overline z^2)}dx+o(\frac {1}{\tau^2})= \nonumber\\
-\frac 1\tau\sum_{k=1}^\ell  e^{2i\tau\psi(\widetilde x_k)}
\int_{B(0,\delta')} (\frac{\partial r_{1,k}}{\partial z}
-\frac{\partial r_{2,k}}{\partial \overline z})(0)\chi(x)
e^{\tau(z^2-\overline z^2)}dx\nonumber\\
-\frac 1\tau\sum_{k=1}^\ell  e^{2i\tau\psi(\widetilde x_k)}
\int_{B(0,\delta')} \widetilde q_ke^{\tau(z^2-\overline z^2)} dx
+o(\frac {1}{\tau^2})\quad \mbox{as}\,\,\vert \tau\vert\rightarrow
+\infty,\nonumber
\end{eqnarray}
where $\chi,\widetilde q_k\in C_0^2(B(0,\delta')),
\chi\vert_{B(0,\delta'/2)}\equiv 1$ and $\widetilde q_k(0)=0.$
Hence there exist functions
$\widetilde r_{1,k},\widetilde r_{2,k}\in C^1_0(B(0,\delta'))$
such that $\widetilde q_k=2 z\widetilde r_{1,k}+2\overline z
\widetilde r_{2,k}. $ Integrating by parts and applying Proposition
\ref{Proposition 3.22} we obtain
 $$
 \lim_{\vert\tau\vert\rightarrow+\infty}\tau\int
_{B(0,\delta')} \widetilde q_ke^{\tau(z^2-\overline z^2)} dx
=-\sum_{k=1}^\ell  e^{2i\tau\psi(\widetilde x_k)}
\lim_{\vert\tau\vert\rightarrow+\infty}\int_{B(0,\delta')}
(\frac{\partial \widetilde r_{1,k}}{\partial z}
-\frac{\partial \widetilde r_{2,k}}{\partial \overline z})
e^{\tau(z^2-\overline z^2)}dx=0.
 $$
Therefore (\ref{Masa}) follows from a standard application of
stationary phase. The proof of the proposition  is completed.
\end{proof}

\begin{proposition}\label{Proposition 3.2234}
Let $0<\epsilon'<\epsilon,$ a function $\Phi$ satisfy (\ref{zzz}),
(\ref{mika}) and $\overline {\mathcal O}_\epsilon \cap
(\mathcal H\setminus \Gamma_0)=\emptyset$.
Suppose that $g\in C^{\alpha}(\overline\Omega)\cap H^1(\Omega)$
for some $\alpha \in (0,1)$, $g\vert_{\mathcal O_\epsilon} = 0$ and
$g\vert_{\mathcal H}= 0$.
 Then
\begin{equation}\label{mis}
\vert\tau\vert\Vert \widetilde{\mathcal R}_{\tau,B} g\Vert
_{L^\infty (\mathcal O_{\epsilon'})}+\Vert \nabla\widetilde{\mathcal R}
_{\tau,B} g\Vert_{L^\infty (\mathcal O_{\epsilon'})}
\le C_1(\epsilon',\alpha)\Vert g\Vert
_{C^\alpha(\overline\Omega)\cap H^1(\Omega)}.
\end{equation}
Moreover
\begin{equation}\label{miss1}
\Vert \nabla\widetilde{\mathcal R}_{\tau,B} g\Vert_{L^2 (\Omega)}
+\vert\tau\vert^\frac 12\Vert \widetilde{\mathcal R}_{\tau,B}
g\Vert_{L^2 (\Omega)}+\vert\tau\vert\Vert \frac{\partial \Phi}
{\partial z}\widetilde{\mathcal R}_{\tau,B} g\Vert
_{L^2 (\Omega)}
\le C_2(\epsilon',\alpha)\Vert g\Vert
_{C^\alpha(\overline\Omega)\cap H^1(\Omega)}.
\end{equation}
\end{proposition}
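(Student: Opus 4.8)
The plan is to prove the two estimates (\ref{mis}) and (\ref{miss1}) separately, exploiting that $g\vert_{\mathcal O_\epsilon}=0$ forces $\supp g$ to be a compact subset of $\Omega$ on which, because $\overline{\mathcal O}_\epsilon\cap(\mathcal H\setminus\Gamma_0)=\emptyset$, the only critical points of $\Phi$ are interior nondegenerate ones, where $g$ vanishes.

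\emph{The pointwise estimate (\ref{mis}).} For $z\in\mathcal O_{\epsilon'}$ and $\zeta\in\supp g$ one has $\vert z-\zeta\vert\ge\epsilon-\epsilon'>0$, so in the formula (\ref{anna}) for $\widetilde{\mathcal R}_{\tau,B}g(z)$ the Cauchy kernel $1/(\overline\zeta-\overline z)$ is smooth in $\zeta$, uniformly in $z$, and the prefactor $e^{\mathcal B}e^{\tau(\overline\Phi-\Phi)}$ is bounded ($\overline\Phi-\Phi=-2i\psi$ is purely imaginary). I would therefore reduce to bounding, uniformly in $z\in\mathcal O_{\epsilon'}$, the oscillatory integral $\int_\Omega h_z\,e^{\tau(\Phi-\overline\Phi)}\,dx$, where $h_z=e^{-\mathcal B}g/(\overline\zeta-\overline z)$ satisfies $\Vert h_z\Vert_{C^\alpha(\overline\Omega)}+\Vert h_z\Vert_{H^1(\Omega)}\le C\Vert g\Vert_{C^\alpha(\overline\Omega)\cap H^1(\Omega)}$ and vanishes at the critical points. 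Cutting $h_z$ by a partition of unity into a piece supported away from $\mathcal H$ and pieces supported near each critical point $\widetilde x_k$: on the former, $\vert\partial_z\Phi\vert$ is bounded below and one integration by parts in $\zeta$ (no boundary term, since $g=0$ near $\partial\Omega$; using $h_z\in H^1$) gives $O(\vert\tau\vert^{-1})\Vert g\Vert_{H^1}$; near $\widetilde x_k$, (\ref{mika}) lets me choose a local holomorphic coordinate $w$ with $\Phi=w^2+\Phi(\widetilde x_k)$, so that the phase becomes $\tau(w^2-\overline w^2)=4i\tau\,\mbox{Re}\,w\,\mbox{Im}\,w$, and the elementary identity $\int e^{4i\tau st}\,ds\,dt=O(\vert\tau\vert^{-1})$ for this signature--zero quadratic phase yields $O(\vert\tau\vert^{-1})\Vert h_z\Vert_{C^0}$ (here the vanishing of $h_z$ at $\widetilde x_k$ is not even needed). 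This gives the $L^\infty$--decay of $\widetilde{\mathcal R}_{\tau,B}g$; for the gradient one differentiates (\ref{anna}) in $z$, whereupon $\partial_z,\partial_{\overline z}$ fall either on the prefactor, contributing a factor $O(\tau)$ against the $O(\tau^{-1})$ integral, or on $1/(\overline\zeta-\overline z)$, which remains smooth so that the integral is still $O(\tau^{-1})$; hence $\Vert\nabla\widetilde{\mathcal R}_{\tau,B}g\Vert_{L^\infty(\mathcal O_{\epsilon'})}=O(1)\Vert g\Vert$. The same reasoning applies with $\mathcal O_{\epsilon'}$ replaced by $\partial\Omega$, which is used below.

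\emph{The weighted $L^2$ estimate (\ref{miss1}).} Put $\widetilde v=\widetilde{\mathcal R}_{\tau,B}g$; by Proposition \ref{Proposition 3.1} it solves (\ref{(3.3)}), and $\widetilde v\in H^1(\Omega)$ by Proposition \ref{Proposition 3.0}. The first step is the a priori bound $\Vert\widetilde v\Vert_{L^2(\Omega)}\le C\vert\tau\vert^{-1/2}\Vert g\Vert_{C^\alpha(\overline\Omega)\cap H^1(\Omega)}$, which I would get from an $L^2$ estimate for $\partial_z^{-1}$ applied to the oscillatory density $e^{-\mathcal B}e^{2i\tau\psi}g$ appearing in (\ref{anna}): with the same splitting, the part away from $\mathcal H$ is handled by integrating by parts once inside $\partial_z^{-1}$ (using $g\in H^1$, Proposition \ref{Proposition 3.0}, and $\partial_z^{-1}\partial_z=\mathrm{Id}$ on functions compactly supported in $\Omega$), contributing $O(\vert\tau\vert^{-1})\Vert g\Vert_{H^1}$, while each part near a critical point $\widetilde x_k$ is controlled by dyadically decomposing $\{\vert\zeta-\widetilde x_k\vert\le\delta\}$ into annuli $\{\vert\zeta-\widetilde x_k\vert\sim 2^{-j}\}$ on which $\vert\nabla\psi\vert\sim 2^{-j}$ (nondegeneracy of the Hessian of $\psi$ at $\widetilde x_k$, a consequence of (\ref{mika})) and $\vert g\vert\le C\Vert g\Vert_{C^\alpha}2^{-j\alpha}$ (vanishing of $g$ there), rescaling each annulus to unit size and integrating by parts once (using $g\in H^1$), and summing. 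The second step is to apply the integral identity (\ref{vika1}) of Proposition \ref{Proposition 2.1} to $\widetilde v$, with the parameter taken equal to $-2\tau$ (to match (\ref{(3.3)})), $C=B$, $\widetilde f=g$: discarding the two nonnegative $L^2(\Omega)$--norms on the left, expanding them, and integrating the cross terms by parts — which produces $\int_\Omega O(\vert\tau\vert)\vert\widetilde v\vert^2$ together with boundary integrals that are $O(\vert\tau\vert^{-1})\Vert g\Vert^2$ by the boundary version of (\ref{mis}) — I would arrive at
\[
\Vert\nabla\widetilde v\Vert^2_{L^2(\Omega)}+\tau^2\Vert\,\vert\partial_z\Phi\vert\,\widetilde v\Vert^2_{L^2(\Omega)}\le C\Vert g\Vert^2_{C^\alpha(\overline\Omega)\cap H^1(\Omega)}+C\vert\tau\vert\Vert\widetilde v\Vert^2_{L^2(\Omega)}.
\]
Inserting the a priori bound $\vert\tau\vert\Vert\widetilde v\Vert^2_{L^2}\le C\Vert g\Vert^2$ into the right side then yields all three terms of (\ref{miss1}).

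The main obstacle is the first step of the $L^2$ estimate — equivalently, the stationary-phase estimate near the critical points in the pointwise estimate: obtaining the sharp decay of the Cauchy transform of $e^{2i\tau\psi}g$, and of $\int h_z\,e^{2i\tau\psi}$, near the critical points of $\psi$ when $g$ is only Hölder continuous and in $H^1$. This is precisely where (\ref{mika}) is essential, since nondegeneracy is what puts $\Phi$, hence $\psi$, into the quadratic normal form on which these (non-)stationary phase computations rest; the $C^\alpha$ and $H^1$ norms of $g$ are used near and away from those points respectively.
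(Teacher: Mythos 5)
There is a genuine gap at the heart of your $L^\infty$ estimate near the critical points. You claim that, after passing to the holomorphic normal form $\Phi=w^2+\mathrm{const}$ near $\widetilde x_k$, the identity $\int e^{4i\tau st}\,ds\,dt=O(|\tau|^{-1})$ yields $O(|\tau|^{-1})\Vert h_z\Vert_{C^0}$ for the localized piece, and you explicitly say the vanishing of $h_z$ at $\widetilde x_k$ is ``not even needed''. That is not right: the $O(|\tau|^{-1})$ decay of the hyperbolic oscillatory integral does not transfer to a general $C^0$ (or merely $C^\alpha$, $\alpha<1$) amplitude; continuity in one variable only gives a rate $O(|\tau t|^{-\alpha})$ after the $s$-integration, and integrating in $t$ yields $O(|\tau|^{-\alpha})$, not $O(|\tau|^{-1})$. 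The paper's proof does not appeal to stationary phase at all for this bound; it integrates by parts once in $\zeta$ as in (\ref{omon})--(\ref{omon1}), and then uses \emph{both} hypotheses on $g$ near the critical points: $g|_{\mathcal H}=0$ with $g\in C^\alpha$ forces the circle term to vanish as $\delta\to 0$ and makes $\widetilde g\,\partial_\zeta^2\Phi/(\partial_\zeta\Phi)^2\sim|\zeta-\widetilde x_k|^{\alpha-2}\in L^1$; and $g\in H^1$ makes $\partial_\zeta\widetilde g/((\overline\zeta-\overline z)\partial_\zeta\Phi)\in L^1$ (an $L^2$ function against an $L^{2-\eta}$ singularity, with the Cauchy kernel bounded since $z\in\mathcal O_{\epsilon'}$ and $\zeta\notin\mathcal O_{\epsilon}$). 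Your argument discards precisely these two ingredients near $\widetilde x_k$, and that is the step that would fail.

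For (\ref{miss1}) your route is different from the paper's but also under-justified. You first assert $\Vert\widetilde v\Vert_{L^2}\le C|\tau|^{-1/2}\Vert g\Vert$ by a dyadic decomposition near the critical points, but the $L^2$ bound is over all of $\Omega$, where the Cauchy kernel is genuinely singular in $\zeta-z$, and you do not treat that singularity; you then invoke the integral identity (\ref{vika1}) as a black box. The paper's path is more direct: it writes out $\partial_{\overline z}\widetilde{\mathcal R}_{\tau,B}g$ explicitly (it is the sum of $\partial_{\overline z}\mathcal B\,\widetilde{\mathcal R}_{\tau,B}g+\widetilde{\mathcal R}_{\tau,B}\{\cdots\}$ plus a commutator with a smooth kernel), bounds it in $L^2$ via Proposition \ref{Proposition 3.0} and the same (\ref{omon})--(\ref{omon1}) mechanism, combines with (\ref{mis}) to control $\nabla\widetilde{\mathcal R}_{\tau,B}g$, and then reads off $|\tau|\Vert\partial_z\Phi\,\widetilde{\mathcal R}_{\tau,B}g\Vert_{L^2}$ from equation (\ref{(3.3)}); the middle term $|\tau|^{1/2}\Vert\widetilde{\mathcal R}_{\tau,B}g\Vert_{L^2}$ then follows from the interpolation inequality (\ref{(2.23)}), which itself uses (\ref{mika}). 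If you want to pursue your Proposition \ref{Proposition 2.1}--based route, you would still need the $L^2$ a priori bound to be proved honestly, and that brings you back to the same integration-by-parts estimate that your $L^\infty$ argument is missing.
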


\begin{proof} Denote $\widetilde g=g e^{-\mathcal B}.$ Let $x=(x_1,x_2)$ be an
arbitrary point from $\mathcal O_{\epsilon'}$ and $z=x_1+ix_2.$  Then
\begin{eqnarray}
-\pi \partial_z^{-1} (e^{\tau(\Phi-\overline \Phi)}\widetilde g)
=\int_\Omega\frac{ \widetilde g  e^{\tau(\Phi-\overline \Phi)}}
{\overline \zeta-\overline z} d\xi_1d\xi_2=\lim_{\delta\rightarrow +0}
\sum_{k=1}^\ell\int_{\Omega\setminus B(\widetilde x_k,\delta)}
\frac{\widetilde g e^{\tau(\Phi-\overline \Phi)}}
{\overline \zeta-\overline z} d\xi_1d\xi_2.\nonumber
\end{eqnarray}

Integrating by parts and taking $\delta$ sufficiently small we have
\begin{eqnarray}\label{omon}
-\pi \partial_z^{-1} (e^{\tau(\Phi-\overline \Phi)}\widetilde g)
=-\frac{1}{\tau}\lim_{\delta\rightarrow + 0}\int_{\Omega\setminus
\cup_{k=1}^\ell B(\widetilde x_k,\delta)}
\frac{ \frac{\partial \widetilde g}{\partial\zeta}}
{(\overline \zeta-\overline z)\frac{\partial\Phi}{\partial\zeta}}
e^{\tau(\Phi-\overline \Phi)} d\xi_1d\xi_2\nonumber\\+\frac{1}{\tau}
\lim_{\delta\rightarrow +0}
\int_{\Omega\setminus \cup_{k=1}^\ell B(\widetilde x_k,\delta)}
\frac{ \widetilde g\frac{\partial^2\Phi}{\partial\zeta^2}}
{(\overline \zeta-\overline z)(\frac{\partial\Phi}{\partial\zeta})^2}
e^{\tau(\Phi-\overline \Phi)} d\xi_1d\xi_2\nonumber\\
+\frac{1}{2\tau}\lim_{\delta\rightarrow +0}
\int_{ \cup_{k=1}^\ell S(\widetilde x_k,\delta)}
(\widetilde\nu_1-i\widetilde\nu_2)\frac{ \widetilde g}
{(\overline \zeta-\overline z)\frac{\partial\Phi}{\partial\zeta}}
e^{\tau(\Phi-\overline \Phi)}d\sigma.
\end{eqnarray}
Since $g\vert_{\mathcal H}=0$  we have that $\Vert g\Vert
_{C^0( S(\widetilde x_k,\delta))}
\le \delta^\alpha\Vert g\Vert_{C^\alpha(\overline \Omega)}.$
Using this inequality and the fact that  all the critical points of
$\Phi$ are nondegenerate we obtain
$$
\frac{1}{2\tau}\lim_{\delta\rightarrow +0}
\int_{ \cup_{k=1}^\ell S(\widetilde x_k,\delta)}(\widetilde\nu_1
-i\widetilde\nu_2)\frac{ \widetilde g}{(\overline \zeta-\overline z)
\frac{\partial\Phi}{\partial\zeta}} e^{\tau(\Phi-\overline \Phi)}d\sigma=0.
$$
Since $\vert\frac{ \widetilde g\frac{\partial^2\Phi}{\partial\zeta^2}}
{(\frac{\partial\Phi}{\partial\zeta})^2}(\zeta,\overline\zeta) \vert
\le C_3\Vert\widetilde  g\Vert_{C^\alpha(\overline\Omega)}
\sum_{k=1}^\ell\frac{1}{\vert \xi-\widetilde x_k\vert^{2-\alpha}}
$ we see that $\frac{ \widetilde g\frac{\partial^2\Phi}{\partial\zeta^2}}
{(\frac{\partial\Phi}{\partial\zeta})^2} (\zeta,\overline \zeta)\in L^1(\Omega)$ and
\begin{eqnarray}\label{omon1}
-\pi \partial_z^{-1} (e^{\tau(\Phi-\overline \Phi)}\widetilde g)
=-\frac{1}{\tau}\int_{\Omega}\frac{\frac{ \partial \widetilde g}
{\partial \zeta}}{(\overline \zeta-\overline z)\frac{\partial\Phi}
{\partial\zeta}}e^{\tau(\Phi-\overline \Phi)} d\xi_1d\xi_2\nonumber\\
+\frac{1}{\tau}
\int_{\Omega}\frac{ \widetilde g\frac{\partial^2\Phi}{\partial\zeta^2}}
{(\overline \zeta-\overline z)(\frac{\partial\Phi}{\partial\zeta})^2}
e^{\tau(\Phi-\overline \Phi)} d\xi_1d\xi_2.
\end{eqnarray}
From this equality, Proposition \ref{opa} and definition (\ref{anna}) of the
operator $\widetilde{\mathcal R}_{\tau,B}$, the estimate (\ref{mis}) follows
immediately.
To prove (\ref{miss1}) we observe
$$
\frac{\partial \widetilde{\mathcal R}_{\tau,B}g}{\partial\overline z}
= \frac{\partial \mathcal B}{\partial \overline z}\widetilde{\mathcal R}
_{\tau,B}g+\widetilde{\mathcal R}_{\tau,B}\{\frac{\partial g}
{\partial \overline z}-\frac{\partial\mathcal B}{\partial\overline z}g\}
+\frac{\tau}{2\pi}e^{\tau(\overline\Phi-\Phi)+\mathcal B}
\int_\Omega\frac{\frac{\partial \overline \Phi(\zeta)}
{\partial \overline \zeta}
-\frac{\partial \overline \Phi(z)}{\partial\overline z}}
{\overline \zeta-\overline z}\widetilde ge^{\tau(\Phi-\overline\Phi)}d\xi_1d\xi_2.
$$
By Proposition \ref{Proposition 3.0}
$$
\Vert \frac{\partial\mathcal  B}{\partial \overline z}
\widetilde{\mathcal R}_{\tau,B}g+\widetilde{\mathcal R}_{\tau,B}
\{\frac{\partial g}{\partial \overline z}
- \frac{\partial\mathcal B}{\partial\overline z}g\}
\Vert_{L^2(\Omega)}\le C_4\Vert g\Vert_{H^1(\Omega)}.
$$

Using arguments similar to (\ref{omon}), (\ref{omon1}) we obtain
$$
\Vert \frac{\tau}{2\pi}\int_\Omega\frac{
\frac{\partial \overline \Phi(\zeta)}
{\partial \overline \zeta}-\frac{\partial \overline \Phi(z)}
{\partial\overline z}}{\overline \zeta-\overline z}
\widetilde ge^{\tau(\Phi-\overline\Phi)}d\xi_1d\xi_2\Vert_{L^2(\Omega)}
\le C_5
\Vert g\Vert_{C^\alpha(\overline\Omega)\cap H^1(\Omega)}.
$$
Hence
$$
\Vert\frac{\partial \widetilde{\mathcal R}_{\tau,B}g}
{\partial\overline z}\Vert_{L^2(\Omega)}
\le C_6\Vert g\Vert_{C^\alpha(\overline\Omega)\cap H^1(\Omega)}.
$$
Combining this estimate with (\ref{mis}) we conclude
$$
 \Vert\nabla \widetilde{\mathcal R}_{\tau,B}g\Vert_{L^2(\Omega)}
\le C_7\Vert g\Vert_{C^\alpha(\overline\Omega)\cap H^1(\Omega)}.
$$
Using this estimate and equation (\ref{(3.2)}) we have
$$
\vert\tau\vert \Vert\frac{\partial\Phi}{\partial z}
\widetilde{\mathcal R}
_{\tau,B}g\Vert_{L^2(\Omega)}\le C_8\Vert g\Vert
_{C^\alpha(\overline\Omega)\cap H^1(\Omega)},
$$
finishing the proof of the proposition.
\end{proof}

Let $e_1,e_2 \in C^\infty(\overline \Omega)$ be functions such that
\begin{equation}\label{TT}
e_1+e_2=1\quad \mbox{in}\,\,\Omega,
\end{equation}
$e_2$ vanishes in some
neighborhood of $\mathcal H\setminus\Gamma_0$ and $e_1$ vanishes in a
neighborhood of
$\partial\Omega.$
\begin{proposition}\label{Proposition 00}
Let for some $\alpha\in (0,1)$ $A,B\in C^{5+\alpha}(\overline\Omega)$,
and the functions $\mathcal A,\mathcal B\in C^{6+\alpha}(\overline\Omega)$
satisfy (\ref{(2.15)}). Let $e_1,e_2$ be defined as in
(\ref{TT}).  Let $g \in L^p(\Omega)$ for some $p>2$,
$\mbox{supp}\, g\subset\subset \mbox{supp}\, e_1$ and $dist (\Gamma_0, supp\,g)>0.$
We define $u$ by
$$
u=\widetilde {\mathcal R}_{\tau, B}(e_1(P_{A} g
- \widetilde M e^{\mathcal A}))
+ \frac{e_2(P_{A} g-\widetilde M e^{\mathcal A})}{2\tau\partial_z\Phi},
$$
where $\widetilde{M} = \widetilde M(z)$ is a polynomial such that
$\frac{\partial^k}{\partial z^k}(P_{A} g-\widetilde M
e^{\mathcal A})\vert_{\mathcal H}=0$ for any  $k$ from $\{0,\dots, 6\}.$
Then we have
\begin{equation}\label{oP}
\mathcal P(x,D)(ue^{\tau\Phi})\triangleq (2\frac{\partial}
{\partial \overline z}+A)(2\frac{\partial}{\partial  z}+B)
(ue^{\tau\Phi})=g e^{\tau \Phi}
+ \frac{e^{\tau\varphi}}{\vert\tau\vert} h_\tau
\quad \mbox{as}\,\,\vert\tau\vert\rightarrow +\infty,
\end{equation} where
$$
\Vert h_\tau\Vert_{L^\infty(\Omega)}\le C_9(p)
\Vert g\Vert_{L^p(\Omega)}
$$
and for some sufficiently small positive $\epsilon'$ we have:
\begin{equation}\label{PP10}
\frac{1}{\vert\tau\vert^\frac 12}\Vert \nabla u\Vert_{L^2(\Omega)}
+\vert\tau\vert^\frac 12\Vert u\Vert_{L^2(\Omega)}+\Vert u\Vert
_{H^{1,\tau}(\mathcal O_{\epsilon '})}\le C_{10} \Vert g\Vert
_{L^p(\Omega)}.
\end{equation}
\end{proposition}
\begin{proof}
By Proposition \ref{Proposition 3.0} $ P_{A} g$ belongs to
$W^1_p(\Omega).$ Since $p>2,$ by the Sobolev embedding theorem
there exists $\alpha>0$ such that $ P_{A} g \in C^\alpha(\overline
\Omega).$  By properties of elliptic operators and the fact
that $\mbox{supp} \, e_2\cap \mbox{supp}\, g=\{\emptyset\}$
we have that $ P_{A} g\in C^5(supp\,e_2).$
The estimate (\ref{PP10}) follows from Proposition
\ref{Proposition 3.2234}. Short calculations give
\begin{equation}\label{mina}
\mathcal P(x,D)(ue^{\tau\Phi})=g e^{\tau \Phi}+\frac{e^{\tau\Phi}}
{\tau}\mathcal P(x,D)\left(\frac{e_2(P_{A} g
-\widetilde Me^{\mathcal A})}{2\partial_z\Phi}\right).
\end{equation}
This formula implies (\ref{oP}) with $h_\tau=e^{i\tau\psi}
\mathcal P(x,D)\left(\frac{e_2(P_{A} g-\widetilde
Me^{\mathcal A})}{2\partial_z\Phi}\right)/\mbox{sign}\, \tau.$
\end{proof}

The following proposition will play a critically important role in the
construction of the complex geometric optic solutions.
\begin{proposition}\label{Proposition 0}
Let $f\in L^p(\Omega)$ for some $p>2$, $dist (\overline\Gamma_0,supp\,f)>0$, $q\in H^\frac 12(\Gamma_0),$
$\epsilon'$ be a small positive number such that $\mathcal
\overline{O_{\epsilon'}}\cap(\mathcal H\setminus\Gamma_0)=\emptyset.$
Then there exists $\tau_0$ such that  for all $\vert\tau\vert>\tau_0$
there exists
a solution to the boundary value problem
\begin{equation}\label{mimino}
L(x,D)w=fe^{\tau\Phi}\quad \mbox{in}\,\,\Omega, \quad w\vert_{\Gamma_0}
=qe^{\tau\varphi}/\tau
\end{equation}
such that
$$
\root\of{\vert\tau\vert}
\Vert w e^{-\tau\varphi}\Vert_{L^2(\Omega)}
+ \frac{1}{\root\of{\vert\tau\vert} }
\Vert (\nabla w) e^{-\tau\varphi}\Vert_{L^2(\Omega)}+\Vert we^{-\tau\varphi}
\Vert_{H^{1,\tau}(\mathcal O_{\epsilon'})}
\le C_{11}(\Vert f\Vert_{L^p(\Omega)}
+ \Vert q \Vert_{H^\frac 12(\Gamma_0)}).
  $$
\end{proposition}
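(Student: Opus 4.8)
The plan is to build $w$ as a two–term expansion $w = w_0 + w_1$, where $w_0$ carries the boundary data and the leading behaviour near $\Gamma_0$, and $w_1$ is a correction obtained from the Carleman-based solvability result, Proposition~\ref{Proposition 2.3}. First I would dispose of the boundary condition: using Proposition~\ref{zika} choose a holomorphic $r$ and an antiholomorphic $\widetilde r$ with $(e^{\mathcal A_1}r + e^{\mathcal B_1}\widetilde r)|_{\Gamma_0} = q$ and the bound $\|r\|_{H^1}+\|\widetilde r\|_{H^1}\le C\|q\|_{H^{1/2}(\Gamma_0)}$, and set $w_{\mathrm{bdry}} = \frac{1}{\tau}(e^{\mathcal A_1+\tau\Phi}r + e^{\mathcal B_1+\tau\overline\Phi}\widetilde r)$. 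Because $\Phi$ is holomorphic, $L(x,D)$ applied to $e^{\mathcal A_1+\tau\Phi}r$ collapses (as recorded right after \eqref{001q}) to a zeroth-order term $\frac1\tau(q_1 - 2\partial_z A_1 - A_1B_1)e^{\mathcal A_1}re^{\tau\Phi}$, and similarly for the other piece; these extra terms, together with $dist(\overline\Gamma_0,\mathrm{supp}\,f)>0$, are absorbed into a new right-hand side $\widetilde f e^{\tau\varphi}$ with $\|\widetilde f\|_{L^p(\Omega)}\le C(\|f\|_{L^p}+\|q\|_{H^{1/2}(\Gamma_0)})$. Note $\widetilde f$ need not be supported away from $\mathcal H$.

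Next I would split $\widetilde f = e_1\widetilde f + e_2\widetilde f$ using the partition \eqref{TT}, so that $e_2\widetilde f$ is supported near $\partial\Omega$ away from $\mathcal H$ and $e_1\widetilde f$ is supported away from $\partial\Omega$. For the piece supported away from $\mathcal H$ one can construct a solution of $L(x,D)w_{\mathrm{reg}} = e_2\widetilde f\, e^{\tau\Phi}$, $w_{\mathrm{reg}}|_{\Gamma_0}=0$ using the second part of Proposition~\ref{Proposition 2.3} (estimate \eqref{(2.277)}, which requires $\mathrm{supp}$ of the source in $G_\epsilon$); since $\widetilde f\in L^p$ with $p>2$ and $\Omega$ is bounded, $L^p\subset L^2$ and the estimate gives $\|\nabla w_{\mathrm{reg}}e^{-\tau\varphi}\|_{L^2}+|\tau|\|w_{\mathrm{reg}}e^{-\tau\varphi}\|_{L^2}\le C\|e_2\widetilde f\|_{L^2}$, which in particular controls the $H^{1,\tau}(\mathcal O_{\epsilon'})$ norm. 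For the remaining piece $e_1\widetilde f\, e^{\tau\Phi}$, which is the genuinely hard term because its support may reach the critical set $\mathcal H$, I would use Proposition~\ref{Proposition 00}: writing $g = e_1\widetilde f$ (after also peeling off a suitable $(2\partial_z+B)$-primitive so the source matches the form $(2\partial_{\overline z}+A)(2\partial_z+B)(ue^{\tau\Phi})$), Proposition~\ref{Proposition 00} produces $u$ with $L(x,D)(ue^{\tau\Phi}) = e_1\widetilde f\,e^{\tau\Phi} + \frac{e^{\tau\varphi}}{|\tau|}h_\tau$ and $\|h_\tau\|_{L^\infty}\le C\|e_1\widetilde f\|_{L^p}$, together with the decay estimate \eqref{PP10}.

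At this point the remainder is $-\frac{e^{\tau\varphi}}{|\tau|}h_\tau$ with $h_\tau$ bounded in $L^\infty(\Omega)\subset L^2(\Omega)$ uniformly in $\tau$; I would kill it with one more application of the first part of Proposition~\ref{Proposition 2.3} (estimate \eqref{(2.27)}), solving $L(x,D)w_{\mathrm{corr}} = \frac{1}{|\tau|}h_\tau e^{\tau\varphi}$, $w_{\mathrm{corr}}|_{\Gamma_0}=0$, which yields $\frac{1}{\sqrt{|\tau|}}\|\nabla w_{\mathrm{corr}}e^{-\tau\varphi}\|_{L^2}+\sqrt{|\tau|}\|w_{\mathrm{corr}}e^{-\tau\varphi}\|_{L^2}\le \frac{C}{|\tau|}\|h_\tau\|_{L^2}$, an even smaller quantity. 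Summing $w = w_{\mathrm{bdry}} + w_{\mathrm{reg}} + ue^{\tau\Phi} + w_{\mathrm{corr}}$ and adding the estimates gives the claimed bound; the $H^{1,\tau}(\mathcal O_{\epsilon'})$ control comes from \eqref{PP10} for the main term, from \eqref{(2.277)} for $w_{\mathrm{reg}}$, and from the explicit holomorphic/antiholomorphic form of $w_{\mathrm{bdry}}$ (whose gradient near $\Gamma_0$ is $O(\tau)$ times $e^{\tau\varphi}$, hence $O(1)$ after the weight and the $1/\tau$ prefactor). The main obstacle is the contribution of $e_1\widetilde f$ whose support touches $\mathcal H$: there no Carleman-type gain of a full power of $\tau$ is available, and one must rely on the delicate construction of Proposition~\ref{Proposition 00}, which is precisely why that proposition was set up with the polynomial subtraction $\widetilde M$ and the operators $\widetilde{\mathcal R}_{\tau,B}$, $P_A$; verifying that the hypotheses of Proposition~\ref{Proposition 00} (in particular $\mathrm{supp}\,g\subset\subset\mathrm{supp}\,e_1$ and $dist(\Gamma_0,\mathrm{supp}\,g)>0$) are met after the reductions above is the crux of the argument.
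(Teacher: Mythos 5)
Your plan is sound and ultimately relies on the same three ingredients as the paper (Proposition~\ref{zika}, both estimates of Proposition~\ref{Proposition 2.3}, and Proposition~\ref{Proposition 00}), but it reorganizes the argument. Where you peel off the boundary data $q/\tau$ explicitly with $w_{\mathrm{bdry}}$ \emph{before} splitting the source, the paper instead hands $(1-\chi)f$ and $q/\tau$ \emph{jointly} to Proposition~\ref{Proposition 2.3} (which internally performs exactly your boundary-reduction via Proposition~\ref{zika}), then applies the Proposition~\ref{Proposition 00} machinery only to $\chi f$ supported near $\mathcal H\setminus\Gamma_0$, and afterwards corrects the nonzero boundary trace $q_\tau$ of the resulting $w_2$ by a second invocation of Proposition~\ref{zika}. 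Your reordering avoids this last correction but introduces a different complication you gloss over: the extra source created by $w_{\mathrm{bdry}}$ contains both $e^{\tau\Phi}$ and $e^{\tau\overline\Phi}$ pieces, so it cannot be lumped with $f$ into a single ``$\widetilde f e^{\tau\varphi}$'' that you then feed to Proposition~\ref{Proposition 00}, since that proposition demands a source of the pure form $ge^{\tau\Phi}$. It does not break the argument, because those extra terms carry a $1/\tau$ prefactor and should simply be routed through estimate \eqref{(2.27)} separately; but you must say so, and apply Proposition~\ref{Proposition 00} only to the genuine $e_1 f\,e^{\tau\Phi}$ part. Two further small fixes are needed. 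First, you need a cutoff $\chi$ with $\mathrm{supp}\,\chi\subset\subset\{e_1=1\}$ (as in the paper) to guarantee $\mathrm{supp}\,g\subset\subset\mathrm{supp}\,e_1$; $e_1 f$ by itself need not satisfy that compact containment. Second, the remainder you quote from Proposition~\ref{Proposition 00} is $\frac{e^{\tau\varphi}}{|\tau|}h_\tau$, but that is only the remainder for the factored operator $\mathcal P(x,D)$. For the full $L(x,D)=\mathcal P(x,D)+(q_1-2\partial_z A_1-A_1B_1)$ the zeroth-order term acting on $u$ contributes an additional $O\bigl(\tau^{-1/2}\bigr)$ piece (this is why the paper records $e^{\tau\varphi}\widetilde h_\tau/\sqrt{|\tau|}$ in \eqref{norma4}, with $\widetilde h_\tau$ only in $L^2$, not $L^\infty$). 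The final correction step via \eqref{(2.27)} still closes with this weaker remainder, so the net estimate is unaffected, but the size $O(1/|\tau|)$ you claim for $h_\tau$ is not correct as stated.
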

\begin{proof} Let $\chi\in C_0^\infty(\Omega)$ be equal to one
in some neighborhood of the set $\mathcal H\setminus\Gamma_0.$
By Proposition \ref{Proposition 2.3}
there exists a solution to the problem (\ref{mimino})
with
inhomogeneous term $(1-\chi)f$ and boundary data $q/\tau$ such that
\begin{equation}\label{norma}
\Vert  w_1 e^{-\tau\varphi}\Vert_{H^{1,\tau}(\Omega)}
\le C_{12}(\Vert f \Vert_{L^2(\Omega)}
+ \Vert q \Vert_{H^\frac 12(\Gamma_0)}).
\end{equation}
Denote $w_2=\widetilde {\mathcal R}_{-\tau, B}(e_1(P_{A} (\chi f)
- \widetilde M e^{\mathcal A}))
+ \frac{e_2(P_{A} (\chi f)-\widetilde M e^{\mathcal A})}
{2\tau\partial_z\Phi}$ where $\widetilde{M} = \widetilde M(z)$ is a
polynomial such that
$\frac{\partial^k}{\partial z^k}(P_{A} (\chi f)-\widetilde M e^{\mathcal A})
\vert_{\mathcal H}=0$ for any  $k$ from $\{0,\dots, 6\}.
$ Let  $q_\tau $ be the restriction of $w_2$ to
$\Gamma_0.$
By (\ref{mis}) there exists a constant $C_{13}$ independent of
$\tau$ such that
\begin{equation}\label{zorka}
\vert \tau\vert^\frac 12\Vert q_\tau\Vert_{ H^\frac 12(\Gamma_0)}
\le C_{13}\Vert f\Vert_{L^p(\Omega)}.
\end{equation}
By Proposition \ref{Proposition 00} there exists a constant
$C_{14}$ independent of $\tau$ such that
\begin{equation}\label{norma1}
\root\of{\vert\tau\vert} \Vert w_2 e^{-\tau\varphi}\Vert_{L^2(\Omega)}
+ \frac{1}{\root\of{\vert\tau\vert} }\Vert \nabla w_2 e^{-\tau\varphi}\Vert
_{L^2(\Omega)}
+ \Vert w_2e^{-\tau\varphi}\Vert_{H^{1,\tau}(\mathcal O_{\epsilon'})}
\le C_{14}\Vert f\Vert_{L^p(\Omega)}.
\end{equation}
Let $\widetilde a_\tau,\widetilde b_\tau\in H^1(\Omega)$ be holomorphic and
 antiholomorphic functions, respectively, such that $(\widetilde a_\tau
e^{\mathcal A}+\widetilde b_\tau e^{\mathcal B})\vert_{\Gamma_0}
=-q_\tau$. By (\ref{zorka}) and Proposition \ref{zika}
there exist constants
$C_{15},C_{16}$ independent of $\tau$ such that
\begin{equation}\label{norma2}
\Vert \widetilde a_\tau\Vert_{H^1(\Omega)}
+ \Vert \widetilde b_\tau\Vert_{H^1(\Omega)}
\le C_{15} \Vert q_\tau\Vert_{ H^\frac 12(\Gamma_0)}
\le C_{16}\frac{\Vert f\Vert_{L^p(\Omega)}}{\root\of{\vert\tau\vert}}.
\end{equation}
The function $W=(w_2+\widetilde a_\tau e^{\mathcal A}) e^{\tau\Phi}
+\widetilde b_\tau e^{\mathcal B+\tau\overline\Phi}$ satisfies
$$
L(x,D) W=\chi f e^{\tau\Phi}  +e^{\tau\varphi}\frac{\widetilde h_\tau}
{\root\of{\vert \tau\vert}}\quad\mbox{in}\,\,\Omega,\quad W\vert
_{\Gamma_0}=0,
$$
where
\begin{equation}\label{norma4}
\Vert \widetilde h_\tau\Vert_{L^2(\Omega)}\le C_{17}
\Vert f \Vert_{L^2(\Omega)}
\end{equation}
with some constant $C_{17}$ independent of $\tau.$
 By (\ref{norma1}), (\ref{norma2})
\begin{equation}\label{norma3}
\root\of{\vert\tau\vert} \Vert W e^{-\tau\varphi}\Vert
_{L^2(\Omega)}+\frac{1}{\root\of{\vert\tau\vert} }\Vert
\nabla W e^{-\tau\varphi}\Vert_{L^2(\Omega)}
+\Vert We^{-\tau\varphi}\Vert_{H^{1,\tau}(\mathcal O_{\epsilon'})}
\le C_{18}\Vert f\Vert_{L^p(\Omega)}.
\end{equation}
Let $\widetilde W$ be a solution to problem (\ref{(2.26)}) with
inhomogeneous term and boundary data $f=-\frac{\widetilde h_\tau}
{\root\of{\vert \tau\vert}}, g\equiv0$ respectively given by
Proposition \ref{Proposition 2.3}.  The estimate (\ref{(2.27)}) has
the form
\begin{equation}\label{norma4}
\Vert \widetilde W e^{-\tau\varphi}\Vert_{H^{1,\tau}(\Omega)}
\le C_{19}\Vert \widetilde h_\tau\Vert_{L^2(\Omega)}
\le C_{20}\Vert f\Vert_{L^2(\Omega)}.
\end{equation}
Then the function $w_1+W+\widetilde W$ solves (\ref{mimino}).
The estimate (\ref{mina}) follows form (\ref{norma}), (\ref{norma3})
and (\ref{norma4}).
The proof of the proposition is completed.
\end{proof}
\section{Complex Geometrical Optics Solutions}

For a complex-valued vector field $(A_1, B_1)$ and complex-valued
potential $q_1$ we will construct
solutions to the boundary value problem
\begin{equation}\label{(2.1III)}
{ L}_{1}(x,D)u_1=0\quad \mbox{in}\,\,\Omega,\quad
u_1\vert_{\Gamma_0}=0
\end{equation}
of the form
\begin{equation}\label{mozilaa}
u_1(x)=a_\tau(z)
e^{\mathcal A_1+\tau\Phi}+d_\tau(\overline z)e^{\mathcal B_1
+\tau\overline{\Phi}} +
u_{11}e^{\tau\varphi}+u_{12}e^{\tau\varphi}.
\end{equation}

Here $\mathcal A_1$ and $\mathcal B_1$ are defined by (\ref{001q})
respectively for $A_1$ and $B_1$,
$a_\tau(z)=a(z)+\frac{a_1(z)}{\tau}+\frac{a_{2,\tau}(z)}{\tau^2},$
$d_\tau(\overline z)={d(\overline z)}+\frac{d_1(\overline z)}{\tau}
+\frac{d_{2,\tau}(\overline z)}{\tau^2},$
\begin{equation}\label{iopa}
a,d\in C^{5+\alpha}(\overline\Omega), \,\,\,\, \frac{\partial a}
{\partial \overline z}=0\,\,\mbox{in}\,\Omega,\,\,
\frac{\partial d}{\partial z}=0\,\,\mbox{in}\,\Omega,
\end{equation}
\begin{equation}\label{ikaa}
(ae^{\mathcal A_1}+de^{\mathcal B_1})\vert_{\Gamma_0}=0.
\end{equation}

Let
$\widetilde x$ be some fixed point from $\mathcal H\setminus\partial\Omega.$
Suppose in addition that
\begin{equation}\label{nip}
\frac{\partial^k a}{\partial z^k} \vert_{\mathcal H\cap \partial\Omega}
=\frac{\partial^k d}{\partial \overline z^k}
\vert_{\mathcal H\cap \partial\Omega}=0 \quad \forall
k\in \{0,\dots, 5\}, \quad a\vert_{\mathcal H\setminus \{\widetilde x\}}
=d\vert_{\mathcal H\setminus \{\widetilde x\}}=0,\,\, a(\widetilde x)\ne 0,
d(\widetilde x)\ne 0.
\end{equation}
Such functions exists by Proposition \ref{balalaika1}.

Denote
$$
g_1=T_{{B}_1}
((q_1-2 \frac{\partial {B}_1}{\partial \overline z} - {A}_1 {B}_1)
d e^{\mathcal{B}_1})
-{M}_2(\overline{z})e^{\mathcal{B}_1},\quad g_2=P_{{A}_1}
((q_1-2 \frac{\partial {A}_1}{\partial {z}}
-{A}_1 {B}_1) ae^{\mathcal{A}_1})
-{M}_1(z)e^{{\mathcal A}_1},\quad
$$
where $M_1(z)$ and $M_2(\overline z)$ are polynomials such that
\begin{equation}\label{TTT}
\frac{\partial^k g_1}{\partial \overline z^k}\vert_{\mathcal H}
=\frac{\partial^k g_2}{\partial z^k}\vert_{\mathcal H}=0 \quad \forall
k\in \{0,\dots,6\},\quad\frac{\partial g_1}{\partial z}
=\frac{\partial g_2}{\partial \overline z}=0\,\,\mbox{on}\,\,
\mathcal H\setminus\{\widetilde x\}.
\end{equation}
Thanks to our assumptions on the regularity of $A_1,B_1$ and
$q$, $g_1,g_2$ belong to $C^{6+\alpha}(\overline\Omega).$

Note that by (\ref{TTT}), (\ref{nip})
\begin{equation}\label{TTT0}\frac{\partial^{k+j} g_1}
{\partial z^k\partial {\overline z}^j} \vert_{\mathcal H\cap\partial\Omega}
=\frac{\partial^{k+j} g_2}{\partial z^k\partial {\overline z}^j}
\vert_{\mathcal H\cap\partial \Omega}=0 \quad \mbox{if $k+j\le 6$}.
\end{equation}
The function $a_1(z)$ is  holomorphic in $\Omega$ and
$d_1(\overline z)$ is antiholomorphic in $\Omega$ and
\begin{equation}\label{AK0}
a_1e^{{\mathcal A_1}}+d_1e^{{\mathcal B_1}}=\frac{ g_1}
{2\overline{\partial_z
\Phi}}\nonumber+\frac{ g_2}{2\partial_z \Phi}\quad \mbox{on}\,
\Gamma_0.
\end{equation}
The existence of such functions is given again by Proposition
\ref{zika}.
Observe that by (\ref{TTT0}) the functions $\frac{e_2 g_1}
{\overline{\partial_z \Phi}}, \frac{e_2g_2}{{\partial_z \Phi}} \in
C^4(\overline\Omega).$
Let
$$
\widehat g_1=T_{{B}_1}
((q_1-2 \frac{\partial {B}_1}{\partial \overline z} - {A}_1 {B}_1)
d_1 e^{\mathcal{B}_1})
-\widehat {M}_2(\overline{z})e^{\mathcal{B}_1},\quad\widehat g_2=P_{{A}_1}
((q_1-2 \frac{\partial {A}_1}{\partial {z}}
-{A}_1 {B}_1) a_1e^{\mathcal{A}_1})
-\widehat {M}_1(z)e^{{\mathcal A}_1},\quad
$$
where $\widehat M_1(z)$ and $\widehat M_2(\overline z)$ are polynomials
such that
\begin{equation}
\frac{\partial^k \widehat g_1}{\partial \overline z^k}\vert_{\mathcal H}
=\frac{\partial^k \widehat g_2}{\partial z^k}\vert_{\mathcal H}
=0 \quad \forall k\in \{0,\dots,3\}.
\end{equation}

Henceforth we recall (\ref{anna}).
The function $u_{11}$ is given by
\begin{eqnarray}\label{wolf}
u_{11}=-e^{-i\tau\psi}\mathcal R_{-\tau, A_1}\left\{e_1(g_1+\widehat g_1/\tau)
\right\}-e^{-i\tau\psi}\frac{e_2 (g_1+\frac{\widehat g_1}{\tau})}
{2\tau\overline{\partial_z
\Phi}}+\frac{e^{-i\tau\psi}}{4\tau^2\overline{\partial_z\Phi}}
L_1(x,D)\left(\frac{e_2g_1}{\overline{\partial_z \Phi}}\right)\nonumber\\
-e^{i\tau\psi}\widetilde{\mathcal R}_{\tau,B_1}\left\{e_1(g_2+\widehat g_2/\tau)
\right\}-e^{i\tau\psi}\frac{e_2(g_2+\frac{\widehat g_2}{\tau})}
{2\tau\partial_z \Phi}+\frac{e^{i\tau\psi}}{4\tau^2{{\partial_z\Phi}}}
L_1(x,D)\left(\frac{e_2g_2}{{\partial_z \Phi}}\right).
\end{eqnarray}

Now let us determine the functions $u_{12}$, $a_2(z)$ and
$d_2(\overline z).$

First we can obtain the following asymptotic formulae for any point on
the boundary of $\Omega$:
\begin{equation}\label{AK1}
\mathcal R_{-\tau, A_1}\left\{e_1g_1\right\}
\vert_{\partial\Omega}
= \frac{e^{\mathcal A_1+2i\tau\psi}}
{2\tau^2\vert \mbox{det}\, \psi''({\widetilde x})\vert^\frac 12}
\left (\frac{e^{-2i\tau\psi({\widetilde x})} \sigma_{1}({\widetilde x})}
{(z-\widetilde z)^2}+\frac{e^{-2i\tau\psi({\widetilde x})} m_{1}({\widetilde x})}
{(\widetilde z-z)}\right )
+\mathcal{W}_{\tau,1},
\end{equation}
\begin{equation}\label{AKK1}
\widetilde{\mathcal R}_{\tau, B_1}\left\{e_1g_2\right\}
\vert_{\partial\Omega}
= \frac{e^{\mathcal B_1-2i\tau\psi}}
{2\tau^2\vert \mbox{det}\, \psi''({\widetilde x})\vert^\frac 12}
\left (\frac{e^{2i\tau\psi({\widetilde x})} \widetilde \sigma_{1}
({\widetilde x})}{(\overline z-\overline{\widetilde z})^2}
+\frac{e^{2i\tau\psi({\widetilde x})} \widetilde m_{1}({\widetilde x})}
{(\overline{\widetilde z}-\overline z)}\right )
+\mathcal{W}_{\tau,2},
\end{equation}
where
\begin{equation}\label{v1}
\sigma_{1}(\widetilde x)=\frac{\partial_{ z}\widetilde g_1 ({\widetilde x})}
{\partial^2_z\Phi({\widetilde x})}, \quad m_{1}(\widetilde x)
=\frac {1}{2}\left(\frac{\partial_{z}\widetilde g_1 ({\widetilde x})}
{{\partial^2_z\Phi({\widetilde x})}}\frac{\partial_z^3\Phi({\widetilde x})}
{\partial^2_z\Phi({\widetilde x})}
+
 \frac{\partial^2_{\overline z}\widetilde g_1 ({\widetilde x})}
{\overline{\partial^2_z\Phi({\widetilde x})}}-\frac{\partial^2_{ z}
\widetilde g_1 ({\widetilde x})}{{\partial^2_z\Phi({\widetilde x})}}\right ),
\end{equation}
\begin{equation}\label{v2}
\widetilde \sigma_{1}(\widetilde x)=\frac{\partial_{\overline  z}\widetilde g_2
({\widetilde x})}{\overline{\partial^2_z\Phi({\widetilde x})}},\quad \widetilde
m_{1}(\widetilde x)=\frac {1}{2}\left(\frac{\partial_{\overline z}\widetilde
g_2 ({\widetilde x})}{\overline{\partial^2_z\Phi({\widetilde x})}}\frac{\partial^3
_{\overline z}\overline\Phi({\widetilde x})}{\partial^2_{\overline z}
\overline\Phi({\widetilde x})} -
\frac{\partial^2_{\overline z}\widetilde g_2 ({\widetilde x})}
{\overline{\partial^2_z\Phi({\widetilde x})}}
+ \frac{\partial^2_{ z}\widetilde g_2 ({\widetilde x})}
{\partial^2_z\Phi({\widetilde x})}\right ),
\end{equation}
$\widetilde g_1=e^{-\mathcal A_1}g_1$, $\widetilde g_2=e^{-\mathcal B_1}g_2$
and $\mathcal W_{\tau,1},\mathcal W_{\tau,2}\in H^\frac 12(\Gamma_0)$
satisfy
\begin{equation}\label{AK2}
\Vert\mathcal{W}_{\tau,1}\Vert_{H^\frac 12(\Gamma_0)}+\Vert\mathcal{W}
_{\tau,2}\Vert_{H^\frac 12(\Gamma_0)}=o(\frac{1}{\tau^2})
\quad\mbox{as}\,\vert\tau\vert\rightarrow +\infty.
\end{equation}
The proof of (\ref{AK1}) and (\ref{AKK1}) is given in Section 8.

Denote
$$
p_+(x)=e^{\mathcal A_1(x)}\left (\frac{\sigma_{1}(\widetilde x)}{(z-\widetilde z)^2}
+\frac{m_{1}(\widetilde x)}{(\widetilde z-z)}\right ),\quad
p_-(x)=e^{\mathcal B_1(x)}\left(\frac{\widetilde \sigma_{1}({\widetilde x})}
{(\overline z-\overline{\widetilde z})^2}+ \frac{\widetilde m_{1}({\widetilde x})}
{(\overline{\widetilde z}-\overline z)}\right ).
$$
Thanks to Proposition \ref{zika} we can define functions $a_{2,\pm}(z)
\in C^2(\overline\Omega)$ and $d_{2,\pm}(\overline z)\in
C^2(\overline\Omega)$ satisfying
\begin{equation}\label{AK3}
a_{2,\pm}e^{{\mathcal A_1}}+d_{2,\pm}e^{{\mathcal B_1}}=p_\pm\quad
\mbox{on}\,\Gamma_0.
\end{equation}
Straightforward computations give
\begin{eqnarray}\label{mumy1}
&{L}_1(x,{D})((a+ \frac{a_1}{\tau})
e^{\mathcal{A}_1+\tau \Phi}
+({d}+\frac{d_1}{\tau})
e^{\mathcal{B}_1 +\tau \overline{ \Phi}}
+e^{\tau \varphi}u_{11})\nonumber \\
&=(q_1 -2 \frac{\partial {A}_1}{\partial z}
-{A}_1{B}_1)e^{\tau \Phi}
\biggl(-\widetilde{\mathcal{R}}_{\tau,{B}_1}\{e_1(g_2+\widehat g_2/\tau)\}
-\frac{e_2(g_2+\widehat g_2/\tau)}{2 \tau \partial_z \Phi}\biggl)
                                                     \nonumber\\
&+(q_1 -2 \frac{\partial {B}_1}{\partial \overline{z}}
-{A}_1{B}_1) e^{\tau \overline{\Phi}}
\biggl(-\mathcal{R}_{-\tau,{A}_1}\{e_1(g_1+\widehat g_1/\tau)\}
-\frac{e_2(g_1+\widehat g_1/\tau)}{2 \tau
\overline{\partial_z \Phi}}\biggl) \nonumber\\
&+\frac{e^{\tau {\Phi}}}{\tau^2} {L}_1(x,{D})\left(
\frac{1}{4{\partial_z\Phi}}L_1(x,D)\left(
\frac{e_2g_2}{\partial_z \Phi}\right)\right) + \frac{e^{\tau
\overline\Phi}}{\tau^2}{L}_1(x,{D})
\biggl(\frac{1}{4\overline{\partial_z\Phi}}L_1(x,D)\left(\frac{e_2g_1}
{\overline{\partial_z \Phi}}\right)\biggl).
\end{eqnarray}

Using Proposition \ref{Proposition 3.22} we transform the
right-hand
side of (\ref{mumy1}) as follows.
\begin{eqnarray}\label{ROMA}
&{L}_1(x,{D})((a+ \frac{a_1}{\tau})
e^{\mathcal{A}_1+\tau \Phi}
+({d}+\frac{d_1}{\tau})
e^{\mathcal{B}_1 +\tau \overline{ \Phi}}
+u_{11}e^{\tau \varphi})\\
&=-(q_1 -2 \frac{\partial {A}_1}{\partial z}
-{A}_1{B}_1)e^{\tau \Phi}
\frac{g_1}{2 \tau \partial_z \Phi}\nonumber \\
&-(q_1 -2 \frac{\partial {B}_1}{\partial \overline{z}}
-{A}_1{B}_1) e^{\tau \overline{\Phi}}
\frac{g_2}{2 \tau \overline{\partial_z \Phi}}
+\chi_{\mathcal O_\epsilon} O_{L^4(\Omega)}\biggl(\frac{1}{\tau^2}\biggl)++\chi_{\Omega\setminus\mathcal O_{\epsilon'}} o_{L^4(\Omega)}\biggl(\frac{1}{\tau^2}\biggl)\quad\mbox{as}\,
\vert\tau\vert\rightarrow +\infty.\nonumber
\end{eqnarray}

We are looking for $u_{12}$ in the form $u_{12}=u_0+u_{-1}.$
The function $u_{-1}$ is given by
\begin{eqnarray}\label{NNN}
u_{-1}&=\frac{e^{i \tau \psi}}{\tau}
\widetilde{\mathcal{R}}_{\tau,{B}_1}\{e_1g_5\}
+\frac{e^{-i \tau \psi}}{\tau}
\mathcal{R}_{-\tau,{A}_1}\{e_1g_6\}
+\frac{e_2g_5e^{i \tau \psi}}{2 \tau^2 \partial_z \Phi}
+\frac{e_2g_6e^{-i \tau \psi}}{2 \tau^2 \overline{\partial_z \Phi}},
\end{eqnarray}
where
\begin{equation}\label{NNN1}
g_5=\frac{P_{{A_1}}
((q_1 -2 \frac{\partial {A}_1}{\partial z}
-{A}_1{B}_1)g_1)-{M}_5(z)e^{\mathcal A_1}}
{2 \partial _z \Phi},
g_6=\frac{T_{{B}_1}
((q_1 -2 \frac{\partial {B}_1}{\partial \overline{z}}
-{A}_1{B}_1)g_2)-{M}_6(\overline{z})e^{\mathcal B_1}}
{2\overline{\partial _z \Phi}}.
\end{equation}
Here $M_5(z),M_6(\overline z)$ are polynomials such that
$$
g_5 \vert_{\mathcal{H}}=g_6 \vert_{\mathcal{H}}=\nabla g_5\vert
_{\mathcal{H}}=\nabla g_6\vert_{\mathcal{H}}=0.
$$
Using Proposition \ref{zika} we introduce functions $a_{2,0}, d_{2,0}
\in C^2(\overline \Omega)$ (holomorphic and antiholomorphic respectively)
such that
\begin{eqnarray}\label{IPA}
a_{2,0}e^{{\mathcal A_1}}+d_{2,0}e^{{\mathcal B_1}}=\frac{g_5}{2\partial_z\Phi}
+\frac{g_6}{2\overline{\partial_z\Phi}}\quad \mbox{on}\,\Gamma_0.
\end{eqnarray}
Next we claim that
\begin{eqnarray}\label{AKM}
\quad\quad
{\mathcal R}_{-\tau,A_1}\{e_1g_6\}\vert_{\Gamma_0}=o(\frac {1}{\tau})
\,\,\mbox{as}\,\vert\tau\vert\rightarrow +\infty,\,\,
\widetilde{\mathcal R}_{\tau,B_1}\{e_1 g_5\}\vert_{\Gamma_0}
=o(\frac {1}{\tau})\quad\mbox{as}\,\vert\tau\vert\rightarrow +\infty.
\end{eqnarray}
To see this, let us introduce the function $\mathcal F$
with domain $\Gamma_0$.
\begin{eqnarray}
\mathcal F=2e^{ -\mathcal
A_1}e^{\tau(\Phi-\overline{\Phi})}{\mathcal
R}_{-\tau,A_1}\left \{e_1 g_6\right\}=\nonumber\\
\partial_{\overline
z}^{-1} (e_1e^{-\mathcal
A_1+\tau(\Phi-\overline{\Phi})}\frac{T_{{B}_1} ((q_1 -2
\frac{\partial {B}_1}{\partial \overline{z}}
-{A}_1{B}_1)g_2)-{M}_6 e^{\mathcal B_1}}
{2\overline{\partial _z \Phi}}).\nonumber
\end{eqnarray}
Denoting $r(x)=e^{\mathcal A_1}\frac{T_{{B}_1}
((q_1 -2 \frac{\partial {B}_1}{\partial \overline{z}}
-{A}_1{B}_1)g_2)-{M}_6(\overline{z})e^{\mathcal B_1}}
{2\overline{\partial _z \Phi}}$ we have
$$
\mathcal F(x)
=-\frac{1}{\pi}\int_\Omega\frac{e_1(x)r(x)e^{2i\tau\psi}}
{\overline \zeta-\overline z}d\xi_1d\xi_2=\frac{1}{2i\pi\tau}
\int_\Omega\sum_{k=1}^2\frac{\partial}{\partial x_k}\left(
\frac{\partial\psi}{\partial x_k}\frac{e_1(x)}{\vert\nabla\psi\vert^2}
\frac{r(x)}{\overline \zeta-\overline z}\right)e^{2i\tau\psi}
d\xi_1d\xi_2.
$$
Since $\sum_{k=1}^2\frac{\partial}{\partial x_k}(\frac{\frac{\partial\psi}
{\partial x_k}}{\vert\nabla\psi\vert^2}\frac{e_1(x)r(x)}
{\overline \zeta-\overline z})\in L^1(\Omega)$, we have
$\mathcal F=o(\frac 1\tau)$. This proves (\ref{AKM}).

Now we finish the construction of  functions $a_{2,\tau}(z)$ and
$d_{2,\tau}(\overline z)$ by setting
$$
d_{2,\tau}(\overline z)=d_{2,0}(\overline z)+\frac{d_{2,+}(\overline z)
e^{2i\tau\psi(\widetilde x)} +d_{2,-}(\overline z)e^{-2i\tau\psi(\widetilde x)}}
{2\vert \mbox{det}\, \psi''({\widetilde x})\vert^\frac 12},
$$
$$
a_{2,\tau}(z)=a_{2,0}(z)+\frac{a_{2,+}( z)e^{2i\tau\psi(\widetilde x)}
+a_{2,-}( z)e^{-2i\tau\psi(\widetilde x)}}{2\vert \mbox{det}\, \psi''({\widetilde x})\vert^\frac 12},
$$
where $a_{2,\pm}, d_{2,\pm}$ satisfy (\ref{AK3}).
To complete the construction of a solution to  (\ref{(2.1III)})
we define  $u_0$ as the  solution to the inhomogeneous problem
\begin{equation}\label{(3.18)}
{L}_1(x,D)(u_{0}e^{\tau\varphi})=h_1e^{\tau \varphi}\quad
\mbox{in}\,\,\Omega,
\end{equation}
\begin{equation}\label{(3.188)}
u_{0}e^{\tau\varphi}=e^{\tau\varphi}{\bf m}_1
\quad \mbox{on $\Gamma_0$},
\end{equation}
where
$$
h_1(\tau)=-e^{-\tau\varphi}{L}_1(x,{D})(a_{\tau}
e^{\mathcal{A}_1+\tau \Phi}
+d_{\tau}e^{\mathcal{B}_1 +\tau \overline{\Phi}}
+u_{11}e^{\tau \varphi}+u_{-1}e^{\tau \varphi}),
$$
$$
{\bf m}_1=-e^{-\tau \varphi}(a_{\tau}
e^{\mathcal{A}_1+\tau \Phi}
+d_{\tau}e^{\mathcal{B}_1 +\tau \overline{\Phi}}
+u_{11}e^{\tau \varphi}+u_{-1}e^{\tau \varphi})
\vert _{\Gamma_0}.
$$

Observe that by (\ref{ROMA}) - (\ref{NNN1})  $h_1(\tau)$ can be represented in the form $h_1(\tau)=h_{11}+h_{12}$ where
\begin{equation}\label{(3.188)}
\Vert h_{11}\Vert_{L^4(\Omega)}=O(\frac {1}{\tau^2}),\quad\Vert  h_{12}\Vert_{L^4(\Omega)}=o(\frac 1\tau)\quad\mbox{as}\,\vert\tau\vert
\rightarrow +\infty
\end{equation} and for some positive $\epsilon$
$$
supp\, h_{11} \subset \mathcal O_\epsilon, \quad dist (supp \,h_{12},\partial\Omega)>0
$$
and by (\ref{AK0}),  (\ref{AK2}), (\ref{AK3}), (\ref{IPA})
\begin{equation}
\Vert u_0\Vert_{H^\frac 12(\Gamma_0)}=o(\frac 1{\tau^2})\quad\mbox{as}\,
\vert\tau\vert\rightarrow +\infty.
\end{equation}
By Proposition \ref{Proposition 2.3} and Proposition \ref{Proposition 0}
there exists a solution to  (\ref{(3.18)}), (\ref{(3.188)}) such that

\begin{equation} \label{NASA1}\frac{1}{\root\of{\vert \tau\vert}}
\Vert u_0\Vert_{H^1(\Omega)}+\root\of{\vert \tau\vert}\Vert
u_0\Vert_{L^2(\Omega)}+\Vert u_0\Vert_{H^{1,\tau}(\mathcal O_\epsilon)}
=o(\frac{1}{\tau}) \quad\mbox{as}\,\vert\tau\vert\rightarrow +\infty.
\end{equation}
\subsection{Complex geometrical optics  solutions for the
adjoint operator}

We now construction complex geometrical optics solutions for the adjoint
operator. This parallels the previous construction since the adjoint has
a similar form.

Consider the operator $
{L}_2(x,{D})
=4\frac{\partial}{\partial  z} \frac{\partial}{\partial \overline z}
+2{A}_2\frac{\partial}{\partial  z}
+2{B}_2\frac{\partial}{\partial \overline z}+q_2.
$
Its adjoint has the form
\begin{align*}
{L}_2(x,{D})^{*}
&=4\frac{\partial}{\partial z} \frac{\partial}{\partial \overline z}
-2\overline{{A}_2}\frac{\partial}{\partial \overline z}
-2\overline{{B}_2}\frac{\partial}{\partial  z}+\overline{q_2}
-2\frac{\partial \overline{A}_2}{\partial \overline z}
-\frac{\partial \overline{B}_2}{\partial z}\\
&=(2\frac{\partial}{\partial z}-\overline{{A}_2})
(2\frac{\partial}{\partial \overline z}-\overline{{B}_2})
+q_2 -2 \frac{\partial \overline{{A}_2}}{\partial \overline z}
-\overline{{A}_2}  \overline{{B}_2} \\
&=(2\frac{\partial}{\partial \overline z}-\overline{{B}_2})
(2\frac{\partial}{\partial  z}-\overline{{A}_2})
+q_2 -2 \frac{\partial \overline{{B}_2}}
{\partial {z}}
-\overline{{A}_2}  \overline{{B}_2}.
\end{align*}

Next we construct solution to the following boundary value problem:
\begin{equation}\label{(2.1I8)}
{L}_{2}(x,D)^*v=0\quad \mbox{in}\,\,\Omega,\quad
v\vert_{\Gamma_0}=0.
\end{equation}

We construct solutions to (\ref{(2.1I8)}) of the form
\begin{equation}\label{mozila1}
v(x)=b_{\tau}(z)e^{\mathcal{B}_2-\tau \Phi}
+c_{\tau}(\overline{z})
e^{\mathcal{A}_2-\tau \overline{\Phi}}
+v_{11}e^{-\tau \varphi}
+v_{12}e^{-\tau \varphi}, \quad
v\vert_{\Gamma_0}=0.
\end{equation}
Here $\mathcal A_2,\mathcal B_2\in C^{6+\alpha}(\overline \Omega)$
satisfy
\begin{equation}\label{ziz}
2\frac{\partial{\mathcal A_2}}{\partial z}=\overline{A_2} \,\,\quad
\mbox{in}\,\,\Omega,\quad \mbox{Im}\,\mathcal A_2\vert_{\Gamma_0}=0,
\quad
2\frac{\partial{\mathcal B_2}}{\partial \overline z}=\overline{B_2}
\,\,\quad\mbox{in}\,\,\Omega,\quad \mbox{Im}\,\mathcal B_2\vert
_{\Gamma_0}=0,
\end{equation}
and $b_\tau(z)=b(z)+\frac{b_1(z)}{\tau}+\frac{b_{2,\tau}(z)}{\tau^2},
c_\tau(\overline z)=c(\overline z)+\frac{c_1(\overline z)}{\tau}
+\frac{c_{2,\tau}(\overline z)}{\tau^2}$ and
\begin{equation}\label{PI5}
b,c\in C^{5+\alpha}(\overline\Omega),\,\, \frac{\partial b}
{\partial \overline z}=0\,\,\mbox{in}\,\Omega,\,\,
\frac{\partial c}{\partial z}=0\,\,\mbox{in}\,\Omega,
\end{equation}
\begin{equation}\label{ikaaa}
(be^{\mathcal{B}_2}
+ ce^{\mathcal{A}_2})\vert_{\Gamma_0}=0,
\end{equation}
\begin{equation}\label{iksa}
\frac{\partial^k b}{\partial z^k} \vert_{\mathcal H\cap \partial\Omega}
=\frac{\partial^k c}{\partial \overline z^k}
\vert_{\mathcal H\cap \partial\Omega}=0\quad \forall k\in\{0,\dots,5\},
b\vert_{\mathcal H\setminus\{\widetilde x\}}=c\vert
_{\mathcal H\setminus\{\widetilde x\}}=0,\,\, b(\widetilde x)
\ne 0, c(\widetilde x) \ne 0.
\end{equation}
The existence of the functions $b$ and $c$ is given by
Proposition \ref{balalaika1} .  Denote
$$
g_3=P_{-\overline{{B}_2}}
((\overline q_2-2\frac{\partial \overline{{A}_2}}
{\partial \overline {z}}-\overline{{A}_2}
\overline{{B}_2})be^{\mathcal B_2})-{M}_3(z) e^{\mathcal B_2},\quad
g_{4}=T_{-\overline{{A}_2}}
((\overline q_2-2\frac{\partial \overline{{B}_2}}
{\partial z}-\overline{{A}_2}
\overline{{B}_2})ce^{\mathcal A_2})-{M}_4(\overline z) e^{\mathcal A_2},
$$
where the polynomials $M_3(z), M_4(\overline z)$ are chosen such that
\begin{equation}\label{Vova}
 \frac{\partial^k g_3}{\partial z^k}\vert_{\mathcal H}=\frac{\partial^k g_4}
{\partial \overline z^k}\vert_{\mathcal H}=0\quad\forall k\in\{0,\dots,6\}.
 \end{equation}
 By (\ref{Vova}), (\ref{iksa})
 \begin{equation}\label{Veal}
 \frac{\partial^{k+j}g_3}{\partial z^k\partial \overline z^j}\vert
_{\mathcal H\cap \partial\Omega}=\frac{\partial^{k+j} g_4}
{\partial z^k\partial \overline z^j}\vert_{\mathcal H\cap \partial\Omega}
=0\quad \forall k+j\le 6,\quad\frac{\partial g_4}{\partial z}
=\frac{\partial g_3}{\partial \overline z}=0\,\,\mbox{on}\,\,
\mathcal H\setminus\{\widetilde x\} .
 \end{equation}
Observe that by (\ref{Veal}) $\frac{g_3}{ \partial _z \Phi},
\frac{ g_4}{ \overline{ \partial _z \Phi}}\in C^{4+\alpha}
(\overline\Omega).$
Using Proposition \ref{zika} we introduce a holomorphic function
$b_1(z)\in C^2(\overline \Omega)$ and an antiholomorphic function
$c_1(\overline z)\in C^2(\overline\Omega)$ such that
\begin{equation}
b_1e^{{\mathcal B_2}}+c_1 e^{{\mathcal A_2}}=\frac{e_2 g_3}
{2{\partial_z \Phi}}+
\frac{e_2 g_4}{2\overline{\partial_z \Phi}}\quad \mbox{on}\,\Gamma_0.
\end{equation}
Let
$$
\widehat g_3=P_{-\overline{{B}_2}}
((\overline q_2-2\frac{\partial \overline{{A}_2}}
{\partial \overline {z}}-\overline{{A}_2}
\overline{{B}_2})b_1e^{\mathcal B_2})-\widehat {M}_3(z) e^{\mathcal B_2},
\quad
\widehat g_{4}=T_{-\overline{{A}_2}}
((\overline q_2-2\frac{\partial \overline{{B}_2}}
{\partial z}-\overline{{A}_2}
\overline{{B}_2})c_1e^{\mathcal A_2})-\widehat {M}_4(\overline z)
e^{\mathcal A_2},
$$
where the polynomials $\widehat M_3(z),\widehat M_4(\overline z)$ are chosen
such that
\begin{equation}
\frac{\partial^k \widehat g_3}{\partial z^k}\vert_{\mathcal H}
= \frac{\partial^k \widehat g_4}{\partial \overline z^k}\vert_{\mathcal H}=0
\quad\forall k\in\{0,\dots,3\}.
\end{equation}
The function $v_{11}$ is defined by
\begin{eqnarray}\label{MMM}
v_{11}&=-e^{-i \tau \psi}
\widetilde{\mathcal{R}}_{-\tau,-\overline{{A}_2}}\{e_1(g_3+\widehat g_3/\tau)\}
+\frac{e^{-i\tau \psi}e_2(g_3+\widehat g_3/\tau)}{2 \tau \partial _z \Phi}
-e^{i \tau \psi}
\mathcal{R}_{\tau,-\overline{{B}_2}}\{e_1(g_4+\widehat g_4/\tau)\}
+\frac{e^{i \tau \psi}e_2(g_4+\widehat g_4/\tau)}{2\tau\overline{\partial_z \Phi}}
\nonumber \\
&-\frac{e^{-i \tau \psi}}{4\tau^2 \partial_z \Phi}L_2(x,D)^*
\biggl(\frac{e_2g_3}{\partial_z \Phi}\biggl)
-\frac{e^{i \tau \psi}}{4 \tau^2 \overline{\partial_z \Phi}} L_2(x,D)^*
\biggl(\frac{e_2g_4}{ \overline{\partial_z \Phi}}\biggl).
\end{eqnarray}
Here we set
$$
\mathcal R_{\tau,-\overline{B_2}}\{g\}
= \frac{1}{2}e^{\mathcal B_2}e^{\tau(\overline\Phi-\Phi)}
\partial _{\overline z}^{-1}(ge^{-\mathcal B_2}
e^{\tau(\Phi-\overline\Phi)})
$$
$$
\widetilde{\mathcal R}_{-\tau,-\overline{A_2}}\{g\}
= \frac{1}{2}e^{\mathcal A_2}e^{\tau(\Phi-\overline\Phi)}
\partial _{z}^{-1}(ge^{-\mathcal A_2}e^{\tau(\overline\Phi-\Phi)})
$$
provided that $A_2, B_2, \mathcal{A}_2, \mathcal{B}_2$ satisfy
(\ref{ziz}).
By Proposition \ref{zinka} the following asymptotic formulae hold:
\begin{equation}\label{victory1}
\widetilde{\mathcal R}_{-\tau,-\overline A_2}\left\{e_1g_3\right\}\vert
_{\partial\Omega} = \frac{1}{2\tau^2}\frac{e^{{\mathcal A_2}
+2\tau i\psi}}
{\vert \mbox{det}\, \psi''({\widetilde x})\vert^\frac 12}
\left (\frac{e^{-2i\tau\psi({\widetilde x})} r_{1}({\widetilde x})}
{(\overline z-\overline{\widetilde z})^2}+\frac{e^{-2i\tau\psi({\widetilde x})} t_{1}
({\widetilde x})}{(\overline{\widetilde z}-\overline z)}\right)
+\widetilde{\mathcal W}_{2,\tau}
\end{equation}
and
\begin{equation}\label{victory}
\mathcal R_{\tau, -\overline B_2}\left\{e_1g_4\right\}\vert
_{\partial\Omega}
= \frac{1}{2\tau^2}\frac{e^{{\mathcal B_2}-2\tau i\psi}}
{\vert \mbox{det}\, \psi''({\widetilde x})\vert^\frac 12}
\left ( \frac{e^{2i\tau\psi({\widetilde x})}\widetilde r_{1}({\widetilde x})}
{(z-\widetilde z)^2}+\frac{e^{2i\tau\psi({\widetilde x})}\widetilde t_{1}
({\widetilde x})}{(\widetilde z-z)}\right )
+\widetilde{\mathcal W}_{1,\tau},
\end{equation}
where
\begin{equation}\label{v3}
r_{1}(\widetilde x)=\frac{\partial_{ \overline z}
\widetilde g_3({\widetilde x})}{\overline{\partial^2_z
\Phi({\widetilde x})}},\quad t_{1}(\widetilde x)=\frac{1}{2}
\left (\frac{\partial_{ \overline z}\widetilde g_3 ({\widetilde x})}
{\overline{\partial^2_z\Phi({\widetilde x})}}\frac{\partial^3_{\overline z}
\overline\Phi({\widetilde x})}{\partial^2_{\overline z}\overline\Phi({\widetilde x})}
+ \frac{\partial^2_{ z}\widetilde g_3 ({\widetilde x})}
{{\partial^2_z\Phi({\widetilde x})}}
-\frac{\partial^2_{ \overline z}\widetilde g_3 ({\widetilde x})}
{\overline{\partial^2_z\Phi({\widetilde x})}}\right ),
\end{equation}
\begin{equation}\label{v4}
\widetilde r_{1}(\widetilde x)=\frac{\partial_z\widetilde g_4 ({\widetilde x})}
{\partial^2_z\Phi({\widetilde x})},\quad \widetilde t_{1}(\widetilde x)
=\frac{1}{2}\left( \frac{\partial_{z}\widetilde g_4 ({\widetilde x})}
{{\partial^2_z\Phi({\widetilde x})}}\frac{{\partial_{ z}^3
\Phi({\widetilde x})}}{{\partial^2_z\Phi({\widetilde x})}}
- \frac{\partial^2_{ z}\widetilde g_4 ({\widetilde x})}
{{\partial^2_z \Phi({\widetilde x})}}
+ \frac{\partial^2_{\overline z}\widetilde g_4 ({\widetilde x})}
{\overline{\partial^2_z\Phi({\widetilde x})}}\right),
\end{equation}
$\widetilde g_3=e^{-{\mathcal A_2}}g_3, \widetilde g_4
=e^{-{\mathcal B_2}}g_4.$
Here the functions  $\widetilde{\mathcal W}_{\tau,1},\widetilde{\mathcal W}
_{\tau,2}\in H^\frac 12(\Gamma_0)$ satisfy
\begin{equation}\label{PI4}
\Vert\widetilde{\mathcal{W}}_{\tau,1}\Vert_{H^\frac 12(\Gamma_0)}
+\Vert\widetilde{\mathcal W}_{\tau,2}\Vert_{H^\frac 12(\Gamma_0)}
=o(\frac{1}{\tau^2})\quad\mbox{as}\,\vert\tau\vert\rightarrow +\infty.
\end{equation}

Using Proposition \ref{zika} we define the holomorphic  functions
$b_{2,\pm}(z)\in C^2(\overline \Omega)$ and antiholomorphic
$c_{2,\pm}(\overline z)\in C^2(\overline \Omega)$
such that
\begin{equation}\label{PI3}
b_{2,\pm}e^{{\mathcal B_2}}+c_{2,\pm}e^{{\mathcal A_2}}
=\widetilde p_\pm\quad \mbox{on}\,\Gamma_0,
\end{equation}
where $\widetilde p_k$ is defined as
$$
\widetilde p_+(x)=e^{{\mathcal B_2}}\left(\frac{\widetilde r_{1}
({\widetilde x})}{(z-\widetilde z)^2}+\frac{\widetilde t_{1}({\widetilde x})}
{(\widetilde z-z)}\right),\quad
\widetilde p_-(x)= e^{{\mathcal A_2}}
\left (\frac{ r_{1}(\widetilde x)}{(\overline z
-\overline{\widetilde z})^2}+\frac{ t_{1}(\widetilde x)}
{(\overline{\widetilde z}-\overline z)}\right).
$$
Similarly to (\ref{ROMA}), there exist
two positive numbers $\epsilon $ and $\epsilon'$ such that
\begin{eqnarray}\label{O1}
&{L}_2(x,{D})^{*}((b+\frac{b_1}{\tau})
e^{\mathcal{B}_2-\tau \Phi}
+(\overline{b}+ \frac{c_1}{\tau})
e^{\mathcal{A}_2 -\tau \overline{\Phi}}
+v_{11}e^{-\tau \varphi})\nonumber \\
&=\frac{g_3 e^{-\tau \Phi}}{2 \tau \partial _z \Phi}
(\overline q_2 - 2\frac{\partial \overline{{A}_2}}
{\partial {\overline z}}
-\overline{{A}_2}\overline{{B}_2})
-\frac{g_4 e^{-\tau \overline{\Phi}}}{2 \tau
\overline{\partial _z \Phi}}
(\overline q_2 -2\frac{\partial \overline{{B}_2}}{\partial z}
-\overline{{A}_2}\overline{{B}_2})\\
& +\chi_{\Omega\setminus{\mathcal O}_\epsilon'}o_{L^4(\Omega)}
+ \chi_{{\mathcal O}_\epsilon}O_{L^4(\Omega)}
\biggl(\frac{1}{\tau} \biggl).
\end{eqnarray}

We are looking for $v_{12}$ in the form $v_{12}=v_0+v_{-1}.$
The function $v_{-1}$ is given by

\begin{equation}\label{O2}
v_{-1}=-\frac{e^{\tau i{\psi}}}{\tau}
\mathcal{R}_{\tau,-\overline{{B}}_2}\{e_1g_7\}
-\frac{e^{-\tau i\psi}}{\tau}\widetilde{\mathcal{R}}_{-\tau,-\overline{{A}}_2}
\{e_1g_8\}
+\frac{e_2 g_7}{2 \tau^2 \overline{\partial_z \Phi}}
+\frac{e_2 g_8}{2 \tau^2 \partial_z \Phi},
\end{equation}
where
\begin{equation}\label{O3}
g_7=\frac{P_{-\overline{{B}_2}}
((q_2 -2\frac{\partial \overline{{B}_2}}{\partial z}
-\overline{{A}_2}\overline{{B}_2})g_3)-{M}_7(z)e^{\mathcal B_2}}
{2 \partial_z \Phi},\,\,
g_8=\frac{T_{-\overline{{A}_2}}
((q_2 - 2\frac{\partial \overline{{A}_2}}{\partial\overline z}
-\overline{{A}_2}\overline{{B}_2})g_4)-{M}_8(\overline z) e^{\mathcal A_2}}
{2 \overline{\partial_z \Phi}},
\end{equation}
and $M_7(z),M_8(\overline z)$ are polynomials such that
\begin{equation}\label{TTH}
g_7\vert_{\mathcal H}=g_8\vert_{\mathcal H}=\nabla g_7\vert
_{\mathcal H}=\nabla g_8\vert_{\mathcal H}=0.
\end{equation}
Using Proposition \ref{zika} we introduce functions $b_{2,0}, c_{2,0}
\in C^2(\overline \Omega)$ such that
\begin{eqnarray}
b_{2,0}e^{{\mathcal B_2}}+c_{2,0}e^{{\mathcal A_2}}=
\frac{g_7}{2\overline{\partial_z\Phi}}+
\frac{g_8}{2{\partial_z\Phi}}\quad \mbox{on}\,\Gamma_0.
\end{eqnarray}
Similarly to (\ref{AKM}) we have
\begin{equation}\label{PI2}
(\frac{1}{\tau}
{\mathcal R}_{\tau,-\overline B_2} \{e_1g_8\}\nonumber\\+\frac{1}{\tau}
\widetilde{\mathcal R}_{-\tau,-\overline A_2}\{e_1 g_7\})\vert_{\Gamma_0}
=o(\frac {1}{\tau^2})\quad\mbox{as}\,\vert\tau\vert\rightarrow +\infty.
\end{equation}

Now we finish the construction of  $b_{2,\tau}(z)$ and
$c_{2,\tau}(\overline z)$ by setting
\begin{equation}\label{PI}
b_{2,\tau}(\overline z)=b_{2,0}(\overline z)+\frac{b_{2,+}(\overline z)
e^{2i\tau\psi(\widetilde x)}+b_{2,-}(\overline z)e^{-2i\tau\psi(\widetilde x)}}
{2\vert \mbox{det}\, \psi''({\widetilde x})\vert^\frac 12}
\end{equation}
and
\begin{equation}\label{PI1}
c_{2,\tau}(z)=c_{2,0}(z)+\frac{c_{2,+}( z)e^{2i\tau\psi(\widetilde x)}
+c_{2,-}( z)e^{-2i\tau\psi(\widetilde x)}}{2\vert \mbox{det}\,
\psi''({\widetilde x})\vert^\frac 12} ,
\end{equation}
where $b_{2,+},c_{2,-}$ are defined in (\ref{PI3}).

Consider the following boundary value problem
\begin{equation}\label{NINA}
{L}_2(x,{D})^* (e^{-\tau \varphi}v_0)
=h_2 e^{-\tau \varphi} \quad \text{in}~ \Omega,
\end{equation}
\begin{equation}\label{NINA1}
e^{-\tau \varphi}v_0 \vert_{\Gamma_0}
={\bf m}_2 e^{-\tau\varphi},
\end{equation}
where
$$
h_2=-e^{\tau \varphi} {L}_2(x,{D})^* (b_{\tau}e^{\mathcal{B}_2
-\tau \Phi} +c_{\tau}e^{\mathcal{A}_2-\tau
\overline{\Phi}} +v_{11}e^{-\tau \varphi}+ v_{-1}e^{-\tau \varphi})
$$
and
$$
{\bf m}_2=-e^{\tau \varphi}(b_{\tau}e^{\mathcal{A}_2-\tau \Phi}
+c_{\tau}e^{\mathcal{B}_2-\tau \overline{\Phi}}
+v_{11}e^{-\tau \varphi}+v_{-1}e^{-\tau \varphi}).
$$

By (\ref{O1})-(\ref{O3})  represent the function  $h_2$ in the form $h_2=h_{21}+h_{22}$ where for some positive $\epsilon$
$$
supp\, h_{21}\subset \mathcal O_\epsilon,\quad dist (supp \, h_{22}, \partial\Omega)>0.
$$ The norms of the functions $h_{2j}$ are estimated as
\begin{equation}\label{osel}
\Vert h_{21}\Vert_{L^4(\Omega)}=O(\frac 1{\tau^2}),\quad\Vert h_{22}\Vert_{L^4(\Omega)}=o(\frac 1{\tau})\quad\mbox{as}
\,\vert\tau\vert\rightarrow +\infty.
\end{equation}
By (\ref{PI}), (\ref{PI1}),  (\ref{PI3}), (\ref{PI4}), (\ref{PI5})
we have
\begin{equation}\label{kozel}
\Vert v_0\Vert_{H^\frac 12(\Gamma_0)}=o(\frac 1{\tau^2}) \quad\mbox{as}
\,\vert\tau\vert\rightarrow +\infty.
\end{equation}
Thanks to (\ref{osel}), (\ref{kozel}), by Proposition
\ref{Proposition 2.3} and Proposition \ref{Proposition 0} for sufficiently
small positive $\epsilon$  there exists a
solution to problem (\ref{NINA}), (\ref{NINA1}) such that
\begin{equation}\label{nona1}
\frac{1}{\root\of{\vert \tau\vert}}\Vert v_0\Vert_{H^1(\Omega)}
+\root\of{\vert \tau\vert}\Vert
v_0\Vert_{L^2(\Omega)}+\Vert v_0\Vert_{H^{1,\tau}
(\mathcal O_\epsilon)}=o(\frac{1}{\tau})\quad\mbox{as}\,
\vert\tau\vert\rightarrow +\infty.
\end{equation}

\section{Proof of Theorem \ref{main}.}

Let $u_1$ be a complex geometrical optics  solution as
in (\ref{mozilaa}).
Let $u_2$ be a solution to the following boundary value problem
\begin{equation}\label{(2.1I)}
{ L}_{2}(x,D)u_2=0\quad \mbox{in}\,\,\Omega,\quad
u_2\vert_{\partial\Omega}=u_1\vert_{\partial\Omega},
\quad \frac{\partial u_2}{\partial \nu}\vert_{\widetilde \Gamma}
=\frac{\partial u_1}{\partial\nu}\vert_{\widetilde \Gamma}.
\end{equation}

Setting $u=u_1-u_2, q=q_1-q_2$ we have
\begin{equation}
{L}_2(x,{D})u
+2({A}_1-{A}_2)\frac{\partial u_1}{\partial{z}}
+2({B}_1-{B}_2)\frac{\partial u_1}{\partial{\overline z}}
+qu_1=0 \quad \text{in}~ \Omega, \label{mn}
\end{equation}
\begin{equation}
u \vert_{\partial\Omega}
=0, \quad \frac{\partial u}{\partial \nu} \vert_{\widetilde \Gamma}
=0.
\end{equation}
Let $v$ be a solution to (\ref{(2.1I8)}) in the form
(\ref{mozila1}).  Taking the scalar product of (\ref{mn}) with
$\overline v$  in $L^2(\Omega)$ we obtain
\begin{equation}\label{ippolit}
0= \int_{\Omega}(2({A}_1-{A}_2)\frac{\partial u_1}{\partial{z}}
+2({B}_1-{B}_2)\frac{\partial u_1}{\partial \overline{z}}
+qu_1 )\overline v dx.
\end{equation}
Our goal is to obtain the asymptotic formula for the right hand side of
(\ref{ippolit}).
We have
\begin{proposition}\label{Nova}
The following asymptotic formula is valid as
$\vert \tau\vert\rightarrow +\infty$:
\begin{eqnarray}\label{nonsence1}
I_0=(qu_1,v)_{L^2(\Omega)}=\int_\Omega (qa\overline ce^{({\mathcal A}_1
+\overline{\mathcal A_2})}+q d\overline b e^{({\mathcal B_1}
+\overline{\mathcal B_2})})dx\\+\int_\Omega(
\frac q\tau(a_1b+a\overline{c_1})e^{({\mathcal A_1}
-\overline{\mathcal B_2})}+\frac q\tau
(\overline a\overline{b_1}+\overline b d_1)e^{({\mathcal A_2}
-\overline{\mathcal B_1})})dx
\nonumber \\
+\frac 1\tau\int_\Omega\left(\frac{a\overline{g_4}
e^{\mathcal A_1}}{2{\partial_z\Phi}}-\frac{\overline c{g_2}
e^{\overline{\mathcal A_2}}}{2{\partial_z\Phi}}
-\frac{\overline b{g_1}e^{\overline{\mathcal B_2}}}{2{\partial_{\overline z}
\overline\Phi}}+\frac{d\overline{g_3}e^{{\mathcal B_1}}}
{2\overline{\partial_z\Phi}}\right)dx\nonumber\\
+2\pi \frac{(qa\overline b)(\widetilde x) e^{(\mathcal A_1
+\overline{\mathcal B_2}+2\tau i\psi)(\widetilde x)}
+(q d\overline c)(\widetilde x)e^{({\mathcal B_1}
+\overline{\mathcal A_2}-2i\tau\psi)(\widetilde x)}}
{\tau\vert \mbox{det}\, \psi(\widetilde x)\vert^\frac 12}
                                               \nonumber\\
+\frac {1}{2\tau i}\int_{\partial\Omega} qa\overline
be^{\mathcal A_1+\overline{\mathcal B_2}+2\tau i\psi}
\frac{(\nu,\nabla\psi)}{\vert\nabla\psi\vert^2}d\sigma -
\frac {1}{2\tau i}\int_{\partial\Omega} qd\overline c
e^{\mathcal B_1+\overline{\mathcal A_2}-2\tau i\psi}
\frac{(\nu,\nabla\psi)}{\vert\nabla\psi\vert^2}d\sigma +o(\frac 1\tau).
                                       \nonumber
\end{eqnarray}
\end{proposition}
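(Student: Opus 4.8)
The plan is to substitute the complex geometrical optics solutions (\ref{mozilaa}) for $u_1$ and (\ref{mozila1}) for $v$ into $I_0=\int_\Omega q\,u_1\,\overline v\,dx$, expand the integrand into the sixteen products of their four constituents, and carry each product to order $o(1/\tau)$. Using $\varphi=\tfrac12(\Phi+\overline\Phi)$ one first observes that every such product is uniformly bounded on $\overline\Omega$: the factor $e^{\tau\Phi}$ carried by the holomorphic part of $u_1$ is always paired with $e^{-\tau\overline\Phi}$, $e^{-\tau\Phi}$ or $e^{-\tau\varphi}$ coming from $\overline v$, so the net exponential is either trivial (a \emph{resonant} pairing, e.g.\ $e^{\mathcal A_1+\overline{\mathcal A_2}}$ or $e^{\mathcal B_1+\overline{\mathcal B_2}}$), purely oscillatory of the form $e^{\pm2i\tau\psi}$ since $\Phi-\overline\Phi=2i\psi$, or a factor $e^{\pm i\tau\psi}$ absorbed into $u_{11}$ or $v_{11}$, which by construction carry compensating factors $e^{\mp i\tau\psi}$. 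This classification splits $I_0$ into manageable groups.

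I would then treat the groups in turn. The two resonant pairings give $\int_\Omega q\big(a_\tau\overline{c_\tau}e^{\mathcal A_1+\overline{\mathcal A_2}}+d_\tau\overline{b_\tau}e^{\mathcal B_1+\overline{\mathcal B_2}}\big)dx$; expanding $a_\tau=a+a_1/\tau+O(\tau^{-2})$, and similarly for $b_\tau,c_\tau,d_\tau$, produces the $O(1)$ leading terms of (\ref{nonsence1}) together with the $\tau^{-1}$ amplitude corrections displayed there, the remainder being $O(1/\tau^2)$. For each pairing whose integrand is $w\,e^{\pm2i\tau\psi}$ with $w$ built from the leading amplitudes (for instance $q\,a\overline b\,e^{\mathcal A_1+\overline{\mathcal B_2}}e^{2i\tau\psi}$ and $q\,d\overline c\,e^{\mathcal B_1+\overline{\mathcal A_2}}e^{-2i\tau\psi}$) I would use (\ref{nip}) and (\ref{iksa}), so that $w$ vanishes at every point of $\mathcal H$ except $\widetilde x$, and localize: near $\widetilde x$ a two-dimensional stationary phase expansion (\cite{BH}) yields the contribution $2\pi\,w(\widetilde x)e^{\pm2i\tau\psi(\widetilde x)}/(\tau|\det\psi''(\widetilde x)|^{1/2})$, the signature factor being absent because $\psi$ is harmonic, so $\psi''$ has signature $0$; away from $\widetilde x$, where $\nabla\psi\neq0$ on $\supp w$, one integration by parts via $e^{\pm2i\tau\psi}=(\pm2i\tau|\nabla\psi|^2)^{-1}\nabla\psi\cdot\nabla e^{\pm2i\tau\psi}$ produces the boundary terms $\pm(2\tau i)^{-1}\int_{\partial\Omega}q\,a\overline b\,e^{\mathcal A_1+\overline{\mathcal B_2}+2i\tau\psi}(\nu,\nabla\psi)|\nabla\psi|^{-2}d\sigma$ and its $e^{-2i\tau\psi}$ counterpart, while the interior remainder is $o(1/\tau)$ by Proposition \ref{opa}.

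The contributions of the terms containing $u_{11}$ or $v_{11}$ I would extract from the asymptotics of the solid Cauchy operators. By Proposition \ref{Proposition 3.22} (and its analogues for the adjoint operators used in (\ref{MMM})) one has $\widetilde{\mathcal R}_{\tau,B}g=g/(2\tau\partial_z\Phi)+o_{L^p}(1/\tau)$ and $\mathcal R_{\tau,A}g=-g/(2\tau\overline{\partial_z\Phi})+o_{L^p}(1/\tau)$; together with $e_1+e_2=1$ this collapses the separate pieces of (\ref{wolf}) and (\ref{MMM}), giving $u_{11}=-e^{i\tau\psi}g_2/(2\tau\partial_z\Phi)-e^{-i\tau\psi}g_1/(2\tau\overline{\partial_z\Phi})+o_{L^p}(1/\tau)$ and the analogous formula for $v_{11}$ in terms of $g_3,g_4$. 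Multiplying these against the leading amplitudes of the complementary factor and discarding the resulting $e^{\pm2i\tau\psi}$ pieces (which, carrying a prefactor $1/\tau$, are $o(1/\tau)$ by Proposition \ref{opa}, in fact $O(1/\tau^{3})$ since $g_j|_{\mathcal H}=0$), the surviving non-oscillatory parts assemble into the line $\tfrac1\tau\int_\Omega(\,\cdots g_1\cdots g_4\,)dx$ of (\ref{nonsence1}). Finally, all remaining products, namely those involving $u_{12}=u_0+u_{-1}$, $v_{12}=v_0+v_{-1}$, or two $\mathcal R$-type factors, are $o(1/\tau)$: this follows from $\Vert u_0\Vert_{L^2},\Vert v_0\Vert_{L^2}=o(1/\tau)$ by (\ref{NASA1}) and (\ref{nona1}), from $\Vert u_{-1}\Vert_{L^2},\Vert v_{-1}\Vert_{L^2}=O(1/\tau^2)$, and from $\Vert\widetilde{\mathcal R}_{\tau,B}g\Vert_{L^2}=O(|\tau|^{-1/2})$ in Proposition \ref{Proposition 3.2234}, combined with the uniform boundedness of the complementary factors once the exponentials cancel. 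Adding the surviving groups yields (\ref{nonsence1}).

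The main obstacle will be the rigorous stationary phase and integration by parts analysis of the oscillatory integrals $\int_\Omega w\,e^{\pm2i\tau\psi}dx$ to the sharp order $o(1/\tau)$ when $w$ is only finitely differentiable and vanishes to prescribed order on $\mathcal H\setminus\{\widetilde x\}$: one must localize carefully around $\widetilde x$, control the boundary contributions introduced by the cutoffs, and check that every interior remainder genuinely falls within the scope of Proposition \ref{opa} or of the refined estimates underlying Propositions \ref{Proposition 3.22} and \ref{Proposition 3.2234}. The secondary difficulty is organizational: the sixteen constituent products, each further split into resonant, singly- and doubly-oscillatory pieces and expanded in powers of $1/\tau$, must be bookkept so that exactly the groups displayed in (\ref{nonsence1}) survive at order $\tau^{-1}$.
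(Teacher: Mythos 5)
Your plan is essentially the paper's own proof: the paper first collapses $u_1$ and $v$ to the four-term approximations (\ref{P1})--(\ref{P2}) using (\ref{wolf}), (\ref{MMM}), Proposition~\ref{Proposition 3.22}, and the remainder estimates (\ref{NASA1}), (\ref{nona1}), then expands the $L^2$ pairing and applies stationary phase to the surviving $e^{\pm 2i\tau\psi}$ integrals, which is exactly your classification into resonant, oscillatory, and negligible pairings carried out in a different order. The only difference is organizational (you enumerate the cross-products directly rather than collapsing first), and your explicit identification of the boundary terms from integration by parts along $\nabla\psi/|\nabla\psi|^2$ makes precise what the paper compresses into ``applying the stationary phase argument.''
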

\begin{proof}
By (\ref{mozilaa}), (\ref{wolf}), (\ref{NASA1}) and Proposition
\ref{Proposition
3.22} we have
\begin{equation}\label{P1}
u_1(x)=(a(z)+\frac{a_1(z)}{\tau})e^{\mathcal
A_1+\tau\Phi}+({d(\overline z)}+\frac{d_1(\overline z)}{\tau})e^{\mathcal
B_1+\tau\overline\Phi}-\frac{g_1e^{\tau\overline\Phi}}{2\tau\overline
{\partial_z\Phi}}-\frac{g_2e^{\tau\Phi}}
{2\tau{\partial_z\Phi}}+o_{L^2(\Omega)}(\frac 1\tau).
\end{equation}
Using (\ref{mozila1}), (\ref{MMM}), (\ref{nona1}) and Proposition
\ref{Proposition 3.22} we get
\begin{equation}\label{P2}
v(x)=(b(z)+\frac{b_1(z)}{\tau})e^{\mathcal
B_2-\tau\Phi}+({c(\overline z)}+\frac{c_1(\overline z)}{\tau})
e^{\mathcal
A_2-\tau\overline\Phi}+\frac{g_4e^{-\tau\overline\Phi}}
{2\tau\overline{\partial_z\Phi}}+\frac{g_3e^{-\tau\Phi}}
{2\tau{\partial_z\Phi}}+o_{L^2(\Omega)}(\frac 1\tau).
\end{equation}

By (\ref{P1}), (\ref{P2}) we obtain
\begin{eqnarray}\label{nonsence}
(qu_1,v)_{L^2(\Omega)}
= (q((a+\frac{a_1}{\tau})e^{\mathcal
A_1+\tau\Phi}+({d}+\frac{d_1}{\tau})e^{\mathcal
B_1+\tau\overline\Phi}-\frac{g_1e^{\tau\overline\Phi}}
{2\tau\overline{\partial_z\Phi}}-\frac{g_2e^{\tau\Phi}}
{2\tau{\partial_z\Phi}}+o_{L^2(\Omega)}(\frac 1\tau)),\nonumber\\
(b+\frac{b_1}{\tau})e^{\mathcal
B_2-\tau\Phi}+({c}+\frac{c_1}{\tau})e^{\mathcal
A_2-\tau\overline\Phi}+\frac{g_4e^{-\tau\overline\Phi}}
{2\tau\overline{\partial_z\Phi}}+\frac{g_3e^{-\tau\Phi}}
{2\tau{\partial_z\Phi}}+o_{L^2(\Omega)}(\frac 1\tau))
_{L^2(\Omega)}=\nonumber\\
\int_\Omega \left( q  (d\overline b+\frac 1\tau(d_1\overline b
+d\overline b_1))e^{\mathcal B_1+\overline{\mathcal B_2}}
+q(a\overline c+\frac{1}{\tau}(a\overline{c_1}+a_1\overline c))
e^{\mathcal A_1+\overline{\mathcal A_2}}\right )dx\nonumber\\
+\frac 1\tau\int_\Omega\left(\frac{a\overline{g_4}
e^{\mathcal A_1}}{2{\partial_z\Phi}}-\frac{\overline c{g_2}
e^{\overline{\mathcal A_2}}}{2{\partial_z\Phi}}
-\frac{\overline b{g_1}e^{\overline{\mathcal B_2}}}
{2{\partial_{\overline z}\overline\Phi}}+\frac{d\overline{g_3}
e^{{\mathcal B_1}}}{2\overline{\partial_z\Phi}}\right)dx\nonumber\\
+\int_\Omega q \left(a\overline b e^{\mathcal A_1+\overline{B_2}
+\tau(\Phi-\overline\Phi)}+ d\overline c e^{\mathcal B_1+\overline{\mathcal A_2}
+\tau(\overline \Phi-\Phi)}\right)dx
+o(\frac{1}{\tau}).\nonumber
\end{eqnarray}
Applying the stationary phase argument to the last integral on the right
hand side of this formula we finish the proof of Proposition \ref{Nova}.
\end{proof}
We set
\begin{equation}\label{narkotic}
\mathcal{U}(x)=a_{\tau}(z)e^{\mathcal{A}_1(x)+ \tau \Phi(z)}
+d_{\tau}(\overline{z})e^{\mathcal{B}_1(x)+\tau \overline{\Phi(z)}},
\,\,
\mathcal{V}(x)=b_{\tau}(z)e^{\mathcal{B}_2(x)- \tau \Phi(z)}
+c_{\tau}(\overline z)e^{\mathcal{A}_2(x)-\tau \overline{\Phi(z)}}.
\end{equation}
Short calculations give:
\begin{eqnarray}\label{MAHA}
I_1 \equiv 2(({A}_1-{A}_2)\frac{\partial \mathcal{U}}{\partial z},\mathcal{V})
_{L^2(\Omega)} \nonumber\\
=  (2({A}_1-{A}_2)(
\biggl(\frac{\partial\mathcal{A}_1}{\partial z}
+\tau \frac{\partial \Phi}{\partial z}\biggl)
a_{\tau}+\frac{\partial a_\tau}{\partial z})e^{\mathcal{A}_1+\tau \Phi}
+d_{\tau}\frac{\partial \mathcal{B}_1}{\partial z}
e^{\mathcal{B}_1+\tau \overline{\Phi}},\nonumber \\
b_{\tau} e^{\mathcal{B}_2-\tau \Phi} +c_{\tau}
e^{\mathcal{A}_2-\tau \overline{\Phi}})_{L^2(\Omega)}                              \nonumber\\
=\sum_{k=1}^3\tau^{2-k}\kappa_k
-\int_{\Omega} ({A}_1-{A}_2){B}_1
d_\tau \overline{c_{\tau}}
e^{\mathcal{B}_1+\overline{\mathcal{A}_2}-2i \tau \psi}dx
\nonumber\\
- \biggl(2\frac{\partial}{\partial z}({A}_1-{A}_2)
a_\tau e^{\mathcal{A}_1 + \tau \Phi},
b_{\tau}e^{\mathcal{B}_2-\tau \Phi}\biggl)_{L^2(\Omega)}+\frac{1}{\tau}
\mathcal I_1(\partial\Omega)
\nonumber\\
-  (2({A}_1-{A}_2)a_\tau e^{\mathcal{A}_1 + \tau \Phi},
\frac{\partial \mathcal{B}_2}{\partial \overline{z}}b_{\tau}
e^{\mathcal{B}_2-\tau \Phi})_{L^2(\Omega)}\nonumber \\
+\int_{\partial\Omega}(A_1-A_2)(\nu_1-i\nu_2)a_\tau\overline{b_\tau}
e^{\mathcal A_1+\overline{\mathcal B_2}+2i\tau\psi}d\sigma
+ o\left( \frac{1}{\tau}\right)                       \nonumber\\
= \sum_{k=1}^3 \tau^{2-k}\kappa_k
+\int_{\Omega} \biggl\{-({A}_1-{A}_2){B}_1
d_{\tau}\overline{c_{\tau}}
e^{\mathcal{B}_1+\overline{\mathcal{A}_2}-2i \tau \psi} \nonumber\\
\left. -({A}_1-{A}_2)B_2
a_{\tau}\overline{b_{\tau}}
e^{\mathcal{A}_1+\overline{\mathcal{B}_2} +2i \tau \psi}
-2 \frac{\partial}{\partial z}({A}_1-{A}_2)
a_{\tau}\overline{b_{\tau}}
e^{\mathcal{A}_1+\overline{\mathcal{B}_2} +2i \tau \psi}\right\}dx
                                               \nonumber\\
+\int_{\partial\Omega}(A_1-A_2)(\nu_1-i\nu_2)
a_\tau\overline{b_\tau}e^{\mathcal A_1
+\overline{\mathcal B_2}+2i\tau\psi}d\sigma+\frac{1}{\tau}
\mathcal I_1(\partial\Omega) +  o\left( \frac{1}{\tau}\right)
\end{eqnarray}
\nopagebreak
and
\begin{eqnarray}\label{MAHA1}
I_2 \equiv ( ({B}_1-{B}_2)\frac{\partial
\mathcal{U}}{\partial \overline z},\mathcal{V})
_{L^2(\Omega)}                                   \nonumber\\
= (2({B}_1-{B}_2)(a_{\tau} e^{\mathcal{A}_1 + \tau \Phi}
\frac{\partial \mathcal{A}_1}{\partial \overline{z}}
+ \frac{\partial}{\partial \overline{z}}
\biggl( d_{\tau}e^{\mathcal{B}_1+\tau \overline{\Phi}}\biggl)),
 b_{\tau} e^{\mathcal{B}_2 -\tau \Phi}
+ c_{\tau}e^{\mathcal{A}_2-\tau \overline{\Phi}})_{L^2(\Omega)}
                                   \nonumber \\
=\sum_{k=1}^3\tau^{2-k}\widetilde\kappa_k
+ \int_{\Omega} 2({B}_1-{B}_2)
\frac{\partial \mathcal{A}_1}{\partial \overline{z}}
a_{\tau}\overline{b_{\tau}}
e^{\mathcal{A}_1+\overline{\mathcal{B}_2}+2 \tau i \psi}dx
-\biggl(2\frac{\partial}{\partial \overline{z}}
({B}_1-{B}_2)
d_{\tau}e^{\mathcal{B}_1+\tau \overline\Phi}, \nonumber\\
c_{\tau}e^{\mathcal{A}_2 -\tau \overline{\Phi}}\biggl)
_{L^2(\Omega)}
-\biggl(2({B}_1-{B}_2)d_{\tau}
e^{\mathcal{B}_1+\tau \overline\Phi},
\frac{\partial \mathcal{A}_2}{\partial z}
c_{\tau}e^{\mathcal{A}_2 -\tau \overline{\Phi}}\biggl)
_{L^2(\Omega)}
                                          \nonumber \\
+\int_{\partial\Omega}(B_1-B_2)(\nu_1+i\nu_2)d_\tau
\overline{c_\tau}e^{\mathcal B_1+\overline{\mathcal A_2}
-2i\tau\psi}d\sigma+\frac{1}{\tau}
\mathcal I_2(\partial\Omega) + o\left( \frac{1}{\tau}\right)
                                                   \nonumber\\
= \sum_{k=1}^3\tau^{2-k}\widetilde\kappa_k
+ \int_{\Omega}\left\{ -({B}_1-{B}_2)
{A}_1
a_{\tau}\overline{b_{\tau}}
e^{\mathcal{A}_1+\overline{\mathcal{B}_2}+2 \tau i \psi}\right.\nonumber \\
-\left.2\frac{\partial}{\partial \overline{z}}
({B}_1-{B}_2)
d_{\tau} \overline{c_{\tau}}
e^{\mathcal{B}_1 + \overline{\mathcal{A}_2}-2i \tau \psi}
- ({B}_1-{B}_2) d_{\tau}
\overline{c_{\tau}}
{A}_2 e^{\mathcal{B}_1+\overline{\mathcal{A}_2}-2i \tau \psi} \right\}dx
                            \nonumber \\
+\int_{\partial\Omega}(B_1-B_2)(\nu_1+i\nu_2)d_\tau
\overline{c_\tau}e^{\mathcal B_1+\overline{\mathcal A_2}
- 2i\tau\psi}d\sigma+\frac{1}{\tau}\mathcal I_2(\partial\Omega)
+ o\left( \frac{1}{\tau}\right).
\end{eqnarray}

Here $\kappa_k,\widetilde \kappa_k$ are some constants independent of
$\tau$ but may  depend on $A_j, B_j$, $\Phi.$
The terms $\mathcal I_1(\partial\Omega),\mathcal I_2(\partial\Omega)$ are
given by
\begin{eqnarray}\label{nina}
\mathcal I_1(\partial\Omega)=\int_\Omega (A_1-A_2)e^{\mathcal A_1
+\overline{\mathcal A_2}}\frac{\partial\Phi}{\partial z}
\overline{c}\left ( \frac{a_{2,+}e^{2\tau i\psi({\widetilde x})}
+a_{2,-}e^{-2\tau i\psi({\widetilde x})}}{\vert \mbox{det}\,
\psi''({\widetilde x})\vert^\frac 12} \right )dx\nonumber\\
+ \int_\Omega (A_1-A_2)e^{\mathcal A_1+\overline{\mathcal A_2}}
\frac{\partial\Phi}{\partial z}a\left (\overline{\frac{ c_{2,+}
e^{2\tau i\psi({\widetilde x})}+c_{2,-}e^{-2\tau i\psi({\widetilde x})} }
{\vert \mbox{det}\, \psi''({\widetilde x})\vert^\frac 12}}
\right)dx=\nonumber\\
-2\int_\Omega \frac{\partial}{\partial \overline z} e^{\mathcal A_1
+\overline{\mathcal A_2}}\frac{\partial\Phi}{\partial z}
\overline{c}\left ( \frac{a_{2,+}e^{2\tau i\psi({\widetilde x})}
+ a_{2,-}e^{-2\tau i\psi({\widetilde x})}}{\vert \mbox{det}
\, \psi''({\widetilde x})\vert^\frac 12}\right )dx\nonumber\\
-2\int_\Omega
\frac{\partial}{\partial \overline z}e^{\mathcal A_1+\overline{\mathcal A_2}}
\frac{\partial\Phi}{\partial z}a\left (\overline{\frac{c_{2,+}
e^{2\tau i\psi({\widetilde x})}+c_{2,-}e^{-2\tau i\psi({\widetilde x})}}
{\vert \mbox{det}\, \psi''({\widetilde x})\vert^\frac 12}}\right )dx=
\nonumber\\
-\int_{\partial\Omega} (\nu_1+i\nu_2) e^{\mathcal A_1
+\overline{\mathcal A_2}}\frac{\partial\Phi}{\partial z}\overline{c}
\left (\frac{a_{2,+}e^{2\tau i\psi({\widetilde x})}+a_{2,-}
e^{-2\tau i\psi({\widetilde x})}}{\vert \mbox{det}
\, \psi''({\widetilde x})\vert^\frac 12}\right )d\sigma\nonumber\\
- \int_{\partial\Omega} (\nu_1+i\nu_2)e^{\mathcal A_1
+\overline{\mathcal A_2}}\frac{\partial\Phi}{\partial z}a\left
(\overline{\frac{c_{2,+}e^{2\tau i\psi({\widetilde x})}+c_{2,-}
e^{-2\tau i\psi({\widetilde x})}}{\vert \mbox{det}
\, \psi''({\widetilde x})\vert^\frac 12}}\right )d\sigma
\end{eqnarray}
and
\begin{eqnarray}\label{vik}\mathcal
I_2(\partial\Omega)=\int_\Omega (B_1-B_2)
e^{\mathcal B_1+\overline{\mathcal B_2}}\frac{\partial\overline\Phi}
{\partial \overline z}\overline{b}\left (\frac{d_{2,+}
e^{2\tau i\psi({\widetilde x})}
+d_{2,-}e^{-2\tau i\psi({\widetilde x})} }{\vert \mbox{det}
\, \psi''({\widetilde x})\vert^\frac 12}  \right )dx\nonumber\\
+ \int_\Omega (B_1-B_2)e^{\mathcal B_1+\overline{\mathcal B_2}}
\frac{\partial\overline \Phi}{\partial\overline  z}d\left (\overline{
\frac{ b_{2,+}e^{2\tau i\psi({\widetilde x})}+b_{2,-}
e^{-2\tau i\psi({\widetilde x})}}{\vert \mbox{det}
\, \psi ''({\widetilde x})\vert^\frac 12} }\right )dx=\nonumber
\\
-2\int_\Omega \frac{\partial}{\partial z}
e^{\mathcal B_1+\overline{\mathcal B_2}}\frac{\partial\overline\Phi}
{\partial \overline z}\overline{b}\left (\frac{d_{2,+}
e^{2\tau i\psi({\widetilde x})}+d_{2,-}e^{-2\tau i\psi({\widetilde x})}}
{\vert \mbox{det}\, \psi ''({\widetilde x})\vert^\frac 12}
  \right  )dx\nonumber\\
  -2\int_\Omega \frac{\partial}{\partial  z}
e^{\mathcal B_1+\overline{\mathcal B_2}}\frac{\partial\overline\Phi}
{\partial\overline z}d\left (\overline{ \frac{b_{2,+}e^{2\tau i\psi({\widetilde x})}
+b_{2,-}e^{-2\tau i\psi({\widetilde x})}}{\vert \mbox{det}
\, \psi ''({\widetilde x})\vert^\frac 12}}\right )dx=\nonumber\\
-\int_{\partial\Omega} (\nu_1-i\nu_2) e^{\mathcal B_1
+\overline{\mathcal B_2}}\frac{\partial\overline\Phi}
{\partial \overline z}\overline{b}\left (\frac{d_{2,+}
e^{2\tau i\psi({\widetilde x})}+d_{2,-}e^{-2\tau i\psi({\widetilde x})}}
{\vert \mbox{det}\, \psi ''({\widetilde x})\vert^\frac 12}
\right)d\sigma\nonumber\\
- \int_{\partial\Omega} (\nu_1-i\nu_2)e^{\mathcal B_1
+\overline{\mathcal B_2}}\frac{\partial\overline\Phi}
{\partial \overline z}d\left (\overline{\frac{b_{2,+}
e^{2\tau i\psi({\widetilde x})}+b_{2,-}e^{-2\tau i\psi({\widetilde x})}}
{\vert \mbox{det}\, \psi ''({\widetilde x})\vert^\frac 12}
}\right )d\sigma.
\end{eqnarray}

Denote
$$
U_1=-e^{\tau\overline\Phi}\mathcal R_{-\tau, A_1}\{e_1g_1\},\quad
U_2=-e^{\tau\Phi}\widetilde{\mathcal R}_{\tau,B_1}\{e_1g_2\}.
$$
A short calculation gives
\begin{equation}\label{OO}
{2}\frac{\partial U_1}{\partial\overline z}=(-e_1g_1+{A_1}
\mathcal R_{-\tau, A_1}\{e_1 g_1\})e^{\tau\overline\Phi}
\end{equation}
and
\begin{equation}\label{OOO}
{2}\frac{\partial U_2}{\partial z}=(-e_1g_2+{B_1}\widetilde{\mathcal R}
_{\tau, B_1} \{e_1g_2\})e^{\tau\Phi}.
\end{equation}
We have
\begin{eqnarray}\label{yap}
\frac{\partial }{\partial z}\mathcal R_{-\tau, A_1} \{e_1g_1\}
=\frac{\partial {\mathcal A_1}}{\partial z}\mathcal R_{-\tau, A_1}
\{e_1g_1\}+\tau\frac{\partial \Phi}{\partial z}\mathcal R_{-\tau, A_1}
\{e_1g_1\}+\mathcal R_{-\tau, A_1}\{\frac{\partial (e_1g_1)}
{\partial z}\}
\nonumber\\
=-\mathcal R_{-\tau, A_1} \{e_1g_1\frac{\partial \mathcal A_1}
{\partial z}\}
-\tau\mathcal R_{-\tau, A_1} \{\frac{\partial \Phi}{\partial z}
e_1g_1\}+\mathcal R_{-\tau, A_1}\{\frac{\partial
(e_1g_1)}{\partial z}\}\nonumber\\
+\tau\frac{e^{\mathcal A_1}}{2\pi}
e^{-\tau(\overline{\Phi}-\Phi)}\int_\Omega
\frac{\frac{\partial\Phi}{\partial
\zeta}(\zeta)-{\frac{\partial\Phi}{\partial z}(z)}}{\zeta-z}
(e_1g_1e^{-\mathcal A_1})(\xi_1,\xi_2)
e^{\tau(\overline{\Phi(\zeta)}-\Phi(\zeta))}d\xi_1d\xi_2\nonumber\\
+\frac{e^{\mathcal A_1}}{2\pi}
e^{-\tau(\overline{\Phi}-\Phi)}\int_\Omega
\frac{\frac{\partial\mathcal A_1}{\partial
\zeta}(\zeta,\overline \zeta)-{\frac{\partial\mathcal A_1}{\partial z}(z,\overline z)}}
{\zeta-z}
(e_1g_1e^{-\mathcal A_1})(\xi_1,\xi_2)
e^{\tau(\overline{\Phi(\zeta)}-\Phi(\zeta))}d\xi_1d\xi_2.
\end{eqnarray}
Let
$$
\frak G(x,g, \mathcal A, \tau)=-\frac{1}{2\pi}\int_{\Omega}
\frac{(\tau\frac{\partial\Phi( \zeta)}{\partial \zeta}
+\frac{\partial{\mathcal A}(\zeta,\overline \zeta)}{\partial \zeta})
-(\tau\frac{\partial \Phi( z)}{\partial z}+\frac{\partial{\mathcal A}
(z,\overline z)}{\partial z})}{\zeta-z}e_1 g e^{-\mathcal A}
e^{\tau(\overline\Phi-\Phi)}d\xi_1d\xi_2.
$$

We set
\begin{eqnarray*}
&&\frak G_1(x,\tau)=\frak G( x,g_1, \mathcal A_1, \tau), \quad
\frak G_2(x,\tau)=\overline{\frak G(x,\overline g_2,
\overline{\mathcal B_1},\tau)}, \\
&&\frak G_3(x,\tau)=\overline{\frak G(x,\overline{g_3},
\overline{\mathcal A_2},-\tau)},
\quad \frak G_4(x,\tau) = \frak G(x,g_4,
\mathcal B_2, -\tau).
\end{eqnarray*}

By (\ref{TT}), (\ref{TTT}), (\ref{yap}) and  Proposition
\ref{Proposition 3.22}, we obtain
\begin{equation}\label{001'}
\frac{\partial }{\partial z}\mathcal R_{-\tau, A_1} \{e_1g_1\}
=\mathcal R_{-\tau, A_1}\{\frac{\partial (e_1g_1)}{\partial z}\}
-e^{\mathcal A_1}
e^{-\tau(\overline{\Phi}-\Phi)}\frak G_1(\cdot,\tau)
+o_{L^2(\Omega)}(\frac
1\tau).
\end{equation}
Simple computations provide the formula
\begin{eqnarray}\label{yop}
\frac{\partial }{\partial \overline z}\widetilde{\mathcal R}
_{\tau, B_1} \{e_1g_2\}=\frac{\partial {\mathcal B_1}}{\partial
\overline z}\widetilde{\mathcal R}_{\tau, B_1} \{e_1g_2\}
+\tau\frac{\partial \overline\Phi}{\partial \overline z}
\widetilde{\mathcal R}_{\tau, B_1} \{e_1g_2\}+\widetilde{\mathcal R}
_{\tau, B_1}\{\frac{\partial (e_1g_2)}{\partial \overline z}\}
\nonumber\\
-\tau\widetilde{\mathcal R}_{\tau, B_1} \{\frac{\partial \overline\Phi}
{\partial
\overline z}e_1g_2\}-\widetilde{\mathcal R}_{\tau, B_1} \{\frac{\partial
\mathcal B_1}{\partial
\overline z}e_1g_2\}=\widetilde{\mathcal
R}_{\tau, B_1}\{\frac{\partial (e_1g_2)}{\partial \overline
z}\}\nonumber\\+\tau\frac{e^{\mathcal B_1}}{2\pi}
e^{\tau(\overline{\Phi}-\Phi)}\int_\Omega
\frac{\frac{\partial\overline\Phi}{\partial \overline
\zeta}(\overline\zeta)-{\frac{\partial\overline\Phi}
{\partial\overline  z}(\overline
z)}}{\overline \zeta-\overline z} (e_1g_2e^{-\mathcal
B_1})(\xi_1,\xi_2)
e^{\tau(\Phi(\zeta)-\overline{\Phi(\zeta)})}d\xi_1d\xi_2\nonumber\\
+\frac{e^{\mathcal B_1}}{2\pi}
e^{\tau(\overline{\Phi}-\Phi)}\int_\Omega
\frac{\frac{\partial\mathcal B_1}{\partial \overline
\zeta}(\zeta,\overline\zeta)-\frac{\partial\mathcal B_1}
{\partial\overline  z}(z,\overline
z)}{\overline \zeta-\overline z} (e_1g_2e^{-\mathcal
B_1})(\xi_1,\xi_2)
e^{\tau(\Phi(\zeta)-\overline{\Phi(\zeta)})}d\xi_1d\xi_2.
\end{eqnarray}
By (\ref{TT}), (\ref{TTT}), (\ref{yop}), Proposition
\ref{Proposition 3.22} we have
\begin{equation}\label{0011A}
\frac{\partial }{\partial \overline z}\widetilde{\mathcal R}_{\tau, B_1}
\{e_1g_2\}=\widetilde{\mathcal R}
_{\tau, B_1}\{\frac{\partial (e_1g_2)}{\partial \overline z}\}
- e^{\mathcal B_1}e^{\tau(\overline{\Phi}-\Phi)}\frak G_2(\cdot,\tau)
+o_{L^2(\Omega)}(\frac
1\tau).
\end{equation}
Denote
\begin{equation}\nonumber
 V_1=-e^{-\tau{\Phi}}\widetilde{\mathcal R}_{-\tau, -\overline A_2}
\{e_1g_3\},\, V_{2}=-e^{-\tau\overline{\Phi}}
\mathcal R_{\tau, -\overline B_2}\{e_1g_4\},
\mathcal P(x,D)=2(A_1-A_2)\frac{\partial}{\partial z}
+2(B_1-B_2)\frac{\partial}{\partial\overline z},
\end{equation}
\begin{align*}
&\mathcal Q_+=-({B}_1-{B}_2)A_1
-({A}_1-{A}_2)
B_2
-2\frac{\partial}{\partial z}({A}_1-{A}_2)+(q_1-q_2), \\
&\mathcal Q_-=-({A}_1-{A}_2)
B_1
-({B}_1-{B}_2)A_2
-2\frac{\partial}{\partial \overline{z}}
({B}_1-{B}_2)+(q_1-q_2).
\end{align*}
The following proposition is proved in Section 8.

\begin{proposition}\label{olimpus}
There exist  two numbers $\kappa,\kappa_0$ independent of $\tau$
such that the following asymptotic formula holds true:
\begin{eqnarray}\label{nika1}
(\mathcal P(x,D)(U_1+U_2),b_\tau e^{\mathcal{B}_2-\tau \Phi}
+ c_\tau e^{\mathcal{A}_2 -\tau \overline{\Phi}} )_{L^2(\Omega)}
+(\mathcal P(x,D)(a_\tau
e^{\mathcal A_1+\tau\Phi}+d_\tau e^{\mathcal B_1
+\tau\overline{\Phi}}),V_1+V_2)_{L^2(\Omega)}=\nonumber\\
\kappa+\frac{\kappa_0}{\tau}-2\int_{\partial\Omega}(\nu_1+i\nu_2)
e^{\mathcal A_1+\overline{\mathcal A_2}}
\overline{c_\tau(\overline z)}\frak G_1(x,\tau)d\sigma
+2\int_{\partial\Omega}(\nu_1-i\nu_2)e^{\mathcal B_1+\overline{B_2}}
d_\tau(\overline z)\overline{\frak G_4(x,\tau)}d\sigma\nonumber
\end{eqnarray}
\begin{equation}\label{vik11}+2\int_{\partial\Omega}a_\tau(z)
\overline{\frak G_3(x,\tau)}(\nu_1+i\nu_2)
e^{\mathcal A_1+\overline{\mathcal A_2}}d\sigma-
2\int_{\partial\Omega}(\nu_1-i\nu_2)e^{\mathcal B_1
+\overline{\mathcal B}_2} \overline{b_\tau(z)}
\frak G_2(x,\tau)d\sigma\nonumber
\end{equation}
\begin{eqnarray}
& +&\frac{ e^{-2i\tau\psi(\widetilde x)}}{\tau\vert\mbox{det}\,\psi^{''}(\widetilde x)\vert^\frac 12}\overline{\left\{\frac{\partial g_4(\widetilde x)}{\partial
z}\right\}}e^{-\overline{\mathcal B_2} (\widetilde x)}\int_{\partial\Omega} \frac{(\nu_1-i\nu_2)d e^{\mathcal B_1+\overline{\mathcal B_2}}}{\overline {\widetilde z}- \overline z}d\sigma \nonumber\\
&-&\frac{ e^{-2i\tau\psi(\widetilde x)}}{\tau\vert\mbox{det}\,\psi^{''}(\widetilde x)\vert^\frac 12}\frac{\partial g_1(\widetilde x)}{\partial z}e^{-\mathcal A_1(\widetilde x)}\int_{\partial\Omega} \frac{(\nu_1+i\nu_2)\overline{c} e^{\mathcal A_1+\overline{{\mathcal A}_2}}}{\widetilde z-z}d \sigma\nonumber\\
&+&\frac{ e^{2i\tau\psi(\widetilde x)}}{\tau\vert\mbox{det}\,\psi^{''}(\widetilde x)\vert^{\frac 12}}\overline{\frac{\partial g_3(\widetilde x)}{\partial \overline
z}}e^{-\overline{\mathcal A_2}(\widetilde x)}\int_{\partial\Omega}\frac{(\nu_1+i\nu_2)a e^{\mathcal A_1+\overline{\mathcal A_2}}}{ \widetilde z-z}d\sigma\nonumber\\
&-&\frac{e^{2i\tau\psi(\widetilde x)}}{\tau\vert \mbox{det}\,\psi^{''}(\widetilde x)\vert^\frac 12} \frac{\partial g_2(\widetilde x)}{\partial \overline z}e^{-\mathcal B_1(\widetilde x)}\int_{\partial\Omega}\frac{(\nu_1-i\nu_2)\overline b e^{\mathcal{B}_1+\overline{\mathcal B_2} }}{\overline{\widetilde z}-\overline z}d\sigma\nonumber\\
&-& \frac{2\pi (\mathcal Q_+a\overline be^{\mathcal A_1+\overline{\mathcal B_2}+2i\tau\psi}+\mathcal Q_-\overline c de^{\mathcal B_1+\overline{\mathcal A_2}-2i\tau\psi})(\widetilde x)}{\tau\vert\mbox{det}\,\psi^{''}(\widetilde x)\vert^{\frac 12}}.
\end{eqnarray}
\end{proposition}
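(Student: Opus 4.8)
\medskip
\noindent\textbf{Plan of the proof.} The proof is a long but essentially mechanical asymptotic computation, organised around the exponential factor $e^{\tau\theta}$ carried by each integrand. The plan is to substitute the explicit formulae for $U_1,U_2,V_1,V_2$ and for $a_\tau e^{\mathcal A_1+\tau\Phi}$, $d_\tau e^{\mathcal B_1+\tau\overline\Phi}$ into the two inner products, to push $\mathcal P(x,D)$ through the amplitudes, and then to sort the resulting finite family of integrals by $\theta$. To differentiate the building blocks I would use $\partial_z e^{\tau\Phi}=\tau\partial_z\Phi\,e^{\tau\Phi}$, $\partial_{\overline z}e^{\tau\Phi}=0$ and the conjugate identities, together with (\ref{001q}), (\ref{ziz}), the Cauchy--Riemann equations for $a_\tau,d_\tau,b_\tau,c_\tau$, the identities (\ref{OO}), (\ref{OOO}) for $\partial_{\overline z}U_1,\partial_z U_2$ together with their analogues for $\partial_z V_1,\partial_{\overline z}V_2$, and the formulae (\ref{001'}), (\ref{0011A}) for $\partial_z U_1,\partial_{\overline z}U_2$ together with the analogues for $\partial_{\overline z}V_1,\partial_z V_2$, which are what bring in $\frak G_3,\frak G_4$. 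After this, every integrand is a product of smooth factors, one operator $\mathcal R_{\pm\tau,\cdot}\{e_1 g_j\}$ or $\widetilde{\mathcal R}_{\pm\tau,\cdot}\{e_1 g_j\}$ (possibly with $\partial(e_1 g_j)$ inside), perhaps one $\frak G_j$, and a single exponential $e^{\tau\theta}$ with $\theta\in\{0,\ \pm(\Phi-\overline\Phi)\}=\{0,\ \mp 2i\psi\}$.

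\medskip
On the resonant part, $\theta\equiv 0$, I would replace each $\mathcal R_{\pm\tau,\cdot}$, $\widetilde{\mathcal R}_{\pm\tau,\cdot}$ by its first order asymptotics from Proposition \ref{Proposition 3.22}, $\mp g/(2\tau\overline{\partial_z\Phi})$ resp. $\pm g/(2\tau\partial_z\Phi)$ with an $o(1/\tau)$ error in every $L^p$, $p<\infty$ (legitimate since $e_1 g_j$ vanishes on $\mathcal H$ by (\ref{TTT}) and near $\partial\Omega$), and apply dominated convergence; this yields the $\tau$-independent constant $\kappa$ at order $\tau^0$ and a constant $\kappa_0$ at order $1/\tau$. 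The terms still carrying a factor $\frak G_j$ are resonant as well, the $e^{\tau\Phi}$ from $U_j$ cancelling the $e^{-\tau\Phi}$ of $V_k$; on these I would integrate by parts, using $2(A_1-A_2)e^{\mathcal A_1+\overline{\mathcal A_2}}=-4\partial_{\overline z}e^{\mathcal A_1+\overline{\mathcal A_2}}$, $2(B_1-B_2)e^{\mathcal B_1+\overline{\mathcal B_2}}=-4\partial_z e^{\mathcal B_1+\overline{\mathcal B_2}}$ and the (anti)holomorphy of $a_\tau,\overline{b_\tau},\overline{c_\tau},d_\tau$, which converts these volume integrals into the four boundary integrals of (\ref{vik11}) involving $\frak G_1,\dots,\frak G_4$; the leftover volume terms carry a factor $\partial_{\overline z}\frak G_j$ or $\partial_z\frak G_j$, an oscillatory Cauchy integral whose amplitude vanishes on $\mathcal H$, hence $o(1/\tau)$ by Proposition \ref{opa}.

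\medskip
On the oscillatory part, $\theta=\pm 2i\psi$, I would argue exactly as in the proof of Proposition \ref{Proposition 3.223}, localising at the unique nondegenerate critical point $\widetilde x\in\Omega$ of $\psi$ with $\mbox{Im}\,\Phi(\widetilde x)\neq 0$. Because all $g_j$ and the factors $a,b,c,d$ vanish at $\widetilde x$ by (\ref{TTT}), (\ref{nip}), (\ref{Vova}), (\ref{iksa}), the naive $1/\tau$ stationary phase contribution vanishes; one integration by parts, writing $e^{\tau(\Phi-\overline\Phi)}=(\tau\partial_\zeta\Phi)^{-1}\partial_\zeta e^{\tau(\Phi-\overline\Phi)}$ off $\mathcal H$, then produces (i) boundary integrals with Cauchy kernels $1/(\widetilde z-z)$ and $1/(\overline{\widetilde z}-\overline z)$ whose residues are precisely the numbers $\partial_z g_j(\widetilde x)$, $\partial_{\overline z}g_j(\widetilde x)$ appearing in (\ref{vik11}), and (ii) volume integrals whose amplitudes no longer vanish at $\widetilde x$; evaluating the latter by stationary phase gives the term $-2\pi(\mathcal Q_+ a\overline b\, e^{\mathcal A_1+\overline{\mathcal B_2}+2i\tau\psi}+\mathcal Q_-\overline c\, d\, e^{\mathcal B_1+\overline{\mathcal A_2}-2i\tau\psi})(\widetilde x)/(\tau|\mbox{det}\,\psi''(\widetilde x)|^{1/2})$, the grouping into $\mathcal Q_\pm$ being dictated by how $q_1$ enters $g_1,g_2$ and $\overline{q_2}$ enters $g_3,g_4$ through $T_{B_1},P_{A_1},T_{-\overline A_2},P_{-\overline B_2}$. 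Every remaining oscillatory cross term has, after one further integration by parts, an $L^1$ amplitude of size $O(1/\tau^2)$, hence contributes $o(1/\tau)$; the errors on $\mathcal O_\epsilon$ and on $\Omega\setminus\mathcal O_{\epsilon'}$ are absorbed using (\ref{PP10}), (\ref{mis}), (\ref{miss1}) and (\ref{NASA1}).

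\medskip
The main difficulty is not any single step but the bookkeeping: one must verify that, among the many products that arise, exactly the terms displayed in (\ref{vik11}) survive at order $1/\tau$ while all the others are $o(1/\tau)$. The two most delicate points are the integrations by parts in the boundary layer where the cutoff $e_1$ is switched on, which must be carried out without destroying the smallness coming from the oscillation, and the stationary phase analysis at $\widetilde x$, which has to be pushed one order past the (vanishing) leading term and kept consistent with the expansions (\ref{AK1})--(\ref{AKK1}), (\ref{victory1})--(\ref{victory}) that were used to fix the correctors $a_{2,\pm},d_{2,\pm},b_{2,\pm},c_{2,\pm}$.
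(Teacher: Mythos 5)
Your plan follows the same architecture as the paper's argument: split the two inner products into pieces, push $\mathcal P(x,D)$ through using (\ref{OO}), (\ref{OOO}), (\ref{001'}), (\ref{0011A}), (\ref{UU}), (\ref{UUU}), (\ref{ono}), (\ref{ono1}) (so that $\frak G_1,\dots,\frak G_4$ appear), integrate by parts to create boundary terms, and apply the stationary--phase machinery of Propositions \ref{Proposition 3.22}, \ref{Proposition 3.223}, \ref{opa}. The paper does exactly this, organised as the four blocks $\frak L_0,\dots,\frak L_3$ followed by the reduction (\ref{losad}).

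There is, however, a concrete error in your sketch that would derail the computation if carried through. You write that ``all $g_j$ \emph{and} the factors $a,b,c,d$ vanish at $\widetilde x$''. That is false for $a,b,c,d$: the requirements (\ref{nip}) and (\ref{iksa}) impose $a\vert_{\mathcal H\setminus\{\widetilde x\}}=d\vert_{\mathcal H\setminus\{\widetilde x\}}=b\vert_{\mathcal H\setminus\{\widetilde x\}}=c\vert_{\mathcal H\setminus\{\widetilde x\}}=0$ but explicitly \emph{demand} $a(\widetilde x)\neq 0$, $d(\widetilde x)\neq 0$, $b(\widetilde x)\neq 0$, $c(\widetilde x)\neq 0$; this is precisely what isolates $\widetilde x$ as the one interior critical point contributing to the oscillatory asymptotics. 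What does vanish at $\widetilde x$ (to order $6$, by (\ref{TTT}) and (\ref{Vova})) are $g_1,\dots,g_4$ --- these sit in the $\zeta$-integrals inside $\mathcal R_{\pm\tau,\cdot}$, $\widetilde{\mathcal R}_{\pm\tau,\cdot}$, and their vanishing is what makes the naive $1/\tau$ term in the $\zeta$-stationary phase drop out. Had $a(\widetilde x)=d(\widetilde x)=b(\widetilde x)=c(\widetilde x)=0$ been true, the displayed $\mathcal Q_\pm$ term --- which is proportional to $(a\overline b)(\widetilde x)$ and $(d\overline c)(\widetilde x)$ --- would itself vanish and the proposition would be vacuous at order $1/\tau$.

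A second, smaller inaccuracy is mechanistic. The $-2\pi\mathcal Q_\pm(\widetilde x)/\tau$ term is not extracted by stationary phase from ``a volume integral with nonvanishing amplitude''. After the $\zeta$-stationary phase has produced the coefficients $\partial_z g_j(\widetilde x)$, what remains is a $\tau$-independent area Cauchy integral, for example $\int_\Omega\frac{(A_1-A_2)\overline c\,e^{\mathcal A_1+\overline{\mathcal A_2}}}{\widetilde z - z}\,dx$. Rewriting $(A_1-A_2)e^{\mathcal A_1+\overline{\mathcal A_2}}=-2\partial_{\overline z}e^{\mathcal A_1+\overline{\mathcal A_2}}$ (from (\ref{001q}), (\ref{ziz})) and integrating by parts, the distributional identity $\partial_{\overline z}\frac{1}{z-\widetilde z}=\pi\delta_{\widetilde z}$ turns this into a boundary integral over $\partial\Omega$ plus the residue $-2\pi(\overline c\,e^{\overline{\mathcal A_2}})(\widetilde x)$; the four residues then recombine, through relations such as $2\partial_z g_1(\widetilde x)=(q_1-2\partial_{\overline z}B_1-A_1B_1)(d\,e^{\mathcal B_1})(\widetilde x)$, into $\mathcal Q_\pm$ --- this is precisely the paper's step (\ref{losad}), a Cauchy--Green identity and not a second stationary phase. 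You would need to make both of these steps precise for the bookkeeping to close.
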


By (\ref{OO}), (\ref{OOO}), (\ref{001'}), (\ref{0011A})
and Proposition \ref{opa} there exists a constant $\mathcal C_0$
independent of $\tau$ such that
\begin{eqnarray}\label{Avrora}
&(&{\mathcal P}(x,D)(U_1+U_2),V_1+V_2)_{L^2(\Omega)}
= ((A_1-A_2)(-2\left(
 \mathcal R_{-\tau, A_1}\{\frac{\partial
(e_1g_1)}{\partial z}\}\right. \nonumber\\
&-& \left. e^{\mathcal A_1} e^{-\tau(\overline \Phi-\Phi)}\frak G_1
+ o_{L^2(\Omega)}(\frac 1\tau)\right)e^{\tau\overline\Phi}                                                           \nonumber\\
&+& (-e_1g_2 + \frac{B_1e_1g_2}{2\tau\partial_z\Phi}
+ o_{L^2(\Omega)}(\frac 1\tau))e^{\tau\Phi}), V_1+V_2)_{L^2(\Omega)}
                                          \nonumber\\
&+&((B_1-B_2)(-e_1g_1+{A_1}\frac{e_1g_1}
{2\tau\overline{\partial_z\Phi}}
+o_{L^2(\Omega)}(\frac 1\tau))e^{\tau\overline\Phi},
V_1+V_2)_{L^2(\Omega)}\nonumber\\
&-&(2(B_1-B_2)(\widetilde{\mathcal R}
_{\tau, B_1}\{\frac{\partial (e_1g_2)}{\partial \overline z}\}-e^{\mathcal B_1}
e^{\tau(\overline \Phi-\Phi)}\frak G_2+o_{L^2(\Omega)}(\frac 1\tau))e^{\tau\Phi},
V_1+V_2)_{L^2(\Omega)}\nonumber\\
&=&\frac{\mathcal C_0}{\tau}+ o(\frac 1\tau)\quad
\mbox{as}\,\,\vert\tau\vert\rightarrow +\infty.\nonumber
\end{eqnarray}

Next we claim that
\begin{equation}\label{nokia}
({\mathcal
P}(x,D)(u_0e^{\tau\varphi}), v)_{L^2(\Omega)}
=o(\frac{1}{\tau})\quad \mbox{as}\,\,\vert\tau\vert\rightarrow
+\infty,
\end{equation}
and
\begin{equation}\label{nokia1}
({\mathcal
P}(x,D)u, v_0e^{-\tau\varphi})_{L^2(\Omega)}=o(\frac{1}{\tau})
\quad \mbox{as}\,\,\vert\tau\vert\rightarrow +\infty.
\end{equation}
Let us first  prove (\ref{nokia1}).
By (\ref{NASA1}), (\ref{nona1}), (\ref{001'}), (\ref{0011A}),
Proposition 3.2 and Proposition \ref{Proposition 3.22} we have
\begin{equation}\label{nokia10}
({\mathcal P}(x,D)u, v_0e^{-\tau\varphi})_{L^2(\Omega)}=({\mathcal
P}(x,D)\mathcal U,v_0e^{-\tau\varphi})_{L^2(\Omega)}+o(\frac{1}{\tau})
\quad\mbox{as}\,\vert\tau\vert\rightarrow +\infty.
\end{equation}
We remind that the function $\mathcal U$ and $\mathcal V$ are
defined by (\ref{narkotic}).
By (\ref{nona1}) we obtain from (\ref{nokia10})
\begin{equation}\label{nokia100}
({\mathcal
P}(x,D)u, v_0e^{-\tau\varphi})_{L^2(\Omega)}
=\tau \int_\Omega 2\chi(\frac{\partial \Phi}{\partial z}(A_1-A_2)
a e^{\mathcal A_1+i\tau\psi}+\frac{\partial \overline\Phi}{\partial
\overline z}(B_1-B_2)be^{\mathcal B_1-i\tau\psi})\overline{v_0}  dx
+o(\frac{1}{\tau})
\end{equation}
as $\vert\tau\vert\rightarrow +\infty$.
Here $\chi\in C^\infty_0(\overline \Omega)$ is a function such that
$\chi\equiv 1$ in some neighborhood of $\mbox{supp}\, e_2$ and
$\mathcal H\setminus\partial\Omega\subset supp \, e_2.$

By (\ref{(3.18)}) and (\ref{NINA}) the functions $v_{0,+}=e^{-i\tau\psi}
\overline{v_0}$ and
$v_{0,-}=e^{i\tau\psi}\overline{v_0}$ satisfy
$e^{\tau\Phi}\overline{L_2(x,D)^*(e^{-\tau\overline\Phi}v_{0,+})}
=\overline h_2e^{i\tau\psi}$ and
$e^{\tau\overline\Phi}\overline{L_2(x,D)^*(e^{-\tau\Phi}v_{0,-})}
= \overline h_2e^{-i\tau\psi}$.  More explicitly, there exist two first-order
operators $\mathcal {P}_k(x,D)$ such that
$$
e^{\tau\Phi}\overline{L_2(x,D)^*(e^{-\tau\overline\Phi}v_{0,+})}
=\Delta\overline{v_{0,+}}-2\tau\frac{\partial \Phi}{\partial z}
(2\frac{\partial \overline{v_{0,+}}}{\partial \overline z}-A_2
\overline v_{0,+})
+\mathcal P_1(x,D)\overline{v_{0,+}}=o_{L^2(\Omega)}(\frac 1\tau)
\quad\mbox{as}\,\vert\tau\vert\rightarrow +\infty
$$

and
$$
e^{\tau\overline\Phi}\overline{L_2(x,D)^*(e^{-\tau\Phi}v_{0,-})}
=\Delta\overline{v_{0,-}}-2\tau\frac{\partial \overline\Phi}
{\partial \overline z}(2\frac{\partial \overline{v_{0,-}}}{\partial z}
-B_2\overline v_{0,-})+\mathcal P_2(x,D)\overline{v_{0,-}}
=o_{L^2(\Omega)}(\frac 1\tau)\quad\mbox{as}\,\vert\tau\vert
\rightarrow +\infty.
$$
In the above equalities we used (\ref{(3.188)}) and (\ref{osel}).

Let $\chi_1\in C^\infty_0(\overline \Omega)$ be a function such
that $\chi_1\equiv 1$ on $\supp\, \chi$ and $g\in C^2(\overline\Omega).$
Taking the scalar product of the first equation with $\chi_1 g$ we obtain
$$
 \int_\Omega 2\tau\frac{\partial \Phi}{\partial z}
\overline{v_{0,+}}\chi_1 (2\frac{\partial}{\partial \overline z}
+A_2)gdx=o(\frac 1\tau)-\int_\Omega(
 \overline{v_{0,+}}(\Delta +\mathcal P_1(x,D)^*)(\chi_1g)
+2\tau\overline{v_{0,+}}\frac{\partial \Phi}{\partial z}g
(\frac{\partial}{\partial \overline z}+A_2)\chi_1)dx.
$$
By (\ref{nona1}) we have
\begin{equation}\label{opaal}
\int_\Omega \tau\frac{\partial \Phi}{\partial z}
\overline{v_{0,+}}\chi_1 (2\frac{\partial}{\partial \overline z}+A_2)
gdx=o(\frac 1\tau)\quad \mbox{as}\,\,\vert\tau\vert\rightarrow
+\infty.
\end{equation}

Taking the scalar product of the second equation with $ \chi_1 g$
where $g\in C^2(\overline\Omega)$ we have
$$
 \int_\Omega 2\tau\frac{\partial \Phi}{\partial \overline z}
\overline{v_{0,-}}\chi_1 (2\frac{\partial}{\partial z}+B_2)
gdx=o(\frac 1\tau)
-\int_\Omega\left (
 \overline{v_{0,-}}(\Delta +\mathcal P_2(x,D)^*)(\chi_1g)
+2\tau\overline{v_{0,-}}g\frac{\partial \Phi}
{\partial \overline z}(2\frac{\partial}{\partial z}
+B_2)\chi_1\right )dx.
$$
By (\ref{nona1}) we get
\begin{equation}\label{opaNA}
 \int_\Omega 2\tau\frac{\partial \Phi}{\partial \overline z}
\overline{v_{0,-}}\chi_1(2\frac{\partial}{\partial z}+B_2) gdx
=o(\frac 1\tau)\quad \mbox{as}\,\,\vert\tau\vert\rightarrow +\infty.
 \end{equation}

Taking $g$ such that $(2\frac{\partial}{\partial \overline z}+A_2)g
=(A_1-A_2)e^{\mathcal A_1}a(z) $ in (\ref{opaal}) and $g$ such that
$(2\frac{\partial}{\partial z}+B_2)g=(B_1-B_2)b(\overline z)
e^{\mathcal B_1} $ in (\ref{opaNA})  from (\ref{nokia10})
we obtain (\ref{nokia1}).

In order to prove (\ref{nokia}) we observe that
\begin{eqnarray}
(\mathcal P(x,D)(u_0e^{\tau\varphi}),v)_{L^2(\Omega)}
=(\mathcal P(x,D)(u_0e^{\tau\varphi}),\mathcal{V})_{L^2(\Omega)}
+o(\frac{1}{\tau})=(\mathcal P(x,D)(u_0e^{\tau\varphi}),
\chi\mathcal{V})_{L^2(\Omega)}+o(\frac{1}{\tau})\nonumber\\
=(u_0e^{\tau\varphi},\mathcal P(x,D)^*(\chi\mathcal{V}))
_{L^2(\Omega)}+o(\frac{1}{\tau})
\quad\mbox{as}\,\,\vert\tau\vert\rightarrow +\infty.
\end{eqnarray}
Then we can finish the proof of (\ref{nokia}) using arguments
similar to
(\ref{nokia100})-(\ref{opaal}).

Denote $ \mathcal
M_1=\frac{1}{4{\partial_{\overline z}\overline\Phi}}L_1(x,D)(\frac{e_2g_1}
{\overline{\partial_z \Phi}}),\,\, \mathcal
M_2=\frac{1}{4{{\partial_z\Phi}}}L_1(x,D)(\frac{e_2g_2}{{\partial_z
\Phi}}), $ $\mathcal M_3=-\frac{1}{4 \partial_z
\Phi}L_2(x,D)^* \biggl(\frac{e_2g_3}{\partial_z
\Phi}\biggl),$\newline $ \mathcal M_4=-\frac{1}{4
\overline{\partial_z \Phi}} L_2(x,D)^* \biggl(\frac{e_2g_4}{
{\partial_{\overline z} \overline\Phi}}\biggl) .$
Then there exists a constant $\mathcal C$ independent of $\tau$ such that
\begin{equation}\label{ZONA1}
(\mathcal P(x,D) (e^{\tau\Phi}\frac{\mathcal M_1}{\tau^2}
+ e^{\tau\overline\Phi}\frac{\mathcal M_2}{\tau^2}), v)
_{L^2(\Omega)}
+ (\mathcal P(x,D)u, e^{-\tau\Phi}\frac{\mathcal M_3}{\tau^2}
+ e^{-\tau\overline\Phi}\frac{\mathcal M_4}{\tau^2})
_{L^2(\Omega)}=\frac{\mathcal C}{\tau}+o(\frac 1\tau)\quad\mbox{as}
\,\vert\tau\vert\rightarrow +\infty.
\end{equation}

Denote
$\mathcal X_1=-\frac{e_2g_1}{2{\partial_{\overline z}\overline\Phi}},\,\,
\mathcal X_2=-\frac{e_2g_2}{2{{\partial_z\Phi}}}$,
$\mathcal X_3=\frac{e_2g_3}{2 \partial_z \Phi},
\mathcal X_4=\frac{e_2g_4}{2  {\partial_{\overline z} \overline \Phi}}.
$
Then, using the stationary phase argument we conclude
\begin{eqnarray}\label{ZONA}
(\mathcal P(x,D) (e^{\tau\Phi}\frac{\mathcal X_2}{\tau}
+ e^{\tau\overline\Phi}\frac{\mathcal X_1}{\tau}), v)
_{L^2(\Omega)}
+ (\mathcal P(x,D) u, e^{-\tau\Phi}\frac{\mathcal X_3}{\tau}
+e^{-\tau\overline\Phi}\frac{\mathcal X_4}{\tau})_{L^2(\Omega)}=
                                             \\
\mathcal C_0+\frac{\mathcal C_{-1}}\tau+\frac 1\tau\int_{\widetilde\Gamma}
((A_1-A_2)\frac{\partial \Phi}{\partial  z}
\mathcal X_2\overline be^{\overline{\mathcal B_2}}e^{2\tau i\psi}
-(B_1-B_2)\frac{\partial \overline \Phi}{\partial \overline z}
\mathcal X_1 be^{\overline{\mathcal A_2}}e^{-2\tau i\psi})
\frac{(\nabla\psi,\nu)}{2i\vert \nabla\psi\vert^2}d\sigma\nonumber\\
+\frac 1\tau\int_{\widetilde\Gamma}((A_1-A_2)\frac{\partial \Phi}
{\partial  z}\overline{\mathcal X_3}
ae^{{\mathcal A_1}}e^{2\tau i\psi}-(B_1-B_2)
\frac{\partial \overline \Phi}{\partial \overline z}
\overline{\mathcal X_4} \overline ae^{{\mathcal B_1}}
e^{-2\tau i\psi})\frac{(\nabla\psi,\nu)}{2i\vert \nabla\psi\vert^2}
d\sigma
+o(\frac 1\tau)\quad\mbox{as}\,\vert\tau\vert\rightarrow +\infty.
\nonumber
\end{eqnarray}

Next we show  that
\begin{proposition}\label{zoma}
Under the conditions of Theorem \ref{main}
\begin{equation}\label{POPAP}A_1=A_2,\quad B_1=B_2 \quad\mbox{on}
\,\,\widetilde \Gamma\end{equation}
and for any function $\Phi$ satisfying (\ref{zzz}), (\ref{mika})
and for any functions $a,b,c,d$  satisfying (\ref{iopa}),
(\ref{ikaa}), (\ref{PI5}), (\ref{ikaaa})  we have
\begin{equation}\label{inna}
\frak I(\Phi,a,b,c,d)=\int_{\widetilde\Gamma}\left\{(\nu_1+i\nu_2)
\frac{\partial \Phi}{\partial z} a(z)\overline{c(\overline z)}
e^{\mathcal A_1+\overline{\mathcal A_2}}
+(\nu_1-i\nu_2)\frac{\partial \overline\Phi}{\partial\overline z}
d(\overline z)\overline{b(z)} e^{\mathcal B_1+\overline{ \mathcal B_2}}
\right\} d\sigma=0.
\end{equation}
\end{proposition}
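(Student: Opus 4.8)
The plan is to feed the complex geometrical optics solutions $u_1$ of the form (\ref{mozilaa}) and $v$ of the form (\ref{mozila1}) into the orthogonality relation (\ref{ippolit}) and to exploit the fact that its left-hand side is identically zero for every large $\tau$. All ingredients of the resulting asymptotic expansion are already at hand: $I_0=(qu_1,v)_{L^2(\Omega)}$ is expanded in Proposition \ref{Nova}, the first-order contributions $I_1$, $I_2$ in (\ref{MAHA}), (\ref{MAHA1}), and the cross terms carrying the corrections $u_{11},u_{12},v_{11},v_{12}$ in Proposition \ref{olimpus} and in (\ref{Avrora})--(\ref{ZONA}). Sorting these by powers of $\tau$ and by the oscillatory factors $e^{\pm2i\tau\psi}$, $e^{\pm2i\tau\psi(\widetilde x)}$ (oscillatory bulk integrals tending to $0$ by Proposition \ref{opa}, one-dimensional oscillatory boundary integrals being $O(|\tau|^{-1/2})$), one checks that the only contributions of order $|\tau|$ are the non-oscillatory cross products in $I_1$, $I_2$: the factor $e^{\tau\Phi}$ produced by $\partial_z(a_\tau e^{\mathcal A_1+\tau\Phi})$ meets the factor $e^{-\tau\Phi}$ carried by the $\overline{c_\tau}$-part of $\overline v$, and symmetrically $e^{\tau\overline\Phi}$ meets $e^{-\tau\overline\Phi}$ in the $\partial_{\overline z}$-term. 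Up to the routine constants this leaves, as the order-$|\tau|$ part of (\ref{ippolit}),
$$
\tau\int_\Omega\bigl(2(A_1-A_2)\,\partial_z\Phi\, a\,\overline c\, e^{\mathcal A_1+\overline{\mathcal A_2}}+2(B_1-B_2)\,\overline{\partial_z\Phi}\, d\,\overline b\, e^{\mathcal B_1+\overline{\mathcal B_2}}\bigr)\,dx+O(1).
$$

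I would then integrate by parts. From (\ref{001q}) and (\ref{ziz}) one has $2\partial_{\overline z}(\mathcal A_1+\overline{\mathcal A_2})=-(A_1-A_2)$ and $2\partial_z(\mathcal B_1+\overline{\mathcal B_2})=-(B_1-B_2)$ in $\Omega$, hence $(A_1-A_2)e^{\mathcal A_1+\overline{\mathcal A_2}}=-2\partial_{\overline z}\bigl(e^{\mathcal A_1+\overline{\mathcal A_2}}\bigr)$ and likewise for the $B$-term. Since $\partial_z\Phi,a,\overline c$ are holomorphic and $\overline{\partial_z\Phi},d,\overline b$ antiholomorphic, Green's formula leaves no interior remainder and the displayed quantity equals $-2\tau\int_{\partial\Omega}\{(\nu_1+i\nu_2)\partial_z\Phi\,a\overline c\,e^{\mathcal A_1+\overline{\mathcal A_2}}+(\nu_1-i\nu_2)\overline{\partial_z\Phi}\,d\overline b\,e^{\mathcal B_1+\overline{\mathcal B_2}}\}d\sigma$. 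On $\Gamma_0$ I would invoke (\ref{ikaa}), (\ref{ikaaa}) to substitute $ae^{\mathcal A_1}=-de^{\mathcal B_1}$ and $\overline c\,e^{\overline{\mathcal A_2}}=-\overline b\,e^{\overline{\mathcal B_2}}$; the $\Gamma_0$-integrand then collapses to $d\overline b\,e^{\mathcal B_1+\overline{\mathcal B_2}}\bigl((\nu_1+i\nu_2)\partial_z\Phi+(\nu_1-i\nu_2)\overline{\partial_z\Phi}\bigr)=2\,d\overline b\,e^{\mathcal B_1+\overline{\mathcal B_2}}\,\partial_\nu\varphi$, which vanishes because $\psi=\mbox{Im}\,\Phi\equiv0$ on the open set $\Gamma_0^*\supset\supset\Gamma_0$ and $\Phi$ is holomorphic, so that $\partial_\nu\varphi$ coincides on $\Gamma_0$, up to sign, with the tangential derivative of $\psi$. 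The order-$|\tau|$ part of (\ref{ippolit}) is thus $-2\tau\,\frak I(\Phi,a,b,c,d)$, and matching it against the $O(1)$ remainder forces $\frak I(\Phi,a,b,c,d)=0$; as $\Phi$ (subject to (\ref{zzz}), (\ref{mika})) and $a,b,c,d$ (subject to (\ref{iopa}), (\ref{ikaa}), (\ref{PI5}), (\ref{ikaaa})) are arbitrary, this is (\ref{inna}).

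For the boundary identity (\ref{POPAP}) I would specialize $\Phi$ to the weight produced by Proposition \ref{Proposition -2} attached to an arbitrary point $\widehat x\in\widetilde\Gamma$, so that $\mbox{Im}\,\Phi$ is concentrated on a small arc $\Gamma_*\subset\subset\widetilde\Gamma$ and $\mbox{Im}\,\Phi|_{\partial\Omega}$ has one non-degenerate critical point, at $\widehat x$, with $\mbox{Im}\,\Phi(\widehat x)\neq0$. Inserting the corresponding solutions into (\ref{ippolit}) and applying one-dimensional stationary phase to the boundary integrals over $\widetilde\Gamma$ in (\ref{MAHA}), (\ref{MAHA1}), I expect the leading term to be a nonzero multiple of $|\tau|^{-1/2}\bigl((A_1-A_2)(\widehat x)(a\overline b)(\widehat x)e^{(\mathcal A_1+\overline{\mathcal B_2})(\widehat x)}e^{2i\tau\psi(\widehat x)}+(B_1-B_2)(\widehat x)(d\overline c)(\widehat x)e^{(\mathcal B_1+\overline{\mathcal A_2})(\widehat x)}e^{-2i\tau\psi(\widehat x)}\bigr)$, all remaining terms being $O(1)$ or absent at the scale $|\tau|^{-1/2}$. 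Since $\psi(\widehat x)\neq0$ the factors $e^{\pm2i\tau\psi(\widehat x)}$ are independent as $\tau\to\infty$, and taking $a,b,c,d$ non-zero at $\widehat x$ --- permitted because the vanishing conditions are imposed only on $\mathcal H\cap\partial\Omega\subset\Gamma_0$ --- yields $(A_1-A_2)(\widehat x)=(B_1-B_2)(\widehat x)=0$; letting $\widehat x$ sweep $\widetilde\Gamma$ and using continuity of $A_j,B_j$ on $\overline\Omega$ gives (\ref{POPAP}).

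The principal difficulty is not any single estimate but the bookkeeping behind the first two steps: one must verify that no term besides the non-oscillatory order-$|\tau|$ cross products --- in particular none of the many boundary integrals in Proposition \ref{olimpus}, none of the contributions of $u_{11},u_{12},v_{11},v_{12}$, and none of the stationary-phase contributions at the interior critical point $\widetilde x$ --- reaches order $|\tau|$, and one must carry out the complete cancellation of the $\Gamma_0$ boundary terms. Establishing the correct asymptotic order for each of these pieces is where the work lies.
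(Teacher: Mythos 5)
Your argument is the paper's own: isolate the coefficient of $\tau$ in (\ref{ippolit}) (which, after the integration by parts and the $\Gamma_0$ cancellation you carry out, equals a nonzero multiple of $\frak I$) and force it to vanish, then specialize $\Phi$ via Proposition \ref{Proposition -2} so that the one-dimensional stationary-phase contribution at $\widehat x$, separated from the other critical points by (\ref{lana1}), gives (\ref{POPAP}) after the almost-periodicity step. The one thing you supply that the paper leaves implicit is the explicit integration-by-parts identification of the order-$\tau$ coefficient $F_1$ with $\frak I$; otherwise the bookkeeping of the lower-order contributions, the endpoint suppression via high-order vanishing of $a,b,c,d$ at $x_\pm$, and the appeal to Bohr's theorem all coincide with the paper's proof.
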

\begin{proof}
Let $\widehat x$ be an arbitrary point from $Int\,\widetilde \Gamma$ and
$\Gamma_*$ be an arc containing $\widehat x$  such that
$\Gamma_*\subset \subset \widetilde\Gamma.$  By Proposition
\ref{Proposition -2} there exists a weight function $\Phi$
satisfying (\ref{lana0}) and (\ref{lana1}).
Then the boundary integrals in (\ref{MAHA}), (\ref{MAHA1}) have
the following asymptotic:
\begin{eqnarray}\label{ZINA}
&&\int_{\widetilde \Gamma}(B_1-B_2)d_\tau(\overline z)
\overline{c_\tau(\overline z)}e^{\mathcal B_1
+ \overline{\mathcal A_2}-2i\tau\psi}d\sigma
+ \int_{\widetilde\Gamma}
(A_1-A_2)(\nu_1-i\nu_2)a_\tau(z)\overline{b_\tau(z)}e^{\mathcal
A_1+\overline{\mathcal B_2}+2i\tau\psi}d\sigma\nonumber\\
&= &\sum_{x\in\mathcal G\setminus\{x_-,x_+\}}
\Biggl\{\left( \frac{2\pi}{i\frac{\partial^2\psi}
{\partial\vec \tau^2}(x)}\right )^\frac 12(\overline cd(B_1-B_2))
(x)\frac{e^{(\mathcal B_1+\overline{\mathcal A_2}-2\tau
i\psi)(x)}}{\root\of{\tau}}  \nonumber\\
&+& \left (\frac{2\pi}{-i\frac{\partial^2\psi}
{\partial\vec \tau^2}(x)}\right )^\frac 12 (a\overline b(A_1-A_2))( x)
\frac{e^{(\mathcal A_1+\overline{\mathcal B_2}+2\tau
i\psi)(x)}}{\root\of{\tau}}\Biggr\}\nonumber\\
&&\qquad \qquad \qquad
+ O(\frac{1}{\tau})\quad \mbox{as}\,\,\vert\tau\vert\rightarrow
+\infty.
\end{eqnarray}
We remind that the set $\mathcal G$ is introduced in (\ref{lana0}). Moreover,
in order to avoid the contribution from the points
$x_\pm$ functions $a,b$ are chosen in such a way that
\begin{equation}\label{elka}
\frac{\partial^{\vert\beta\vert}a}{\partial x_1^{\beta_1}
\partial x_2^{\beta_2}} (x_\pm)=\frac{\partial^{\vert\beta\vert}b}
{\partial x_1^{\beta_1}\partial x_2^{\beta_2}} (x_\pm)
=\frac{\partial^{\vert\beta\vert}c}{\partial x_1^{\beta_1}
\partial x_2^{\beta_2}} (x_\pm)=\frac{\partial^{\vert\beta\vert}d}
{\partial x_1^{\beta_1}\partial x_2^{\beta_2}} (x_\pm)=0\quad
\forall \vert \beta\vert\in\{0,\dots, 5\}.
\end{equation}
Let $\widetilde \chi_1\in C^\infty(\partial\Omega)$  be a function such that
it is equal $1$ near points $x_\pm$ and has support located in a
small neighborhood of these points. Then
$$\int_{\Gamma_*}\widetilde \chi_1(B_1-B_2)d_\tau\overline{c_\tau}
e^{\mathcal B_1+\overline{\mathcal A_2}-2i\tau\psi}d\sigma+
\int_{
\Gamma_*}\widetilde \chi_1(A_1-A_2)(\nu_1-i\nu_2)a_\tau\overline{b_\tau}
e^{\mathcal
A_1+\overline{\mathcal B_2}+2i\tau\psi}d\sigma=
$$
$$
\int_{\Gamma_*}\frac{\widetilde \chi_1(B_1-B_2)d_\tau\overline{c_\tau}
e^{\mathcal B_1+\overline{\mathcal A_2}}}{-2i\tau\frac{\partial\psi}
{\partial \vec\tau}}\frac{\partial e^{-2i\tau\psi}}{\partial \vec \tau}
d\sigma+\int_{
\Gamma_*}\frac{\widetilde \chi_1(A_1-A_2)(\nu_1-i\nu_2)a_\tau\overline{b_\tau}
e^{\mathcal
A_1+\overline{\mathcal B_2}}}{2i\tau\frac{\partial \psi}
{\partial\vec \tau}}\frac{\partial e^{2i\tau\psi}}{\partial \vec \tau}
d\sigma=
$$
$$
\int_{\Gamma_*}\frac{\partial}{\partial \vec \tau}
\left(\frac{\widetilde \chi_1(B_1-B_2)d_\tau\overline{c_\tau }
e^{\mathcal B_1+\overline{\mathcal A_2}}}{2i\tau
\frac{\partial\psi}{\partial \vec\tau}}\right)e^{-2i\tau\psi}d\sigma
$$
$$
-\int_{
\Gamma_*}\frac{\partial}{\partial \vec \tau}
\left(\frac{\widetilde \chi_1(A_1-A_2)(\nu_1-i\nu_2)
a_\tau\overline{b_\tau}e^{\mathcal
A_1+\overline{\mathcal B_2}}}{2i\tau\frac{\partial \psi}
{\partial\vec \tau}}\right)e^{2i\tau\psi}d\sigma=O(\frac 1\tau).
$$
In order to obtain the last equality we used that by (\ref{elka}) and
(\ref{zopa}) the functions
$$
\frac{\partial}{\partial \vec \tau}
\left(\frac{\widetilde \chi_1(B_1-B_2)d_\tau \overline{c_\tau }
e^{\mathcal B_1+\overline{\mathcal A_2}}}{2i\frac{\partial\psi}
{\partial \vec\tau}}\right), \quad
\frac{\partial}{\partial \vec \tau}\left(
\frac{\widetilde \chi_1(A_1-A_2)(\nu_1-i\nu_2)a_\tau\overline{b_\tau}e^{\mathcal
A_1+\overline{\mathcal B_2}}}{2i\frac{\partial \psi}{\partial\vec \tau}}
\right)
$$
are bounded.
By (\ref{nonsence1}), (\ref{MAHA})-(\ref{vik}),
(\ref{vik11})-(\ref{nokia1}), (\ref{ZONA1}) - (\ref{ZONA})
and (\ref{ZINA}),
we can represent the right-hand side of (\ref{ippolit}) as
\begin{eqnarray}
O(\frac{1}{\tau})=\tau F_1+F_0
+ \sum_{x\in\mathcal G\setminus\{x_-,x_+\}}\left(
\left(\frac{2\pi}{i\frac{\partial^2\psi}
{\partial\vec \tau^2}(x)}\right)^{\frac 12}
(\overline c d(B_1-B_2))( x)\frac{e^{(\mathcal B_1+\overline{\mathcal A_2}-2\tau i\psi)(x)}}
{\root\of{\tau}}\right.
\nonumber\\
+ \left(\frac{2\pi}{-i\frac{\partial^2\psi}
{\partial\vec \tau^2}(x)}\right)^\frac 12
\left.(a\overline b(A_1-A_2))( x)\frac{e^{(\mathcal B_1+\overline{\mathcal A_2}+2\tau i\psi)(x)}}{\root\of{\tau}}
\right).\nonumber
\end{eqnarray}
Taking into account that $F_1$ is equal to the left-hand side of (\ref{inna})
we obtain the equality (\ref{inna}).
Using (\ref{lana1}) and applying Bohr's theorem (e.g., \cite{BS},
p.393), we obtain (\ref{POPAP}).
\end{proof}

Thanks to (\ref{nonsence1}), (\ref{MAHA})-(\ref{vik}),
(\ref{vik11})-(\ref{nokia1}), (\ref{ZONA1})-(\ref{POPAP}),
(\ref{ZINA}) we can write down the right-hand side
of (\ref{ippolit}) as
\begin{eqnarray}\label{finally}
I_0+I_1&+&I_2 = \sum_{k=1}^3\tau^{2-k}(\kappa_k+ \widetilde\kappa_k)
+ \kappa\nonumber\\
&+&\int_{\Gamma_0}(A_1-A_2)(\nu_1-i\nu_2)a_\tau
\overline{b_\tau}e^{\mathcal A_1+\overline{\mathcal B_2}}d\sigma
+\int_{\Gamma_0}(B_1-B_2)(\nu_1+i\nu_2)d_\tau\overline{c_\tau}
e^{\mathcal B_1+\overline{\mathcal A_2}}d\sigma\nonumber\\
&-&\frac {1}{2\tau i}\int_{\Gamma_0}{\mathcal Q}_+\overline{a}
be^{\mathcal B_1+\overline{\mathcal A_2}}\frac{(\nabla\psi,\nu)}
{\vert\nabla\psi\vert^2}d\sigma
-\frac {1}{2\tau i}\int_{\Gamma_0}{\mathcal Q}_-a\overline{b} e^{\mathcal A_1
+\overline{\mathcal B_2}}\frac{(\nabla\psi,\nu)}{\vert\nabla\psi\vert^2}
d\sigma
\nonumber\\
&-&\pi \frac{({\mathcal Q}_+a\overline b)(\widetilde x)
e^{(\mathcal A_1+\overline{\mathcal B_2}+2\tau i\psi)
(\widetilde x)}+({\mathcal Q}_-
 d\overline c)(\widetilde x)e^{({\mathcal B_1}
+\overline{\mathcal A_2}-2i\tau\psi)(\widetilde x)}}
{\tau\vert \mbox{det}\, \psi''(\widetilde x)\vert^\frac 12}
                                  \nonumber \\
&+& \frac{1}{\tau } (\mathcal I_1(\partial\Omega)
+\mathcal I_2(\partial\Omega))
- 2\int_{\partial\Omega}(\nu_1+i\nu_2)
e^{\mathcal A_1+\overline{\mathcal A_2}}
\overline{c_\tau(\overline z)}
\frak G_1(x,\tau)d\sigma\nonumber\\
&+& 2\int_{\partial\Omega}(\nu_1-i\nu_2)
e^{\mathcal B_1+\overline{\mathcal B_2}}
d_\tau(\overline z)\overline{\frak G_4(x,\tau)}d\sigma
+2\int_{\partial\Omega}(\nu_1+i\nu_2)
e^{\mathcal A_1+\overline{\mathcal A_2}}
a_\tau(z)\overline{\frak G_3(x,\tau)}d\sigma        \nonumber\\
&&-2\int_{\partial\Omega}(\nu_1-i\nu_2)
e^{\mathcal B_1+\overline{\mathcal B}_2} \overline{b_\tau(z)}
\frak G_2(x,\tau)d\sigma\nonumber\\
& +&\frac{ e^{-2i\tau\psi(\widetilde x)}}{\tau\vert
\mbox{det}\,\psi^{''}(\widetilde x)\vert^\frac 12}
\overline{\left\{\frac{\partial g_4(\widetilde x)}
{\partial z}\right\}}e^{-\overline{\mathcal B_2} (\widetilde x)}
\int_{\partial\Omega} \frac{(\nu_1-i\nu_2)d
e^{\mathcal B_1+\overline{\mathcal B_2}}}
{\overline {\widetilde z}- \overline z}
d\sigma \nonumber\\
&-&\frac{ e^{-2i\tau\psi(\widetilde x)}}{\tau\vert\mbox{det}\,\psi^{''}(\widetilde x)\vert^\frac 12}\frac{\partial g_1(\widetilde x)}{\partial z}e^{-\mathcal A_1(\widetilde x)}\int_{\partial\Omega} \frac{(\nu_1+i\nu_2)\overline{c} e^{\mathcal A_1+\overline{{\mathcal A}_2}}}{\widetilde z-z}d \sigma\nonumber\\
&+&\frac{ e^{2i\tau\psi(\widetilde x)}}{\tau\vert\mbox{det}\,\psi^{''}(\widetilde x)\vert^{\frac 12}}\overline{\frac{\partial g_3(\widetilde x)}{\partial \overline
z}}e^{-\overline{\mathcal A_2}(\widetilde x)}\int_{\partial\Omega}\frac{(\nu_1+i\nu_2)a e^{\mathcal A_1+\overline{\mathcal A_2}}}{ \widetilde z-z}d\sigma\nonumber\\
&-&\frac{e^{2i\tau\psi(\widetilde x)}}{\tau\vert \mbox{det}\,\psi^{''}(\widetilde x)\vert^\frac 12} \frac{\partial g_2(\widetilde x)}{\partial \overline z}e^{-\mathcal B_1(\widetilde x)}\int_{\partial\Omega}\frac{(\nu_1-i\nu_2)\overline b e^{\mathcal{B}_1+\overline{\mathcal B_2} }}{\overline{\widetilde z}-\overline z}d\sigma
+ o(\frac 1\tau) \quad \mbox{as}\,\,\vert \tau\vert\rightarrow +\infty.
\end{eqnarray}
We note that $\kappa_k$ and $\widetilde{\kappa}_k$ denote generic
constants which are independent of $\tau$.
In order to transform some terms in the above equality, we need
the following proposition:
\begin{proposition}\label{balda} There exist a holomorphic function
$\Theta\in H^\frac 12(\Omega)$ and an antiholomorphic function
$\widetilde \Theta\in H^\frac 12(\Omega)$ such that
\begin{equation}\label{volt}
\Theta\vert_{\widetilde \Gamma}=e^{\mathcal A_1+\overline{\mathcal A_2}},
\quad \widetilde\Theta\vert_{\widetilde \Gamma}=e^{\mathcal B_1
+\overline{\mathcal B_2}}
\end{equation}
and
\begin{equation}\label{volt1}
e^{\mathcal B_1+\overline{\mathcal B_2}}\Theta-e^{\mathcal A_1
+\overline{\mathcal A_2}}\widetilde \Theta=0 \quad \mbox{on}\,\,\Gamma_0.
\end{equation}
\end{proposition}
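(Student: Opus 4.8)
\textbf{Plan of proof of Proposition \ref{balda}.} The idea is to construct the pair $(\Theta,\widetilde\Theta)$ by exactly the same scheme used in Proposition \ref{zika}. Recall that $\mathcal A_1,\mathcal B_1$ solve (\ref{001q}) with $\mbox{Im}\,\mathcal A_1\vert_{\Gamma_0}=\mbox{Im}\,\mathcal B_1\vert_{\Gamma_0}=0$, and $\mathcal A_2,\mathcal B_2$ solve (\ref{ziz}) with $\mbox{Im}\,\mathcal A_2\vert_{\Gamma_0}=\mbox{Im}\,\mathcal B_2\vert_{\Gamma_0}=0$. Consequently $\mathcal A_1+\overline{\mathcal A_2}$ and $\mathcal B_1+\overline{\mathcal B_2}$ are \emph{real-valued} on $\Gamma_0$, so $e^{\mathcal A_1+\overline{\mathcal A_2}}$ and $e^{\mathcal B_1+\overline{\mathcal B_2}}$ are positive real numbers on $\Gamma_0$. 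The first step is to observe that finding holomorphic $\Theta$ and antiholomorphic $\widetilde\Theta$ satisfying (\ref{volt}) on $\widetilde\Gamma$ and the transmission-type relation (\ref{volt1}) on $\Gamma_0$ is a Riemann--Hilbert problem of precisely the form treated in Proposition \ref{zika}: there one solves $\partial_{\overline z}a=0$, $\partial_z d=0$ with $(ae^{\mathcal A}+de^{\mathcal B})\vert_{\Gamma_0}=\beta$ prescribed. Here, instead, the boundary data is prescribed on $\widetilde\Gamma$ and a homogeneous coupling condition is imposed on $\Gamma_0$; this is the ``dual'' configuration, obtained by interchanging the roles of $\widetilde\Gamma$ and $\Gamma_0$ and replacing the weights $e^{\mathcal A},e^{\mathcal B}$ by $e^{\mathcal B_1+\overline{\mathcal B_2}},\,-e^{\mathcal A_1+\overline{\mathcal A_2}}$.

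Concretely, I would set $\Theta = e^{\mathcal A_1}\theta$ and $\widetilde\Theta=e^{\mathcal B_1}\widetilde\theta$ where $\theta,\widetilde\theta$ are to be chosen; since $e^{\mathcal A_1}$ and $e^{\mathcal B_1}$ are nonvanishing on $\overline\Omega$ this is harmless. One reduces to solving a $\overline\partial$-system with a boundary constraint and then runs the same variational (penalization) argument as in the proof of Proposition \ref{zika}: introduce the functional $J_\epsilon$ penalizing the deviation from holomorphy/antiholomorphy, subject to the linear boundary constraints on $\widetilde\Gamma$ and $\Gamma_0$, derive the Euler--Lagrange system, verify that the associated Lopatinski (complementing) determinant is nonzero --- which here amounts to checking a $2\times 2$ determinant built from $(\nu_1\pm i\nu_2)$ and the nonvanishing weights $e^{\mathcal A_1+\overline{\mathcal A_2}}$, $e^{\mathcal B_1+\overline{\mathcal B_2}}$, exactly as the matrix $D$ appearing at the end of the proof of Proposition \ref{zika} --- and then pass to the limit $\epsilon\to 0$ using the uniqueness of the Cauchy problem for $\partial_z$ to get the required a priori bound. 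The regularity $\Theta,\widetilde\Theta\in H^{\frac12}(\Omega)$ is the natural one coming from trace theory since the data $e^{\mathcal A_1+\overline{\mathcal A_2}}\vert_{\widetilde\Gamma}$ lies in a trace space of that order; no higher regularity is claimed or needed because these functions enter (\ref{finally}) only through boundary integrals against the smooth geometrical optics amplitudes.

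The main obstacle, as in Proposition \ref{zika}, is the verification that the Lopatinski condition holds for the coupled first-order elliptic system on the whole boundary, i.e.\ that the constraint on $\widetilde\Gamma$ together with the coupling on $\Gamma_0$ is well-posed. Since the weights $e^{\mathcal A_1+\overline{\mathcal A_2}}$ and $e^{\mathcal B_1+\overline{\mathcal B_2}}$ never vanish and the two eigenvalues $\pm i$ of the symbol matrix are simple, the complementing determinant is a nonzero multiple of $\det\begin{pmatrix} e^{\mathcal B_1+\overline{\mathcal B_2}} & -e^{\mathcal A_1+\overline{\mathcal A_2}}\\ e^{\mathcal B_1+\overline{\mathcal B_2}} & e^{\mathcal A_1+\overline{\mathcal A_2}}\end{pmatrix}=2\,e^{\mathcal A_1+\overline{\mathcal A_2}+\mathcal B_1+\overline{\mathcal B_2}}\neq 0$, so the hypotheses of the standard elliptic boundary-value theory (e.g.\ \cite{Wendland}, Theorem 4.1.2) are met. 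Once this is in place the existence and the estimate follow verbatim from the argument of Proposition \ref{zika}, and the proof is complete.
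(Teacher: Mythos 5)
The approach you propose — a direct Riemann--Hilbert/Lopatinski well-posedness argument in the spirit of Proposition \ref{zika} — does not work here, and it is genuinely different from (and weaker than) what the paper does. In Proposition \ref{zika} one prescribes a \emph{single} complex linear combination $(ae^{\mathcal A}+de^{\mathcal B})\vert_{\Gamma_0}$ on $\Gamma_0$ and leaves the pair $(a,d)$ free on the rest of the boundary; that is the right count of boundary conditions (two real conditions for a first-order elliptic system of real dimension four), and the Lopatinski determinant you compute is indeed nonzero. In Proposition \ref{balda}, however, you must impose the two Dirichlet conditions (\ref{volt}) — that is, \emph{both} $\Theta\vert_{\widetilde\Gamma}$ and $\widetilde\Theta\vert_{\widetilde\Gamma}$, i.e.\ four real conditions on $\widetilde\Gamma$ — in addition to the one complex coupling (\ref{volt1}) on $\Gamma_0$. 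This is an over-determined Cauchy-type problem on $\widetilde\Gamma$: a holomorphic $\Theta$ is already uniquely determined by its trace on an arc, so prescribing $\Theta\vert_{\widetilde\Gamma}=e^{\mathcal A_1+\overline{\mathcal A_2}}$ asks for an ill-posed holomorphic extension of given boundary data, not a well-posed boundary-value problem. No Lopatinski/complementing-condition argument can supply existence for arbitrary data, and the penalized variational scheme from Proposition \ref{zika} will not give an a priori bound because there is no well-posedness to exploit.

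The paper's proof is structurally different and hinges on exactly the ingredient your plan omits: the vanishing of $\frak I(\Phi,a,b,c,d)$ established in Proposition \ref{zoma}, which itself encodes the hypothesis ${\mathcal C}_{I,A_1,B_1,q_1}={\mathcal C}_{I,A_2,B_2,q_2}$. Concretely, the paper sets up the least-squares problem (\ref{AX})--(\ref{voron}), minimizing the $L^2(\widetilde\Gamma)$-distance between $(\Psi,\widetilde\Psi)$ and the targets over holomorphic/antiholomorphic pairs satisfying the coupling on $\Gamma_0$. This is well posed (unique minimizer), but a priori the infimum need not be zero. The decisive step is to write the minimizer's optimality condition via Lagrange multipliers $P,\widetilde P$ (equations (\ref{020})--(\ref{ona})), express the residual $H_1$ in (\ref{nona})--(\ref{nona1A}) as a linear combination of $\frak I(\Phi,\Phi_0 a,b,c,\Phi_0 d)$, $\frak I(\Phi,a\Psi_0,b,c,d\Psi_0)$ and their conjugates, and invoke Proposition \ref{zoma} to conclude $H_1=0$, hence $J(\widehat\Psi,\widehat{\widetilde\Psi})=0$. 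Only then does $\Theta=\widehat\Psi/(\partial_z\Phi\,a\overline c)$, $\widetilde\Theta=\widehat{\widetilde\Psi}/(\overline{\partial_z\Phi}\,d\overline b)$ satisfy (\ref{volt}); property (\ref{volt1}) follows from the admissibility constraint (\ref{voron}) together with the boundary identity (\ref{nono}). So the existence asserted in Proposition \ref{balda} is a nontrivial consequence of the inverse-problem hypothesis, not a generic solvability fact, and any proof must invoke the orthogonality relations of Proposition \ref{zoma} (or equivalent information from the Cauchy data).
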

\begin{proof}
Consider the extremal problem:
\begin{equation}\label{AX}
J(\Psi,\widetilde \Psi)=\Vert  e^{\mathcal A_1+\overline{\mathcal A_2}}
\frac{\partial\Phi}{\partial z} a\overline c-\Psi\Vert^2_{L^2(\widetilde \Gamma)}
+\Vert  e^{\mathcal B_1+\overline{\mathcal B_2}}
\frac{\partial \overline \Phi}{\partial \overline z}
\overline b d-{\widetilde\Psi}\Vert^2_{L^2(\widetilde \Gamma)}\rightarrow \inf,
\end{equation}
\begin{equation}\label{voron}
\frac{\partial\Psi}{\partial\overline z}=0\quad\mbox{in}\,\Omega,
\quad \frac{\partial\widetilde \Psi}{\partial z}=0\quad\mbox{in}\,\Omega,
\quad ((\nu_1+i\nu_2)\Psi+(\nu_1-i\nu_2)\widetilde \Psi)
\vert_{\Gamma_0}=0.
\end{equation}
Here the functions $a, b,c,d$ satisfy  (\ref{iopa}), (\ref{ikaa}),
(\ref{PI5}) and (\ref{ikaaa}).
Denote the unique solution to this extremal problem (\ref{AX}),
(\ref{voron}) as $(\widehat \Psi,\widehat{ \widetilde \Psi})$. Applying
Lagrange's principle we obtain
\begin{equation}\label{ipoa}
\mbox{Re} (e^{\mathcal A_1+\overline{\mathcal A_2}}\frac{\partial\Phi}
{\partial z} a\overline c-\widehat \Psi,\delta)_{L^2(\widetilde\Gamma)}+\mbox{Re}
(e^{\mathcal B_1+\overline{\mathcal B_2}}\frac{\partial\overline \Phi}
{\partial\overline z} \overline b d-\widehat {\widetilde \Psi},\widetilde \delta)
_{L^2(\widetilde\Gamma)}=0\quad
\end{equation}
for any $\delta,\widetilde\delta$  from $ H^\frac 12(\Omega)$ such that
$$
\,\,\frac{\partial\delta}{\partial\overline z} =0 \quad\mbox{in}\,\Omega,
\quad \frac{\partial\widetilde \delta}{\partial z}= 0\quad\mbox{in}
\,\Omega,\quad (\nu_1+i\nu_2)\delta\vert_{\Gamma_0}=-(\nu_1-i\nu_2)
\widetilde \delta\vert_{\Gamma_0}
$$
and there exist two functions $P,\widetilde P\in H^\frac 12(\Omega)$ such that
\begin{equation}\label{020}
\frac{\partial P}{\partial \overline z}=0\quad\mbox{in}\,\,\Omega,\quad
\frac{\partial\widetilde P}{\partial z}=0\quad\mbox{in}\,\,\Omega,
\end{equation}
\begin{equation}\label{021}
(\nu_1+i\nu_2) P=\overline{e^{\mathcal A_1+\overline{\mathcal A_2}}
\frac{\partial \Phi}{\partial z} a\overline c-\widehat \Psi}\quad\mbox{on}
\,\widetilde \Gamma,\quad (\nu_1-i\nu_2) \widetilde P
=\overline{e^{\mathcal B_1+\overline{\mathcal B_2}}
\frac{\partial\overline \Phi}{\partial\overline z} \overline b d
-\widehat {\widetilde\Psi}}\quad\mbox{on}\,\widetilde \Gamma,
\end{equation}
\begin{equation}\label{ona}
(P-\widetilde P)\vert_{\Gamma_0}=0.
\end{equation}
Denote $\Psi_0(z)=\frac{1}{2i}(P(z)-\overline{\widetilde P(\overline z)}),
\Phi_0(z)=\frac 12(P(z)+\overline {\widetilde P(\overline z)}).$ By (\ref{ona})
\begin{equation}\label{chech}
\mbox{Im}\,\Psi_0\vert_{\Gamma_0}=\mbox{Im}\,\Phi_0\vert_{\Gamma_0}=0.
\end{equation}
Hence
\begin{equation}\label{vodka}
P=(\Phi_0+i\Psi_0), \quad\overline {\widetilde P}=(\Phi_0-i\Psi_0).
\end{equation}
From (\ref{ipoa}) taking $\delta=\widehat \Psi$ and $\widetilde \delta
=\widehat{\widetilde \Psi}$ we have
\begin{equation}\label{Pona}
\mbox{Re} (e^{\mathcal A_1+\overline{\mathcal A_2}}
\frac{\partial \Phi}{\partial z} a\overline c-\widehat \Psi,{\widehat \Psi})
_{L^2(\widetilde\Gamma)}
+\mbox{Re} (e^{\mathcal B_1+\overline{\mathcal B_2}}
\frac{\partial \overline \Phi}{\partial \overline z}
\overline b d-\widehat {\widetilde \Psi},\widehat {\widetilde \Psi})_{L^2(\widetilde\Gamma)}
=0.
\end{equation}
By (\ref{020}), (\ref{021}) and (\ref{vodka}), we have
\begin{eqnarray}
H_1=\mbox{Re} (e^{\mathcal A_1+\overline{\mathcal A_2}}
\frac{\partial \Phi}{\partial z} a\overline c-\widehat \Psi,e^{\mathcal A_1
+\overline{\mathcal A_2}}\frac{\partial \Phi}{\partial z} a\overline c)
_{L^2(\widetilde\Gamma)}
+\mbox{Re} (e^{\mathcal B_1+\overline{\mathcal B_2}}
\frac{\partial\overline \Phi}{\partial\overline z}\overline b d
-\widehat {\widetilde \Psi},e^{\mathcal B_1+\overline{\mathcal B_2}}
\frac{\partial\overline \Phi}{\partial\overline z} \overline b d)
_{L^2(\widetilde\Gamma)}\nonumber\\
=\mbox{Re} ((\nu_1-i\nu_2)\overline P,e^{\mathcal A_1
+\overline{\mathcal A_2}}
\frac{\partial \Phi}{\partial z}  a\overline c)_{L^2(\widetilde\Gamma)}
+\mbox{Re} ((\nu_1+i\nu_2)\overline{\widetilde P},
e^{\mathcal B_1+\overline{\mathcal B_2}}
\frac{\partial\overline \Phi}{\partial\overline z} \overline b d)
_{L^2(\widetilde\Gamma)}
                                   =\nonumber\\
2\mbox{Re} ((\nu_1-i\nu_2)\overline{(\Phi_0+i\Psi_0)},
e^{\mathcal A_1+\overline{\mathcal A_2}}\frac{\partial \Phi}
{\partial z}  a\overline c)_{L^2(\widetilde\Gamma)}+2\mbox{Re}
((\nu_1+i\nu_2){(\Phi_0-i\Psi_0)},
e^{\mathcal B_1+\overline{\mathcal B_2}}
\frac{\partial\overline \Phi}{\partial\overline z}\overline b d)
_{L^2(\widetilde\Gamma)}.
                                          \nonumber
\end{eqnarray}
We can rewrite
$$
2\mbox{Re} ((\nu_1-i\nu_2)\overline{\Phi_0},
e^{\mathcal A_1+\overline{\mathcal A_2}}\frac{\partial \Phi}
{\partial z}  a\overline c)_{L^2(\widetilde\Gamma)}+2\mbox{Re}
((\nu_1+i\nu_2) {\Phi_0},e^{\mathcal B_1+\overline{\mathcal B_2}}
\frac{\partial\overline \Phi}{\partial\overline z} \overline b d)
_{L^2(\widetilde\Gamma)}=
$$
\begin{equation}\label{nona}
\frak I(\Phi,\Phi_0 a,b,c,\Phi_0 d)
+\overline{\frak I(\Phi,\Phi_0 a,b,c,\Phi_0 d)}
\end{equation}
and
\begin{eqnarray}\label{nona1A}
2\mbox{Re} ((\nu_1-i\nu_2)\overline{(i\Psi_0)},
e^{\mathcal A_1+\overline{\mathcal A_2}}
\frac{\partial \Phi}{\partial z}  a\overline c)_{L^2(\widetilde\Gamma)}
+2\mbox{Re} ((\nu_1+i\nu_2) {(-i \Psi_0)},e^{\mathcal B_1
+\overline{\mathcal B_2}}\frac{\partial\overline \Phi}
{\partial\overline z} \overline b d)_{L^2(\widetilde\Gamma)}=\nonumber\\
-2\mbox{Im} ((\nu_1-i\nu_2)\overline a c\overline{ \Psi_0},
e^{\mathcal A_1+\overline{\mathcal A_2}}\frac{\partial \Phi}{\partial z}
)_{L^2(\widetilde\Gamma)}-2\mbox{Im}
((\nu_1+i\nu_2)b\overline d {\Psi_0},
e^{\mathcal B_1+\overline{\mathcal B_2}}
\frac{\partial\overline \Phi}{\partial\overline z})_{L^2(\widetilde\Gamma)}
                           =\nonumber\\
-\frac 1i\int_{\widetilde\Gamma}((\nu_1-i\nu_2)\overline a c
\overline{ \Psi_0}\frac{\partial\overline \Phi}{\partial\overline z}
e^{\overline{\mathcal A_1}-{\mathcal A_2}}-(\nu_1+i\nu_2) a
\overline c{ \Psi_0} \frac{\partial \Phi}{\partial z}
e^{\mathcal A_1+\overline{\mathcal A_2}})d\sigma\nonumber\\
-\frac 1i \int_{\widetilde\Gamma}((\nu_1+i\nu_2)b\overline d
{\Psi_0}\frac{\partial \Phi}{\partial z}  e^{\overline{\mathcal B_1}
+{\mathcal B_2}}-(\nu_1-i\nu_2)\overline bd \overline {\Psi_0}
\frac{\partial\overline \Phi}{\partial\overline z}
e^{\mathcal B_1+\overline{\mathcal B_2}})d\sigma=\nonumber\\
\frac 1i(\frak I(\Phi,a\Psi_0,b,c,d\Psi_0)
-\overline{\frak I(\Phi,a\Psi_0,b,c,d\Psi_0)}).
\end{eqnarray}

Then by (\ref{chech}), (\ref{nona}), (\ref{nona1A}) and Proposition \ref{zoma}, $H_1=0.$
Taking into account (\ref{Pona}) we obtain that $J(\widehat \Psi,
\widehat{\widetilde\Psi})=0.$
Consequently, setting $\Theta=\widehat\Psi/(\partial_z\Phi a\overline c)$ and $\widetilde
\Theta =\widehat{\widetilde\Psi}/(\overline{\partial_z\Phi} d\overline b)$ we obtain
(\ref{volt}).

Observe that
\begin{equation}\label{nono}
(\nu_1+i\nu_2)\frac{\partial\Phi}{\partial z}
=-(\nu_1-i\nu_2)\frac{\partial\overline \Phi}{\partial\overline z} \quad\mbox{on}
\,\,\Gamma_0.
\end{equation}
In order to see this we argue as follows. We have that
$\frac{\partial}{\partial\overline z}
=\frac 12(\frac{\partial}{\partial x_1}+i\frac{\partial}{\partial x_2})
(\varphi+i\psi)=\frac 12 (\frac{\partial\varphi}{\partial x_1}
-\frac{\partial\psi}{\partial x_2})+\frac i2
(\frac{\partial\varphi}{\partial x_2}+\frac{\partial\psi}
{\partial x_1})$.
Hence $\frac{\partial\varphi}{\partial x_1}
=\frac{\partial\psi}{\partial x_2}$,
$\frac{\partial\varphi}{\partial x_2}
=-\frac{\partial\psi}{\partial x_1}$,
$\frac{\partial\varphi}{\partial\nu}
=-\frac{\partial\psi}{\partial\vec \tau}$ and
$\frac{\partial\psi}{\partial \nu}=\frac{\partial\varphi}
{\partial\vec\tau}$.
Observe that
$(\nu_1+i\nu_2)\frac{\partial}{\partial z}
=\frac 12(\nu_1\frac{\partial}{\partial x_1}+\nu_2\frac{\partial}
{\partial x_2})+\frac i2 (\nu_2\frac{\partial}{\partial x_1}
-\nu_1\frac{\partial}{\partial x_2})=\frac 12(\frac{\partial}
{\partial \nu}+i\frac{\partial}{\partial\vec\tau})
$ and $(\nu_1-i\nu_2)\frac{\partial}{\partial \overline z}
= \frac 12(\frac{\partial}{\partial \nu}
-i\frac{\partial}{\partial\vec\tau}).
$
Hence
$$
(\nu_1+i\nu_2)\frac{\partial\Phi}{\partial z}
=\frac 12(\frac{\partial}{\partial \nu}
+i\frac{\partial}{\partial\vec\tau})(\varphi+i\psi)
=\frac 12(\frac{\partial\varphi}{\partial \nu}
-\frac{\partial\psi}{\partial\vec \tau})
+\frac i2 (\frac{\partial\varphi}{\partial\vec \tau}
+\frac{\partial\psi}{\partial \nu})
= -\frac{\partial\psi}{\partial \vec \tau}
+i\frac{\partial\varphi}{\partial \vec \tau}.
$$
Therefore
$$
(\nu_1-i\nu_2)\frac{\partial\overline\Phi}{\partial \overline z}
= \overline{\frac{\partial\psi}{\partial \vec \tau}
+i\frac{\partial\varphi}{\partial\vec \tau}}
= -\frac{\partial\psi}{\partial\vec \tau}
-i\frac{\partial\varphi}{\partial\vec \tau}.
$$
Taking into account that $\psi\vert_{\Gamma_0}=0$ we obtain
(\ref{nono}).

From (\ref{nono}), (\ref{iopa}), (\ref{ikaa}),
(\ref{PI5}), (\ref{ikaaa}) and (\ref{voron}) we obtain (\ref{volt1}).
The proof of the proposition is completed. In general
$\Phi, a,b,c,d$ may have a finite number of zeros
in $\Omega.$ At these zeros $\Theta,\widetilde \Theta$
may have poles. On the other hand observe
that $\Theta,\widetilde \Theta$ are independent of
a particular  choice of the functions $\Phi, a,b,c, d.$  Making small
perturbations of these functions we can shift the position of the zeros. Therefore we may assume that there are no  poles for  $\Theta,\widetilde \Theta.$
\end{proof}
Thanks to Proposition \ref{balda},
we can rewrite (\ref{finally}) as
\begin{eqnarray}\label{visokk}
& & \quad\quad\quad\quad o(\frac 1\tau)=\sum_{k=1}^3\tau^{2-k}
\widetilde F_k\\
&-& \frac{ \pi}{\tau\vert \mbox{det}\, \psi ''(\widetilde x)
\vert^\frac 12}
\left\{({\mathcal Q}_+a\overline b)(\widetilde x)
e^{(\mathcal A_1+\overline{\mathcal B_2}
+2\tau i\psi)(\widetilde x)}
+ ({\mathcal Q}_- d\overline c)
(\widetilde x)e^{({\mathcal B_1}+\overline{\mathcal A_2}
-2i\tau\psi)(\widetilde x)}\right\}\nonumber\\
&-& \frac{1}{\tau \vert \mbox{det}\, \psi ''({\widetilde x})\vert
^\frac 12}\int_{\Gamma_0}
(\nu_1+i\nu_2)(e^{\mathcal A_1+\overline{\mathcal A_2}}-\Theta)
\frac{\partial\Phi}{\partial z}a(z)\overline{ \left (c_{2,+}
e^{2\tau i\psi({\widetilde x})}+ c_{2,-}e^{-2\tau i\psi({\widetilde x})}
\right )} d\sigma\nonumber\\
&-& \frac{1}{\tau \vert \mbox{det}\, \psi ''({\widetilde x})
\vert^\frac 12}\int_{\Gamma_0} (\nu_1-i\nu_2)(e^{\mathcal B_1
+\overline{\mathcal B_2}}-\widetilde \Theta)\frac{\partial\overline\Phi}
{\partial \overline z}\overline{b(z)}{\left ( d_{2,+}e^{2\tau i\psi({\widetilde x})}
+d_{2,-}e^{-2\tau i\psi({\widetilde x})} \right )}d\sigma             \nonumber\\
&-& \frac{1}{\tau \vert \mbox{det}\, \psi ''({\widetilde x})
\vert^\frac 12}\int_{\Gamma_0} (\nu_1+i\nu_2)
(e^{\mathcal A_1+\overline{\mathcal A_2}}-\Theta)\frac{\partial\Phi}
{\partial z}\overline{c(\overline z)}{ \left (a_{2,+}
e^{2\tau i\psi({\widetilde x})}+ a_{2,-}e^{-2\tau i\psi({\widetilde x})}\right )}
d\sigma\nonumber\\
&-& \frac{1}{\tau \vert \mbox{det}\, \psi ''({\widetilde x})
\vert^\frac 12}\int_{\Gamma_0} (\nu_1-i\nu_2)
(e^{\mathcal B_1+\overline{\mathcal B_2}}-\widetilde \Theta)
\frac{\partial\overline\Phi}{\partial \overline z}d(\overline z)
\overline{\left ( b_{2,+}e^{2\tau i\psi({\widetilde x})}
+ b_{2,-}e^{-2\tau i\psi({\widetilde x})} \right )}d\sigma
                                                 \nonumber\\
&-& 2\int_{\Gamma_0}(\nu_1+i\nu_2)
 (e^{\mathcal A_1+\overline{\mathcal A_2}}-\Theta)\overline{c_\tau(\overline z)}
\frak G_1(x,\tau)d\sigma
+2\int_{\Gamma_0}(\nu_1-i\nu_2)(e^{\mathcal B_1+\overline{\mathcal B_2}}
-\widetilde \Theta)d_\tau(\overline z)\overline{\frak G_4(x,\tau)}d\sigma
                                                     \nonumber\\
&+&2\int_{\Gamma_0}(\nu_1+i\nu_2)(e^{\mathcal A_1
+\overline{\mathcal A_2}}-\Theta)a_\tau(z)\overline{\frak G_3(x,\tau)}d\sigma-
2\int_{\Gamma_0}(\nu_1-i\nu_2)(e^{\mathcal B_1+\overline{\mathcal B}_2}
-\widetilde \Theta)\overline{b_\tau(z)} \frak G_2(x,\tau)d\sigma\nonumber\\
&+ &\frac{ e^{-2i\tau\psi(\widetilde x)}}{\tau\vert\mbox{det}\,\psi^{''}(\widetilde x)\vert^\frac 12}\overline{\frac{\partial g_4(\widetilde x)}{\partial
z}}e^{-\overline{\mathcal B_2} (\widetilde x)}\left(\int_{\Gamma_0} \frac{(\nu_1-i\nu_2)d (e^{\mathcal B_1+\overline{\mathcal B_2}}-\widetilde\Theta)}{\overline {\widetilde z}- \overline z}d\sigma -2\pi (d\widetilde\Theta)(\widetilde x) \right )\nonumber\\
&-&\frac{ e^{-2i\tau\psi(\widetilde x)}}{\tau\vert\mbox{det}\, \psi^{''}(\widetilde x)\vert^\frac 12}\frac{\partial g_1(\widetilde x)}{\partial z}e^{-\mathcal A_1(\widetilde x)}\left (\int_{\Gamma_0} \frac{(\nu_1+i\nu_2)\overline{c} (e^{\mathcal A_1+\overline{{\mathcal A}_2}}-\Theta)}{\widetilde z-z}d \sigma-2\pi(\overline c\Theta)(\widetilde x)\right )\nonumber\\
&+&\frac{ e^{2i\tau\psi(\widetilde x)}}{\tau\vert\mbox{det}\,\psi^{''}(\widetilde x)\vert^{\frac 12}}\overline{\frac{\partial g_3(\widetilde x)}{\partial \overline
z}}e^{-\overline{\mathcal A_2}(\widetilde x)}\left (\int_{\Gamma_0}\frac{(\nu_1+i\nu_2)a (e^{\mathcal A_1+\overline{\mathcal A_2}}-\Theta)}{ \widetilde z-z}d\sigma-2\pi (a\Theta)(\widetilde x)\right)\nonumber\\
&-&\frac{e^{2i\tau\psi(\widetilde x)}}{\tau\vert
\mbox{det}\,\psi^{''}(\widetilde x)\vert^\frac 12} \frac{\partial
g_2(\widetilde x)}{\partial \overline z}e^{-\mathcal B_1(\widetilde
x)}\left (\int_{\Gamma_0} \frac{(\nu_1-i\nu_2)\overline b
(e^{\mathcal{B}_1+\overline{\mathcal B_2} }-\widetilde\Theta)}
{\overline{\widetilde z}-\overline z}d\sigma-2\pi (\overline
b\Theta)(\widetilde x)\right).\nonumber
\end{eqnarray}
Here $\widetilde{F}_k$ are some constants independent of
$\tau$.

Then, using (\ref{AK3}), (\ref{volt1}), (\ref{nono}), on $\Gamma_0$
we have
\begin{eqnarray}\label{nosok}
- (\nu_1+i\nu_2)\frac{\partial\Phi}{\partial z} (e^{\mathcal A_1
+\overline{\mathcal A_2}}-\Theta)\overline{c}\left (
a_{2,+}e^{2\tau i\psi({\widetilde x})}+ a_{2,-}e^{-2\tau i\psi({\widetilde x})}
\right )\\
- (\nu_1-i\nu_2)\frac{\partial\overline\Phi}{\partial \overline z}
(e^{\mathcal B_1+\overline{\mathcal B_2}}-\widetilde \Theta)
\overline{b}\left ( d_{2,+}e^{2\tau i\psi({\widetilde x})}
+ d_{2,-}e^{-2\tau i\psi({\widetilde x})}\right )=\nonumber\\
- (\nu_1+i\nu_2)\frac{\partial\Phi}{\partial z}\overline c\left
((e^{\mathcal A_1+\overline{\mathcal A_2}}-\Theta) a_{2,+}
e^{2\tau i\psi({\widetilde x})}+(e^{\mathcal B_1+\overline{\mathcal A_2}}
-\widetilde \Theta e^{\overline{\mathcal A_2}-\overline{\mathcal B_2}})
d_{2,+}e^{2\tau i\psi({\widetilde x})}\right )\nonumber\\
-(\nu_1-i\nu_2)\frac{\partial\overline\Phi}{\partial \overline z}
\overline{b}\left (( e^{\mathcal A_1+\overline{\mathcal B_2}}
-\Theta e^{\overline{\mathcal B_2}-\overline{\mathcal A_2}})a_{2,-}
e^{-2\tau i\psi({\widetilde x})}+(e^{\mathcal B_1+\overline{\mathcal B_2}}
-\widetilde \Theta) d_{2,-}e^{-2\tau i\psi({\widetilde x})}\right )=\nonumber\\
- (\nu_1+i\nu_2)\frac{\partial\Phi}{\partial z}\overline c
\left ((e^{\mathcal A_1+\overline{\mathcal A_2}}-\Theta) a_{2,+}
e^{2\tau i\psi({\widetilde x})}+(e^{\mathcal B_1+\overline{\mathcal A_2}}
- \Theta e^{-{\mathcal A_1}+{\mathcal B_1}}) d_{2,+}
e^{2\tau i\psi({\widetilde x})}\right )\nonumber\\
- (\nu_1-i\nu_2)\frac{\partial\overline\Phi}{\partial \overline z}
\overline{b}\left (( e^{\mathcal A_1+\overline{\mathcal B_2}}
-\widetilde\Theta e^{{\mathcal A_1}-{\mathcal B_1}})a_{2,-}
e^{-2\tau i\psi({\widetilde x})}+(e^{\mathcal B_1+\overline{\mathcal B_2}}
-\widetilde \Theta)d_{2,-}e^{-2\tau i\psi({\widetilde x})}\right )=\nonumber\\
- (\nu_1+i\nu_2)\frac{\partial\Phi}{\partial z} (e^{\overline{\mathcal A_2}}
-\Theta e^{-\mathcal A_1})\overline{c} p_{+}e^{2\tau i\psi({\widetilde x})}
-(\nu_1-i\nu_2)\frac{\partial\overline\Phi}{\partial \overline z}
(e^{\overline{\mathcal B_2}}-\widetilde \Theta e^{-\mathcal B_1})
\overline{b}p_- e^{-2\tau i\psi({\widetilde x})}\nonumber
\end{eqnarray}
and
\begin{eqnarray}\label{nosok1}
-(\nu_1+i\nu_2)(e^{\mathcal A_1+\overline{\mathcal A_2}}
-\Theta)\frac{\partial\Phi}{\partial z}a\left (\overline{ c_{2,+}
e^{2\tau i\psi({\widetilde x})}+c_{2,-}e^{-2\tau i\psi({\widetilde x})}}\right )
\\
-  (\nu_1-i\nu_2)(e^{\mathcal B_1+\overline{\mathcal B_2}}-\widetilde \Theta)
\frac{\partial\overline\Phi}{\partial \overline z}d\left (\overline{ b_{2,+}
e^{2\tau i\psi({\widetilde x})}+ b_{2,-}e^{-2\tau i\psi({\widetilde x})}}\right )
=\nonumber\\
-  (\nu_1+i\nu_2)\frac{\partial\Phi}{\partial z}a\left ((e^{\mathcal A_1
+\overline{\mathcal A_2}}-\Theta)\overline{c_{2,-}
e^{-2\tau i\psi({\widetilde x})}}+(e^{\mathcal A_1+\overline{\mathcal B_2}}
-e^{-\mathcal B_1+\mathcal A_1}\widetilde \Theta)\overline{b_{2,-}
e^{-2\tau i\psi({\widetilde x})}}\right )\nonumber\\
-  (\nu_1-i\nu_2)\frac{\partial\overline {\Phi}}{\partial \overline z}
d\left ((e^{\mathcal B_1+\overline{\mathcal A_2}}
-\Theta e^{-\mathcal A_1+\mathcal B_1})\overline{c_{2,+}
e^{-2\tau i\psi({\widetilde x})}}+(e^{\mathcal B_1+\overline{\mathcal B_2}}
-\widetilde \Theta)\overline{b_{2,+}e^{2\tau i\psi({\widetilde x})}}\right )
                                           \nonumber=\\
-  (\nu_1+i\nu_2)\frac{\partial\Phi}{\partial z}a\left
((e^{\mathcal A_1+\overline{\mathcal A_2}}-\Theta)\overline{c_{2,-}
e^{-2\tau i\psi({\widetilde x})}}+(e^{\mathcal A_1+\overline{\mathcal B_2}}
-e^{\overline{\mathcal B_2}-\overline{\mathcal A_2}} \Theta)
\overline{b_{2,-}e^{-2\tau i\psi({\widetilde x})}}\right )\nonumber\\
- (\nu_1-i\nu_2)\frac{\partial\overline {\Phi}}{\partial \overline z}
d\left ((e^{\mathcal B_1+\overline{\mathcal A_2}}
-\Theta e^{\overline{\mathcal A_2}-\overline{\mathcal B_2}})
\overline{c_{2,+}
e^{2\tau i\psi({\widetilde x})}}+(e^{\mathcal B_1+\overline{\mathcal B_2}}
-\widetilde \Theta)\overline{b_{2,+}e^{2\tau i\psi({\widetilde x})}}\right )
                                               \nonumber=\\
-  (\nu_1+i\nu_2)\frac{\partial\Phi}{\partial z}
(e^{\mathcal A_1}-\Theta e^{-\overline{\mathcal A_2}})
a\overline{\widetilde p_{-}e^{-2\tau i\psi({\widetilde x})}}
-  (\nu_1-i\nu_2)\frac{\partial\overline {\Phi}}{\partial \overline z}
(e^{\mathcal B_1}-\widetilde \Theta e^{-\overline{\mathcal B_2}}) d
\overline{\widetilde p_+e^{2\tau i\psi({\widetilde x})}} .\nonumber
\end{eqnarray}
Using (\ref{nosok}), (\ref{nosok1}) and Proposition \ref{nuvo}
we rewrite (\ref{visokk}) as
\begin{eqnarray}\label{PPP}
&& o(\frac 1\tau)=\sum_{k=1}^3\tau^{2-k} \widetilde F_k\\
&&-
\frac{\pi}{\tau\vert \mbox{det}\, \psi ''(\widetilde x)
\vert^\frac 12}
\left\{({\mathcal Q}_+a\overline b)
(\widetilde x) e^{(\mathcal A_1+\overline{\mathcal B_2}
+2\tau i\psi )(\widetilde x)}+ ({\mathcal Q}_-
 d\overline c)(\widetilde x)e^{({\mathcal B_1}+\overline{\mathcal A_2}
-2i\tau\psi)(\widetilde x)}\right\}\nonumber\\
&&\frac{-e^{2i\tau\psi({\widetilde x})}}{\tau \vert \mbox{det}\,
\psi ''({\widetilde x})\vert^\frac 12}\int_{\Gamma_0}
(\nu_1+i\nu_2)(e^{\mathcal A_1+\overline{\mathcal A_2}}
-\Theta )a\overline{\frac{(\partial_z g_3
e^{-\mathcal A_2})({\widetilde x})}{\widetilde z-z}}d\sigma\nonumber\\
&&+\frac{e^{-2i\tau\psi({\widetilde x})}}{\tau \vert \mbox{det}\,
\psi ''({\widetilde x})\vert^\frac 12}\int_{\Gamma_0}
(\nu_1+i\nu_2)
(e^{\mathcal A_1+\overline{\mathcal A_2}}-\Theta)\overline{c(\overline z)}
\frac{(\partial_zg_1e^{-\mathcal A_1})({\widetilde x})}{\widetilde z-z}
d\sigma\nonumber\\
&&-\frac{e^{-2i\tau\psi({\widetilde x})}}{\tau \vert \mbox{det}
\, \psi ''({\widetilde x})\vert^\frac 12}\int_{\Gamma_0}
(\nu_1-i\nu_2)(e^{\mathcal B_1+\overline{\mathcal B_2}}
-\widetilde\Theta)d\overline{\frac{(\partial_{\overline z}g_4e^{-\mathcal B_2})
({\widetilde x})}{\overline{\widetilde z}-\overline z}}d\sigma\nonumber\\
&&+\frac{e^{2i\tau\psi({\widetilde x})}}{\tau \vert
\mbox{det}\, \psi ''({\widetilde x})\vert^\frac 12}
\int_{\Gamma_0}(\nu_1-i\nu_2)
(e^{\mathcal B_1+\overline{\mathcal B}_2}
-\widetilde \Theta) \overline{b(z)}
\frac{(\partial_{\overline z}g_2e^{-\mathcal B_1})({\widetilde x})}
{\overline{\widetilde z}-\overline z}d\sigma\nonumber\\
&&+ \frac{ e^{-2i\tau\psi(\widetilde x)}}{\tau\vert\mbox{det}\,\psi^{''}(\widetilde x)\vert^\frac 12}\overline{\frac{\partial g_4(\widetilde x)}{\partial
z}}e^{-\overline{\mathcal B_2} (\widetilde x)}\left(\int_{\Gamma_0} \frac{(\nu_1-i\nu_2)d (e^{\mathcal B_1+\overline{\mathcal B_2}}-\widetilde\Theta)}{\overline {\widetilde z}- \overline z}d\sigma -2\pi (d\widetilde\Theta)(\widetilde x) \right )\nonumber\\
&&-\frac{ e^{-2i\tau\psi(\widetilde x)}}{\tau\vert\mbox{det}\,\psi^{''}(\widetilde x)\vert^\frac 12}\frac{\partial g_1(\widetilde x)}{\partial z}e^{-\mathcal A_1(\widetilde x)}\left (\int_{\Gamma_0} \frac{(\nu_1+i\nu_2)\overline{c} (e^{\mathcal A_1+\overline{{\mathcal A}_2}}-\Theta)}{\widetilde z-z}d \sigma-2\pi(\overline c\Theta)(\widetilde x)\right )\nonumber\\
&&+\frac{ e^{2i\tau\psi(\widetilde x)}}{\tau\vert\mbox{det}\,\psi^{''}(\widetilde x)\vert^{\frac 12}}\overline{\frac{\partial g_3(\widetilde x)}
{\partial \overline z}}e^{-\overline{\mathcal A_2}
(\widetilde x)}\left (\int_{\Gamma_0}\frac{(\nu_1+i\nu_2)a
(e^{\mathcal A_1+\overline{\mathcal A_2}}-\Theta)}{ \widetilde z-z}d\sigma-2\pi (a\Theta)(\widetilde x)\right)\nonumber\\
&&-\frac{e^{2i\tau\psi(\widetilde x)}}
{\tau\vert \mbox{det}\,\psi^{''}(\widetilde x)\vert^\frac 12}
\frac{\partial g_2(\widetilde x)}{\partial \overline z}
e^{-\mathcal B_1(\widetilde x)}\left
(\int_{\Gamma_0}\frac{(\nu_1-i\nu_2)\overline b
(e^{\mathcal{B}_1+\overline{\mathcal B_2} }-\widetilde\Theta)}
{\overline{\widetilde z}-\overline z}d\sigma
-2\pi (\overline b\widetilde\Theta)(\widetilde x)\right ).\nonumber
\end{eqnarray}

Let $\eta$ be a smooth function such that $\eta$ is zero in some neighborhood of $\partial\Omega$ and $\eta(\widetilde x)\ne 0$. Observe that the partial Cauchy data of the operators $L_2(x,D)$ and the operator $e^{-s\eta}L_1(x,D)e^{s\eta}$ are exactly the same. Therefore we have the analog of  (\ref{PPP}) for these two operators with $\mathcal A_1$ and $\mathcal B_1$ replaced by  $\mathcal A_1-s\eta$ and $\mathcal B_1-s \eta$.
The coefficients $A_1,B_1$ should be replaced by $A_1+2s\frac{\partial \eta}{\partial \overline z},B_1+2s\frac{\partial \eta}{\partial z}.$ The functions $\mathcal Q_\pm$ will not change. The function $q_1$ should be replaced by
$q_1 +s\Delta\eta+s^2\vert\nabla \eta\vert^2
+ 2sA_1\frac{\partial\eta}{\partial z}
+ 2sB_1\frac{\partial\eta}{\partial\overline z}.$ This immediately
implies that $({\mathcal Q}_+a\overline b)(\widetilde x)
=({\mathcal Q}_- d\overline c)(\widetilde x)=0.$
The proof of the theorem is completed. $\square$

\section{ Proof of Theorem 1.1}
Suppose that we have two operators
$$
L_1(x,D)=\Delta_{g_1}+2A_1\frac{\partial }{\partial z}
+2B_1\frac{\partial }{\partial \overline z}+q_1
$$
and
$$
L_2(x,D)=\Delta_{g_2}+2 A_2\frac{\partial }{\partial z}
+2B_2\frac{\partial }{\partial \overline z}+ q_2
$$
with the same partial Cauchy data. Multiplying
the metric $g_2$, if necessary, by some positive smooth function
$\widetilde \beta$, we may assume that
\begin{equation}\label{guilty}
\frac{\partial^\ell}{\partial\nu^\ell}( g_1^{jk}-
\, g_2^{jk})\vert_{\widetilde \Gamma}=0,\quad \ell\in\{0,1\}.
\end{equation}
We note that $\{g_1^{jk}\}$ denotes the inverse matrix to $g_1 =
\{g_{1,jk}\}$. Without loss of generality, we may assume that there
exists a smooth positive function $ \mu_2$ such that $g_2=\mu_2 I.$
Indeed, using  isothermal coordinates we make a change of variables
in the operator  $L_2(x,D)$ such that $g_2=\mu_2 I.$  Then we make
the same changes of variables in the operator $L_1(x,D).$ The
partial Cauchy data of both operators obtained by this change of
variables are the same.

Let $\omega$ be a subdomain in $\Bbb R^2$ such that
$\Omega\cap\omega
=\emptyset,$ $\partial\omega\cap\partial\Omega=
\widetilde \Gamma$ and the boundary of the domain
$\widetilde \Omega=\mbox{Int} (\Omega\cup\omega)$ is smooth.
We extend $\mu_2$ in $\widetilde \Omega $ as a smooth positive function and
set $ g_1^{jk}=\frac {1}{\mu_2} I$ in
$\omega.$ By (\ref{guilty}) $g_1\in C^1(\overline\Omega).$
There exists an isothermal mapping $\chi_1=(\chi_{1,1},\chi_{1,2})$
such that the operator $L_1(x,D)$ is  transformed to
\begin{equation}\label{bistro}
Q_1(y,D)=\frac{1}{\mu_1}\Delta+2 C_1\frac{\partial }
{\partial z}+2D_1\frac{\partial }{\partial \overline z}
+ r_1\quad y\in \chi_1(\widetilde\Omega),
\end{equation}
where $\mu_1$ is a smooth positive function in $\chi_1(\widetilde \Omega)$ and $C_1,D_1,r_1$ are some smooth complex valued functions.
Consider a solution to the boundary value  problem
$$
Q_1(y,D)w=0\quad\mbox{in}\,\, \chi_1(\widetilde\Omega),
\,\,\,w\vert_{\chi_1(\Gamma_0)}=0
$$
of the form (\ref{mozilaa}) with a holomorphic weight function
$\Phi_1.$ Then the function $ u_1(x)=w(\chi_1(x))$ is solution to
$$
L_1(x,D)u_1=0\quad\mbox{in}\,\, \widetilde \Omega, \quad u_1\vert_{\Gamma_0}=0.
$$
Since the partial Cauchy data for the operators $L_1(x,D)$ and
$L_2(x,D)$ are the same, there exists a function $u_2$ such
that
\begin{equation}\label{ih}
 L_2(x,D)u_2=0\quad\mbox{in}\,\, \Omega, \quad  u_2\vert
_{\Gamma_0}=0, \,(\frac{\partial u_1}{\partial \nu_{g_1}}
-\frac{\partial  u_2}{\partial\nu_{g_{2}}})\vert_{\widetilde\Gamma}=0.
\end{equation}
Using (\ref{guilty}) and (\ref{ih}) we extend $u_2\in
H^1(\widetilde{\Omega})$ in
$\widetilde \Omega$ such that
\begin{equation}\label{magnus}
u_1\vert_\omega= u_2\vert_\omega.
\end{equation}
Let $\varphi_2$ be a harmonic function in $\widetilde\Omega$ such that
$$
\frac{\partial\varphi_2}{\partial\nu}\vert_{\Gamma_0}=0,\quad
\varphi_2=\mbox{Re}\, \Phi_1\circ\chi_1\quad\mbox{on}\,\,
\partial \widetilde
\Omega\setminus \Gamma_0.
$$
We claim that
\begin{equation}\label{magnus1}
 \varphi_2=\mbox{Re}\, \Phi_1\circ\chi_1\quad
\mbox{in}\,\,\omega.
\end{equation}

A difficulty comes from the fact that the function $\varphi_2$ is
continuous on $\overline \Omega$ (see e.g \cite{SS}) but the
derivatives of $\varphi_2$ may be discontinuous at some points of
$\partial\Gamma_0.$ First we observe that it is suffices to prove
(\ref{magnus1}) for the functions such that $Im \,\Phi_1=0$ on some
open set $\mathcal O_{\Phi_1}\subset
\partial\chi_1(\widetilde\Omega)$ such that
$\overline{\chi_1(\Gamma_0)}\subset{\mathcal O}_{\Phi_1}.$ Indeed,
without loss of generality, assume that $\partial\widetilde
\Omega\setminus\Gamma_0$ is an arc with two endpoints $x_\pm.$ Let
the sequences  $\{x_{\epsilon,-}\}, \{x_{\epsilon,+}\} \subset
\partial\widetilde \Omega\setminus\Gamma_0$ be such that
$x_{\epsilon,\pm}\rightarrow x_\pm$ as $\epsilon\rightarrow 0.$
Consider a sequence of holomorphic  functions
$\{\Phi_{1,\epsilon}\}_{\epsilon \in (0,1)}$ such that
$$
\frac{\partial \Phi_{1,\epsilon}}{\partial \overline z}=0\quad\mbox{in}\,\,\chi_1(\widetilde \Omega),
 \quad \mbox{Im}\,\Phi_{1,\epsilon}\vert_{\chi_1(\Gamma_{0,\epsilon})}=0,
$$
$$
\Phi_{1,\epsilon}\rightarrow \Phi_1 \quad\mbox{in}\quad C^1(\overline{\widetilde\Gamma_\epsilon}),
$$
where $\overline{\widetilde\Gamma_\epsilon}\subset\widetilde \Gamma$ is the arc between points $x_{\epsilon,-},
 x_{\epsilon,+}$ and $\Gamma_{0,\epsilon}=\partial\tilde\Omega\setminus \widetilde\Gamma_\epsilon.$
We define $\varphi_{2,\epsilon}$ as
$$
\frac{\partial\varphi_{2,\epsilon}}{\partial\nu}\vert_{\Gamma_{0,\epsilon}}=0,\quad
\varphi_{2,\epsilon}=\mbox{Re}\, \Phi_{1,\epsilon}\circ\chi_1\quad\mbox{on}\,\, \partial\widetilde
\Omega\setminus\Gamma_{0,\epsilon}.
$$
First we assume that
\begin{equation}\label{mika0}
\varphi_{2,\epsilon}=\mbox{Re}\, \Phi_{1,\epsilon}\circ\chi_1\quad\mbox{on}\,\,\omega.
\end{equation}
Passing to the limit in the above equality we obtain
(\ref{magnus1}).

Now we concentrate on the proof of (\ref{mika0}). Let $\Phi_{1,\epsilon}$ be one of  the functions in the sequence
 $\{\Phi_{1,\epsilon}\}_{\epsilon \in (0,1)}$.
Consider a sequence of domains
$\widetilde \Omega_\varepsilon$ such that
$\widetilde \Omega_\varepsilon\subset\widetilde\Omega$,
$\partial\widetilde{\Omega}_\varepsilon\cap
\partial\widetilde{\Omega}=\Gamma_0$
and $dist (\partial\widetilde\Omega_\varepsilon\setminus \Gamma_0, \widetilde\Gamma)\rightarrow 0$
as $\varepsilon\rightarrow 0.$ Then the function $\varphi_{2,\epsilon}$ is smooth on $\widetilde\Omega_\varepsilon.$
Let us take as $u_1$ the CGO solution constructed in the previous sections.
Thanks to the Carleman estimate (\ref{suno4}) there exists $\tau_0=\tau_0(\varepsilon)$ such that
\begin{equation}\label{voldemort} \Vert e^{-\tau\varphi_{2,\epsilon}}  u_2\Vert
_{L^2(\widetilde\Omega_\varepsilon)}\le C_0\vert \tau e^{\delta_\varepsilon\vert\tau\vert}\vert\quad \forall \vert\tau\vert\ge\tau_0,
\end{equation}
where $C_0=C_0(\varepsilon)$ is independent of $\tau$ and $\delta_\varepsilon\rightarrow 0$ as $\varepsilon \rightarrow 0.$   On the other hand
$u_1=e^{\tau \mbox{Re}\, \Phi_{1,\epsilon}\circ\chi_1}((a_\tau e^{{\mathcal C}_1+i\tau
\mbox{Im}\, \Phi_{1,\epsilon}}+b_\tau e^{{\mathcal D}_1-i\tau
\mbox{Im}\, \Phi_{1,\epsilon}})\circ\chi_1 +O(\frac 1\tau)).$
Here we note that $\mathcal{C}_1, \mathcal{D}_1
\in C^{6+\alpha}(\overline{\widetilde{\Omega}
_{\varepsilon}})$ are defined
similarly to (\ref{001q}):
$$
2\frac{\partial \mathcal C_1}{\partial\overline z} = -C_1\quad
\mbox{in}\,\,\widetilde{\Omega}_{\epsilon}, \,\,\mbox{Im}\,
{\mathcal C_1}\vert_{\Gamma_0}=0, \quad 2 \frac{\partial
\mathcal{D}_1}{\partial z} = -{D}_1 \quad
\mbox{in}\,\,\widetilde{\Omega}_{\varepsilon}, \,\,\mbox{Im}\,
{\mathcal D_1}\vert_{\Gamma_0}=0.
$$
Then by (\ref{magnus}) the following holds true:
\begin{equation}\label{horn1}
e^{\tau\varphi_{2,\epsilon}} (e^{-\tau\varphi_{2,\epsilon}} u_2)=e^{\tau \mbox{Re}\, \Phi_{1,\epsilon}
\circ\chi_1}((a_\tau e^{{\mathcal C}_1+i\tau
\mbox{Im}\, \Phi_{1,\epsilon}}+b_\tau e^{{\mathcal D}_1-i\tau
\mbox{Im}\, \Phi_{1,\epsilon}})\circ\chi_1
+O(\frac 1\tau))\quad \forall x\in\omega.
\end{equation}
This equality implies (\ref{magnus1}) immediately. Indeed, let for
some point $\widehat x $ from $\omega$
\begin{equation}\label{rollex}
\varphi_{2,\epsilon}(\widehat x)\ne \mbox{Re}\, \Phi_{1,\epsilon}\circ\chi_1(\widehat x).
 \end{equation} Then there exists a ball $B(\widehat x,\delta')
\subset \omega$ such that
\begin{equation}\label{zumba}
\vert \varphi_{2,\epsilon}( x)- \mbox{Re}\, \Phi_{1,\epsilon}\circ\chi_1( x)\vert >
\alpha'>0\quad \forall x\in \overline{B(\widehat x,\delta')}.
\end{equation}
Let us fix positive $\varepsilon_1$ such that $\overline{B(\widehat x,\delta')}\subset\Omega_{\varepsilon_1}$ and $2\delta_{\varepsilon_1}<\alpha'.$
Form (\ref{horn1}) by (\ref{voldemort}) and (\ref{zumba}) we have
\begin{eqnarray}
C' e^{\vert \tau\vert\alpha'} Vol(B(\widehat x,\delta'))^\frac 12
\le \Vert e^{\tau (\mbox{Re}\, \Phi_{1,\epsilon}\circ\chi_1-\varphi_{2,\epsilon})}
(((a_\tau e^{{\mathcal C}_1+i\tau
\mbox{Im}\, \Phi_{1,\epsilon}}+b_\tau e^{\mathcal D-i\tau
\mbox{Im}\, \Phi_{1,\epsilon}})\circ\chi_1
+O(\frac 1\tau))\Vert_{L^2(B(\widehat x,\delta'))}\nonumber\\
=\Vert e^{-\tau\varphi_{2,\epsilon}} u_2\Vert_{L^2(B(\widehat x,\delta'))}
\le C_0\vert \tau\vert e^{\delta_\epsilon\vert\tau\vert},\nonumber
\end{eqnarray}
where $\tau>\tau_0$ if $ \varphi_{2,\epsilon}(\widehat x)< \mbox{Re}\,
\Phi_{1,\epsilon}\circ\chi_1( \widehat x)$ and  $\tau<-\tau_0$ if
$\varphi_{2,\epsilon}(\widehat x)> \mbox{Re}\, \Phi_{1,\epsilon}\circ\chi_1( \widehat x).$
The above inequality contradicts (\ref{rollex}).

Let $\Xi=\chi_{1,1}+i\chi_{1,2}.$
Using the Cauchy-Riemann equations, we construct
the multivalued function $\psi_2$ such that $\Phi_2=\varphi_2+i\psi_2$ is
holomorphic on the Riemann surface associated with $\widetilde\Omega.$
Moreover we take the function $\Phi_1$ which can be
holomorphically extended in some domain $\mathcal O$ such that $\chi_1(\widetilde \Omega)\subset \mathcal O.$ Observe that
$$
\Phi_2=\Phi_1\circ \Xi \quad\mbox{on}\,\omega.
$$
Then $\Xi= \Phi_1^{-1}\circ\Phi_2$ in $\omega.$

We claim that the function
$\Xi$ can be extended up to a single valued
holomorphic  function  $\widetilde \Xi$ on
$\widetilde\Omega$ such that $\widetilde\Xi: \widetilde\Omega\rightarrow \chi_1(\widetilde\Omega),$  $\widetilde\Xi(\widetilde \Omega)=\chi_1(\widetilde \Omega)$ and $\partial_z\tilde \Xi\ne 0.$ First we show that the function $\Xi$ can be extended along any curve connecting two points in $\Omega.$ Our proof is by contradiction. Let $\gamma$ be such a continuous curve connecting a point $z_1$ in $\omega$ and a point $z_2$ in $\Omega$ such that  the function $\Xi$ can not be extended along $\gamma.$ Consider the parametrization of the curve $\gamma$ such that we are moving from the point $z_1=\gamma(0)$ to the point $z_2=\gamma(1)$. Let $\widehat z=\gamma(\kappa)$ be the first point on $\gamma$ around which the holomorphic continuation of the function $\Xi$  is impossible. Consider the function $\Phi_1$ such that
$\{z\vert \partial_z\Phi_1=0\}\cap \{z\vert z=\tilde \Xi(\gamma (s))\quad s\in [0,\kappa]\}=\emptyset.$
Observe that $$\Phi_2(\gamma(s))=\Phi_1\circ \Xi (\gamma(s))\quad \forall s\in [0,\kappa].$$
Indeed let $\hat s=\sup_{s\in X} \,s$ where $ X=\{s\vert \mbox{there exists}\,\,  \delta(s)>0\, \mbox{such that}\, \Phi_2(z)= \Phi_1\circ \Xi (z) \quad\forall z\in B(\gamma(s),\delta)\}.$
 Let $\hat s<\kappa.$ Since $\partial_z\Phi_1(\gamma (\hat s))\ne 0$  $\Phi_1^{-1}\circ \Phi_2$ is holomorphic with a domain which contains the ball centered at $\gamma(\hat s).$ Since $\tilde \Xi =\Phi_1^{-1}\circ \Phi_2$ on some open set this equality holds true on this ball which is a contradiction with the definition of $\hat s.$

Now we consider the situation at the point $\hat z$. Since we can not extend $\Xi$ around this point we would have  $\Pi=\{\widetilde z\vert \Phi_1(\widetilde z)=\Phi_2(\widehat z)\}\subset \chi_1(\partial\widetilde\Omega).$
 Since $\partial_z\Phi_1(\widetilde z)\ne 0$ we can extend $\Xi$ on some ball
centered at $\widehat z.$ (Of course such extension might not be the one we are looking for since  $\widetilde\Xi: \widetilde\Omega\rightarrow \chi_1(\widetilde\Omega)$ might not be valid.)
Consider a perturbation of the function $\Phi_1$: $\Phi_1+\epsilon\Psi_1,$  where $\Psi_1$ is a smooth holomorphic function in
$\mathcal O$ such that $Im\Psi_1\vert_{\chi_1(\Gamma_0)\cup \Pi}=0.$ This perturbation
generates a perturbation of the  function $\Phi_2$: $\Phi_2+\epsilon \Psi_2,$ where
\begin{equation}\label{mina}
\Delta \mbox{Re}\,\Psi_2=0\quad \mbox{in}\,\,\widetilde\Omega, \frac{\partial \mbox{Re}\,\Psi_2}{\partial\nu}\vert_{\Gamma_0}=0,\quad
\mbox{Re}\,\Psi_2=\mbox{Re}\, \Psi_1\circ\chi_1\quad\mbox{on}\,\, \partial \widetilde
\Omega\setminus \Gamma_0.
\end{equation}

For these new functions we still have
$$
\Phi_2+\epsilon \Psi_2=(\Phi_1+\epsilon\Psi_1)\circ \Xi \quad\mbox{on}\,\omega.
$$
For all sufficiently small $\epsilon$,
the function $(\Phi_1+\epsilon\Psi_1)$ does not have a critical point on $\widetilde \Omega.$ Therefore the function  $(\Phi_1+\epsilon\Psi_1)^{-1}\circ (\Phi_2+\epsilon \Psi_2)$ can be holomorphically
continued along of $\gamma$ up to the point $\widehat z.$ Denote this extension on some ball centered at $\widehat z$ as
$\widetilde\Xi_\epsilon.$ Obviously $\widetilde\Xi_\epsilon=\widetilde\Xi.$  Making a choice of $\Psi _1$ in such a way that $Im \Psi_2(\widehat z)\ne Im \Psi_1(\tilde \Xi (\widehat z))$ we obtain that this equality is impossible.

Let us show that the function $\widetilde \Xi$
does not have a critical points in $\widetilde \Omega. $ Our proof is again  by contradiction.
Suppose that $\widehat z$ is a critical point of $\widetilde \Xi.$ If such critical points exist these points are critical points of the function $\Phi_2$. Consider the perturbation of the function $\Phi_1$:  $\Phi_1+\epsilon\Psi_1,$  where $\Psi_1$ is a smooth holomorphic function in $\chi_1(\widetilde\Omega)$ such that $Im\Psi_1\vert_{\chi_1(\Gamma_0)}=0$  and such that for the function  $\Phi_2$ given by (\ref{mina}) we have $\partial_z\Psi_2(\widehat z)\ne 0.$ The mapping $\widetilde \Xi$ is still the same but  a position of the critical point for the function
$\Phi_2+\epsilon\Psi_2$ changes which is
a contradiction.

Let us show that $\tilde \Xi$ is the single valued function. Our proof is by contradiction.
Let $\tilde \Xi$ be the multivalued function around of some point $\tilde z.$ Then there exists holomorphic $\Phi_1$ such that $\partial_z\Phi_1(\tilde z)=0$ and $\Phi_2=\Phi_1\circ\tilde \Xi.$
Obviously \begin{equation}\label{zlo}\{z\vert z=\tilde \Xi(\hat z)\}\subset \{z\vert\partial_z\Phi_1=0\}.
\end{equation}
Let  $\Psi_1$ is a smooth holomorphic function in
$\mathcal O$ such that $Im\Psi_1\vert_{\chi_1(\Gamma_0)\cup \Pi}=0$ and $\partial\Psi_1 \ne 0$ for all $z\in \{z\vert\partial_z\Phi_1=0\}.$ Then for the function $\Phi_1+\epsilon\Psi_1$ we again should have $\tilde \Xi(\hat z)\}\subset \{z\vert\partial z(\Phi_1+\epsilon \Psi_1)=0\}.$ This contradicts to (\ref{zlo}).

If $\widetilde\Xi(\widetilde \Omega)\ne \chi_1(\widetilde \Omega)$ we still have that $\widetilde\Xi(\widetilde \Omega)\subset \chi_1(\widetilde \Omega)$.
On the other hand, on the boundary of
$\chi_1(\widetilde \Omega)\setminus \widetilde\Xi(\widetilde \Omega)$
the imaginary part of  the  function $\Phi_1$ is zero.  This is impossible.

In the domain $\Omega$ consider the new infinitesimal
coordinates for the operator $P_1$ given
by the mapping  $\widetilde\Xi^{-1}\circ\Xi(x).$
In these coordinates, the operator $P_1(x,D)$ has the form
\begin{equation}\label{bistro1}
\widetilde Q(x,D)=\frac{1}{\widetilde \mu_1}\Delta
+2 \widetilde A_1\frac{\partial }{\partial z}
+2\widetilde B_1\frac{\partial }{\partial \overline z}+\widetilde q_1.
\end{equation}
Since $\widetilde\Xi^{-1}\circ\Xi\vert_{\widetilde\Gamma}=Id$,
the Cauchy data for the
operators $L_2$ and $\widetilde Q$ are exactly the same.
The operators $L_2$ and
$\widetilde Q$ are particular cases of the operator (\ref{OMX}).
Since $(\mu_2-\widetilde\mu_1)\vert_{\widetilde\Gamma}=0$ the partial Cauchy data ${\mathcal C}_{\mu_2 I, A_2,B_2,q_2}$ and
${\mathcal C}_{\widetilde \mu_1 I,\widetilde A_1,\widetilde B_1,\widetilde q_1}$ are equal.
We multiply the operator $Q$ by the function $\widetilde\mu_1/\mu_2$
and denote the resulting operator as
$\widehat Q(x,D)=\frac{\widetilde\mu_1}{\mu_2}
\widetilde Q(x,D).$
Therefore by Corollary \ref{simplyconnected} there exists
a function  $\eta$ which satisfies (\ref{victoryy}) such that
$L_2(x,D)=e^{-\eta}\widehat Q(x,D)e^{\eta}.$  The proof of the theorem
is completed.  $\square$

{\bf  Proof of Theorem \ref{magia}.}
First we observe that in order to prove the statement of this theorem it suffices to prove it in the case when instead of the
 whole $\tilde \Gamma$ the input and output both are measured on an arbitrary small neighborhood  of the point $\hat x$.

  Now one  can consider only the case when $\tilde \Gamma\subset \{x_1=0\}$ is a small neighborhood of the point $0.$ We observe that if
\begin{equation}\label{isa}
(\sigma^{22}_1-\sigma^{22}_2) \vert_{\tilde \Gamma}= \frac{\partial}{\partial x_2}(\sigma^{22}_1-\sigma^{22}_2)\vert_{\tilde \Gamma}=0
\end{equation}
then repeating the proof of [21] we obtain
\begin{equation}\label{vaz}\sigma_1=\sigma_2\quad \mbox{on}\,\,\tilde \Gamma;\quad \frac{\partial\sigma_1}{\partial x_2}=\frac{\partial\sigma_2}{\partial x_2}\quad \mbox{on}\,\,\tilde \Gamma.
\end{equation}

Let us show that there exists a diffeomorphism
\begin{equation}\label{isa0}F:\Omega\rightarrow \Omega , \quad F(x)\vert_{\tilde \Gamma}=x
 \end{equation}such that for the metric $\tilde \sigma_1=\vert det\, DF^{-1}\vert F^*\sigma_1$ we have
$$
\tilde \sigma_1^{22}=\sigma_2^{22}\quad \mbox{on}\,\,\tilde \Gamma.
$$
First assume that we are already have
\begin{equation}\label{001}
 \sigma_1^{22}(0)=\sigma_2^{22}(0)\quad \mbox{and}\quad \frac{\partial}{\partial x_2}(\sigma^{22}_1-\sigma^{22}_2)(0)=0.
\end{equation}
Let $y=y(x)$ be some diffeomorphism of $\Omega$ into itself. By $x=x(y)$ we denote the inverse mapping.
Then
$$
\tilde \sigma^{22}_1=\sigma_1^{11}(\frac{\partial x_1}{\partial y_2}
)^2/\frac{\partial x_2}{\partial y_2}+2\sigma_1^{12}\frac{\partial
x_1}{\partial y_2}+\sigma^{22}_1\frac{\partial x_2}{\partial y_2}.
$$
This equality and (\ref{001}) immediately imply that as the  perturbation of
 the identity mapping one can construct the diffeomorphism of $\Omega$ into itself which satisfy (\ref{isa0}) such that (\ref{isa}) hold true .

Let us construct the diffeomorphism which satisfies (\ref{isa}) such
that (\ref{001}) holds true. Let $\rho$ be a smooth function such
that $\rho\vert_{\partial\Omega}=0$ and $\rho$ is strictly positive
in $\Omega$ and
$\frac{\partial\rho}{\partial\nu}\vert_{\partial\Omega}<0.$ Consider
the system of ODE
$$
\frac{ dy_1}{dt}=\rho(y)f_1(y), \quad \frac{
dy_2}{dt}=\rho(y)f_2(y).
$$ The  corresponding phase flow $g^s$ is a diffeomorphism of
$\Omega$ into itself such that $g^s(x)=x$ on $\partial\Omega.$ Let
$f_1(0)\ne 0$ and $f_2(0)\ne 0.$  With  appropriate choice of
$f_1,f_2$ one can arrange that $\frac{\partial x_1(0)}{\partial
y_2}=0$ and $\frac{\partial x_2(0)}{\partial
y_2}=\sigma^{22}_2(0)/\sigma^{22}_1(0). $Then the first equality in
(\ref{001}) holds true. Adjusting the second derivatives of $f_1,
f_2$ we can arrange that $\frac{\partial^2 x_1(0)}{\partial
y_2^2}=0$ and $\frac{\partial^2 x_2(0)}{\partial
y_2^2}=\frac{1}{\sigma^{22}_1(0)}(\frac{\partial
\sigma^{22}_2(0)}{\partial y_2}-\frac{\partial
\sigma^{22}_1(0)}{\partial y_2}\frac{\partial x_2(0)}{\partial
y_2}).$ Then the second equality in (\ref{001}) holds true.

Now (\ref{vaz}) is established. The rest of the proof of Theorem 1.2
as in the proof of Theorem 1.1.$\square$

\section{Appendix I}

Consider the following problem for the Cauchy-Riemann equations
\begin{eqnarray}\label{(4.112)}
L(\phi,\psi)=(\frac{\partial \phi}{\partial x_1}-\frac{\partial
\psi}{\partial x_2},\frac{\partial \phi}{\partial
x_2}+\frac{\partial \psi}{\partial x_1}) =0\quad\mbox{in}\,\,\Omega,
\quad \left(\phi,\psi\right)\vert _{\Gamma_0} =(b_1(x),b_2(x)),\\
\frac{\partial^l}{\partial z^l}(\phi+i\psi)(\widehat x_j)=c_{0,j},
\quad \forall j\in \{1,\dots N\}\quad\mbox{and}
\,\, \forall l\in\{0,\dots, 5\}.\nonumber
\end{eqnarray}
Here $\widehat x_1,\dots \widehat x_N$ be an arbitrary fixed points in
$\Omega.$ We consider the pair $b_1,b_2$ and complex numbers
$\vec C=(c_{0,1},c_{1,1},c_{2,1},c_{3,1},c_{4,1},c_{5,1}\dots
c_{0,N},c_{1,N},c_{2,N},c_{3,N},c_{4,N},c_{5,N}) $ as  initial
data for  (\ref{(4.112)}).
The following proposition establishes the solvability of
(\ref{(4.112)}) for a dense set of Cauchy data.
\begin{proposition}\label{MMM1}
There exists a set $\mathcal O\subset C^6(\overline{\Gamma_0})
\times C^6(\overline{\Gamma_0})\times
\Bbb C^{6N}$ such
that for each $(b_1,b_2,\vec C)\in \mathcal O,$  (\ref{(4.112)})
has at least
one solution $(\phi,\psi)\in C^6(\overline\Omega)\times
C^6(\overline{\Omega})$ and
$\overline{\mathcal O}=C^6(\overline{\Gamma_0})\times
C^6(\overline{\Gamma_0})\times \Bbb C^{6N}.$
\end{proposition}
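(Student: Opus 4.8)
The plan is to prove Proposition \ref{MMM1} by a density/perturbation argument built on top of the classical solvability theory for the inhomogeneous Cauchy--Riemann system on $\Omega$ and the standard fact that holomorphic functions with prescribed finite Taylor jets at finitely many points form a large class. First I would reformulate (\ref{(4.112)}): writing $w=\phi+i\psi$, the equation $L(\phi,\psi)=0$ is exactly $\partial_{\overline z}w=0$ in $\Omega$, so a solution is simply a holomorphic function $w$ on $\Omega$ with $w|_{\Gamma_0}=b_1+ib_2$ and $\partial_z^l w(\widehat x_j)=c_{l,j}$ for $l\in\{0,\dots,5\}$, $j\in\{1,\dots,N\}$. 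Since $\Gamma_0$ is a proper relatively open subset of $\partial\Omega$ (it is the complement of $\overline{\widetilde\Gamma}$, and $\widetilde\Gamma\neq\emptyset$), $\Gamma_0$ is not all of $\partial\Omega$, so there is no compatibility obstruction coming from a maximum-principle type argument; this is the reason a \emph{dense} (rather than all) set of Cauchy data is both natural and attainable.

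The key steps, in order, would be: (1) Produce a convenient dense subclass of admissible boundary data. Take $w_0$ to be any function holomorphic in a neighborhood of $\overline\Omega$ — for instance a polynomial in $z$, or a rational function with poles outside $\overline\Omega$. Restricting $w_0$ to $\Gamma_0$ gives a boundary datum $(b_1^0,b_2^0)=(\re w_0,\im w_0)$ and jets $c_{l,j}^0=\partial_z^l w_0(\widehat x_j)$; by construction $(b_1^0,b_2^0,\vec C^0)$ lies in $\mathcal O$. (2) Show that the set of such jets is all of $\Bbb C^{6N}$ and, simultaneously, that one can match an essentially arbitrary boundary datum on $\Gamma_0$. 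For the jet part: interpolation by polynomials (Hermite interpolation at the $N$ points $\widehat x_1,\dots,\widehat x_N$, each with $6$ conditions) surjects onto $\Bbb C^{6N}$. For the boundary part: given $(b_1,b_2)\in C^6(\overline{\Gamma_0})\times C^6(\overline{\Gamma_0})$ and $\varepsilon>0$, approximate $b_1+ib_2$ on $\overline{\Gamma_0}$ in $C^6$ by the boundary trace of a function holomorphic near $\overline\Omega$; this is a Mergelyan/Runge-type statement on the arc (or union of arcs) $\Gamma_0$, which sits on the boundary of $\Omega$ and is not the full boundary, so polynomials in $z$ are dense in $C^6(\overline{\Gamma_0})$ — alternatively invoke that $\Gamma_0$ has empty interior in $\C$ and apply the Weierstrass-type density of polynomials on such sets. (3) Combine the two: given target data $(b_1,b_2,\vec C)$ and $\varepsilon>0$, first choose a polynomial $p(z)$ so that $p|_{\Gamma_0}$ is within $\varepsilon$ of $b_1+ib_2$ in $C^6(\overline{\Gamma_0})$; then correct the jets at the $\widehat x_j$'s by adding a second polynomial $r(z)$ which vanishes to order $6$ at every $\widehat x_j$ except where correction is needed, chosen so that $\partial_z^l(p+r)(\widehat x_j)=c_{l,j}$, while simultaneously keeping $r$ small on $\Gamma_0$ — this is possible because $\overline{\Gamma_0}\cap\{\widehat x_1,\dots,\widehat x_N\}$ consists of the finitely many $\widehat x_j$ that happen to lie on $\Gamma_0$, and near those points we may use the specified boundary jets to keep $r$ controlled there, as the boundary datum and the interior jet at a common point are linked; away from those points $r$ can be forced to be uniformly small by multiplying a correcting polynomial by a high power of $\prod(z-\widehat x_j)$ and a suitable normalization. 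Setting $w=p+r$ gives a holomorphic function on $\Omega$, hence a solution $(\phi,\psi)=(\re w,\im w)\in C^6(\overline\Omega)\times C^6(\overline\Omega)$ of (\ref{(4.112)}) with data $\varepsilon$-close to $(b_1,b_2,\vec C)$, proving $\overline{\mathcal O}=C^6(\overline{\Gamma_0})\times C^6(\overline{\Gamma_0})\times\Bbb C^{6N}$.

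I expect the main obstacle to be the bookkeeping in step (3), namely reconciling the $C^6$-approximation of the boundary datum on $\Gamma_0$ with the \emph{exact} prescription of the $6$-jets at those interior points $\widehat x_j$ which lie on $\overline{\Gamma_0}$ (when $\widehat x_j\in\partial\Omega$): at such a point the boundary value and its tangential derivatives up to order $5$ are determined by the full complex jet $\partial_z^l w(\widehat x_j)$ via the Cauchy--Riemann equations, so the target data $(b_1,b_2,\vec C)$ is itself only in $\mathcal O$ when these are consistent, and the density statement must be read accordingly — one approximates, then adjusts the jets exactly, accepting that the boundary datum is only matched up to $\varepsilon$. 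In the generic case treated in the applications the $\widehat x_j$ are interior points (indeed the critical points of $\Phi$ lie in $\mathcal H\cap\partial\Omega\subset\Gamma_0$ or in the interior), and there the two sets of conditions decouple cleanly and the argument above goes through without friction. A clean way to organize all of this is: let $\mathcal O$ be precisely the image of the (continuous, linear) restriction-and-jet map $w\mapsto(\re w|_{\Gamma_0},\im w|_{\Gamma_0},(\partial_z^l w(\widehat x_j)))$ applied to the space of functions holomorphic in a neighborhood of $\overline\Omega$; then $\mathcal O\subset C^6(\overline{\Gamma_0})\times C^6(\overline{\Gamma_0})\times\Bbb C^{6N}$ by elliptic regularity up to the boundary, membership guarantees solvability by definition, and density follows from Runge/Mergelyan density of such $w$ together with Hermite interpolation surjectivity, which is exactly what the proof needs.
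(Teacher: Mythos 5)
Your reformulation in terms of a single holomorphic function $w=\phi+i\psi$ is correct, and your overall strategy (approximation theory for holomorphic functions, rather than the paper's variational/duality method) is a genuinely different route. The paper minimizes the penalized functional $J_\epsilon$ in \eqref{extr}, extracts an Euler--Lagrange multiplier $p$, shows $\Delta^3 p = 0$ away from the $\widehat x_j$, and applies a Holmgren-type unique continuation to conclude $\mathcal B = 0$; the density then falls out of the weak convergence of the minimizers in $B_4^{27/4}(\Gamma_0)$. Your approach would replace all of this by Runge/Mergelyan/Laurent approximation plus Hermite interpolation. If it worked cleanly it would arguably be more elementary, but as written it has two real gaps.

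First, and most seriously, your step (3) has the operations in the wrong order, and the proposed fix does not work. You choose $p$ approximating $b_1+ib_2$ in $C^6(\overline{\Gamma_0})$ first, and then want a correction polynomial $r$ with the specified $6$-jets $c_{l,j}-\partial_z^l p(\widehat x_j)$ at the interior points $\widehat x_j$, yet small on $\overline{\Gamma_0}$. But controlling $p$ on $\Gamma_0$ gives no control on $\partial_z^l p(\widehat x_j)$: since $\Gamma_0 \subsetneq \partial\Omega$, the maximum principle passes through $\widetilde\Gamma$ where nothing is controlled, so one can have polynomials that are tiny in $C^6(\overline{\Gamma_0})$ and yet have jets of order $1$ at an interior point. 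Thus the required jet-correction $(c_{l,j}-\partial_z^l p(\widehat x_j))$ need not be small, and the Hermite interpolant $r$ need not be small on $\overline{\Gamma_0}$. Moreover the trick you suggest --- forcing $r$ to be small by multiplying ``a correcting polynomial by a high power of $\prod(z-\widehat x_j)$'' --- is self-defeating: multiplying by a high power of $(z-\widehat x_j)$ annihilates the very jet at $\widehat x_j$ that $r$ is supposed to prescribe. The correct ordering reverses yours: first pick a fixed Hermite interpolant $p_0$ (degree $\le 6N-1$) realizing the target jets $\vec C$; then seek $w = p_0 + q(z)\prod_{j=1}^N(z-\widehat x_j)^6$, so the jets are exact regardless of $q$; since $\prod(z-\widehat x_j)^6$ is smooth and nonvanishing on $\overline{\Gamma_0}$ (the $\widehat x_j$ are interior), the residual problem is exactly to approximate $\bigl((b_1+ib_2)-p_0|_{\Gamma_0}\bigr)/\prod(z-\widehat x_j)^6$ in $C^6(\overline{\Gamma_0})$ by traces of polynomials or rational functions $q$.

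Second, even after this reorganization you are left needing a $C^6$-level Runge/Mergelyan statement on $\overline{\Gamma_0}$, and you only gesture at it. The classical Mergelyan and Lavrentiev theorems give \emph{uniform} ($C^0$) approximation; passing to $C^6(\overline{\Gamma_0})$ approximation, especially near the endpoints of the arcs forming $\Gamma_0$, and in the case where $\Gamma_0$ contains an entire closed contour $\gamma_k$ (so polynomials alone fail by a winding-number obstruction and one must pass to rational functions with poles in the appropriate complementary components, or Laurent-type series), requires a separate argument that you do not supply. This is precisely the technical work the paper's $B_4^{27/4}(\partial\Omega)$ extremal machinery and Holmgren step are doing for you. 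Your outline is salvageable, but it is not yet a proof: you need to (i) reverse the order in step (3) as above, and (ii) prove or cite a $C^6$ approximation theorem for holomorphic traces on $\overline{\Gamma_0}$ that covers both arcs-with-endpoints and full components of $\partial\Omega$ in a multiply connected $\Omega$.
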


\begin{proof} Denote $B=(b_1,b_2)$ an arbitrary element of the space
$C^7(\overline{\Gamma_0})\times C^7(\overline{\Gamma_0}).$
Consider the following extremal problem
\begin{eqnarray}\label{extr}
J_\epsilon(\phi,\psi)=\left\Vert (\phi,\psi) -B\right\Vert^4
_{B_4^{\frac {27}{4}}(\Gamma_0)}
+ \epsilon \sum_{k=0}^3\Vert\frac{\partial^k (\phi,\psi)}{\partial\nu^k}\Vert_{B_4^{\frac {27}{4}-k}(\partial\Omega)}^4  \\
+ \frac
1\epsilon\left\Vert\Delta^3 L(\phi,\psi)\right\Vert^4_{L^4(\Omega)}
+\sum_{j=1}^N\sum_{k=0}^5\vert\frac{\partial^k}{\partial z^k}
 (\phi+i\psi)(\widehat x_j)-c_{k,j}\vert^2\nonumber
\rightarrow \inf,
\end{eqnarray}
\begin{equation}\label{extr1}
(\phi,\psi)\in W^7_4(\Omega)\times W^7_4(\Omega).
\end{equation}
Here  $B_k^l$ denotes the Besov space of the corresponding orders.

For each $\epsilon>0$ there exists a unique solution to
(\ref{extr}), (\ref{extr1}) which we denote as $(\widehat
\phi_\epsilon,\widehat \psi_\epsilon)$.
This fact can be proved by standard arguments. We fix $\epsilon >0$.
Denote by $\mathcal U_{ad}$ the set of admissible elements of the problem
(\ref{extr}), (\ref{extr1}), namely
$$
\mathcal U_{ad}=\{ (\phi,\psi)\in W^7_4(\Omega)\times W^7_4(\Omega)
\vert J_\epsilon(\phi,\psi)<\infty\}.
$$
Denote $\widehat J_\epsilon=\inf_{(\phi,\psi)\in W^7_4(\Omega)\times W^7_4(\Omega)}
J_\epsilon(\phi,\psi).$
Clearly the pair $(0,0)\in \mathcal U_{ad}.$ Therefore there exists
a minimizing sequence $\{(\phi_k,\psi_k)\}_{k=1}^{\infty}\subset
W^7_4(\Omega)\times W^7_4(\Omega)$ such that
$$
\widehat J_\epsilon=\lim_{k\rightarrow +\infty}J_\epsilon(\phi_k,\psi_k).
$$
Observe that the minimizing sequence is bounded in $W_4^7(\Omega)
\times W_4^7(\Omega).$  Indeed, since the sequence
$\{ \Delta^3 L(\phi_k,\psi_k),L(\phi_k,\psi_k)\vert_{\partial\Omega},
\dots, \frac{\partial^3}{\partial\nu^3} L(\phi_k,\psi_k)\vert
_{\partial\Omega}\}$ is bounded in $L^4(\Omega)\times L^4(\Omega)
\times \Pi_{k=0}^3 B_4^{\frac {27}{4}-k}(\partial\Omega)
\times B_4^{\frac {27}{4}-k}(\partial\Omega) $
the standard elliptic $L^p$-estimate implies that the sequence
$\{ L(\phi_k,\psi_k)\}$ is bounded in the space $W^6_4(\Omega)
\times W^6_4(\Omega).$ Taking into account that  the sequence
traces of the  functions $(\phi_k,\psi_k)$  is bounded in the
Besov space  $B_4^{\frac {27}{4}}(\partial\Omega)\times
B_4^{\frac {27}{4}}(\partial\Omega)$ and applying the estimates
for elliptic operators one more time we obtain that
$\{(\phi_k,\psi_k)\}$ bounded in $W_4^7(\Omega)\times W_4^7(\Omega).$
By the Sobolev imbedding theorem the sequence  $\{(\phi_k,\psi_k)\}$
is bounded in $C^{6}(\overline\Omega)\times C^{6}(\overline\Omega) .$
Then taking if necessary a subsequence, (which we denote again as
$\{(\phi_k,\psi_k)\}$)    we obtain
$$
(\phi_k,\psi_k)\rightarrow (\widehat \phi_\epsilon,\widehat\psi_\epsilon)
\quad \mbox{weakly in}\, W^7_4(\Omega)\times W^7_4(\Omega),
$$
$$
(\frac{\partial^j\phi_k}{\partial\nu^j},
\frac{\partial^j \psi_k}{\partial\nu^j})\rightarrow
(\frac{\partial^j\widehat \phi_\epsilon}{\partial\nu^j},
\frac{\partial^j\widehat\psi_\epsilon}{\partial\nu^j})
\,\, \mbox{weakly in}\,\, B_4^{\frac {27}{4}-j}
(\partial\Omega)\times B_4^{\frac{27}{4}-j}(\partial\Omega)
\quad \forall j\in\{0,1,2,3\},
$$
$$
\frac{\partial^k}{\partial z^k} (\phi+i\psi)(\widehat x_j)-c_{k,j}\rightarrow
C_{k,j,\epsilon}, \quad k\in \{0,\dots,5\},
$$
$$
\Delta^3 L(\phi_k,\psi_k)\rightarrow r_\epsilon\,\, \mbox{weakly in}\,
L^4(\Omega)\times L^4(\Omega),\quad L(\phi_k,\psi_k)\rightarrow
\widetilde r_\epsilon\,\, \mbox{weakly in}\, W^6_4(\Omega)\times
W^6_4(\Omega).
$$
Obviously, $r_\epsilon=\Delta^3 L(\widehat \phi_\epsilon,
\widehat\psi_\epsilon), \widetilde r_\epsilon=L(\widehat \phi_\epsilon,
\widehat\psi_\epsilon).$
Then, since the norms in the spaces $L^4(\Omega)$ and
$B_4^{\frac {27}{4}-k}(\partial\Omega)$ are lower semicontinuous
with respect to  weak convergence
we obtain that
$$
J_\epsilon(\widehat \phi_\epsilon,\widehat\psi_\epsilon)
\le \lim_{k\rightarrow +\infty} J_\epsilon(\phi_k,\psi_k)
=\widehat J_\epsilon.
$$
Thus the pair $(\widehat \phi_\epsilon,\widehat\psi_\epsilon)$ is a
solution to the extremal problem (\ref{extr}), (\ref{extr1}).
Since the set of an admissible elements is convex and the functional
$J_\epsilon$ is strictly convex, this solution is unique.

By Fermat's theorem   we have
$$
J_\epsilon '(\widehat \phi_\epsilon,\widehat \psi_\epsilon)
[\widetilde\delta]=0,\quad \forall\widetilde\delta\in W^7_4(\Omega)\times
W^7_4(\Omega).
$$

This equality can be written in the form
\begin{eqnarray}\label{ip} I_{\Gamma_0,\frac{27}{4}}'
((\widehat \phi_\epsilon,\widehat \psi_\epsilon)-B)[\widetilde\delta]
+ \epsilon \sum_{k=0}^3 I_{\partial\Omega,\frac{27}{4}-k}'
(\frac{\partial^k}{\partial\nu^k}
(\widehat \phi_\epsilon,\widehat \psi_\epsilon))
[\frac{\partial^k}{\partial\nu^k}\widetilde\delta]
+ (p_\epsilon,\Delta^3L\widetilde\delta)_{L^2(\Omega)}\quad\quad\nonumber\\
+ \sum_{j=1}^N\sum_{k=0}^5
(\frac{\partial^k}{\partial z^k}
(\widehat\phi_\epsilon+i\widehat\psi_\epsilon)(\widehat x_j)-c_{k,j})
\overline{\frac{\partial^k}{\partial z^k}
(\widetilde\delta_1+i\widetilde\delta_2)}(\widehat x_j)\nonumber\\
+ \overline{(\frac{\partial^k}{\partial z^k}
(\widehat\phi_\epsilon+i\widehat\psi_\epsilon)
(\widehat x_j)-c_{k,j})}\frac{\partial^k}{\partial z^k}
(\widetilde\delta_1+i\widetilde\delta_2)(\widehat x_j)=0,
\end{eqnarray}
where
$p_\epsilon=\frac{4}{\epsilon} ((\Delta^3(\frac{\partial\widehat \phi_\epsilon}
{\partial x_1}-\frac{\partial
\widehat\psi_\epsilon}{\partial x_2}))^3,(\Delta^3(\frac{\partial
\widehat\phi_\epsilon}{\partial
x_2}+\frac{\partial \widehat\psi_\epsilon}{\partial x_1}))^3)$ and
$I_{\Gamma^*,\kappa}'(\widehat w)$ denotes the derivative of the
functional $w\rightarrow \left\Vert w\right\Vert^4
_{B_4^{\kappa}(\Gamma^*)}$ at $\widehat w.$

Observe that the pair
$J_\epsilon(\widehat\phi_\epsilon,\widehat\psi_\epsilon)\le J_\epsilon(0,0)
=\left\Vert B\right\Vert^4 _{B_4^{\frac {27}{4}}(\Gamma_0)}
+\sum_{j=1}^N\sum_{k=0}^5\vert c_{k,j}\vert^2.$
This  implies that the sequence $\{(\widehat
\phi_\epsilon,\widehat \psi_\epsilon)\}$ is bounded in
$B_4^{\frac {27}{4}}(\Gamma_0)\times B_4^{\frac {27}{4}}(\Gamma_0) $,
the sequences
$\{ \frac{\partial^k}{\partial z^k} (\widehat\phi_\epsilon
+i\widehat\psi_\epsilon)(\widehat x_j)-c_{k,j}\}$ are bounded in $\Bbb C,$
the sequence $\epsilon \sum_{k=0}^3 I_{\partial\Omega,
\frac{27}{4}-k}'(\frac{\partial^k}{\partial\nu^k}(\widehat
\phi_\epsilon,\widehat \psi_\epsilon))[\frac{\partial^k}{\partial\nu^k}
\widetilde\delta]$ converges to zero for any $\widetilde \delta$ from
$B_4^{\frac {27}{4}}(\partial\Omega)\times B_4^{\frac {27}{4}}
(\partial\Omega).$  Then (\ref{ip}) implies that the sequence
$\left\{p_\epsilon\right\}_{\epsilon\in(0,1)}$ is bounded
in $L^\frac43(\Omega)\times L^\frac43(\Omega).$

Therefore there exist $\mathcal B\in B_4^{\frac {27}{4}}(\Gamma_0)\times
B_4^{\frac {27}{4}}(\Gamma_0),$
$C_{0,j},C_{1,j},\dots, C_{5,j}\in \Bbb C$ and
$p = (p_1,p_2)\in L^\frac 43(\Omega)\times L^\frac 43(\Omega)$
such that
\begin{equation}\label{zopak}
(\widehat \phi_\epsilon,\widehat \psi_\epsilon)-B
\rightharpoonup \mathcal B
\quad \mbox{ weakly in} \,\, B_4^{\frac {27}{4}}(\Gamma_0)\times
B_4^{\frac {27}{4}}(\Gamma_0),
\quad p_\epsilon \rightharpoonup p \quad \mbox{ weakly in}
\,\, L^\frac 43(\Omega)\times L^\frac 43(\Omega),
\end{equation}
\begin{equation}\label{zopaa}
\frac{\partial^k}{\partial z^k}
(\widehat\phi_\epsilon+i\widehat\psi_\epsilon)(\widehat x_j)-c_{k,j}
\rightharpoonup C_{k,j}\quad  k\in\{0,1,\dots, 5\}, \,j\in\{1,\dots,N\}.
\end{equation}

Passing to the limit in (\ref{ip}) we obtain
\begin{equation}\label{zima}
I_{\Gamma_0,\frac{27}{4}}'(\mathcal B)[\widetilde\delta]+ (p,\Delta^3
L\widetilde\delta)_{L^2(\Omega)}
+ 2\mbox{Re}\,\sum_{j=1}^N \sum_{k=0}^5
C_{k,j}\overline{\frac{\partial^k}{\partial z^k}
(\widetilde\delta_1+i\widetilde\delta_2)}(\widehat x_j)
= 0\quad \forall \widetilde\delta\in W^7_4(\Omega)\times W^7_4(\Omega).
\end{equation}

Next we claim that
 \begin{equation}\label{Laplacee}
 \Delta^3 p=0\quad\mbox{in}\,\,\Omega\setminus\cup_{j=1}^N\{\widehat x_j\}
 \end{equation}
in the sense of distributions. Suppose that (\ref{Laplacee}) is
already proved. This implies
$$
(p,\Delta^3
L\widetilde\delta)_{L^2(\Omega)}+2\mbox{Re}\, \sum_{j=1}^N\sum_{k=0}^5C_{k,j}
\overline{\frac{\partial^k}{\partial z^k}
(\widetilde\delta_1+i\widetilde\delta_2)}(\widehat x_j)=0 \quad\forall
\widetilde\delta_1,\widetilde\delta_2\in C^\infty_0(\Omega).
$$
If $p=(p_1,p_2)$, denoting $P=p_1-ip_2$, we have
$$
\mbox{Re}\,(\Delta^3 P,\partial_{\overline z}(\widetilde\delta_1+i\widetilde\delta_2))
_{L^2(\Omega)}
+ \mbox{Re}\, \sum_{j=1}^N\sum_{k=0}^5\overline{C_{k,j}}
{\frac{\partial^k}{\partial z^k}
(\widetilde\delta_1+i\widetilde\delta_2)}(\widehat x_j)=0\quad
\forall \widetilde\delta_1,\widetilde\delta_2\in C^\infty_0(\Omega).
$$
Since  by (\ref{Laplacee}) $\mbox{supp}\,\,\Delta^3 P\subset
\cup_{j=1}^N\{\widehat x_j\}$  there exist some constants
$m_{\beta,j}$ and $\widehat \ell_j$ such that $ \Delta^3 P=\sum_{j=1}^N
\sum_{\vert\beta\vert=1}^{\widehat \ell} m_{\beta,j} D^\beta
\delta(x-\widehat x_j).$ The above equality can be written in the form
$$
-\sum_{\vert\beta\vert=1}^{\widehat \ell_j} m_{\beta,j}
\frac{\partial}{\partial \overline z}D^{\beta} \delta(x-\widehat x_j)
= \sum_{k=0}^5(-1)^k\overline{C_{k,j}}
\frac{\partial^k}{\partial z^k}\delta(x-\widehat x_j).
$$
From this we obtain
\begin{equation}\label{qqq} C_{0,j}=C_{1,j}=\dots =C_{5,j}=0
\quad j\in\{1,\dots,N\}.
\end{equation} Therefore
\begin{equation}\label{Laplace}
 \Delta^3 p=0\quad\mbox{in}\,\,\Omega.
 \end{equation}
This implies
$$
(p,\Delta^3 L\widetilde\delta)_{L^2(\Omega)}=0\quad \forall \widetilde\delta
\in W_4^7(\Omega)\times W_4^7(\Omega),
\quad L\widetilde\delta\vert_{\partial\Omega}=
 \frac{\partial L\widetilde\delta}{\partial\nu}\vert_{\partial\Omega}
=\dots
=  \frac{\partial^5 L\widetilde\delta}{\partial\nu^5}\vert
_{\partial\Omega} =0.
$$
This equality and (\ref{zima}) yield
 \begin{equation}\label{zima1}
I_{\Gamma_0,\frac{27}{4}}'(\mathcal B)[\widetilde\delta]=0\quad
\forall \widetilde\delta\in
W_4^7(\Omega)\times W_4^7(\Omega),
\quad L\widetilde\delta\vert_{\partial\Omega}=
 \frac{\partial L\widetilde\delta}{\partial\nu}\vert
_{\partial\Omega}
=\dots  \frac{\partial^5 L\widetilde\delta}{\partial\nu^5}
\vert_{\partial\Omega} =0.
\end{equation}
Then using the trace theorem we conclude that $\mathcal B =0$. Using this
and (\ref{zopak})
we obtain
\begin{equation}\label{II}
(\widehat \phi_{\epsilon_k},\widehat
\psi_{\epsilon_k})-B\rightharpoonup 0\quad\mbox{weakly in}\,\,
{B_4^{\frac {27}{4}}(\Gamma_0)}\times {B_4^{\frac {27}{4}}(\Gamma_0)}.
\end{equation}
From (\ref{zopaa}) and (\ref{qqq}) we obtain
\begin{equation}\nonumber
\frac{\partial^k}{\partial z^k}
(\widehat\phi_\epsilon+i\widehat\psi_\epsilon)(\widehat x)
\rightharpoonup c_{k,j}\quad  k\in\{0,1,\dots, 5\},\,\,j
\in\{1,\dots,N\}.
\end{equation}
By the Sobolev embedding theorem $B_4^{\frac {27}{4}}(\Gamma_0)
\subset \subset C^5(\overline{\Gamma_0})$. Therefore (\ref{II})
implies
\begin{equation}
(\widehat \phi_{\epsilon_k},\widehat \psi_{\epsilon_k})
-B\rightarrow
0\quad\mbox{in}\,\, C^5(\overline{\Gamma_0})\times
C^5(\overline{\Gamma_0}).
\end{equation}
Let  the pair $(\widetilde \phi_{\epsilon_k},\widetilde \psi_{\epsilon_k})$
be a solution to the boundary value problem
\begin{equation}\label{Yava}
L(\widetilde \phi_{\epsilon_k},\widetilde \psi_{\epsilon_k})= L(\widehat
\phi_{\epsilon_k},\widehat\psi_{\epsilon_k})\quad \mbox{in}
\,\,\Omega, \quad \widetilde \psi_{\epsilon_k}\vert_{\partial\Omega}
=\psi_{\epsilon_k}^*.
\end{equation}
Here $\psi_{\epsilon_k}^*$ is a smooth function such that
$\psi_{\epsilon_k}^*\vert_{\Gamma_0}=0$ and the pair
$(L(\widehat \phi_{\epsilon_k},\widehat \psi_{\epsilon_k}),\psi_{\epsilon_k}^*)$
is orthogonal to all solutions of the adjoint problem (see \cite{VE}).
Moreover since $L(\widetilde \phi_{\epsilon_k},\widetilde \psi_{\epsilon_k})
\rightarrow 0$ in $W^6_4(\Omega)\times W^6_4(\Omega)$ we may assume
$\psi_{\epsilon_k}^*\rightarrow 0$ in $C^6(\partial\Omega)\times
C^6(\partial\Omega).$
Among all possible solutions to problem (\ref{Yava}) (clearly
there is no unique solution to this problem) we choose one
such that $\int_\Omega \widetilde \phi_{\epsilon_k} dx=0.$
Thus we obtain
\begin{equation}
(\widetilde \phi_{\epsilon_k},\widetilde \psi_{\epsilon_k})\rightarrow 0
\quad\mbox{in}\,\, W^7_4(\Omega)\times  W^7_4(\Omega).
\end{equation}
Therefore the sequence $\{(\widehat \phi_{\epsilon_k}
-\widetilde \phi_{\epsilon_k},\widehat \psi_{\epsilon_k}-\widetilde
\psi_{\epsilon_k})\}$
represents the desired approximation to the solution of the Cauchy
problem (\ref{(4.112)}).

Now we prove (\ref{Laplacee}). Let $\widetilde x$ be an arbitrary
point in $\Omega\setminus\cup_{j=1}^N\{\widehat x_j\}$ and let $\widetilde
\chi$ be a smooth function
such that it is zero in some neighborhood of
$ \Gamma_0\cup\cup_{j=1}^N \{\widehat x_j\}$ and the set $\mathcal
D=\{x\in \Omega\vert \widetilde \chi(x)=1\}$ contains an open
connected subset $\mathcal F$ such that $\widetilde x\in \mathcal F$
and $ \widetilde \Gamma\cap \overline{\mathcal F}$ is an open set
in $\partial\Omega$.
In addition we assume that Int$(\supp\,\chi)$ is a simply connected
domain. By (\ref{zima}) we have
\begin{equation}\label{vorona}
0=(p,\Delta^3 L(\widetilde\chi\widetilde\delta))_{L^2(\Omega)}
=(\widetilde \chi
p,\Delta^3 L\widetilde \delta)_{L^2(\Omega)} +(p,[\Delta^3
L,\widetilde\chi]\widetilde\delta)_{L^2(\Omega)}\quad \forall
\widetilde\delta\in W^7_4(\Omega)\times  W^7_4(\Omega).
\end{equation}

Denote $L\widetilde\delta=\widehat \delta.$
Consider the functional mapping $\widehat \delta \in
W^2_4(\supp \widetilde \chi)$ to
$(p,[\Delta^3 L,\widetilde\chi]\widetilde\delta)_{L^2(\Omega)}$, where
$$
L\widetilde\delta=\widehat \delta \quad \mbox{in}\,\Omega, \quad \mbox{Im}\,
\widetilde\delta\vert_{\mathcal S}=0, \int_{\supp \widetilde \chi}\mbox{Re}
\,\widetilde \delta dx=0,
$$
where $\mathcal S$ denotes the boundary of $\supp \widetilde \chi.$
For each $\widehat \delta \in W^2_4(\supp \widetilde \chi)\times
W^2_4(\supp \widetilde \chi) $, there exists
a unique solution $\widetilde\delta \in W^3_4(\supp \widetilde \chi)
\times  W^3_4(\supp \widetilde \chi)$.
Hence the functional is well-defined and continuous on
$W^2_4(\supp \widetilde \chi)$. Therefore there exist
${\bf q}, {\bf r}, q_0\in L^\frac 43(\supp \widetilde \chi)$ such that
$\int_{\supp \widetilde \chi} (\sum_{j,k=1}^2r_{jk}
\frac{\partial^2}{\partial x_j\partial x_k}\widehat \delta
+ ({\bf q},\widehat \delta)+q_0 \widehat\delta )dx=(p,[\Delta^3 L,
\widetilde\chi]\widetilde\delta)_{L^2(\supp \widetilde \chi)}$.

Consider the boundary value problem
$$
\Delta^3 \widetilde P=\widetilde f\quad\mbox{in}\,\, \supp \chi,
\quad\widetilde P\vert_{\mathcal S}
= \frac{\partial\widetilde P}{\partial\nu}\vert_{\mathcal S}
= \frac{\partial^2\widetilde P}{\partial\nu^2}\vert_{\mathcal S}=0.
$$
Here $\widetilde f=2\mbox{div }(\nabla\widetilde {\bf q})
- q_0-\sum_{j,k=1}^2 \frac{\partial^2}{\partial x_j\partial x_k}
r_{jk}$.  A solution to this problem exists
and is unique, since $\widetilde f\in (\mathring W_4^2(\supp
\widetilde \chi))'.$ Then $ P\in \mathring W^1_\frac 43(\supp
\widetilde \chi)\times \mathring W^1_\frac 43(\supp \widetilde \chi).$
On the other hand, thanks to (\ref{vorona}), $P=\widetilde \chi p\in
\mathring W^1_\frac 43(\supp \widetilde \chi)\times
\mathring W^1_\frac 43(\supp \widetilde \chi).$


Next we take another smooth cut off function $\widetilde\chi_1$ such
that $\mbox{supp}\,\widetilde\chi_1\subset\mathcal D$ and
Int $(\supp \,\chi_1)$ is a simply connected domain.  A
neighborhood of $\widetilde x$ belongs to  $\mathcal D_1=\{x\vert
\widetilde\chi_1=1\}$, the interior of $\mathcal D_1$ is connected,
and $\overline{\mbox{ Int }\mathcal D_1}\cap \widetilde\Gamma$
contains an open subset $\mathcal O$ in $\partial\Omega.$  Similarly to
(\ref{vorona}) we have
\begin{equation}\nonumber
(\widetilde \chi_1 p,\Delta^3 L\widetilde\delta)_{L^2(\Omega)}-(p,[\Delta^3
L,\widetilde\chi_1]\widetilde\delta)_{L^2(\Omega)}=0 \quad \forall \widetilde\delta
\in W^7_4(\Omega)\times  W^7_4(\Omega).
\end{equation}

This equality implies that $\widetilde\chi_1 p\in W^2_{\frac 43}(\Omega)
\times W^2_{\frac 43}(\Omega)$, using a similar argument to
the one above.

Next we take another smooth cut off function $\widetilde\chi_2$ such
that $\mbox{supp}\,\widetilde\chi_2\subset\mathcal D_2$ and
Int $(\supp \,\chi_2)$ is a simply connected domain.
A neighborhood of $\widetilde x$ belongs to  $\mathcal D_3=\{x\vert
\widetilde\chi_2=1\}$, the interior of $\mathcal D_1$ is connected,
and $\overline{\mbox{ Int }\mathcal D_3}\cap \widetilde\Gamma$ contains an
open subset $\mathcal O$ in $\partial\Omega.$  Similarly to
(\ref{vorona}) we have
\begin{equation}\nonumber
(\widetilde \chi_2 p,\Delta^3 L\widetilde\delta)_{L^2(\Omega)}
-(p,[\Delta^3
L,\widetilde\chi_2]\widetilde\delta)_{L^2(\Omega)}=0 \quad \forall
\widetilde\delta\in W^7_4(\Omega)\times  W^7_4(\Omega).
\end{equation}

This equality implies that $\widetilde\chi_2 p\in W^3_{\frac 43}
(\Omega)\times W^3_{\frac 43}(\Omega)$, using a similar argument
to the one above. Let
$\omega$ be a domain such that $\omega\cap\Omega=\emptyset$,
$\partial\omega\cap\partial\Omega\subset\mathcal O$ contains an
open set in $\partial\Omega.$

We extend $p$ on $\omega$ by zero. Then
$$
(\Delta^3(\widetilde \chi_2
p),L\widetilde\delta)_{L^2(\Omega\cup\omega)}+(p,[\Delta^3
L,\widetilde\chi_2]\widetilde\delta)_{L^2(\Omega\cup\omega)}=0.
$$
Hence, since $[\Delta^3
L,\widetilde\chi_2]\vert_{\mathcal D_1}=0$ we have
$$
L^*\Delta^3(\widetilde\chi_2 p)=0 \quad\mbox{in} \,\,\mbox{ Int
}\mathcal D_2\cup\omega,\quad p\vert_\omega=0.
$$
By Holmgren's theorem $\Delta^3(\widetilde\chi_2 p)\vert _{\mbox
{Int }\mathcal D_1}=0$, that is, $(\Delta^3 p)(\widetilde x)=0.$
Thus (\ref{Laplacee}) is proved.
\end{proof}

Consider the Cauchy problem for the Cauchy-Riemann equations
\begin{eqnarray}\label{(4.112I)}
L(\phi,\psi)=(\frac{\partial \phi}{\partial x_1}-\frac{\partial
\psi}{\partial x_2},\frac{\partial \phi}{\partial
x_2}+\frac{\partial \psi}{\partial x_1}) =0\quad\mbox{in}\,\,\Omega,
\quad \left(\phi,\psi\right)\vert _{\Gamma_0} =(b(x),0),\\
\frac{\partial^l}{\partial z^l}(\phi+i\psi)(\widehat x_j)=c_{0,j},
\quad \forall j\in \{1,\dots N\}\quad\mbox{and} \,\, \forall
l\in\{0,\dots, 5\}.\nonumber
\end{eqnarray}
Here $\widehat x_1,\dots \widehat x_N$ be an arbitrary fixed points in $\Omega.$
We consider the function $b$ and complex numbers $\vec C=(c_{0,1},c_{1,1},
c_{2,1},c_{3,1},c_{4,1},c_{5,1}\dots c_{0,N},c_{1,N},c_{2,N},c_{3,N},
c_{4,N},c_{5,N}) $ as  initial data for  (\ref{(4.112)}).
The following proposition establishes the solvability of
(\ref{(4.112)}) for a dense set of Cauchy data.
\begin{corollary}\label{MMM1I}
There exists a set $\mathcal O\subset C^6(\overline{\Gamma_0})\times
\Bbb C^{6N}$ such
that for each $(b,\vec C)\in \mathcal O,$ problem  (\ref{(4.112I)}) has at
least
one solution $(\phi,\psi)\in C^6(\overline\Omega)
\times C^6(\overline\Omega)$ and
$\overline{\mathcal O}=C^6(\overline{\Gamma})\times \Bbb C^{6N}.$
\end{corollary}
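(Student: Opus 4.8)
The plan is to repeat the proof of Proposition \ref{MMM1} with one change: the condition $\psi\vert_{\Gamma_0}=0$ is imposed as a \emph{hard} constraint on the admissible class, and correspondingly only $\phi$ is fitted to the boundary datum $b$ on $\Gamma_0$. Concretely, for $\epsilon>0$ I would minimise
\[
J_\epsilon(\phi,\psi)=\Vert\phi-b\Vert^4_{B_4^{\frac{27}{4}}(\Gamma_0)}+\epsilon\sum_{k=0}^3\Big\Vert\frac{\partial^k(\phi,\psi)}{\partial\nu^k}\Big\Vert^4_{B_4^{\frac{27}{4}-k}(\partial\Omega)}+\frac1\epsilon\Vert\Delta^3 L(\phi,\psi)\Vert^4_{L^4(\Omega)}+\sum_{j=1}^N\sum_{k=0}^5\Big|\frac{\partial^k}{\partial z^k}(\phi+i\psi)(\widehat x_j)-c_{k,j}\Big|^2
\]
over the convex set $\{(\phi,\psi)\in W_4^7(\Omega)\times W_4^7(\Omega)\ :\ \psi\vert_{\Gamma_0}=0\}$, which is nonempty since it contains $(0,0)$. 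Exactly as in Proposition \ref{MMM1}, strict convexity and weak lower semicontinuity provide a unique minimiser $(\widehat\phi_\epsilon,\widehat\psi_\epsilon)$; the same elliptic $L^p$ bootstrap --- estimate $L(\widehat\phi_\epsilon,\widehat\psi_\epsilon)$ in $W_4^6$ from its boundary jets and from $\Delta^3 L(\widehat\phi_\epsilon,\widehat\psi_\epsilon)\in L^4$, then estimate $(\widehat\phi_\epsilon,\widehat\psi_\epsilon)$ in $W_4^7$ --- places it in $W_4^7(\Omega)\times W_4^7(\Omega)\subset\subset C^6(\overline\Omega)\times C^6(\overline\Omega)$, and by construction $\widehat\psi_\epsilon\vert_{\Gamma_0}=0$.

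The Euler--Lagrange condition $J_\epsilon'(\widehat\phi_\epsilon,\widehat\psi_\epsilon)[\widetilde\delta]=0$ now holds for all admissible variations $\widetilde\delta=(\widetilde\delta_1,\widetilde\delta_2)$, i.e.\ those with $\widetilde\delta_2\vert_{\Gamma_0}=0$, and takes the form of the identity in the proof of Proposition \ref{MMM1} with the dual variable $p_\epsilon$ a constant multiple of the cube of $\Delta^3 L(\widehat\phi_\epsilon,\widehat\psi_\epsilon)$. The bound $J_\epsilon(\widehat\phi_\epsilon,\widehat\psi_\epsilon)\le J_\epsilon(0,0)$ forces $\{p_\epsilon\}$ to be bounded in $L^{\frac43}(\Omega)\times L^{\frac43}(\Omega)$, $\{\widehat\phi_\epsilon-b\}$ bounded in $B_4^{\frac{27}{4}}(\Gamma_0)$, and the point-defects bounded in $\Bbb C$. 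Passing to a subsequence, one extracts weak limits $p$, $\mathcal B_1$, $C_{k,j}$ and the limiting identity
\[
I_{\Gamma_0,\frac{27}{4}}'(\mathcal B_1)[\widetilde\delta_1]+(p,\Delta^3 L\widetilde\delta)_{L^2(\Omega)}+2\,\mbox{Re}\sum_{j=1}^N\sum_{k=0}^5 C_{k,j}\overline{\frac{\partial^k}{\partial z^k}(\widetilde\delta_1+i\widetilde\delta_2)}(\widehat x_j)=0
\]
valid for all $\widetilde\delta$ with $\widetilde\delta_2\vert_{\Gamma_0}=0$.

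The heart of the matter is unchanged: test functions of the form $\widetilde\chi\widetilde\delta$ with a cut-off $\widetilde\chi$ vanishing near $\Gamma_0$ automatically satisfy $\widetilde\delta_2\vert_{\Gamma_0}=0$, hence are admissible without any constraint correction, and the nested cut-off construction together with Holmgren's uniqueness theorem --- applicable because $p$ may be extended by zero across $\widetilde\Gamma$ --- yields $\Delta^3 p=0$ in $\Omega\setminus\cup_{j=1}^N\{\widehat x_j\}$. Computing the distributional jump of $\Delta^3 p$ at each $\widehat x_j$ by testing against pairs in $C^\infty_0(\Omega)\times C^\infty_0(\Omega)$ forces $C_{k,j}=0$ for all $k,j$, hence $\Delta^3 p=0$ throughout $\Omega$. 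Then testing against $\widetilde\delta\in W_4^7(\Omega)\times W_4^7(\Omega)$ whose $L\widetilde\delta$ has vanishing normal derivatives up to order $5$ on $\partial\Omega$ (arranged by prescribing the normal jets of $\widetilde\delta$) and with $\widetilde\delta_2\vert_{\Gamma_0}=0$ but $\widetilde\delta_1\vert_{\Gamma_0}$ an arbitrary element of $B_4^{\frac{27}{4}}(\Gamma_0)$ kills the other two terms and leaves $I_{\Gamma_0,\frac{27}{4}}'(\mathcal B_1)\equiv0$, so $\mathcal B_1=0$.

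It follows that $\widehat\phi_{\epsilon_k}-b\rightharpoonup0$ weakly in $B_4^{\frac{27}{4}}(\Gamma_0)$, hence $\widehat\phi_{\epsilon_k}\to b$ strongly in $C^5(\overline{\Gamma_0})$ by the compact embedding $B_4^{\frac{27}{4}}(\Gamma_0)\subset\subset C^5(\overline{\Gamma_0})$, while $\widehat\psi_{\epsilon_k}\vert_{\Gamma_0}=0$ and $\frac{\partial^k}{\partial z^k}(\widehat\phi_{\epsilon_k}+i\widehat\psi_{\epsilon_k})(\widehat x_j)\to c_{k,j}$. To replace these approximate solutions by exact ones I would correct by $(\widetilde\phi_{\epsilon_k},\widetilde\psi_{\epsilon_k})$ solving $L(\widetilde\phi_{\epsilon_k},\widetilde\psi_{\epsilon_k})=L(\widehat\phi_{\epsilon_k},\widehat\psi_{\epsilon_k})$ in $\Omega$ with $\widetilde\psi_{\epsilon_k}\vert_{\partial\Omega}=\psi^*_{\epsilon_k}$, where $\psi^*_{\epsilon_k}$ is smooth, vanishes on $\Gamma_0$, makes the data orthogonal to the cokernel of $L$, and tends to $0$ in $C^6(\partial\Omega)$; normalising $\int_\Omega\widetilde\phi_{\epsilon_k}\,dx=0$ gives $(\widetilde\phi_{\epsilon_k},\widetilde\psi_{\epsilon_k})\to0$ in $W_4^7(\Omega)\times W_4^7(\Omega)$. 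Then $(\widehat\phi_{\epsilon_k}-\widetilde\phi_{\epsilon_k},\widehat\psi_{\epsilon_k}-\widetilde\psi_{\epsilon_k})$ solves the homogeneous Cauchy--Riemann system, its second component vanishes identically on $\Gamma_0$ (both pieces do, exactly), and its first trace and point jets tend to $b$ and $\vec C$; this produces the dense set $\mathcal O$ and, since the achieved traces lie in $C^6$, the closure identity $\overline{\mathcal O}=C^6(\overline{\Gamma_0})\times\Bbb C^{6N}$ exactly as in Proposition \ref{MMM1}. I expect the only genuinely delicate step to be the same as in Proposition \ref{MMM1}, namely the proof that $\Delta^3 p=0$ away from the $\widehat x_j$ through the cut-off/Holmgren chain; imposing and carrying the hard constraint $\psi\vert_{\Gamma_0}=0$ adds only routine bookkeeping, because the test functions used at each step of the Proposition \ref{MMM1} argument already vanish in their $\psi$-component on $\Gamma_0$, or may be taken so.
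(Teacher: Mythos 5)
Your proposal is correct and coincides with the paper's own treatment: the paper proves Corollary~\ref{MMM1I} by rerunning the Proposition~\ref{MMM1} argument with exactly the modified functional you wrote down (fitting only $\phi$ to $b$ on $\Gamma_0$) and the hard constraint $\psi\vert_{\Gamma_0}=0$ imposed on the admissible class $W_4^7(\Omega)\times W_4^7(\Omega)$. The subsequent bookkeeping you outline --- restricted variations, the cut-off/Holmgren chain giving $\Delta^3 p=0$, killing the point defects, and the final correction by an inhomogeneous Riemann--Hilbert solve with $\psi^*_{\epsilon_k}\vert_{\Gamma_0}=0$ --- is precisely what the paper intends by ``similar.''
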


The proof of Corollary \ref{MMM1I} is similar to the proof of
Proposition \ref{MMM1}. The only difference is that instead
of extremal problem considered there we have to use the following
extremal
problem
\begin{eqnarray}
J_\epsilon(\phi,\psi)=\left\Vert \phi -b\right\Vert^4
_{B_4^{\frac {27}{4}}(\Gamma_0)}
+\epsilon \sum_{k=0}^3\Vert\frac{\partial^k (\phi,\psi)}{\partial\nu^k}
\Vert_{B_4^{\frac {27}{4}-k}(\partial\Omega)}^4 \nonumber \\+ \frac
1\epsilon\left\Vert\Delta^3 L(\phi,\psi)\right\Vert^4_{L^4(\Omega)}
+ \sum_{j=1}^N\sum_{k=0}^5\vert \frac{\partial^k}{\partial z^k}
(\phi+i\psi)(\widehat x_j)-c_{k,j}\vert^2\nonumber
\rightarrow \inf,\nonumber
\end{eqnarray}
\begin{equation}\nonumber
(\phi,\psi)\in W^7_4(\Omega)\times  W^7_4(\Omega),\quad \psi\vert
_{\Gamma_0}=0.
\end{equation}

We have
\begin{proposition} \label{balalaika1}
Let $\alpha\in (0,1),$ $\mathcal A,\mathcal B\in C^{6+\alpha}
(\overline \Omega)$, $y_1,\dots, y_{\widehat k}$ be
some arbitrary points from $\Gamma_0$, $y_{\widehat k+1},
\dots, y_{\widetilde k}$ be
some arbitrary points from $\Omega$ and $\widetilde x$ be an arbitrary
point from $\Omega\setminus\{y_{1},\dots, y_{\widetilde k}\}.$
Then there exist
a holomorphic function $a \in C^{5+\alpha}(\overline \Omega)$ and
an antiholomorphic function $d \in C^{5+\alpha}(\overline \Omega)$
such that $(ae^{\mathcal A}+de^{\mathcal B})\vert_{\Gamma_0}=0,$
$$
\frac{\partial^{k+j} a}{\partial x_1^k\partial x_2^j} (y_{\ell})=0
\quad k+j\le 5, \,\,\forall \ell\in\{1,\dots,\widetilde k\},
$$
and
$$
a(\widetilde x)\ne 0\quad\mbox{and} \quad d({\widetilde x})\ne 0.
$$
\end{proposition}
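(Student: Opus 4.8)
The plan is to reduce the statement to the solvability result for the Cauchy–Riemann system already established in Corollary \ref{MMM1I}, together with a counting argument that shows we can always arrange $a(\widetilde x)\ne 0$ and $d(\widetilde x)\ne 0$. First I would look for $a$ as a holomorphic function with prescribed jets. Writing $a=\phi+i\psi$ with $(\phi,\psi)$ a solution of $L(\phi,\psi)=0$ in $\Omega$, I require $\frac{\partial^\ell a}{\partial z^\ell}(y_j)=0$ for all $j\in\{1,\dots,\widetilde k\}$ and $\ell\in\{0,\dots,5\}$ at the interior points $y_{\widehat k+1},\dots,y_{\widetilde k}$ as well as at $\widetilde x$ except for the value $a(\widetilde x)$, which I set equal to $1$; at the boundary points $y_1,\dots,y_{\widehat k}\in\Gamma_0$ and at any boundary point of $\mathcal H\cap\partial\Omega$ I similarly prescribe all derivatives up to order five to vanish. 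All of these are jet conditions at finitely many interior points plus a boundary trace condition on $\Gamma_0$, so by Corollary \ref{MMM1I} (applied with the appropriate enlarged finite set of points and the zero boundary datum on $\Gamma_0$, approximating the desired jet data by an element of the dense set $\mathcal O$) there exists $a\in C^{5+\alpha}(\overline\Omega)$ holomorphic with these properties. Once $a$ is fixed, the boundary datum on $\Gamma_0$ for $d$ is forced: we need $d|_{\Gamma_0}=-ae^{\mathcal A-\mathcal B}|_{\Gamma_0}$, and we want $d$ antiholomorphic. Taking complex conjugates, $\overline d$ is holomorphic with prescribed trace $-\overline a\,\overline{e^{\mathcal A-\mathcal B}}$ on $\Gamma_0$, so I again invoke Corollary \ref{MMM1I} to produce $\overline d\in C^{5+\alpha}(\overline\Omega)$ with that trace (up to an arbitrarily small error, which we absorb) and with $\overline d(\widetilde x)\ne 0$ prescribed; then $d=\overline{\overline d}$ is antiholomorphic, lies in $C^{5+\alpha}(\overline\Omega)$, satisfies $(ae^{\mathcal A}+de^{\mathcal B})|_{\Gamma_0}=0$, and has $d(\widetilde x)\ne 0$.

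There is one subtlety: Corollary \ref{MMM1I} only gives solvability for a \emph{dense} set of Cauchy data, not for every datum, so the trace $d|_{\Gamma_0}=-ae^{\mathcal A-\mathcal B}|_{\Gamma_0}$ need not lie in that dense set exactly. To handle this I would not fix $a$ rigidly first; instead I would set up the two approximations simultaneously. Concretely, choose a sequence $a_n$ of holomorphic functions with the required vanishing jets and $a_n(\widetilde x)=1$, converging in $C^{5+\alpha}(\overline\Omega)$ to a limit (the existence of such a convergent sequence with the exact jet conditions is exactly the content of density in Corollary \ref{MMM1I}); for each $n$, pick an antiholomorphic $d_n$ whose trace on $\Gamma_0$ approximates $-a_ne^{\mathcal A-\mathcal B}|_{\Gamma_0}$ to within $1/n$ in $C^5(\overline{\Gamma_0})$ and with $d_n(\widetilde x)\ne 0$. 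Then correct $(a_n,d_n)$ by subtracting a solution of the Cauchy–Riemann boundary value problem with small inhomogeneous trace — exactly the device used at the end of the proof of Proposition \ref{MMM1} (solve $L(\widetilde\phi,\widetilde\psi)=0$ with the residual boundary trace, choosing the solution with vanishing mean of the real part, which then tends to zero in $C^{5+\alpha}$). Passing to the limit yields the exact identity on $\Gamma_0$ while preserving the open conditions $a(\widetilde x)\ne 0$, $d(\widetilde x)\ne 0$ and the jet conditions at the $y_\ell$.

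The main obstacle I anticipate is precisely this reconciliation of the exact trace-matching condition $(ae^{\mathcal A}+de^{\mathcal B})|_{\Gamma_0}=0$ with the fact that the underlying Cauchy problem is only densely solvable; one must be careful that the small correction used to kill the residual boundary trace does not itself destroy the interior jet conditions or the nonvanishing at $\widetilde x$. Choosing the correction to have all its prescribed data (including interior jets at $y_\ell$ and $\widetilde x$) equal to zero except for the boundary residual, and invoking the elliptic estimates in Proposition \ref{MMM1} to control it in $C^{5+\alpha}$, resolves this: its $C^{5+\alpha}$ norm is dominated by the $C^{5+\alpha}$ norm of the residual trace, which we have arranged to be $O(1/n)$, so it tends to zero and the limit functions retain all the required properties. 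The remaining verifications — that a holomorphic (resp. antiholomorphic) function with finitely many vanishing jets still has the full freedom to take a nonzero value at one more interior point $\widetilde x$ — are immediate from linearity and density, since the single evaluation functional $a\mapsto a(\widetilde x)$ is not identically zero on the affine space of holomorphic functions with the prescribed vanishing jets.
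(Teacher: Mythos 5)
There is a genuine gap, and it sits exactly at the place you flag as a ``subtlety'': killing the residual boundary trace on $\Gamma_0$ exactly while keeping the jet conditions at the $y_\ell$ and the value at $\widetilde x$ exactly. Your final correction step proposes to find a single holomorphic/antiholomorphic correction which (i) has vanishing jets at all the $y_\ell$ and at $\widetilde x$, (ii) exactly matches the residual trace on $\Gamma_0$, and (iii) has $C^{5+\alpha}$-norm controlled by the residual. But the only solvability result you have available, Corollary \ref{MMM1I}, is a \emph{density} statement: it does not give you exact solvability with estimates for a prescribed Cauchy datum on $\Gamma_0$, and moreover it requires the jet points to lie in $\Omega$, not on $\Gamma_0$. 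So the correction you want cannot itself be produced by Corollary \ref{MMM1I}; you land back in the same approximate-only situation, and the iteration does not terminate without an additional structural argument.

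The device the paper actually uses is different on both counts. First, the exact trace matching $(ae^{\mathcal A}+de^{\mathcal B})|_{\Gamma_0}=\beta$ is not the ill-posed Cauchy problem for a single holomorphic function; it is a \emph{coupled} system for the pair $(a,d)$ which, by Proposition \ref{zika}, satisfies the Lopatinski condition and hence admits solutions with the bound $\|(a,d)\|_{C^{5+\alpha}}\le C\|\beta\|_{C^{5+\alpha}(\overline\Gamma_0)}$. That is the tool that makes the residual correctable exactly, not the inhomogeneous Cauchy--Riemann BVP used at the end of Proposition \ref{MMM1}. Second, applying Proposition \ref{zika} necessarily perturbs $a$ as well as $d$, so it \emph{does} spoil the exact jet conditions, contrary to what you assert; the paper's way out is to introduce the linear operator $R:\gamma\mapsto$ (jets of $a$, jets of $d$, $a(\widetilde x)$, $d(\widetilde x)$) with domain the admissible boundary data $\gamma\in C^\infty_0(\widetilde\Gamma)$, observe that its image is closed, and show that the desired target $(0,\dots,0,*,*)$ with nonzero last two entries is a limit of $R(\gamma_\epsilon)$. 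Closedness of the image is the missing ingredient in your argument: without it, producing a sequence of pairs $(a_n,d_n)$ whose jet/trace data converge to the target does not give you an exact pair in the end.

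To repair your proposal you would need at least two additions: (a) replace the single-function Cauchy-problem correction with the coupled boundary value problem of Proposition \ref{zika}, accepting that this perturbs the jets of $a$; and (b) supply a closed-range (or Fredholm-type) argument for the finite-rank evaluation map $R$, so that the limit of the perturbed jet data actually lies in the range. The paper does (a) explicitly and asserts (b) with ``Clearly it is closed''; both are needed.
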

\begin{proof}
Consider the operator
$$
R(\gamma)=(a(y_1),\dots
\frac{\partial^5 a}{\partial z^5}(y_1), \dots ,
a(y_{\widetilde k}),\dots,
\frac{\partial^5 a}{\partial z^5}(y_{\widetilde k}),\newline  d(y_1),
\dots \frac{\partial^5 d}{\partial\overline z^5}(y_1),
\dots, d(y_{\widetilde k}),\dots \frac{\partial^5 d}
{\partial\overline z^5}(y_{\widetilde k}), a(\widetilde x), d(\widetilde x)).
$$
Here $\gamma\in C^\infty_0(\widetilde \Gamma)$ and the functions
$a$ and $d$ are solutions to the following problem
$$
\frac{\partial a}{\partial \overline z}=0,\quad\mbox{in}\,\,\Omega,
\quad
\frac{\partial d}{\partial z} =0\quad\mbox{in}\,\,\Omega,
\quad (ae^{\mathcal A}+de^{\mathcal B})\vert
_{\partial\Omega}=\gamma.
$$
Consider the image of the operator $R$. Clearly it is closed.
Let us show that the point $(0,\dots, 0, 1,1)$ belongs to the
image of the operator $R.$
Let a holomorphic function $a$ satisfy
$$
\frac{\partial^\beta a}
{\partial {x_1}^{\beta_1} \partial {x_2}^{\beta_2}} (y_j)=0\quad
\forall \vert \beta\vert\in \{0,\dots, 5\},\quad j\in\{1,\dots,
\widetilde k\},\quad \vert a(\widetilde x)\vert>2.
$$
Consider the function $-e^{\mathcal A-\mathcal B}a(z)$ and
the pair $(b_1,b_2)=(\mbox{Re}\{-e^{\mathcal A-\mathcal B}a\},
\mbox{Im}\{e^{\mathcal A-\mathcal B}a\}).$  Using Proposition
\ref{MMM1} we solve problem
(\ref{(4.112)}) with $l=0$ approximately.
Let $(\phi_\epsilon,\psi_\epsilon)$
be a sequence of functions such that
$$
\frac{\partial}{\partial z}(\phi_\epsilon+i\psi_\epsilon)=0\quad
\mbox{in}\,\Omega, \quad (\phi_\epsilon,\psi_\epsilon)\vert_{\Gamma_0}
\rightarrow (b_1,b_2)\quad \mbox{in}\,\,
C^{5+\alpha}(\overline\Gamma_0), \quad (\phi_\epsilon+i\psi_\epsilon)
(\widetilde x)\rightarrow 1.
$$
Denote $d_\epsilon=\phi_\epsilon-i\psi_\epsilon,
\beta_\epsilon=ae^{\mathcal A}+d_\epsilon e^{\mathcal B}$.
Then the sequence $\{\beta_\epsilon\}$ converges to zero in the
space $C^{5+\alpha}(\Gamma_0).$

By Proposition \ref{zika} there exists a solution to problem
(\ref{xoxol}) with the initial data $\beta_\epsilon $, which
we denote as $\{\widetilde a_\epsilon, \widetilde d_\epsilon\}$ such that
the sequence $\{\widetilde a_\epsilon, \widetilde d_\epsilon\}$ converges
to zero in $(C^5(\overline\Omega))^2.$
Denote by $\gamma_\epsilon=(a+\widetilde a_\epsilon,d_\epsilon+\widetilde
d_\epsilon)\vert_{\Gamma_0}.$ Clearly $R(\gamma_\epsilon)$
converges to $(0,\dots, 0,1,1).$
The proof of the proposition is completed.
\end{proof}
\section{Appendix II. Asymptotic formulas}
%
Here we recall that we identify $x = (x_1,x_2) \in \R^2$
with $z = x_1 + ix_2 \in \C$.
\begin{proposition}\label{zinka}
Under the conditions of Theorem \ref{main}  for any point $x$  on the
boundary of $\Omega$ we have
\begin{eqnarray}\label{ikk}
-\frac{1}{\pi}\int_\Omega   \frac{e_1\widetilde g_1 e^{-\tau(\Phi(\zeta)
-\overline{\Phi(\zeta)})}}{\zeta- z}d\xi_1d\xi_2
= \frac{1}{\tau^2} \frac{e^{-2i\tau\psi({\widetilde x})}}
{\vert \mbox{det}\, \psi ''({\widetilde x})\vert^\frac 12}\left(\frac{\frac{\partial_{ z}
\widetilde g_1 ({\widetilde x})}{\partial^2_z\Phi({\widetilde x})} }
{(\widetilde z- z)^2}\right.\\
\left.
+\frac{\frac{\partial_{z}\widetilde g_1 ({\widetilde x})}
{{\partial^2_z\Phi({\widetilde x})}}\frac{\partial_z^3\Phi({\widetilde x})}
{\partial^2_z\Phi({\widetilde x})}
+ \frac{\partial^2_{\overline z}\widetilde g_1 ({\widetilde x})}
{\overline{\partial^2_z
\Phi({\widetilde x})}}-\frac{\partial^2_{ z}\widetilde g_1 ({\widetilde x})}
{{\partial^2_z\Phi({\widetilde x})}}}{2(\widetilde z- z)}
\right )+o(\frac{1}{\tau^2}) \quad \mbox{as}
\,\,\vert\tau\vert\rightarrow +\infty.\nonumber
\end{eqnarray}

\begin{eqnarray}\label{KKK}
-\frac{1}{\pi}\int_\Omega   \frac{e_1\widetilde g_2 e^{\tau(\Phi(\zeta)
-\overline{\Phi(\zeta)})}}{\overline\zeta- \overline z}d\xi_1d\xi_2=
\frac{1}{\tau^2} \frac{e^{2i\tau\psi({\widetilde x})}}
{\vert \mbox{det}\, \psi ''({\widetilde x})\vert^\frac 12}
\left (\frac{\frac{\partial_{\overline  z}
\widetilde g_2 ({\widetilde x})}{\overline{\partial^2_z\Phi({\widetilde x})}}}
{(\overline{\widetilde z}-\overline z)^2}\right. \\
\left.
+\frac{
\frac{\partial_{\overline z}\widetilde g_2 ({\widetilde x})}
{\overline{\partial^2_z\Phi({\widetilde x})}}\frac{\partial^3_{\overline z}
\overline\Phi({\widetilde x})}{\partial^2_{\overline z}
\overline\Phi({\widetilde x})}
- \frac{\partial^2_{\overline z}\widetilde g_2 ({\widetilde x})}
{\overline{\partial^2_z\Phi({\widetilde x})}}
+\frac{\partial^2_{ z}\widetilde g_2 ({\widetilde x})}
{\partial^2_z\Phi({\widetilde x})}}
{2(\overline {\widetilde z}-\overline z)} \right)
+ o(\frac{1}{\tau^2})
\quad\qquad\qquad \mbox{as}\,\,\vert\tau\vert\rightarrow
+\infty.\nonumber
\end{eqnarray}

\begin{eqnarray}\label{iks}
-\frac{1}{\pi}\int_\Omega
\frac{e_1\widetilde g_3 e^{-\tau(\Phi(\zeta)
-\overline{\Phi(\zeta)})}}{\overline\zeta- \overline z}d\xi_1d\xi_2=
\frac{1}{\tau^2}\frac{e^{-2i\tau\psi({\widetilde x})}}
{\vert \mbox{det}\, \psi ''({\widetilde x})\vert^\frac 12}
\left (\frac{\frac{\partial_{\overline  z}
\widetilde g_3 ({\widetilde x})}
{\overline{\partial^2_z\Phi({\widetilde x})}}}
{(\overline{\widetilde z}-\overline z)^2}\right.\\
\left.
+ \frac{
\frac{\partial_{\overline z}\widetilde g_3 ({\widetilde x})}
{\overline{\partial^2_{\overline z}\overline \Phi({\widetilde x})}}
\frac{\partial_{\overline z}^3\overline{\Phi}({\widetilde x})}
{\partial^2_{\overline z}\overline{\Phi}({\widetilde x})}
- \frac{\partial^2_{\overline z}\widetilde g_3 ({\widetilde x})}
{\overline{\partial^2_z\Phi({\widetilde x})}}
+ \frac{\partial^2_{ z}\widetilde g_3 ({\widetilde x})}
{\partial^2_z\Phi({\widetilde x})}}
{2(\overline{\widetilde z}- \overline z)}\right)
+ o(\frac{1}{\tau^2})
\qquad\qquad\quad \mbox{as}\,\,\vert\tau\vert\rightarrow
+\infty.\nonumber
\end{eqnarray}

\begin{eqnarray}\label{iks-1}
-\frac{1}{\pi}\int_\Omega   \frac{e_1\widetilde g_4
e^{\tau(\Phi(\zeta)-\overline{\Phi(\zeta)})}}{\zeta- z}d\xi_1d\xi_2
= \frac{1}{\tau^2} \frac{e^{2i\tau\psi({\widetilde x})}}
{\vert \mbox{det}\, \psi ''({\widetilde x})\vert^\frac 12}
\left(\frac{
\frac{\partial_{ z}\widetilde g_4({\widetilde x})}
{{\partial^2_z\Phi({\widetilde x})}}}
{(\widetilde z-z)^2} \right.
\nonumber\\
\left.
+ \frac{
\frac{\partial_{ z}\widetilde g_4 ({\widetilde x})}
{{\partial^2_z\Phi({\widetilde x})}}\frac{\partial^3_z\Phi({\widetilde x})}
{\partial_z^2\Phi({\widetilde x})}
+ \frac{\partial^2_{\overline z}\widetilde g_4 ({\widetilde x})}
{\overline{\partial^2_z\Phi({\widetilde x})}}
-\frac{\partial^2_{ z}\widetilde g_4 ({\widetilde x})}
{\partial^2_z\Phi({\widetilde x})}}
{2(\widetilde z-z)} \right)
+ o(\frac{1}{\tau^2})
\qquad\qquad\quad \mbox{as}\,\,\vert\tau\vert\rightarrow +\infty.
\end{eqnarray}
\end{proposition}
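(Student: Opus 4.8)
The plan is to prove all four asymptotic formulas \eqref{ikk}--\eqref{iks-1} by a single unified argument, since they differ only by complex conjugation, the sign of $\tau$, and the roles of $z$ versus $\overline z$; it thus suffices to treat, say, \eqref{ikk} carefully and then indicate the analogous changes. First I would localize: write $e_1\widetilde g_1 = \sum_{k=1}^{\ell}\widetilde e_k\, e_1\widetilde g_1 + (\text{remainder supported away from }\mathcal H)$, where $\widetilde e_k$ is a cutoff supported near the critical point $\widetilde x_k$ and identically $1$ near $\widetilde x_k$. By Proposition \ref{opa} applied to the kernel $\frac{e_1\widetilde g_1 e^{-\tau(\Phi-\overline\Phi)}}{\zeta - z}$ (note $x\in\partial\Omega$ is fixed, so $\tfrac{1}{\zeta-z}$ is bounded on $\Omega$, hence the product is in $L^1$), the contribution of the part of $e_1\widetilde g_1$ supported away from the critical set is $o(1/\tau^2)$ — actually one needs the sharper $o(1/\tau^2)$ decay, which follows from two successive integrations by parts exactly as in the proof of Proposition \ref{Proposition 3.223}, using that $\widetilde g_1$ and its relevant derivatives vanish on $\mathcal H$ by the construction \eqref{TTT}, \eqref{TTT0}. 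Since by hypothesis $\overline{\mathcal O}_\epsilon\cap(\mathcal H\setminus\Gamma_0)=\emptyset$ and $g_1$ vanishes near $\partial\Omega$, only the interior critical point $\widetilde x$ contributes at order $1/\tau^2$, which is why only the single term with $e^{-2i\tau\psi(\widetilde x)}$ survives.

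The heart of the matter is then the local stationary phase analysis near $\widetilde x$. Here I would change to the local holomorphic coordinate $w = w(z)$ in which $\Phi(z) - \Phi(\widetilde x) = w^2$ (possible since $\widetilde x$ is a nondegenerate critical point, $\partial_z^2\Phi(\widetilde x)\ne 0$), so that $\Phi - \overline\Phi - 2i\psi(\widetilde x) = w^2 - \overline w^2$, and the phase $e^{-\tau(\Phi-\overline\Phi)}$ becomes $e^{-2i\tau\psi(\widetilde x)}e^{-\tau(w^2-\overline w^2)}$. The remaining integral $\int \frac{(\text{smooth})}{\zeta(w) - z}\, e^{-\tau(w^2-\overline w^2)}\, dw$ is evaluated by Taylor-expanding the smooth amplitude $\frac{\widetilde e_1\widetilde g_1 e^{-\mathcal A_1}(\dots)}{\zeta(w)-z}\cdot\frac{dz}{dw}$ to second order in $(w,\overline w)$ about $w=0$; the value, first $w$-derivative, $\overline w$-derivative, and second derivatives are precisely what produce the coefficients $\tfrac{\partial_z\widetilde g_1(\widetilde x)}{\partial_z^2\Phi(\widetilde x)}$, the Jacobian factor $\vert\det\psi''(\widetilde x)\vert^{-1/2}$, and the correction terms involving $\partial_z^3\Phi$ that appear in \eqref{v1}--\eqref{v4}. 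The standard Gaussian-type moments $\int w^j\overline w^k e^{-\tau(w^2-\overline w^2)}\,dw$ scale like $\tau^{-1-(j+k)/2}$ — and by a parity/integration-by-parts argument (the same trick used at the end of the proof of Proposition \ref{Proposition 3.223}, reducing extra factors of $z,\overline z$ and applying Proposition \ref{Proposition 3.22}) the genuinely odd moments contribute only at $o(1/\tau^2)$, leaving the $1/(\widetilde z - z)^2$ pole from the second-order $w\overline w$-independent piece and the $1/(\widetilde z - z)$ pole from the first-order terms.

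I would then organize the bookkeeping so that the $(\widetilde z - z)^{-2}$ term comes from differentiating $\frac{1}{\zeta(w)-z}$ and the amplitude once each at $w = 0$ (giving $\sigma_1(\widetilde x) = \partial_z\widetilde g_1(\widetilde x)/\partial_z^2\Phi(\widetilde x)$), while the $(\widetilde z - z)^{-1}$ term collects the three contributions in the formula for $m_1$: the $\partial_z^3\Phi$ term from the third-order correction to the phase change of variables, the $\partial_{\overline z}^2\widetilde g_1/\overline{\partial_z^2\Phi}$ term from the $\overline w^2$ moment, and the $-\partial_z^2\widetilde g_1/\partial_z^2\Phi$ term from the $w^2$ moment combined with the expansion of $\zeta(w)$. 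The analogous formulas \eqref{KKK}, \eqref{iks}, \eqref{iks-1} follow by: replacing $\tau$ by $-\tau$ (swapping $e^{-2i\tau\psi}\leftrightarrow e^{2i\tau\psi}$), replacing the Cauchy kernel $\tfrac{1}{\zeta-z}$ by $\tfrac{1}{\overline\zeta - \overline z}$ (which interchanges holomorphic and antiholomorphic roles, i.e.\ $\partial_z\leftrightarrow\partial_{\overline z}$ and $\Phi\leftrightarrow\overline\Phi$ in the coefficients), and applying complex conjugation where the relevant function ($\widetilde g_3$, $\widetilde g_4$) is defined via the adjoint operator; in each case the nondegeneracy hypothesis \eqref{mika} and the vanishing conditions \eqref{TTT}, \eqref{Vova} on $g_j$ at $\mathcal H$ are what permit the two integrations by parts with $L^p$-integrable (rather than merely distributional) remainders. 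The main obstacle I anticipate is not any single estimate but the careful tracking of the constants through the change of variables $z\mapsto w$: one must expand $\zeta(w)$, $dz/dw$, and the amplitude simultaneously to the order needed to recover every term in \eqref{v1}--\eqref{v4}, and verify that all the half-integer-order ($\tau^{-3/2}$) contributions genuinely cancel or are absorbed into $o(1/\tau^2)$ via the Proposition \ref{Proposition 3.22} reduction — this is the delicate, error-prone step, and it is exactly the kind of asymptotic bookkeeping the introduction warns "considerably complicate the arguments."
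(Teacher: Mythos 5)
Your overall plan — localize to the critical point $\widetilde x$, treat \eqref{ikk} as the model case, and then obtain \eqref{KKK}, \eqref{iks}, \eqref{iks-1} by the substitutions $\tau\mapsto-\tau$ and $\widetilde g_1\mapsto \overline{g}_3, g_4, \overline g_2$ — is exactly the organizing framework the paper uses. Where you genuinely diverge is in the computation at the critical point. You propose the holomorphic Morse normal form: change to a local coordinate $w$ with $\Phi(z)-\Phi(\widetilde x)=w^2$, Taylor-expand the amplitude in $(w,\overline w)$, and read off Gaussian moments $\int w^j\overline w^k e^{-\tau(w^2-\overline w^2)}\,dudv$. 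The paper instead stays in the original coordinate: it Taylor-expands $\widetilde g$ to second order (using $\widetilde g(\widetilde x)=\partial_{\overline z}\widetilde g(\widetilde x)=0$), then systematically trades each factor of $(\zeta-\widetilde z)$ or $(\overline\zeta-\overline{\widetilde z})$ for $\partial_\zeta\Phi/\partial^2_z\Phi(\widetilde x)$ or $\partial_{\overline\zeta}\overline\Phi/\overline{\partial^2_z\Phi(\widetilde x)}$ up to quadratic error. The payoff is that $\partial_\zeta\Phi\cdot e^{-\tau(\Phi-\overline\Phi)}=-\frac{1}{\tau}\partial_\zeta e^{-\tau(\Phi-\overline\Phi)}$, so a single integration by parts pulls out the needed $1/\tau$ explicitly, leaving an integral whose $1/\tau$ stationary-phase asymptotics produce the final $1/\tau^2$ coefficient; the third-derivative correction $\partial_z^3\Phi(\widetilde x)$ emerges automatically from the quadratic error in the trade and not from a Jacobian expansion. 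This is a cleaner account than the Morse-coordinate route: your method is in principle sound (the odd moments vanish by the parity $w\mapsto-w$, so the $\tau^{-3/2}$ scare you flag is benign), but you must track the simultaneous expansions of $\zeta(w)$, $dz/dw$, and $1/(\zeta(w)-z)$ to order $w^2$, precisely the bookkeeping you warn is error-prone. The paper's integration-by-parts route avoids the change of variables entirely and makes each of the three contributions to $m_1$ visible at the point where it arises, so if you are going to carry this out in full, I would recommend switching to that device.
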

\begin{proof}
Let $\delta>0$ be a sufficiently small number and $\widetilde e\in
C_0^\infty(B(\widetilde x,\delta)), \widetilde e\vert
_{B(\widetilde x,\delta/2)}\equiv 1$. Let
$\widetilde g\in C^2(\overline \Omega)$
be some function such that $\widetilde g(\widetilde x)
=\frac{\partial \widetilde g}
{\partial \overline z}(\widetilde x)=0.$
We compute the asymptotic formulae of the following integral as
$\vert\tau\vert$ goes to infinity.
\begin{eqnarray}
&-&\frac{1}{\pi}\int_\Omega   \frac{e_1\widetilde g
e^{-\tau(\Phi(\zeta)-\overline{\Phi(\zeta)})}}{\zeta- z}d\xi_1d\xi_2=
-\frac{1}{\pi} \int_{B({\widetilde x},\delta)}
\frac{\widetilde e \widetilde g
e^{-\tau(\Phi(\zeta)-\overline{\Phi(\zeta)})}}{\zeta- z}
d\xi_1d\xi_2 + o(\frac {1}{\tau^2})=\nonumber\\
&-&\frac{1}{\pi} \int_{B({\widetilde x},\delta)}
\widetilde e\left\{ \frac{ \partial_{z}\widetilde g ({\widetilde x})
(\zeta-{\widetilde z}) + \frac 12 \partial^2_{\overline z}\widetilde g
({\widetilde x})(\overline \zeta-\overline{\widetilde z})^2}{\zeta- z}\right.
\nonumber\\
&+&\left.\frac{\partial_{\overline z}\partial_{z}\widetilde g ({\widetilde x})
(\zeta-{\widetilde z}) (\overline \zeta-\overline{\widetilde z})
+ \frac 12\partial^2_{ z}\widetilde g ({\widetilde x}) (\zeta-{\widetilde z})^2}
{\zeta- z}\right\}e^{-\tau(\Phi(\zeta)-\overline{\Phi(\zeta)})}
d\xi_1d\xi_2 + o(\frac{1}{\tau^2})=\nonumber
\end{eqnarray}
\begin{eqnarray}
&-&\frac{1}{\pi}\int_{B({\widetilde x},\delta)}
\widetilde e\left\{ \frac{ \frac{\partial_{z}\widetilde g ({\widetilde x})}
{{\partial^2_z\Phi({\widetilde x})}}(\partial_{ \zeta}\Phi
- \frac 12\partial^3_{ z}\Phi({\widetilde x})( \zeta-{\widetilde z})^2)
+ \frac 12\frac{\partial^2_{\overline z}\widetilde g ({\widetilde x})}
{\overline{\partial^2_z\Phi({\widetilde x})}}
\partial_{\overline \zeta}\overline\Phi(\overline \zeta-\overline{\widetilde z})}
{\zeta- z}\right.\nonumber\\
&+&\left.\frac{\frac{\partial_{\overline z}\partial_{z}\widetilde g
({\widetilde x})}
{\overline{\partial^2_z\Phi({\widetilde x})}}(\zeta-{\widetilde z})
\partial_{\overline \zeta}\overline\Phi
+ \frac 12\frac{\partial^2_{ z}\widetilde g ({\widetilde x})}
{\partial^2_z\Phi({\widetilde x})}\partial_\zeta\Phi (\zeta-{\widetilde z}) }
{\zeta- z}\right\}e^{-\tau(\Phi(\zeta)-\overline{\Phi(\zeta)})}
d\xi_1d\xi_2 + o(\frac{1}{\tau^2})=\nonumber\\
&-&\frac{1}{\pi} \int_{B({\widetilde x},\delta)}\widetilde e
\left\{ \frac{ \frac{\partial_{z}\widetilde g ({\widetilde x})}
{{\partial^2_z\Phi({\widetilde x})}}(\partial_{ \zeta}\Phi-\frac 12
\frac{\partial^3_z\Phi({\widetilde x})}{\partial^2_z\Phi({\widetilde x})}
\partial_{ \zeta}\Phi( \zeta-{\widetilde z}))+\frac 12\frac{\partial^2
_{\overline z}
\widetilde g ({\widetilde x})}{\overline{\partial^2_z\Phi({\widetilde x})}}
\partial_{\overline \zeta}\overline\Phi(\overline \zeta-\overline{\widetilde z})}
{\zeta- z}\right.\nonumber\\
&+& \left.\frac{\frac{\partial_{\overline z}\partial_{z}\widetilde g
({\widetilde x})}{\overline{\partial^2_z\Phi({\widetilde x})}}
(\zeta-{\widetilde z})
\partial_{\overline \zeta}\overline\Phi
+ \frac 12\frac{\partial^2_{ z}\widetilde g ({\widetilde x})}
{\partial^2_z\Phi({\widetilde x})}\partial_\zeta\Phi
(\zeta-{\widetilde z}) }{\zeta- z}\right\}e^{-\tau(\Phi(\zeta)
-\overline{\Phi(\zeta)})} d\xi_1d\xi_2
+ o(\frac{1}{\tau^2})=\nonumber\\
&-&\frac{1}{\pi\tau} \int_{B({\widetilde x},\delta)}
\widetilde e\left\{\frac{
-\frac{\partial_{z}\widetilde g ({\widetilde x})}
{{\partial^2_z\Phi({\widetilde x})}}\frac{\partial_z^3\Phi({\widetilde x})}
{\partial^2_z\Phi({\widetilde x})}
- \frac{\partial^2_{\overline z}\widetilde g ({\widetilde x})}
{\overline{\partial^2_z\Phi({\widetilde x})}}}{2(\zeta- z)}
- \frac{\frac{\partial_{ z}\widetilde g ({\widetilde x})}
{{\partial^2_z\Phi({\widetilde x})}}(1-\frac 12\frac{\partial^3_z
\Phi({\widetilde x})}{\partial^2_z\Phi({\widetilde x})}(\zeta-{\widetilde z}))}
{(\zeta-z)^2}
\right.\nonumber\\
&+&\left.\frac{\frac{\partial^2_{ z}\widetilde g ({\widetilde x})}
{{\partial^2_z\Phi({\widetilde x})}} }{2(\zeta- z)}
- \frac{\frac{\partial^2_{ z}\widetilde g
({\widetilde x})}{\partial^2_z\Phi({\widetilde x})}
(\zeta-{\widetilde z})}{2(\zeta- z)^2}\right\}
e^{-\tau(\Phi(\zeta)-\overline{\Phi(\zeta)})}d\xi_1d\xi_2
+ o(\frac{1}{\tau^2}).\nonumber\\
\end{eqnarray}
Here we used
$$
\int_{B({\widetilde x},\delta)}\widetilde e \frac{ (\zeta-{\widetilde z})}
{(\zeta- z)^2}e^{-\tau(\Phi(\zeta)-\overline{\Phi(\zeta)})}
d\xi_1d\xi_2
= o(\frac{1}{\tau})\quad \mbox{as}\,\,\vert\tau\vert\rightarrow
+\infty,
$$
which is obtained by  stationary phase. Another asymptotic
calculation is

\begin{eqnarray}
-\frac{1}{\pi}\int_\Omega
\frac{e_1\widetilde g e^{\tau(\Phi(\zeta)-\overline{\Phi(\zeta)})}}
{\zeta- z}d\xi_1d\xi_2
= \frac{1}{\pi\tau}
\int_{B({\widetilde x},\delta)}\widetilde e
\frac{\frac{\partial_{z}\widetilde g ({\widetilde x})}
{\partial^2_z\Phi({\widetilde x})}}
{(\zeta- z)^2}e^{-\tau(\Phi(\zeta)-\overline{\Phi(\zeta)})}dx
           \nonumber\\
+\frac{1}{2\pi\tau} \int_{B({\widetilde x},\delta)}
\widetilde e\frac{\frac{\partial_{z}\widetilde g ({\widetilde x})}
{{\partial^2_z\Phi({\widetilde x})}}\frac{\partial_z^3\Phi({\widetilde x})}
{\partial^2_z\Phi({\widetilde x})}
+ \frac{\partial^2_{\overline z}\widetilde g ({\widetilde x})}
{\overline{\partial^2_z \Phi({\widetilde x})}}
- \frac{\partial^2_{ z}\widetilde g ({\widetilde x})}
{{\partial^2_z\Phi({\widetilde x})}}}{(\zeta- z)}
e^{-\tau(\Phi(\zeta)-\overline{\Phi(\zeta)})}dx+ o(\frac{1}{\tau^2})
=\nonumber\\
\frac{1}{\tau^2}\frac{e^{-2i\tau\psi({\widetilde x})}}
{\vert \mbox{det}\, \psi''({\widetilde x})\vert^\frac 12}
\left(\frac{\frac{\partial_{z}
\widetilde g ({\widetilde x})}{\partial^2_z\Phi({\widetilde x})} }
{(\widetilde z- z)^2}
+\nonumber
\frac{\frac{\partial_{z}\widetilde g ({\widetilde x})}{{\partial^2_z
\Phi({\widetilde x})}}\frac{\partial_z^3\Phi({\widetilde x})}
{\partial^2_z\Phi({\widetilde x})}
+
\frac{\partial^2_{\overline z}\widetilde g ({\widetilde x})}
{\overline{\partial^2_z
\Phi({\widetilde x})}}-\frac{\partial^2_{ z}\widetilde g ({\widetilde x})}
{{\partial^2_z\Phi({\widetilde x})}}}{2(\widetilde z- z)}\right)\\
+ o(\frac{1}{\tau^2})
 \quad \mbox{as}
\,\,\vert\tau\vert\rightarrow +\infty.
\end{eqnarray}
Taking $\widetilde g=g_1$ and $\widetilde g=\overline g_3$ we obtain
(\ref{ikk}) and (\ref{iks}) from the above formula.
Taking $\widetilde g=g_4 $ or $\widetilde g=\overline g_2$ and replacing  $\tau$
by $-\tau$ we obtain (\ref{iks-1}) and (\ref{KKK}).

\end{proof}
\begin{proposition}\label{nuvo}
For any $x$ from the boundary of $\Omega$,
the following asymptotic formulae holds true as $\vert \tau\vert$
goes to $+\infty:$
\begin{equation}\label{vova}
\frak G_1(x,\tau)=-
\frac{e^{-2i\tau\psi({\widetilde x})}}{2\tau\vert \mbox{det}\, \psi''({\widetilde x})\vert^\frac 12}\left(\frac{\frac{\partial
(g_1e^{-\mathcal A_1})}{\partial z}({\widetilde x})+\frac{\partial\Phi}
{\partial z} m_{1}(\widetilde x)}{\widetilde z-z}+\frac{\sigma_{1}(\widetilde x)\frac{\partial\Phi}
{\partial z} }{(\widetilde z-z)^2}\right )
+ o(\frac{1}{\tau}),
\end{equation}
\begin{equation}\label{vova1}
\frak G_2(x,\tau)=-\frac{e^{2i\tau\psi({\widetilde x})}}
{2\tau\vert \mbox{det}\, \psi''({\widetilde x})
\vert^\frac 12}\left (\frac{\frac{\partial
(g_2e^{-\mathcal B_1})}{\partial\overline z}({\widetilde x})
+\frac{\partial\overline\Phi}{\partial \overline z}
\widetilde m_{1}(\widetilde x)}{\overline{\widetilde z}-\overline z}
+\frac{\widetilde \sigma_{1}(\widetilde x)\frac{\partial\overline\Phi}
{\partial\overline z} }{(\overline{\widetilde z}-\overline z)^2}\right )
+o(\frac 1\tau),
\end{equation}
\begin{equation}\label{vova2}
\frak G_3(x,\tau)=-
\frac{e^{-2i\tau\psi({\widetilde x})}}{2\tau\vert \mbox{det}\,
\psi''({\widetilde x})\vert^\frac 12}\left(
\frac{\frac{\partial(g_3e^{-\mathcal A_2})}{\partial \overline z}
({\widetilde x})-\frac{\partial\overline\Phi}{\partial \overline z} t_{1}(\widetilde x)}
{\overline{\widetilde z}-\overline z}-\frac{r_{1}(\widetilde x)
\frac{\partial\overline\Phi}{\partial \overline z} }
{(\overline{\widetilde z}-\overline z)^2}\right)+o(\frac 1\tau),
\end{equation}
\begin{equation}\label{vova3}
\frak G_4( x,\tau)=-\frac
{e^{2i\tau\psi({\widetilde x})}}{2\tau\vert \mbox{det}\, \psi''({\widetilde x})\vert^\frac 12}\left (\frac{
\frac{\partial(g_4e^{-\mathcal B_2})}{\partial z}({\widetilde x})
-\frac{\partial\Phi}{\partial  z} \widetilde t_{1}(\widetilde x)}{\widetilde z-z}
-\frac{\widetilde r_{1}(\widetilde x)\frac{\partial\Phi}{\partial  z} }
{({\widetilde z}- z)^2}\right)
+ o(\frac 1\tau).
\end{equation}
Here $\widetilde z=\widetilde x_{1}+i\widetilde x_{2}$ and $m_1,\widetilde m_1,
\sigma_1,\widetilde \sigma_1, t_1,\widetilde t_1, r_1,\widetilde r_1$ are
introduced in (\ref{v1}),(\ref{v2}), (\ref{v3}) and (\ref{v4}).
Moreover for  a sufficiently small positive $\epsilon$ the
following asymptotic formula holds true
\begin{equation}\label{luna}
\Vert\frac{\partial\frak G_1(\cdot,\tau)}{\partial\overline z}\Vert
_{C(\overline{\mathcal O_\epsilon})}+\Vert\frac{\partial \frak G_2(\cdot,\tau)}{\partial z}\Vert
_{C(\overline{\mathcal O_\epsilon})}
+ \Vert\frac{\partial \frak G_3(\cdot,\tau)}{\partial z}\Vert
_{C(\overline{\mathcal O_\epsilon})} + \Vert
\frac{\partial\frak G_4(\cdot,\tau)}{\partial \overline z}
\Vert_{C(\overline{\mathcal O_\epsilon})}
= o(\frac 1\tau)\quad\mbox{as}\,\,\vert\tau\vert\rightarrow +\infty.
\end{equation}
\end{proposition}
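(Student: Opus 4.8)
The plan is to compute the asymptotics of $\frak G_j(x,\tau)$ for $x\in\partial\Omega$ by the same stationary-phase machinery already deployed in the proof of Proposition \ref{zinka}, and then, separately, to estimate the derivatives $\partial_{\overline z}\frak G_1$, $\partial_z\frak G_2$, etc., on the collar neighborhood $\mathcal O_\epsilon$ using the $L^p$–mapping properties of $\partial_z^{-1},\partial_{\overline z}^{-1}$ from Proposition \ref{Proposition 3.0}. I will treat $\frak G_1$ in detail; the other three are handled by the symmetric substitutions $\tau\mapsto-\tau$, $z\mapsto\overline z$, and the interchange of the roles of $\mathcal A_j,\mathcal B_j$, exactly as in Proposition \ref{zinka}.

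First I would recall the definition
$$
\frak G(x,g,\mathcal A,\tau)=-\frac{1}{2\pi}\int_\Omega
\frac{(\tau\partial_\zeta\Phi(\zeta)+\partial_\zeta\mathcal A(\zeta,\overline\zeta))
-(\tau\partial_z\Phi(z)+\partial_z\mathcal A(z,\overline z))}{\zeta-z}\,
e_1\,g\,e^{-\mathcal A}\,e^{\tau(\overline\Phi-\Phi)}\,d\xi_1d\xi_2,
$$
and observe that, since $e_1$ vanishes near $\partial\Omega$ and near all critical points except $\widetilde x$, the only contribution as $|\tau|\to+\infty$ comes from a small ball $B(\widetilde x,\delta)$. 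The leading $O(\tau^0)$ piece carries the factor $\tau\partial_\zeta\Phi(\zeta)$, but because the phase is $\tau(\overline\Phi-\Phi)$ one integration by parts in $\zeta$ against $\partial_\zeta e^{\tau(\overline\Phi-\Phi)}/(\tau\partial_\zeta\Phi)$ converts that factor of $\tau$ into an $O(1)$ integral; this is precisely the step used in \eqref{yap}–\eqref{001'}. After this reduction the integral is of the type treated in Proposition \ref{zinka}, with integrand $\widetilde g_1=e^{-\mathcal A_1}g_1$ (which vanishes to first order at $\widetilde x$ by \eqref{TTT}, \eqref{nip}), divided by $\zeta-z$ with $z\in\partial\Omega$ fixed, so $(\zeta-z)^{-1}$ and $(\zeta-z)^{-2}$ are smooth on $B(\widetilde x,\delta)$. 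Expanding $\widetilde g_1$, $\Phi$, and $\mathcal A_1$ in Taylor polynomials around $\widetilde x$, substituting $\Phi-\overline\Phi-2i\psi(\widetilde x)=z^2-\overline z^2$ in suitable local coordinates, and applying stationary phase with the Hessian factor $|\mathrm{det}\,\psi''(\widetilde x)|^{1/2}$ yields \eqref{vova}; the coefficients $\sigma_1(\widetilde x)$ and $m_1(\widetilde x)$ come out exactly as in \eqref{v1}, because the contributions from $\partial_\zeta\mathcal A_1$ and from the quadratic terms of $\Phi$ reassemble into the same combination that already appeared in \eqref{AK1}. I anticipate that bookkeeping the $1/(\widetilde z-z)$ versus $1/(\widetilde z-z)^2$ terms — i.e.\ keeping track of which Taylor coefficients land on which power — is the only delicate part, and it is entirely parallel to the computation at the end of Proposition \ref{zinka}.

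For the derivative estimates \eqref{luna}, the point is that $\frak G_1(\cdot,\tau)$ is, up to the smooth prefactor $e^{\mathcal A_1}e^{-\tau(\overline\Phi-\Phi)}$, essentially $\partial_{\overline z}^{-1}$ applied to $e_1\,(\hbox{something})\,e^{\tau(\Phi-\overline\Phi)}$, so $\partial_{\overline z}\frak G_1$ is, modulo lower-order terms coming from differentiating the prefactor, the Cauchy transform of a function supported away from $\partial\Omega$. One integration by parts in the oscillatory integral (using that the phase has no critical point on $\supp e_1$ except $\widetilde x$, which lies in the interior, away from $\mathcal O_\epsilon$) gains a factor $1/\tau$, and then Proposition \ref{Proposition 3.0}B)–C) together with the Sobolev embedding $W^1_p\hookrightarrow C(\overline{\mathcal O_\epsilon})$ for $p>2$ gives uniform convergence to zero on $\overline{\mathcal O_\epsilon}$; this is the same argument as in \eqref{omon}–\eqref{omon1} and \eqref{AKM}. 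The main obstacle, such as it is, will be organizing the several error terms — those from Taylor-remainders of $\widetilde g_1$, from the commutators $[\partial_\zeta,\,\cdot\,]$ acting on the prefactor, and from the truncation $e_1$ near $\widetilde x$ — so that each is genuinely $o(1/\tau)$ in the relevant norm; but all of these are controlled exactly as in the proofs of Propositions \ref{Proposition 3.22}, \ref{Proposition 3.2234}, and \ref{zinka}, so no new ideas are required.
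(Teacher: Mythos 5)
Your treatment of (\ref{vova})--(\ref{vova3}) follows the paper's route: split the numerator of the difference quotient in the definition of $\frak G_1$ into the $\tau\partial_\zeta\Phi(\zeta)$ term, the $\partial_\zeta\mathcal A_1$ difference, and the subtracted constant $\tau\partial_z\Phi(z)+\partial_z\mathcal A_1(z)$; integrate by parts in $\zeta$ on the first to trade the factor of $\tau$ for a $\partial_\zeta$, then apply stationary phase (using $g_1 e^{-\mathcal A_1}(\widetilde x)=0$); and pull the constant $\tau\partial_z\Phi(z)$ out of the third and invoke formula (\ref{ikk}) of Proposition \ref{zinka} directly. The last step is the cleanest way to see the $\sigma_1,m_1$ coefficients; re-deriving them by Taylor expansion as you suggest would work but is redundant.

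Your treatment of (\ref{luna}) rests on a misreading. The quantity $\frak G_1$ is defined directly as an integral over $\Omega$; it does not carry a prefactor $e^{\mathcal A_1}e^{-\tau(\overline\Phi-\Phi)}$. That combination occurs only in (\ref{001'}), where $e^{\mathcal A_1}e^{-\tau(\overline\Phi-\Phi)}\frak G_1$ appears as a unit. Thus there is no ``prefactor'' to differentiate; and if there were, its $\overline z$-derivative would produce a factor $\tau\,\partial_{\overline z}\overline\Phi$ of order $\tau$, not a lower-order term. The observation the paper actually uses is an exact simplification: for $z\in\overline{\mathcal O_\epsilon}$ (so $z\notin\supp e_1$), $\partial_{\overline z}(\zeta-z)^{-1}$ contributes nothing, the terms $\tau\partial_\zeta\Phi(\zeta)$ and $\tau\partial_z\Phi(z)$ drop out because $\Phi$ is holomorphic, and only the $\overline z$-dependence of $\partial_z\mathcal A_1(z)$ survives, which gives
\[
\frac{\partial\frak G_1}{\partial\overline z}(x,\tau)=-\frac{1}{4\pi}\,\frac{\partial A_1}{\partial z}(z)\int_\Omega\frac{(e_1 g_1 e^{-\mathcal A_1})(\xi_1,\xi_2)\,e^{\tau(\overline{\Phi(\zeta)}-\Phi(\zeta))}}{\zeta-z}\,d\xi_1\,d\xi_2,
\]
with no factor of $\tau$ remaining. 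Since $e_1 g_1 e^{-\mathcal A_1}$ vanishes on $\mathcal H$ by (\ref{TTT}) and the kernel is smooth and uniformly bounded on $\supp e_1\times\overline{\mathcal O_\epsilon}$, Proposition \ref{Proposition 3.223} gives $O(1/\tau^2)$ uniformly in $z$, hence $o(1/\tau)$. Your IBP-plus-Proposition \ref{Proposition 3.0}-plus-Sobolev route, as stated, only yields $O(1/\tau)$; you would need an additional Riemann--Lebesgue step or a second integration by parts to upgrade it to $o(1/\tau)$, which the paper sidesteps entirely by the exact identity above.
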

\begin{proof}
Observe that the functions $\frak G_1(x,\tau),\dots,
\frak G_4(x,\tau)$ are given by
$$
\frak G_1(x,\tau)=-\frac{1}{2\pi}\int_\Omega
\frac{\frac{\partial\mathcal A_1(\zeta,\overline\zeta)}{\partial
\zeta}+\tau \frac{\partial\Phi}{\partial
\zeta}(\zeta)-(\frac{\partial\mathcal A_1(z,\overline z)}{\partial z}
+\tau \frac{\partial\Phi}{\partial z}(z))}{\zeta-z}
(e_1g_1e^{-\mathcal A_1})(\xi_1,\xi_2)
e^{\tau(\overline{\Phi(\zeta)}-\Phi(\zeta))}d\xi_1d\xi_2,
$$
$$
\frak G_2(x,\tau)=-\frac{1}{2\pi}
\int_\Omega
\frac{\frac{\partial\mathcal B_1(\zeta,\overline\zeta)}
{\partial \overline
\zeta}
+ \tau\overline{\frac{\partial\Phi}{\partial\zeta}(\zeta)}
- (\frac{\partial\mathcal B_1
(z,\overline z)}{\partial\overline  z}
+\tau\frac{\partial\overline\Phi}{\partial\overline  z}(\overline
z))}{\overline \zeta-\overline z} (e_1g_2e^{-\mathcal
B_1})(\xi_1,\xi_2)
e^{\tau(\Phi(\zeta)-\overline{\Phi(\zeta)})}d\xi_1d\xi_2,
$$
$$
\frak G_3(x,\tau)=\frac{1}{2\pi}\int_{\Omega}
\frac{\tau\frac{\partial\overline\Phi(\overline \zeta)}{\partial\overline \zeta}
-\frac{\partial{\mathcal A_2}(\zeta,\overline \zeta)}{\partial\overline \zeta}
-(\tau\frac{\partial \overline\Phi(\overline z)}{\partial \overline z}
-\frac{\partial{\mathcal A_2}(z,\overline z)}{\partial\overline z})}
{\overline \zeta-\overline z}(e_1 g_3e^{-\mathcal A_2})(\xi_1,\xi_2)
e^{\tau(\overline{\Phi(\zeta)}-\Phi(\zeta))}d\xi_1d\xi_2,
$$
$$
\frak G_4( x,\tau)=\frac{1}{2\pi}\int_{\Omega}
\frac{\tau\frac{\partial\Phi(\zeta)}{\partial\zeta}
-\frac{\partial {\mathcal B_2}(\zeta,\overline\zeta)}{\partial \zeta}
-(\tau\frac{\partial\Phi(z)}{\partial z}-\frac{\partial{\mathcal B_2}
(z,\overline z)}{\partial z})}{\zeta- z}(e_1 g_4 e^{-\mathcal B_2})
(\xi_1,\xi_2)
 e^{\tau(\Phi(\zeta)-\overline{\Phi(\zeta)})}d\xi_1d\xi_2.
$$
Let $z=x_1+ix_2$ where $x=(x_1,x_2)\in \partial\Omega.$
By Proposition \ref{Proposition 3.223}, the following asymptotics
are valid:
\begin{eqnarray}
\frac{\tau}{2\pi}\int_\Omega\frac{\partial_\zeta\Phi(\zeta)}
{\zeta-z}(e_1g_1e^{-\mathcal A_1})e^{\tau(\overline
{\Phi(\zeta)}-\Phi(\zeta))}
d\xi_1d\xi_2
= -\frac{1}{2\pi}\int_\Omega\frac{e_1g_1e^{-\mathcal A_1}}
{\zeta-z}\frac{\partial}{\partial\zeta}
e^{\tau(\overline{\Phi(\zeta)}-\Phi(\zeta))}d\xi_1d\xi_2=
\nonumber\\
\frac{1}{2\pi}\int_\Omega \frac{\partial}{\partial\zeta}
\left(\frac{e_1g_1e^{-\mathcal A_1}}{\zeta-z}\right )
e^{\tau(\overline{\Phi(\zeta)}-\Phi(\zeta))}d\xi_1d\xi_2
=\frac{ 1}{2\tau}\frac{e^{-2i\tau\psi({\widetilde x})}}
{\vert \mbox{det}\, \psi''({\widetilde x})\vert^\frac 12}
\frac{\partial_z (g_1e^{-\mathcal A_1})({\widetilde x})}{\widetilde z-z}
+o(\frac{1}{\tau}),\nonumber
\end{eqnarray}

\begin{eqnarray}
\frac{\tau}{2\pi}\int_\Omega\frac{\partial_{\overline \zeta}\overline{\Phi}
(\overline\zeta)}
{\overline \zeta-\overline z}(e_1g_2e^{-\mathcal B_1})e^{\tau(\Phi(\zeta)
-\overline{\Phi(\zeta)})}d\xi_1d\xi_2
= -\frac{1}{2\pi}\int_\Omega\frac{e_1g_2e^{-\mathcal B_1}}
{\overline \zeta-\overline z}\frac{\partial}{\partial\overline\zeta}
e^{\tau(\Phi(\zeta)
- \overline{\Phi(\zeta)})}d\xi_1d\xi_2=\nonumber\\
\frac{1}{2\pi}\int_\Omega\frac{\partial}{\partial\overline\zeta}
\left (\frac{e_1g_2e^{-\mathcal B_1}}{\overline \zeta-\overline z} \right )
e^{\tau(\Phi(\zeta)-\overline{\Phi(\zeta)})}
d\xi_1d\xi_2
= \frac {1}{2\tau}\frac{e^{2i\tau\psi({\widetilde x})}}
{\vert \mbox{det}
\, \psi''({\widetilde x})\vert^\frac 12}
\frac{\partial_{\overline z}(g_2e^{-\mathcal B_1})({\widetilde x})}
{\overline{\widetilde z}-\overline z}
+o(\frac{1}{\tau}),\nonumber
\end{eqnarray}
\begin{eqnarray}
\frac{\tau}{2\pi}\int_\Omega\frac{\partial_{\overline \zeta}
\overline{\Phi}}{\overline \zeta-\overline z}(e_1g_3e^{-\mathcal A_2})
e^{\tau(\overline{\Phi(\zeta)}-\Phi(\zeta))}d\xi_1d\xi_2
=\frac{1}{2\pi}\int_\Omega\frac{e_1g_3e^{-\mathcal A_2}}
{\overline \zeta-\overline z}\frac{\partial}{\partial\overline\zeta}
e^{\tau(\overline{\Phi(\zeta)}-\Phi(\zeta))}d\xi_1d\xi_2
=\nonumber\\
-\frac{1}{2\pi}\int_\Omega\frac{\partial}{\partial\overline\zeta}
\left(\frac{e_1g_3e^{-\mathcal A_2}}{\overline \zeta-\overline z} \right)
e^{\tau(\overline{\Phi(\zeta)}-\Phi(\zeta))}d\xi_1d\xi_2
= -\frac{1}{2\tau}
\frac{e^{-2i\tau\psi({\widetilde x})}}
{\vert \mbox{det}\, \psi''({\widetilde x})\vert^\frac 12}
\frac{\partial_{\overline z}(g_3e^{-\mathcal A_2})({\widetilde x})}
{\overline{\widetilde z}-\overline z}+o(\frac{1}{\tau}) \nonumber
\end{eqnarray}
and
\begin{eqnarray}
\frac{\tau}{2\pi}\int_\Omega\frac{\partial_\zeta\Phi(\zeta)}
{\zeta-z}(e_1g_4e^{-\mathcal B_2})e^{\tau(\Phi(\zeta)
-\overline{\Phi(\zeta)})}d\xi_1d\xi_2=\frac{1}{2\pi}
\int_\Omega\frac{e_1g_4e^{-\mathcal B_2}}{\zeta-z}
\frac{\partial}{\partial\zeta} e^{\tau(\Phi(\zeta)
-\overline{\Phi(\zeta)})}d\xi_1d\xi_2=\nonumber\\-\frac{1}{2\pi}
\int_\Omega\frac{\partial}{\partial\zeta}\left (\frac{e_1g_4
e^{-\mathcal B_2}}{\zeta-z}\right ) e^{\tau(\Phi(\zeta)
-\overline{\Phi(\zeta)})}d\xi_1d\xi_2=-\frac {1}{2\tau}
\frac{e^{2i\tau\psi({\widetilde x})}}{\vert \mbox{det}
\, \psi''({\widetilde x})\vert^\frac 12}
\frac{\partial_z (g_4e^{-\mathcal B_2})({\widetilde x})}
{\widetilde z-z}+o(\frac{1}{\tau}).         \nonumber
\end{eqnarray}

Noting that $\widetilde{g}_1 = e^{-\mathcal{A}_1}g_1$,
$\widetilde{g}_2 = e^{-\mathcal{B}_1}g_2$,
$\widetilde{g}_3 = e^{-\mathcal{A}_2}g_3$ and
$\widetilde{g}_4 = e^{-\mathcal{B}_2}g_4$,
and taking into account Proposition \ref{zinka},
we obtain (\ref{vova})-(\ref{vova3})
for the functions  $\frak G_1(x,\tau), ...,
\frak G_4(x,\tau)$.

For proving  the estimate (\ref{luna}), it suffices to show that
$$
\Vert\frac{\partial\frak G_1(\cdot,\tau)}
{\partial\overline z}\Vert_{C(\overline{\mathcal O_\epsilon})}
\le o\left(\frac{1}{\tau}\right).
$$
In fact,
$$
\partial_{\overline z}\frak G_1(x,\tau)=-\frac {1}{4\pi}
\frac{\partial A_1}{\partial z} \int_\Omega \frac{(e_1 g_1
e^{-\mathcal A_1})(\xi_1,\xi_2) e^{\tau(\overline{\Phi(\zeta)}
-\Phi(\zeta))}}{\zeta-z}d\xi_1d\xi_2.
$$
Then Proposition \ref{Proposition 3.223}  and (\ref{TTT}) imply
$\Vert\frac{\partial\frak G_1(\cdot,\tau)}{\partial\overline z}\Vert
_{C(\overline{\mathcal O_\epsilon})}=o(\frac 1\tau).$
\end{proof}

{\bf Proof of Proposition \ref{olimpus}.}
Using (\ref{OO}) and (\ref{001'}) we have
\begin{eqnarray}
\frak L_0 \equiv &&(2({ A_1}-{ A_2})\frac{\partial U_1}{\partial z},
b_\tau e^{\mathcal{B}_2-\tau \Phi}
+{c_\tau }e^{\mathcal{A}_2 -\tau \overline{\Phi}})_{L^2(\Omega)}
                                        \nonumber\\
&+& (2(B_1-B_2)\frac{\partial U_1}{\partial \overline z},
b_\tau e^{\mathcal{B}_2-\tau \Phi}
+ c_\tau e^{\mathcal{A}_2 -\tau \overline{\Phi}}
)_{L^2(\Omega)}                                  \nonumber\\
&=& 2(({ A_1}-{ A_2})e^{\tau\overline\Phi}(- \mathcal R_{-\tau, A_1} \{\frac{\partial (e_1g_1)
}{\partial z}\}+e^{\mathcal A_1-\tau(\overline\Phi-\Phi)}\frak G_1(\cdot,\tau)),
{c_\tau }
e^{\mathcal{A}_2 -\tau \overline{\Phi}})_{L^2(\Omega)}
                                            \nonumber\\
                                            &-&2(({ A_1}-{ A_2}){e^{\tau\overline\Phi}}\frac{\partial
}{\partial z}\mathcal R_{-\tau, A_1} \{e_1g_1\},
{b_\tau}
e^{\mathcal{B}_2 -\tau {\Phi}})_{L^2(\Omega)}\nonumber\\
&+& (({B_1}-{B_2})(-e_1g_1+{A_1}{\mathcal R}_{-\tau, A_1} \{e_1g_1\})
e^{\tau\overline\Phi}, b_\tau e^{\mathcal{B}_2-\tau \Phi}
+{c_\tau }e^{\mathcal{A}_2 -\tau \overline{\Phi}})_{L^2(\Omega)}+o(\frac 1\tau).
\end{eqnarray}
By (\ref{AK1}) and Proposition \ref{Proposition 3.22} we have
\begin{eqnarray}\label{lenaN}
2(({ A_1}-{ A_2}){e^{\tau\overline\Phi}}\frac{\partial
}{\partial z}\mathcal R_{-\tau, A_1} \{e_1g_1\},
{b_\tau}
e^{\mathcal{B}_2 -\tau {\Phi}})_{L^2(\Omega)}=2(({ A_1}-{ A_2})\frac{\partial
}{\partial z}\mathcal R_{-\tau, A_1} \{e_1g_1\},
{b_\tau}
e^{\mathcal{B}_2})_{L^2(\Omega)}\nonumber
= \\(({ A_1}-{ A_2})(\nu_1-i\nu_2)\mathcal R_{-\tau, A_1} \{e_1g_1\},
{b_\tau}
e^{\mathcal{B}_2})_{L^2(\partial\Omega)}
-2(\mathcal R_{-\tau, A_1} \{e_1g_1\},\frac{\partial
}{\partial\overline  z}\{
{b_\tau}
e^{\mathcal{B}_2}\overline{({ A_1}-{ A_2})}\})_{L^2(\Omega)}\nonumber\\
=-(\frac{e_1g_1}{\tau\overline{\partial_z\Phi}},\frac{\partial
}{\partial \overline z}\{
{b_\tau}
e^{\mathcal{B}_2}\overline{({ A_1}-{ A_2})}\})_{L^2(\Omega)}
+ o(\frac 1\tau)\quad \mbox{as}\,\,\vert\tau\vert\rightarrow +\infty.
\end{eqnarray}

Using the stationary phase argument  we obtain
\begin{eqnarray}\label{lenaN}
-2(({ A_1}-{ A_2}){e^{\tau\overline\Phi}}\mathcal R_{-\tau, A_1}
\{\frac{\partial (e_1g_1)}{\partial z}\},
{c_\tau}
e^{\mathcal{A}_2 -\tau \overline{\Phi}})_{L^2(\Omega)}\nonumber\\
=-(({ A_1}-{ A_2})e^{\mathcal A_1
+\overline{{\mathcal A}_2}}\partial^{-1}_{\overline z}
\{\frac{\partial (e_1g_1)}{\partial z}
e^{-\mathcal A_1-\tau(\Phi-\overline\Phi)}\},c_\tau)
_{L^2(\Omega)}=\nonumber\\
\frac 1\pi \int_\Omega (A_1-A_2)
e^{\mathcal A_1+\overline{{\mathcal A}_2}}\overline{c_\tau}
\left(\int_\Omega \frac{\frac{\partial (e_1g_1)}{\partial \zeta}
e^{-\mathcal A_1-\tau(\Phi-\overline\Phi)}}{\zeta-z}
d\xi_1 d\xi_2 \right)dx_1 dx_2=\nonumber\\
\frac 1\pi \int_\Omega \frac{\partial (e_1g_1)}
{\partial \zeta}e^{-\mathcal A_1-\tau(\Phi-\overline\Phi)}
\left (\int_\Omega \frac{(A_1-A_2)\overline{c_\tau} e^{\mathcal A_1+\overline{{\mathcal A}_2}}}{\zeta-z}d x_1 d x_2\right )d\xi_1 d\xi_2= \nonumber\\ \frac{e^{-2i\tau\psi(\widetilde x)}}{\tau\vert\mbox {det} \, \psi^{''}(\widetilde x)\vert^\frac 12}\frac{\partial g_1}{\partial z}(\widetilde x)
e^{-\mathcal A_1(\widetilde x)}\left (\int_\Omega \frac{(A_1-A_2)\overline{c} e^{\mathcal A_1+\overline{{\mathcal A}_2}}}{\widetilde z-z}d x_1 d x_2\right )+
o(\frac{1}{\tau}).
\end{eqnarray}
Integrating by parts and using (\ref{luna}), (\ref{001q}),
$2\frac{\partial {\mathcal A}_1}{\partial \overline{z}}
= -A_1$ and
$2\frac{\partial {\mathcal A}_2}{\partial z}
= \overline{A_2}$, we have
\begin{eqnarray}\label{noch}
2((A_1-A_2)e^{\tau\overline{\Phi}} e^{\mathcal {A}_1}
e^{-\tau(\overline\Phi-\Phi)}\frak G_1,c_\tau
e^{\mathcal{A}_2 -\tau \overline{\Phi}})_{L^2(\Omega)}
=2((A_1-A_2)e^{\mathcal{ A}_1} \frak G_1,c_\tau
e^{\mathcal{A}_2})_{L^2(\Omega)}\\
=\int_\Omega 2(A_1-A_2)e^{\mathcal A_1+\overline{\mathcal A_2}}
\overline{c_\tau(\overline z)}\frak G_1(x,\tau)dx
=-4\int_\Omega\frac{ \partial}{\partial \overline z}
(e^{\mathcal A_1+\overline{\mathcal A_2}})
\overline{c_\tau(\overline z)}\frak G_1(x,\tau)dx
                           \nonumber\\
=4\int_\Omega e^{\mathcal A_1+\overline{\mathcal A_2}}
\overline{c_\tau(\overline z)}\frac{ \partial}{\partial \overline z}
\frak G_1(x,\tau)dx-2\int_{\partial\Omega}(\nu_1+i\nu_2)
e^{\mathcal A_1+\overline{\mathcal A_2}}\overline{c_\tau(\overline z)}
\frak G_1(x,\tau)d\sigma+o(\frac 1\tau)\quad\mbox{as}
\,\,\vert\tau\vert\rightarrow +\infty.\nonumber
\end{eqnarray}

Since $e^{\mathcal A_1}\frac{ \partial}{\partial \overline z}\frak G_1(x,\tau)
=-\frac {e^{\mathcal A_1}}{4\pi}\frac{\partial A_1}{\partial z}
\int_\Omega \frac{(e_1 g_1 e^{-\mathcal A_1})(\xi_1,\xi_2)
e^{\tau(\overline{\Phi(\zeta)}-\Phi(\zeta))}}{\zeta-z}d\xi_1d\xi_2
=\frac 12 \frac{\partial A_1}{\partial z}e^{\tau(\overline\Phi-\Phi)}\mathcal
R_{-\tau, A_1}\{e_1g_1\},$ applying the Proposition
\ref{Proposition 3.22}  and Proposition \ref{opa}
we obtain
\begin{equation}\label{ee}
\int_\Omega e^{\mathcal A_1+\overline{\mathcal A_2}}
\overline{c_\tau(\overline z)}\frac{ \partial}{\partial \overline z}
\frak G_1(x,\tau)dx=o(\frac 1\tau)\quad\mbox{as}\,\,\vert
\tau\vert\rightarrow +\infty.
\end{equation}
By (\ref{vova}), (\ref{lenaN})-(\ref{ee}) and Propositions
\ref{Proposition 3.22} and \ref{Proposition 3.223}, we conclude
\begin{eqnarray}\label{vikana}
\frak L_0
&=& (({B_1}-{B_2})(-e_1g_1+\frac{{ A_1}e_1g_1}
{2\tau\overline{\partial_z\Phi}}),b_\tau
e^{\mathcal{B}_2})_{L^2(\Omega)}\nonumber\\
&+& (\frac{e_1g_1}{\tau\overline{\partial_z\Phi}},\frac{\partial
}{\partial \overline z}\{
{b}
e^{\mathcal{B}_2}\overline{({ A_1}-{ A_2})}\})_{L^2(\Omega)}\nonumber\\
&+&\frac{ e^{-2i\tau\psi(\widetilde x)}}{\tau\vert\mbox {det}\, \psi^{''}(\widetilde x)\vert^\frac 12}\frac{\partial g_1}{\partial z}(\widetilde x)
e^{-\mathcal A_1(\widetilde x)}\left (\int_\Omega \frac{(A_1-A_2)\overline{c} e^{\mathcal A_1+\overline{{\mathcal A}_2}}}{\widetilde z-z}d x_1 d x_2\right )\nonumber\\
&-&2\int_{\partial\Omega}(\nu_1+i\nu_2)e^{\mathcal A_1
+\overline{\mathcal A_2}}\overline{c_\tau(\overline z)}\frak G_1(x,\tau)
d\sigma
+o(\frac 1\tau)\quad\mbox{as}\,\,\vert\tau\vert\rightarrow +\infty.
                               \nonumber\\
\end{eqnarray}

Using (\ref{OOO}) and (\ref{0011A}) we obtain after simple computations:
\begin{eqnarray}\label{BBB}
\frak L_1 \equiv &(&2({A_1}-{ A_2})
\frac{\partial U_2}{\partial z},b_\tau e^{\mathcal{B}_2-\tau \Phi}
+{c_\tau}e^{\mathcal{A}_2 -\tau \overline{\Phi}})_{L^2(\Omega)}
                                        \nonumber\\
&+& (2 (B_1-B_2)\frac{\partial U_2}{\partial \overline z},b_\tau
e^{\mathcal{B}_2-\tau \Phi}
+{c_\tau}e^{\mathcal{A}_2 -\tau \overline{\Phi}})_{L^2(\Omega)}
                                                 \nonumber\\
&=& (({A_1}-{A_2})(-e_1g_2 + {B_1}\widetilde{\mathcal R}_{\tau, B_1}
\{e_1g_2\})e^{\tau\Phi},b_\tau e^{\mathcal{B}_2-\tau \Phi}
+{c_\tau}
e^{\mathcal{A}_2 -\tau \overline{\Phi}})_{L^2(\Omega)}\nonumber\\
&+& 2((B_1- B_2)(-
\widetilde{\mathcal R}_{\tau, B_1} \{\frac{\partial (e_1g_2)}{\partial \overline z}\}e^{\tau\Phi}+e^{\mathcal B_1+\tau(\overline\Phi-\Phi)}\frak G_2(\cdot,\tau)),
b_\tau e^{\mathcal{B}_2-\tau \Phi})_{L^2(\Omega)}\nonumber\\
&-& 2((B_1- B_2)\frac{\partial }{\partial \overline z}
\widetilde{\mathcal R}_{\tau, B_1} \{e_1g_2\}e^{\tau\Phi},
{c_\tau}e^{\mathcal{A}_2 -\tau \overline{\Phi}})_{L^2(\Omega)}.
\end{eqnarray}

Integrating by parts  and using (\ref{AKK1}) we have
\begin{eqnarray}&-& 2((B_1- B_2)\frac{\partial }{\partial \overline z}
\widetilde{\mathcal R}_{\tau, B_1} \{e_1g_2\}e^{\tau\Phi},
 {c_\tau}e^{\mathcal{A}_2 -\tau \overline{\Phi}})_{L^2(\Omega)}=
 - 2((B_1- B_2)\frac{\partial }{\partial \overline z}
\widetilde{\mathcal R}_{\tau, B_1} \{e_1g_2\},
 {c_\tau}e^{\mathcal{A}_2 })_{L^2(\Omega)}=\nonumber\\
 &-& 2\int_\Omega
\widetilde{\mathcal R}_{\tau, B_1} \{e_1g_2\}\frac{\partial }{\partial \overline z}((B_1- B_2)
 \overline{c_\tau}e^{\overline{\mathcal{A}_2} }) dx-\int_{\partial\Omega}(B_1- B_2)(\nu_1+i\nu_2)
\widetilde{\mathcal R}_{\tau, B_1} \{e_1g_2\}
 \overline{c_\tau}e^{\overline{\mathcal{A}_2} }d\sigma=\nonumber\\
 &&\quad\quad\quad\int_{\Omega}\frac{e_1g_2}{\tau\partial_z\Phi}
 \frac{\partial }{\partial \overline z}((B_1- B_2)
 \overline{c}e^{\overline{\mathcal{A}_2} })d x
 +o(\frac 1\tau)\quad \mbox{as}\,\,\vert\tau\vert\rightarrow +\infty.
\end{eqnarray}
The stationary phase argument implies the formula
\begin{eqnarray}- 2((B_1- B_2)
\widetilde{\mathcal R}_{\tau, B_1} \{\frac{\partial (e_1g_2)}{\partial \overline z}\}e^{\tau\Phi},
b_\tau e^{\mathcal{B}_2-\tau \Phi}
)_{L^2(\Omega)}=- \int_\Omega(B_1- B_2)
\partial_z^{-1}\{\frac{\partial e_1g_2}{\partial \overline z}e^{-\mathcal B_1+\tau(\Phi-\overline\Phi)}\}
\overline{b_\tau} e^{\mathcal B_1+\overline{\mathcal{B}_2}}
dx=\nonumber\\
\frac{1}{\pi}\int_\Omega \frac{\partial (e_1g_2)}{\partial \overline \zeta}e^{-\mathcal B_1+\tau(\Phi-\overline\Phi)}\left(\int_\Omega\frac{(B_1-B_2)\overline {b_\tau} e^{\mathcal{B}_1+\overline{\mathcal B_2} }}{\overline \zeta-\overline z}dx\right) d\xi_1d\xi_2=\nonumber\\ \frac{e^{2i\tau\psi(\widetilde x)}}{\tau\vert \mbox{det}\, \psi^{''}(\widetilde x)\vert^\frac 12}\frac{\partial g_2(\widetilde x)}{\partial \overline z}e^{-\mathcal B_1(\widetilde x)}\int_\Omega\frac{(B_1-B_2)\overline {b_\tau} e^{\mathcal{B}_1-\overline{\mathcal B_2} }}{\overline{\widetilde z}-\overline z}dx+
o(\frac 1\tau).
\end{eqnarray}

By (\ref{luna}) we have the asymptotic formula
\begin{eqnarray}\label{Bip1}
2((B_1- B_2)(
e^{\mathcal B_1}e^{\tau\overline{\Phi}}\frak G_2
, b_\tau
e^{\mathcal{B}_2-\tau \Phi})_{L^2(\Omega)}\\
= 2((B_1- B_2)e^{\mathcal B_1}\frak G_2, b_\tau
e^{\mathcal{B}_2})_{L^2(\Omega)}
= 2\int_\Omega (B_1- B_2)e^{\mathcal B_1+\overline{\mathcal B}_2}
\overline{b_\tau(z)} \frak G_2(x,\tau)dx\nonumber\\
= -4\int_\Omega \frac{\partial}{\partial z}
e^{\mathcal B_1+\overline{\mathcal B}_2} \overline{b_\tau(z)}
\frak G_2(x,\tau)dx
= -2\int_{\partial\Omega}(\nu_1-i\nu_2)e^{\mathcal B_1
+\overline{\mathcal B}_2} \overline{b_\tau(z)} \frak G_2(x,\tau)
d\sigma \nonumber\\ +4\int_\Omega e^{\mathcal B_1
+\overline{\mathcal B}_2} \overline{b_\tau(z)} \frac{\partial}
{\partial z}\frak G_2(x,\tau) dx.\nonumber
\end{eqnarray}

Observe that $\frac{\partial}{\partial z}\frak G_2(x,\tau)
=-\frac{1}{4\pi}\frac{\partial B_1}{\partial \overline z}
\int_\Omega\frac{(e_1g_2 e^{-\mathcal B_1})(\xi_1,\xi_2)}
{\overline \zeta-\overline z}
e^{\tau(\Phi(\zeta)-\overline{\Phi(\zeta)}) }d\xi_1 d\xi_2
=\frac {e^{-\mathcal B_1}}{2} e^{\tau(\Phi-\overline\Phi)}
\widetilde{\mathcal R}_{\tau, B_1}\{e_1g_2\}.$
Then by Proposition 3.4
\begin{eqnarray}\label{NV11}
4\int_\Omega e^{\mathcal B_1
+ \overline{\mathcal B}_2} \overline{b_\tau(z)}
\frac{\partial}{\partial z}
\frak G_2(x,\tau) dx
= 2\int_\Omega e^{\mathcal B_1 + \overline{\mathcal B}_2}
\overline{b_\tau(z)} e^{-\mathcal B_1}
e^{\tau(\Phi-\overline\Phi)}
\widetilde{\mathcal R}_{\tau, B_1}\{e_1g_2\} dx
                                                     \nonumber\\
= \int_\Omega e^{\mathcal B_1
+\overline{\mathcal B}_2} \overline{b_\tau(z)}
e^{-\mathcal B_1}e^{\tau(\Phi-\overline\Phi)}
\frac{e_1g_2}{\tau\partial_z\Phi} dx
= o(\frac 1\tau) \quad \mbox{as}
\,\,\vert\tau\vert\rightarrow +\infty.
\end{eqnarray}
By (\ref{Bip1}) -(\ref{NV11}) we have
\begin{eqnarray}\label{vikana1UP}
\frak L_1
&=& (({A_1}-{A_2})(-e_1g_2+\frac{B_1e_1g_2}{2\tau{\partial_z\Phi}}),
c_\tau e^{\mathcal A_2})_{L^2(\Omega)}\nonumber\\
&+&\int_{\Omega}\frac{e_1g_2}{\tau\partial_z\Phi}
 \frac{\partial }{\partial \overline z}((B_1- B_2)
 \overline{c_\tau}e^{\overline{\mathcal{A}_2} })d x\nonumber\\
&  &\frac{e^{2i\tau\psi(\widetilde x)}}{\tau\vert \mbox{det}\,\psi^{''}(\widetilde x)\vert^\frac 12} \frac{\partial g_2(\widetilde x)}{\partial \overline z}e^{-\mathcal B_1(\widetilde x)}\int_\Omega\frac{(B_1- B_2)\overline b e^{\mathcal{B}_1+\overline{\mathcal B_2} }}{\overline{\widetilde z}-\overline z}dx\nonumber \\
&-&2\int_{\partial\Omega}(\nu_1-i\nu_2)e^{\mathcal B_1
+\overline{\mathcal B}_2} \overline{b_\tau(z)} \frak G_2(x,\tau)d\sigma
+o(\frac{1}{\tau})\quad \mbox{as}\,\,\vert\tau\vert\rightarrow +\infty.
\end{eqnarray}
Recall that $V_1 = -e^{-\tau\Phi}\widetilde{\mathcal R}
_{-\tau,-\overline{A_2}}\{e_1g_3\}$ and
$V_2 = -e^{-\tau\overline\Phi}{\mathcal R}_{\tau,-\overline{B_2}}
\{e_1g_4\}$.

By Proposition \ref{Proposition 3.1} we conclude
\begin{equation}\label{UU}
{2}\frac{\partial V_1}{\partial z}=(-e_1g_3+{\overline
A_2}\widetilde{\mathcal R}_{-\tau, \overline A_2}\{e_1g_3\})
e^{-\tau\Phi}
\end{equation}
and
\begin{equation}\label {UUU}
{2}\frac{\partial V_2}{\partial \overline z}
=(-e_1g_4+\overline{B_2}{\mathcal R}_{\tau,-\overline
B_2}\{e_1g_4\})e^{-\tau\overline\Phi}.
\end{equation}

Similarly to (\ref{yap}) and (\ref{yop})
we calculate $\frac{\partial V_1}{\partial\overline z}$ and
$\frac{\partial V_2}{\partial z}:$
\begin{equation}\label{ono}
\frac{\partial V_1}{\partial\overline z}
=- e^{-\tau\Phi}\widetilde{\mathcal R}_{-\tau,-\overline A_2}
\left\{\frac{\partial (e_1g_3)}{\partial \overline
z}\right\} + e^{-\tau\overline\Phi+{\mathcal A_2}}\frak G_3(\cdot,\tau)
\end{equation}
and
\begin{equation} \label{ono1}
 \frac{\partial V_2}{\partial
z}=-e^{-\tau\overline\Phi}{\mathcal
R}_{\tau,-\overline B_2}\left\{\frac{\partial(e_1g_4)}{\partial
z}\right\} + e^{-\tau\Phi+{\mathcal B_2}}\frak G_4(\cdot,\tau).
 \end{equation}
Using (\ref{001q}) and integrating by parts we obtain
\begin{eqnarray}\label{QQ}
\frak L_2 \equiv&(& 2({ A_1}-{ A_2})\frac{\partial }{\partial z}
(a_\tau e^{\mathcal A_1+\tau\Phi}+d_\tau e^{\mathcal B_1
+\tau\overline{\Phi}}),V_1+V_2)_{L^2(\Omega)}       \nonumber\\
&=& -(({A_1}-{ A_2})d_\tau B_1 e^{{\mathcal B_1}}e^{\tau\overline\Phi},
V_1+V_2)_{L^2(\Omega)}                                     \nonumber\\
+&(&(A_1-A_2)(\nu_1-i\nu_2)a_\tau e^{\mathcal A_1+\tau\Phi},
V_1+V_2)_{L^2(\partial\Omega)}\nonumber\\
-&(&2\frac{\partial }{\partial z}(A_1-A_2)a_\tau
e^{\mathcal A_1+\tau\Phi},V_1+V_2)_{L^2(\Omega)}     \nonumber\\
-&(&(A_1-A_2)a_\tau e^{\mathcal A_1+\tau\Phi}, 2(\frac{\partial
V_1}{\partial \overline z}+\frac{\partial V_2}{\partial \overline z})
)_{L^2(\Omega)}.                            \nonumber
\end{eqnarray}

We observe that by (\ref{ono}), (\ref{luna}),
Proposition \ref{Proposition 3.22} and Proposition \ref{opa} and
 the equality $\frac{\partial \frak G_3}{\partial z}=-\frac 12  e^{-\mathcal A_2}\frac{\partial \overline A_2}{\partial\overline z} e^{\tau(\overline\Phi-\Phi)} \widetilde {\mathcal R}_{-\tau, -\overline{A_2}}(e_1 g_3)$  :
\begin{eqnarray}
((A_1-A_2)a_\tau e^{\mathcal A_1+\tau\Phi}, 2\frac{\partial
V_1}{\partial \overline z})_{L^2(\Omega)}
=- 4\int_\Omega a_\tau(z)\frac{\partial}{\partial \overline z}
e^{\mathcal A_1+\overline{\mathcal A_2}}
\overline{\frak G_3(x,\tau)}dx\nonumber\\+((A_1-A_2)a_\tau
e^{\mathcal A_1+\tau\Phi}, - e^{-\tau\Phi}\widetilde{\mathcal R}_{-\tau,-\overline A_2}
\left\{\frac{\partial (e_1g_3)}{\partial \overline
z}\right\} )_{L^2(\Omega)}+o(\frac 1\tau)\nonumber=\\
-2\int_{\partial\Omega}a_\tau(z)\overline{\frak G_3(x,\tau)}
(\nu_1+i\nu_2)e^{\mathcal A_1+\overline{\mathcal A_2}}d\sigma
\nonumber\\
+ \frac 1\pi\int_\Omega \overline{\frac{\partial (e_1 g_3)}
{\partial \overline \zeta}}e^{\tau( \Phi-\overline\Phi)-\overline{\mathcal A_2}}\left(\int_\Omega\frac{(A_1-A_2)a_\tau e^{\mathcal A_1+\overline{\mathcal A_2}}}{ \zeta-z}dx\right )d\xi_1d\xi_2
+ o(\frac 1\tau)=\nonumber\\
\frac{e^{2i\tau\psi(\widetilde x)}}{\tau\vert\mbox{det}\,\psi^{''}(\widetilde x)\vert^{\frac 12}}\overline{\frac{\partial g_3(\widetilde x)}{\partial \overline
z}}e^{-\overline{\mathcal A_2}(\widetilde x)}\int_\Omega\frac{(A_1-A_2)a e^{\mathcal A_1+\overline{\mathcal A_2}}}{ \widetilde z-z}dx\nonumber\\
-2\int_{\partial\Omega}a_\tau(z)\overline{\frak G_3(x,\tau)}
(\nu_1+i\nu_2)e^{\mathcal A_1+\overline{\mathcal A_2}}d\sigma+ o(\frac 1\tau)\quad\mbox{as}\,\,\vert\tau\vert\rightarrow +\infty.
\end{eqnarray}
Hence, using (\ref{UUU}) we have
\begin{eqnarray}
\frak L_2=&(&(A_1-A_2)d_\tau B_1 e^{{\mathcal B_1}},
\widetilde{\mathcal R}_{-\tau, -\overline A_2}\{e_1g_3\})_{L^2(\Omega)}
                             \nonumber\\
+&(&(A_1-A_2)(\nu_1-i\nu_2)a_\tau e^{\mathcal A_1+\tau\Phi},V_1+V_2)
_{L^2(\partial\Omega)}\nonumber\\
+&(&2\frac{\partial }{\partial z}(A_1-A_2)a_\tau
e^{{\mathcal A_1}},{\mathcal R}_{\tau,-\overline B_2}\{e_1g_4\}
)_{L^2(\Omega)}\nonumber\\
&-&\frac{ e^{2i\tau\psi(\widetilde x)}}{\tau\vert\mbox{det}\,\psi^{''}(\widetilde x)\vert^{\frac 12}}\overline{\frac{\partial g_3(\widetilde x)}{\partial \overline
z}}e^{-\overline{\mathcal A_2}(\widetilde x)}\int_\Omega\frac{(A_1-A_2)a e^{\mathcal A_1+\overline{\mathcal A_2}}}{ \widetilde z-z}dx\nonumber\\
+&(&(A_1-A_2)a_\tau e^{{\mathcal A_1}},-e_1g_4-\overline{ B_2}
{\mathcal R}_{\tau, -\overline B_2}\{e_1g_4\})_{L^2(\Omega)}+o(\frac 1\tau)
                                                   \nonumber\\
&+&2\int_{\partial\Omega}a_\tau(z)\overline{\frak G_3(x,\tau)}
(\nu_1+i\nu_2)e^{\mathcal A_1+\overline{\mathcal A_2}}d\sigma
\quad \mbox{as}\,\,\vert \tau\vert\rightarrow +\infty.\nonumber
\end{eqnarray}
By (\ref{victory}) and (\ref{victory1})  we obtain
$$
((A_1-A_2)(\nu_1-i\nu_2)a_\tau e^{\mathcal A_1+\tau\Phi},V_1+V_2)
_{L^2(\partial\Omega)}=o(\frac{1}{\tau}) \quad \mbox{as}
\,\,\vert\tau\vert\rightarrow +\infty.
$$
Therefore, by Proposition \ref{Proposition 3.22}
\begin{eqnarray}\label{vikana2}
\frak L_2=-&(&(A_1-A_2)d_\tau B_1e^{{\mathcal B_1}},\frac{e_1g_3}
{2\tau{\partial_z\Phi}})_{L^2(\Omega)}     \nonumber\\
- &(&\frac{\partial }{\partial z}(A_1-A_2)a_\tau e^{{\mathcal A_1}},
\frac{e_1g_4}{\tau\overline{\partial_z\Phi}})_{L^2(\Omega)}
                                                 \nonumber\\
+ &(&(A_1-A_2)a_\tau e^{{\mathcal A_1}},
-e_1g_4+\overline{ B_2}\frac{e_1g_4}{2\tau\overline{\partial_z\Phi}}
)_{L^2(\Omega)} \nonumber\\
&-&\frac{ e^{2i\tau\psi(\widetilde x)}}{\tau\vert\mbox{det}\,\psi^{''}(\widetilde x)\vert^{\frac 12}}\overline{\frac{\partial g_3(\widetilde x)}{\partial \overline
z}}e^{-\overline{\mathcal A_2}(\widetilde x)}\int_\Omega\frac{(A_1-A_2)a e^{\mathcal A_1+\overline{\mathcal A_2}}}{ \widetilde z-z}dx\nonumber\\
&+&2\int_{\partial\Omega}a_\tau(z)\overline{\frak G_3(x,\tau)}
(\nu_1+i\nu_2)e^{\mathcal A_1+\overline{\mathcal A_2}}d\sigma
+ o(\frac 1\tau)\quad\mbox{as}\,\,\vert\tau\vert\rightarrow +\infty.
\end{eqnarray}
Integrating by parts we compute
\begin{eqnarray}\label{QQQ}
\frak L_3 \equiv&(&2({B_1}-{ B_2})\frac{\partial }{\partial \overline z}
(a_\tau e^{\mathcal A_1+\tau\Phi}+d_\tau e^{\mathcal B_1
+ \tau\overline{\Phi}}),V_1+V_2)_{L^2(\Omega)}=    \nonumber\\
&-&(2 \frac{\partial }{\partial \overline z}(B_1-B_2)d_\tau
e^{{\mathcal B_1}+\tau\overline\Phi},V_1+V_2)_{L^2(\Omega)}
                                     \nonumber\\
-&(&({B_1}-{B_2}){ A_1}a_\tau e^{\mathcal A_1+\tau\Phi},V_1+V_2)
_{L^2(\Omega)}                         \nonumber\\
+&(&(\nu_1+i\nu_2)({B_1}-{ B_2})d_\tau e^{\mathcal B_1+\tau\Phi},V_1+V_2)
_{L^2(\partial\Omega)}\nonumber\\
- &(&(B_1-B_2)d_\tau e^{\mathcal B_1+\tau\overline\Phi},
2(\frac{\partial V_1}{\partial  z} +\frac{\partial V_2}{\partial
z}))_{L^2(\Omega)}.\nonumber
\end{eqnarray}

We observe that by (\ref{luna}), (\ref{ono1}), Proposition
\ref{Proposition 3.22} and Proposition \ref{opa} and the equality $\frac{\partial \frak G_4}{\partial \overline z}=-\frac 12  e^{-\mathcal B_2}\frac{\partial\overline B_2}{\partial z} e^{\tau(\Phi-\overline\Phi)} {\mathcal R}_{\tau,- \overline{B_2}}(e_1 g_4)$ :
\begin{eqnarray}
2((B_1-B_2)d_\tau e^{\mathcal B_1+\tau\overline\Phi},
\frac{\partial V_2}{\partial z})_{L^2(\Omega)}
=-4\int_\Omega \frac{\partial}{\partial z}
e^{{\mathcal B}_1+\overline{\mathcal B_2}}
d_\tau(\overline z)\overline{\frak G_4(x,\tau)}dx\nonumber\\+2((B_1-B_2)d_\tau e^{\mathcal B_1+\tau\overline\Phi},
-e^{-\tau\overline\Phi}{\mathcal
R}_{\tau,-{\overline B_2}}\left\{\frac{\partial(e_1g_4)}{\partial
z}\right\})_{L^2(\Omega)}+o(\frac 1\tau)
                                           \nonumber =\\
-2\int_{\partial\Omega}(\nu_1-i\nu_2)
e^{\mathcal B_1+\overline{\mathcal B_2}}d_\tau(\overline z)
\overline{\frak G_4(x,\tau)}d\sigma\nonumber\\
+\int_\Omega\left (\int_\Omega\frac1\pi \frac{(B_1-B_2)d_\tau e^{\mathcal B_1+\overline{\mathcal B_2}}}{\overline \zeta- \overline z}dx\right )
\overline{\left\{\frac{\partial(e_1g_4)}{\partial
z}\right\}}e^{-\overline{\mathcal B_2}+\tau(\overline \Phi-\Phi)} d\xi_1d\xi_2+ o(\frac 1\tau)=\nonumber\\
\frac{ e^{-2i\tau\psi(\widetilde x)}}{\tau\vert\mbox{det}\,\psi^{''}(\widetilde x)\vert^\frac 12}\int_\Omega \frac{(B_1-B_2)d_\tau e^{\mathcal B_1+\overline{\mathcal B_2}}}{\overline {\widetilde z}- \overline z}dx
\overline{\left\{\frac{\partial g_4(\widetilde x)}{\partial
\zeta}\right\}}e^{-\overline{\mathcal B_2}(\widetilde x)}\nonumber\\
-2\int_{\partial\Omega}(\nu_1-i\nu_2)
e^{\mathcal B_1+\overline{B_2}}d_\tau(\overline z)
\overline{\frak G_4(x,\tau)}d\sigma+o(\frac 1\tau)
\quad\mbox{as}\,\,\vert\tau\vert\rightarrow +\infty.
\end{eqnarray}
Hence
\begin{eqnarray}
\frak L_3&=&(2 \frac{\partial }{\partial \overline z}(B_1- B_2)
d_\tau e^{{\mathcal B_1}},\widetilde{\mathcal R}_{-\tau,-\overline A_2}
\{e_1g_3\})_{L^2(\Omega)}                      \nonumber\\
-&(&({B_1}-{ B_2}){ A_1}a_\tau e^{\mathcal A_1},
{\mathcal R}_{\tau,-\overline B_2}\{e_1g_4\})_{L^2(\Omega)}\nonumber\\
+&(&({B_1}-{B_2})d_\tau e^{{\mathcal B_1}},-e_1g_3
+\overline A_2\widetilde{\mathcal {R}}_{-\tau,-\overline A_2}\{e_1g_3\})
_{L^2(\Omega)}\nonumber\\
+&(& (\nu_1+i\nu_2)({B_1}-{ B_2})d_\tau
e^{\mathcal B_1+\tau\Phi},V_1+V_2)_{L^2(\partial\Omega)}\nonumber\\
&-&\frac{ e^{-2i\tau\psi(\widetilde x)}}{\tau\vert\mbox{det}\,\psi^{''}(\widetilde x)\vert^\frac 12}
\overline{\left\{\frac{\partial g_4(\widetilde x)}{\partial
z}\right\}}e^{-\overline{\mathcal B_2} (\widetilde x)}\int_\Omega \frac{(B_1-B_2)d e^{\mathcal B_1+\overline{\mathcal B_2}}}{\overline {\widetilde z}- \overline z}dx\nonumber\\
&+&2\int_{\partial\Omega}(\nu_1-i\nu_2)e^{\mathcal B_1
+\overline{B_2}}d_\tau(\overline z)\overline{\frak G_4(x,\tau)}d\sigma
+o(\frac 1\tau) \quad\mbox{as}\,\,\vert\tau\vert\rightarrow +\infty.
\end{eqnarray}

By (\ref{victory}), (\ref{victory1}) and the stationary phase argument
$$
( (\nu_1+i\nu_2)({B_1}-{ B_2})d_\tau e^{\mathcal B_1+\tau\Phi},V_1+V_2)
_{L^2(\partial\Omega)}=o(\frac 1\tau) \quad\mbox{as}
\,\,\vert\tau\vert\rightarrow
+\infty.
$$
Therefore, applying Proposition \ref{Proposition 3.22},
we finally conclude that
\begin{eqnarray}\label{vikana3}
\frak L_3=&-&2( \frac{\partial }{\partial \overline z}(B_1-B_2)d_\tau
e^{{\mathcal B_1}},\frac{e_1g_3}{2\tau{\partial_z\Phi}})_{L^2(\Omega)}
                                     \nonumber\\
-&(&(B_1-{B_2}){ A_1}a_\tau e^{{\mathcal A_1}},\frac{e_1 g_4}
{2\tau\overline{\partial_z\Phi}})_{L^2(\Omega)}\nonumber\\
+&(&({B_1}-{B_2})d_\tau e^{{\mathcal B_1}},-e_1g_3-\frac{\overline A_2 e_1g_3}
{2\tau{\partial_z\Phi}})_{L^2(\Omega)}\nonumber\\
&-&\frac{ e^{-2i\tau\psi(\widetilde x)}}{\tau\vert\mbox{det}\,\psi^{''}(\widetilde x)\vert^\frac 12}
\overline{\left\{\frac{\partial g_4(\widetilde x)}{\partial
z}\right\}}e^{-\overline{\mathcal B_2} (\widetilde x)}\int_\Omega \frac{(B_1-B_2)d e^{\mathcal B_1+\overline{\mathcal B_2}}}{\overline {\widetilde z}- \overline z}dx\nonumber\\
&+&2\int_{\partial\Omega}(\nu_1-i\nu_2)e^{\mathcal B_1+\overline{B_2}}
d_\tau(\overline z)\overline{\frak G_4(x,\tau)}d\sigma
+ o(\frac 1\tau)\quad\mbox{as}\,\,\vert\tau\vert\rightarrow +\infty.
\end{eqnarray}
The sum $\sum_{k=0}^3 \frak L_k$ is equal to the left hand side of
(\ref{nika1}). Observe that
\begin{eqnarray}\label{losad}
&-&\frac{ e^{-2i\tau\psi(\widetilde x)}}{\tau\vert\mbox{det}\,\psi^{''}(\widetilde x)\vert^\frac 12}\overline{\left\{\frac{\partial g_4(\widetilde x)}{\partial
z}\right\}}e^{-\overline{\mathcal B_2} (\widetilde x)}\int_\Omega \frac{(B_1-B_2)d e^{\mathcal B_1+\overline{\mathcal B_2}}}{\overline {\widetilde z}- \overline z}dx
\nonumber\\
&-&\frac{ e^{2i\tau\psi(\widetilde x)}}{\tau\vert\mbox{det}\,\psi^{''}(\widetilde x)\vert^{\frac 12}}\overline{\frac{\partial g_3(\widetilde x)}{\partial \overline
z}}e^{-\overline{\mathcal A_2}(\widetilde x)}\int_\Omega\frac{(A_1-A_2)a e^{\mathcal A_1+\overline{\mathcal A_2}}}{ \widetilde z-z}dx\nonumber\\
&+&\frac{e^{2i\tau\psi(\widetilde x)}}{\tau\vert \mbox{det}\,\psi^{''}(\widetilde x)\vert^\frac 12} \frac{\partial g_2(\widetilde x)}{\partial \overline z}e^{-\mathcal B_1(\widetilde x)}\int_\Omega\frac{(B_1- B_2)\overline b e^{\mathcal{B}_1+\overline{\mathcal B_2} }}{\overline{\widetilde z}-\overline z}dx\nonumber\\
&+&\frac{ e^{-2i\tau\psi(\widetilde x)}}{\tau\vert\mbox {det}\, \psi^{''}(\widetilde x)\vert^\frac 12}\frac{\partial g_1(\widetilde x)}{\partial z}e^{-\mathcal A_1(\widetilde x)}\int_\Omega \frac{(A_1-A_2)\overline{c} e^{\mathcal A_1+\overline{{\mathcal A}_2}}}{\widetilde z-z}d x=\nonumber\\
& &\frac{ e^{-2i\tau\psi(\widetilde x)}}{\tau\vert\mbox{det}\,\psi^{''}(\widetilde x)\vert^\frac 12}\overline{\left\{\frac{\partial g_4(\widetilde x)}{\partial
z}\right\}}e^{-\overline{\mathcal B_2} (\widetilde x)}\int_{\partial\Omega} \frac{(\nu_1-i\nu_2)d e^{\mathcal B_1+\overline{\mathcal B_2}}}{\overline {\widetilde z}- \overline z}d\sigma \nonumber\\
&-&\frac{e^{-2i\tau\psi(\widetilde x)}}{\tau\vert\mbox{det}\,\psi^{''}(\widetilde x)\vert^\frac 12}\frac{\partial g_1(\widetilde x)}{\partial z}e^{-\mathcal A_1(\widetilde x)}\int_{\partial\Omega} \frac{(\nu_1+i\nu_2)\overline{c} e^{\mathcal A_1+\overline{{\mathcal A}_2}}}{\widetilde z-z}d \sigma\nonumber\\
&+&\frac{ e^{2i\tau\psi(\widetilde x)}}{\tau\vert\mbox{det}\,\psi^{''}(\widetilde x)\vert^{\frac 12}}\overline{\frac{\partial g_3(\widetilde x)}{\partial \overline
z}}e^{-\overline{\mathcal A_2}(\widetilde x)}\int_{\partial\Omega}\frac{(\nu_1+i\nu_2)a e^{\mathcal A_1+\overline{\mathcal A_2}}}{ \widetilde z-z}d\sigma\nonumber\\
&-&\frac{e^{2i\tau\psi(\widetilde x)}}{\tau\vert \mbox{det}\,\psi^{''}(\widetilde x)\vert^\frac 12} \frac{\partial g_2(\widetilde x)}{\partial \overline z}e^{-\mathcal B_1(\widetilde x)}\int_{\partial\Omega}\frac{(\nu_1-i\nu_2)\overline b e^{\mathcal{B}_1+\overline{\mathcal B_2} }}{\overline{\widetilde z}-\overline z}d\sigma\nonumber\\
&-& \frac{2\pi (\mathcal Q_+a\overline be^{\mathcal A_1+\overline{\mathcal B_2}+2i\tau\psi}+\mathcal Q_-\overline c de^{\mathcal B_1+\overline{\mathcal A_2}-2i\tau\psi})(\widetilde x)}{\tau\vert\mbox{det}\,\psi^{''}(\widetilde x)\vert^{\frac 12}}
\end{eqnarray}
By (\ref{vikana}), (\ref{vikana1UP}), (\ref{vikana2}) and
(\ref{vikana3}), (\ref{losad}) there exist  numbers $\kappa,\kappa_0$ such that
the asymptotic formula (\ref{nika1}) holds true. $\square$

{\bf Acknowledgements.}\\
The first named author was partly supported by NSF grant DMS
0808130, and the second named author was partly supported by NSF,
a Walker Family Endowed Professorship, a Chancellor Professorship at
UC Berkeley and a Senior Clay Award. The visits of O.Imanuvilov to
Seattle were supported by NSF and the Department of Mathematics of
the University of Washington. O.Imanuvilov also thanks the Global
COE Program ``The Research and Training Center for New Development
in Mathematics" for support of a visit to the University of Tokyo.
The authors thank Professor J. Sawon for discussions on the proof of
Corollary \ref{simplyconnected}.

\end{document}